\documentclass[11pt]{amsart}


\voffset=5mm
\oddsidemargin=5pt \evensidemargin=5pt
\headheight=9pt     \topmargin=-38pt
\textheight=675pt   \textwidth=463.pt

\usepackage{amssymb,verbatim}
\usepackage{amsmath,amsfonts}
\usepackage[mathscr]{euscript} 
\usepackage{amsthm}
\usepackage{url}
\usepackage{graphicx} 
\usepackage{enumitem} 
\usepackage{hyperref} 
\hypersetup{colorlinks} 
\usepackage{bbm} 
\usepackage{bm} 
\usepackage{cancel} 

\usepackage[colorinlistoftodos]{todonotes}

\RequirePackage{cleveref}
\usepackage{hypcap}
\hypersetup{colorlinks=true, citecolor=darkblue, linkcolor=darkblue}
\definecolor{darkblue}{rgb}{0.0,0,0.7}
\newcommand{\darkblue}{\color{darkblue}}

\definecolor{darkred}{rgb}{0.68,0,0}

\definecolor{darkgreen}{rgb}{0,.38,0}

\newcommand{\defn}[1]{\emph{\darkblue #1}}
\newcommand{\defnb}[1]{\emph{\darkblue #1}}

\def\defna{\defn}
\def\defng{\defn}




\setlist[enumerate]{
	label=\textnormal{({\roman*})},
	ref={\roman*}}

\makeatletter
\def\th@plain{%
	\thm@notefont{}
	\itshape 
}
\def\th@definition{%
	\thm@notefont{}
	\normalfont 
}
\makeatother


\newtheorem{thm}{Theorem}[section]
\newtheorem{lemma}[thm]{Lemma}

\newtheorem*{claim*}{Claim}
\newtheorem{cor}[thm]{Corollary}
\newtheorem{prop}[thm]{Proposition}
\newtheorem{conj}[thm]{Conjecture}

\newtheorem{op}[thm]{Open Problem}

\theoremstyle{definition}

\newtheorem{definition}[thm]{Definition}

\numberwithin{figure}{section}
\numberwithin{equation}{section}


\def\emp{\nothing}

\def\zz{\mathbb Z}
\def\nn{\mathbb N}

\def\rr{\mathbb R}
\def\qqq{\mathbb Q}

\def\ff{\mathbb F}

\def\fq{{\mathbb F}_q}

\def\sm{\smallsetminus}

\def\la{\lambda}
\def\ga{\gamma}

\def\al{\alpha}
\def\be{\beta}

\def\cB{\mathcal B}
\def\cI{\mathcal I}

\def\cF{\mathcal F}

\def\cP{\mathcal P}

\def\ssu{\subset}

\def\<{\langle}
\def\>{\rangle}

\def\ups{\upsilon}
\def\vt{\ze}

\def\0{{\mathbf 0}}

\def\nothing{\varnothing}

\def\.{\hskip.06cm}
\def\ts{\hskip.03cm}

\def\ba{\textbf{\textbf{a}}}

\def\bc{\textbf{\textbf{c}}}

\def\bO{\textbf{\textbf{0}}}

\def\kK{{\Bbbk}}
\def\ze{{\zeta}}

\newcommand{\girth}{\mathrm{girth}}


\def\ah{\textrm{h}}
\def\aI{{ \textrm{I} } }
\def\aIr{\textrm{\em I}}

\def\aM{\textrm{M}}

\def\aN{\textrm{N}}

\def\ag{\textrm{g}}

\def\ah{\textrm{h}}

\def\.{\hskip.06cm}
\def\ts{\hskip.03cm}
\def\nin{\noindent}

\newcommand{\textsu}[1]{\textup{\textsf{#1}}}

\newcommand{\ComCla}[1]{\textup{\textsu{#1}}}

\newcommand{\sharpP}{\ComCla{\#P}}
\newcommand{\SP}{\ComCla{\#P}}

\newcommand{\Sigmap}{\ensuremath{\Sigma^{{\textup{p}}}}}

\newcommand{\NP}{\ComCla{NP}}

\newcommand{\coNP}{\ComCla{coNP}}
\renewcommand{\P}{\ComCla{P}}
\newcommand{\CeqP}{\ComCla{C$_=$P}}

\newcommand{\PH}{\ComCla{PH}}
\newcommand{\FP}{\ComCla{FP}}
\newcommand{\PP}{\ComCla{PP}}
\newcommand{\PSPACE}{\ComCla{PSPACE}}

\def\SP{\sharpP}
\def\poly{{\P}}
\def\CEP{{\CeqP}}


\newcommand{\iB}{\textnormal{B}} 





\DeclareMathOperator{\cb}{\mathbf{c}} 
\DeclareMathOperator{\eb}{\mathbf{e}} 
\DeclareMathOperator{\Ec}{\mathcal{E}} 

\DeclareMathOperator{\hb}{\mathbf{h}} 

\DeclareMathOperator{\Rb}{\mathbb{R}} 
 %
 %
\DeclareMathOperator{\vb}{\mathbf{v}} 
\DeclareMathOperator{\wb}{\mathbf{w}} 
\DeclareMathOperator{\zb}{\mathbf{z}} 



\DeclareMathOperator{\rk}{\textnormal{rk}}

\DeclareMathOperator{\Ef}{\Theta} 
\DeclareMathOperator{\ef}{\text{\it e}} 
\DeclareMathOperator{\Qf}{ {\Gamma} } 
\DeclareMathOperator{\Vf}{\Omega} 
\DeclareMathOperator{\Vfm}{\Omega^0} 
\DeclareMathOperator{\Vfp}{\Omega^+} 

\DeclareMathOperator{\vf}{{\text{\it \sts v}}} 
\DeclareMathOperator{\vfs}{{\text{\it \sts v}}^\ast} 
\DeclareMathOperator{\wf}{{\text{\it \sts w}}} 

\def\sts{\hskip.015cm}

\newcommand\ar{\textnormal{d}} 
\def\arr{\textrm{\em d}}

\def\av{\textrm{v}}

\def\AA{\mathbb A}
\def\bM{\textbf{\textrm{M}}\hskip-0.03cm{}}
\def\bMr{\textbf{\textrm{\em M}}\hskip-0.03cm{}}
\def\bT{\mathbf{T}}

\def\af{\textrm{f}}
\DeclareMathOperator{\fb}{\mathbf{f}}
\DeclareMathOperator{\gb}{\mathbf{g}}

\def\asr{\textrm{\em s}}

\def\aC{\textrm C}
\DeclareMathOperator{\bC}{\textbf{\textrm{C}}\hskip-0.03cm{}}

\newcommand{\Mf}{\mathscr{M}} 
\newcommand{\Nf}{\mathscr{N}} 
\DeclareMathOperator{\Df}{\mathscr{D}} 
\DeclareMathOperator{\bb}{\mathbf{b}} 

\DeclareMathOperator{\bS}{\mathbf{S}}
\DeclareMathOperator{\bSc}{\mathbf{Sc}}
\DeclareMathOperator{\bScp}{\mathbf{S^\prime c^\prime}}

\DeclareMathOperator{\bzc}{\mathbf{z \ts c}}
\DeclareMathOperator{\psc}{\textnormal{p}} 

\DeclareMathOperator{\EBULC}{\textsc{EqualitySY}}
\DeclareMathOperator{\EMason}{\textsc{EqualityMason}}

\DeclareMathOperator{\CDC}{\textsc{CoincidenceDC}}
\DeclareMathOperator{\CDCR}{\textsc{BasesRatioCoincidence}}

\DeclareMathOperator{\VDCR}{\textsc{BasesRatioVerification}}

\DeclareMathOperator{\BBM}{\#\textsc{Bases}}
\DeclareMathOperator{\NDCR}{\#\textsc{BasesRatio}}
\DeclareMathOperator{\aP}{\textnormal{P}}
\DeclareMathOperator{\at}{\tau} 


\def\cJ{\mathcal J}

\newcommand{\supp}{\textnormal{supp}}
\DeclareMathOperator{\as}{\textnormal{s}}
\DeclareMathOperator{\Par}{\textnormal{Par}}
\DeclareMathOperator{\Comp}{\textnormal{Comp}}
\DeclareMathOperator{\NL}{\textnormal{NL}}


\title
[Equality cases of the Stanley--Yan inequality]
{Equality cases of the Stanley--Yan \\
 log-concave matroid inequality}
\date{\today}

\author{Swee Hong Chan}
\address[Swee Hong Chan]{Department of Mathematics, Rutgers University,  Piscatway, NJ 08854.}
\email{\texttt{sc2518@rutgers.edu}}

\author[\ts Igor Pak]{Igor Pak}
\address[Igor Pak]{Department of Mathematics, UCLA,  Los Angeles, CA 90095.}
\email{\texttt{pak@math.ucla.edu}}


\begin{document}

\begin{abstract}
The \emph{Stanley--Yan} (SY) \emph{inequality} \ts gives the ultra-log-concavity
for the numbers of bases of a matroid which have given sizes of intersections
with $k$ fixed disjoint sets.  The inequality was proved by Stanley (1981)
for regular matroids, and by Yan (2023) in full generality.  In the original
paper, Stanley asked for equality conditions of the SY~inequality, and proved
total equality conditions for regular matroids in the case $k=0$.

In this paper, we completely resolve Stanley's problem. First, we obtain
an explicit description of the equality cases of the SY inequality for 
$k=0$, extending Stanley's results to general
matroids and removing the ``total equality'' assumption.
Second, for $k\ge 1$, we prove that the equality cases of the SY inequality 
cannot be described in a sense that they are not in the
polynomial hierarchy unless the polynomial hierarchy collapses to a finite level.
\end{abstract}
	
\maketitle
	
\vskip-.5cm


\section{Introduction}\label{s:intro}

\subsection{Foreword}\label{ss:intro-for}
Among combinatorial objects, \emph{matroids} \ts are fundamental and
have been extensively studied in both combinatorics 
and applications (see e.g.\ \cite{Ox,Schr03}).  In recent years, a remarkable
progress has been made towards understanding  \emph{log-concave} \ts
matroid inequalities for various matroid parameters (see e.g.\ \cite{Huh,Kalai}).
Much less is known about their equality conditions as they remain inaccessible
by algebraic techniques (see Section~\ref{s:hist}).

In this paper we completely resolve Stanley's open problem \cite[p.~60]{Sta-AF},
asking for equality conditions for the \emph{Stanley--Yan} 
\emph{inequality}, although probably not in the
way Stanley had expected.  This is a very general log-concave
inequality for the numbers of bases of a matroid which have given sizes of
intersections with $k$ fixed sets.  Since known proofs are independent of~$k$,
it may come as a surprise that the equality conditions have a completely different
nature for different~$k$.  Curiously, our negative result
is formalized and proved in the language of computational complexity.
Even as a conjecture this was inconceivable until our recent work
(cf.~$\S$\ref{ss:finrem-quote}).

%

\smallskip

\subsection{Stanley's problem}\label{ss:intro-main}
Let \ts $\Mf$ \ts be a matroid or rank \ts $r=\rk(\Mf)$, with a ground set~$X$
of size \ts $|X|=n$.  Denote by \ts $\cB(\Mf)$ \ts the set of bases of~$\Mf$.
This is a collection of $r$-subsets of~$X$.  Fix integers \ts $k \geq 0$ \ts and
\. $0\le a, c_1,\ldots,c_k \le r$. Additionally, fix disjoint subsets \ts
$R, S_1,\ldots, S_k \ssu X$.
%
Define
\[
\cB_{\bSc}(\Mf,R,a)  \ := \ \big\{ \ts A \in \cB(\Mf)  \, : \,  |A \cap R|=a, \.  |A \cap S_1|=c_1, \. \ldots \., \. |A \cap S_k|=c_k  \ts \big\}\ts,
\]
where \ts $\bS = (S_1,\ldots,S_k)$ \ts and \ts $\bc = (c_1,\ldots,c_k)$.
Denote \. $\iB_{\bSc}(\Mf,R,a):=|\cB_{\bSc}(\Mf,R,a)|$, and let
\[ \aP_{\bSc}(\Mf,R,a) \ := \  \iB_{\bSc}(\Mf,R,a) \. \tbinom{r}{a \ts , \ts c_1 \ts , \ldots , \ts c_k \ts , \ts \ups }^{-1},
\]
where \. $\ups = r-a-c_1-\ldots-c_k$\ts. See Section~\ref{s:def} for the
definitions and notation.

\smallskip

\begin{thm}[{\rm \defn{Stanley--Yan inequality}, \cite[Thm~2.1]{Sta-AF} and \cite[Cor.~3.47]{Yan23}}{}]\label{t:SY}
\begin{equation}\label{eq:SY}
\tag{SY}
{\aP_{\bSc}(\Mf,R,a)}^2 \ \geq \ \aP_{\bSc}(\Mf,R,a+1) \. \aP_{\bSc}(\Mf,R,a-1).
\end{equation}
\end{thm}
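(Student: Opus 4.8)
The plan is to reduce \eqref{eq:SY}, for an arbitrary matroid, to the theory of \emph{Lorentzian polynomials}; in the regular case this reduction specializes to Stanley's original argument via the Aleksandrov--Fenchel inequality. The starting object is the homogeneous degree-$r$ polynomial
\[
f_{\Mf}(\by) \ := \ \sum_{B \in \cB(\Mf)} \ \prod_{i \in B} y_i\ts,
\]
with one variable $y_i$ for each $i\in X$, whose coefficients record the bases of $\Mf$. I would invoke the fundamental fact (Br\"{a}nd\'{e}n--Huh) that $f_{\Mf}$ is \emph{Lorentzian}, together with the three standard closure properties of the Lorentzian class: it is preserved under (i)~substitutions $\by\mapsto A\bz$ with $A$ a nonnegative matrix, (ii)~partial differentiation, and (iii)~specializing a variable to~$0$.

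Next I would ``color'' the ground set. Put $T := X \sm (R \cup S_1 \cup \cdots \cup S_k)$ and substitute $y_i\mapsto t$ for $i\in R$, $y_i\mapsto s_j$ for $i\in S_j$, and $y_i\mapsto u$ for $i\in T$; by closure~(i) the resulting
\[
g(t,s_1,\ldots,s_k,u) \ = \ \sum_{A \in \cB(\Mf)} t^{|A\cap R|}\, s_1^{|A\cap S_1|}\cdots s_k^{|A\cap S_k|}\, u^{|A\cap T|}
\]
is Lorentzian, homogeneous of degree $r$ in $k+2$ variables, and the coefficient of $t^{a}s_1^{c_1}\cdots s_k^{c_k}u^{\ups}$ in $g$ equals $\iB_{\bSc}(\Mf,R,a)$. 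Applying $\partial_{s_1}^{c_1}\cdots\partial_{s_k}^{c_k}$ and then setting $s_1=\cdots=s_k=0$, closure~(ii)--(iii) give a Lorentzian bivariate polynomial, homogeneous of degree $d:=r-c_1-\cdots-c_k$,
\[
\wt g(t,u) \ = \ \big(\ts c_1!\cdots c_k!\ts\big)\,\sum_{a}\iB_{\bSc}(\Mf,R,a)\,t^{a}\,u^{d-a}.
\]
Since a homogeneous bivariate polynomial $\sum_a b_a t^a u^{d-a}$ with nonnegative coefficients is Lorentzian precisely when $\big(b_a/\tbinom{d}{a}\big)_a$ is log-concave with no internal zeros, this yields
\[
\Big(\ts\iB_{\bSc}(\Mf,R,a)\big/\tbinom{d}{a}\ts\Big)^{2} \ \geq \ \Big(\ts\iB_{\bSc}(\Mf,R,a+1)\big/\tbinom{d}{a+1}\ts\Big)\,\Big(\ts\iB_{\bSc}(\Mf,R,a-1)\big/\tbinom{d}{a-1}\ts\Big)\ts.
\]
Passing to the normalization of \eqref{eq:SY} is then a routine computation: using $\aP_{\bSc}(\Mf,R,a)=\iB_{\bSc}(\Mf,R,a)\tbinom{r}{a,c_1,\ldots,c_k,\ups}^{-1}$, the relation $\ups=d-a$, and the identity $\tbinom{d}{a}^{2}\big/\!\big(\tbinom{d}{a+1}\tbinom{d}{a-1}\big)=(a+1)(\ups+1)/(a\ts\ups)$, the last inequality is equivalent to \eqref{eq:SY} after cancelling matching factorial factors; the boundary cases ($a=0$, or $\ups=0$, or a vanishing right-hand side) need no separate treatment, since the support of a Lorentzian polynomial is an interval, so positivity of $\iB_{\bSc}(\Mf,R,a\pm1)$ forces positivity of $\iB_{\bSc}(\Mf,R,a)$.

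Finally, when $\Mf$ is regular with totally unimodular representation $\{v_i\}_{i\in X}\subset\mathbb{R}^{r}$, the argument above coincides with Stanley's: by Cauchy--Binet and multilinearity, $\aP_{\bSc}(\Mf,R,a)$ equals the mixed discriminant $\Dc\big(P_R^{[a]},Q_1^{[c_1]},\ldots,Q_k^{[c_k]},U^{[\ups]}\big)$ of the positive semidefinite matrices $P_R=\sum_{i\in R}v_iv_i^{\top}$, $Q_j=\sum_{i\in S_j}v_iv_i^{\top}$, $U=\sum_{i\in T}v_iv_i^{\top}$ taken with the indicated multiplicities, and \eqref{eq:SY} is precisely Alexandrov's inequality for mixed discriminants applied in the two $P_R$-slots against~$U$. \textbf{The one genuinely hard step is the first one:} establishing that $f_{\Mf}$ is Lorentzian for an \emph{arbitrary} matroid, where no positive semidefinite representation is available. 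This is the Br\"{a}nd\'{e}n--Huh theorem --- resting on the basis-exchange axiom (M-convexity of $\supp f_{\Mf}$) together with the local Hessian log-concavity condition --- and it is the substance behind the general-matroid statement of \eqref{eq:SY} proved in \cite{Yan23}. Granting that input, everything else is formal bookkeeping.
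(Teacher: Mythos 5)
Your proposal is correct and follows essentially the same route as the proofs this paper cites for Theorem~\ref{t:SY}: the paper does not reprove the inequality but attributes it to Yan's Lorentzian-polynomial argument (and Stanley's Alexandrov--Fenchel argument in the regular case), which is exactly what you reconstruct --- basis generating polynomial Lorentzian by Br\"and\'en--Huh, closure under nonnegative substitutions, derivatives and zero specializations, then the bivariate Lorentzian $=$ ultra-log-concave (no internal zeros) characterization. Indeed, your reduction is the same template the paper itself uses in Section~\ref{s:indep} to prove the generalized Mason inequality (Theorem~\ref{t:indep}), so nothing further is needed beyond the cited input that $f_{\Mf}$ is Lorentzian.
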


\smallskip

This inequality was discovered by Stanley who proved it for regular (unimodular)
matroids using the \emph{Alexandrov--Fenchel inequality}.  The inequality was
extended to general matroids by Yan \cite{Yan23}, using \emph{Lorentzian polynomials}.
Both proofs are independent of~$k$.

To motivate the result, Stanley showed in \cite[Thm~2.9]{Sta-AF} (see also \cite[Thm~3.48]{Yan23}),
that the  Stanley--Yan (SY) inequality for \ts $k=0$ \ts implies the \emph{Mason--Welsh conjecture} (1971, 1972), 
see \eqref{eq:Mason-1} in~$\S$\ref{ss:hist-LP}. 
This is a log-concave inequality for the number of independent sets of a matroid (see~$\S$\ref{ss:hist-LP}),
which remained a conjecture until Adiprasito, Huh and Katz \cite{AHK}
famously proved it in full generality using \emph{combinatorial Hodge theory}.


In~\cite[$\S$2]{Sta-AF}, Stanley asked for equality conditions for \eqref{eq:SY}
and proved partial results in this direction (see below).  Despite major developments
on matroid inequalities, no progress on this problem has been made until
now.  We give a mixture of both positive and negative results which completely resolve
Stanley's problem.  We start with the latter.

\smallskip

\subsection{Negative results} \label{ss:intro-negative}
Let \ts $\Mf$ be a \emph{binary matroid} \ts given by its representation over $\mathbb{F}_2\ts$, and
let $k\ge 0$, \ts $R\subseteq X$, \ts $\bS\in X^k$, \ts $a\in \nn$, \ts $\bc \in \nn^k$ \ts be as above.
Denote by \. $\EBULC_k$ \.
the decision problem
\[
\EBULC_k \ := \  \big\{\ts \aP_{\bSc}(\Mf,R,a)^2 \. =^? \. \aP_{\bSc}(\Mf,R,a+1) \. \aP_{\bSc}(\Mf,R,a-1) \ts\big\}.
\]


\smallskip

\begin{thm}[{\rm \defn{\ts $k\ge 1$ \ts case}}{}]\label{t:main-negative}
For all \ts $k \geq 1$\ts, we have:
	$$\EBULC_k \ts \in \PH \ \ \Longrightarrow \ \ \PH=\Sigmap_m  \quad \ \text{for some} \ \, m,
	$$
for binary matroids.
Moreover, the result holds for \ts $a=1$ and $c_1=r-2$\ts.
\end{thm}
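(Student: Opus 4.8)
\noindent\emph{Proof idea.}
The plan is to build a polynomial-time many-one reduction from a \CeqP-complete decision problem to $\EBULC_k$ restricted to binary matroids with $a=1$ and $c_1=r-2$, and then to conclude by the standard consequence of Toda's theorem: if a \CeqP-hard problem lies in $\PH$, then $\CeqP\subseteq\PH$, which forces $\PH=\Sigmap_m$ for some $m$. I would first reduce to the case $k=1$: given $k\ge 1$, one pads an instance by setting $S_2=\dots=S_k=\varnothing$ and $c_2=\dots=c_k=0$, which changes neither $\iB_{\bSc}(\Mf,R,a)$ nor the multinomial $\binom{r}{a,c_1,\dots,c_k,\ups}$; hence $\EBULC_k$ restricted to such instances coincides with $\EBULC_1$, and it suffices to treat $k=1$.

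The second step is an elementary but essential computation. Fix $a=1$, $c_1=r-2$, and write $\iB_t:=\iB_{(S_1),(r-2)}(\Mf,R,t)$ for $t\in\{0,1,2\}$. The three multinomials that occur are
\[
\binom{r}{0,\,r-2,\,2}=\binom r2,\qquad
\binom{r}{1,\,r-2,\,1}=2\binom r2,\qquad
\binom{r}{2,\,r-2,\,0}=\binom r2,
\]
so $\aP_{(S_1),(r-2)}(\Mf,R,t)$ equals $\iB_0/\binom r2$, $\iB_1/(2\binom r2)$, $\iB_2/\binom r2$ for $t=0,1,2$ respectively. Consequently, the equality case of \eqref{eq:SY} in this regime is equivalent to the single integer identity $\iB_1^{\,2}=4\,\iB_0\,\iB_2$. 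My target is therefore to build $\Mf$ (together with $R$ and $S_1$) so that $\iB_1=\iB_0+\iB_2$ holds identically; then $\iB_1^{\,2}=4\iB_0\iB_2$ reads $(\iB_0+\iB_2)^2=4\iB_0\iB_2$, i.e.\ $(\iB_0-\iB_2)^2=0$, so the SY-equality holds precisely when $\iB_0=\iB_2$.

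The heart of the argument is a matroid construction that simultaneously delivers $\iB_1=\iB_0+\iB_2$ and makes $\iB_0$ and $\iB_2$ the two sides of a \CeqP-hard comparison. Here I would use the $\SP$-hardness of counting bases of (non-regular) binary matroids subject to a prescribed intersection size, from which a deletion/contraction ``coincidence'' problem --- roughly, ``does $\#\cB(\Nf\setminus e)$ equal $\#\cB(\Nf/e)$?'' for binary $\Nf$, and more generally the quadratic variant behind $\QDR$ --- is \CeqP-complete; this is exactly the role of the reduction chain for the auxiliary counting problems $\BBM$, $\NDCR$, $\CDCR$. Starting from such an instance I would build a binary matroid $\Mf$ of rank $r$, with $R=\{e_1,e_2\}$ disjoint from $S_1$ and one auxiliary element $f_1\notin R\cup S_1$, so that: (i) every basis of $\Mf$ uses exactly $r-2$ elements of $S_1$, making the constraint $|A\cap S_1|=r-2$ automatic and leaving two ``mobile'' slots among $R\cup\{f_1\}$; (ii) $f_1$ is parallel to $e_1$ in $\Mf$ and is a coloop of $\Mf\setminus e_1$; and (iii) the ``$a{=}0$'' bases (mobile part $\{f_1,\cdot\}$) and the ``$a{=}2$'' bases (mobile part $\{e_1,e_2\}$) biject with $\cB(\Nf\setminus e)$ and $\cB(\Nf/e)$ respectively. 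Property (ii) makes the parallel swap $e_1\leftrightarrow f_1$ a bijection between bases of $\Mf$ containing $e_1$ and bases containing $f_1$ but not $e_1$; since every basis of $\Mf$ contains exactly one of $e_1,f_1$ (as $f_1$ is a coloop of $\Mf\setminus e_1$), and the swap preserves both $|A\cap S_1|$ and the membership of $e_2$, it identifies the ``$e_1$-present'' and ``$e_1$-absent'' bases within each $e_2$-stratum, which yields $\iB_1=\iB_0+\iB_2$. Together with the previous step, the SY-equality for this $\Mf,R,S_1$ holds iff $\iB_0=\iB_2$, i.e.\ iff $\#\cB(\Nf\setminus e)=\#\cB(\Nf/e)$; this completes the reduction and shows $\EBULC_1$ (hence $\EBULC_k$) is \CeqP-hard, already for $a=1$, $c_1=r-2$, and binary matroids.

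The final step is routine: \CeqP-hardness of $\EBULC_k$ together with $\EBULC_k\in\PH$ gives $\CeqP\subseteq\PH$, hence $\PH=\Sigmap_m$ for some $m$. The step I expect to be the main obstacle is item (iii) of the construction --- encoding the two sides of a \CeqP-hard comparison as basis counts of minors that surface in the ``$a{=}0$'' and ``$a{=}2$'' strata, while simultaneously (a) keeping $\Mf$ representable over $\ff_2$, (b) respecting the rigid pattern $a\le 2$, $c_1=r-2$, which leaves only two mobile slots per basis, and (c) installing the parallel pair $\{e_1,f_1\}$ responsible for $\iB_1=\iB_0+\iB_2$. This is where the reduction chain through $\BBM$, $\NDCR$, $\CDCR$ (and the quadratic problem $\QDR$) does the real work; and the precise sharpness ``$a=1$, $c_1=r-2$'' stated in the theorem is exactly what such a construction can be made to achieve --- in pointed contrast with $k=0$, where the analogous equality is governed by describable, combinatorial conditions.
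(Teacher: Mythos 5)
The first half of your proposal is essentially the paper's reduction: padding to go from $k=1$ to general $k$, the multinomial computation showing that at $a=1$, $c_1=r-2$ the \eqref{eq:SY} equality is the single identity $\iB_1^2=4\ts\iB_0\ts\iB_2$, and a gadget (a parallel pair splitting the middle stratum) forcing $\iB_1=\iB_0+\iB_2$ so that equality holds iff $\iB_0=\iB_2$, with the two outer strata encoding $\iB(\Nf/x-y)$ and $\iB(\Nf/y-x)$. This matches the paper's Lemmas~\ref{lem:CDC-to-EBULC}, \ref{lem:CDCR-to-CDC} and \ref{lem:more} (the paper adds a new parallel pair $u,v$ and takes $R=\{x,u\}$, but the mechanism is the same).

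The gap is in the step you yourself flag as ``the real work'': you assert that $\SP$-hardness of counting bases of binary matroids yields $\CeqP$-completeness of the deletion/contraction coincidence problem, and then plan a many-one reduction from a $\CeqP$-complete problem. That inference is unjustified and is not what the paper proves. Hardness of computing $\iB(\Mf)$ does not by itself make the coincidence problem $\{\iB(\Mf-x)/\iB(\Mf/x)=^?\iB(\Nf-y)/\iB(\Nf/y)\}$ hard: the known $\SP$-hardness reductions are Turing reductions and give you no way to manufacture a matroid whose basis count (or ratio) equals a prescribed target value, which is exactly what a many-one reduction to a coincidence problem needs. The paper circumvents this by an oracle argument: $\PH\subseteq\P^{\SP}\subseteq\P^{\<\NDCR\>}\subseteq\NP^{\<\VDCR\>}\subseteq\NP^{\<\CDCR\>}\subseteq\NP^{\<\EBULC_1\>}$, where the crucial inclusion $\NP^{\<\VDCR\>}\subseteq\NP^{\<\CDCR\>}$ (the Verification Lemma~\ref{lem:verify}) requires, given a guessed rational $A/B$, \emph{constructing in polynomial time} a planar graph $G'$ with spanning tree ratio $\at(G'-e')/\at(G'/e')=A/B$ and only polynomially many edges; this is achieved via the continued-fraction realization (Theorems~\ref{thm:graph-cf} and \ref{thm:graph-cf-sum}) combined with Larcher's theorem and the number-theoretic Proposition~\ref{p:NTD} to guarantee short quotient sums, plus Lemma~\ref{lem:BBM} to pass from $\BBM$ to ratios and Lemma~\ref{lem:r-bound} to bound them. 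None of this machinery appears in your sketch, and without it the passage from $\SP$-hardness of base counting to hardness of the \emph{equality} problem $\EBULC_1$ does not go through; the collapse is then obtained directly from Toda's theorem and $\EBULC_k\in\Sigmap_m$ giving $\PH\subseteq\NP^{\Sigmap_m}\subseteq\Sigmap_{m+1}$, with no need for (and no proof of) $\CeqP$-completeness of any coincidence problem. (Incidentally, $\QDR$ plays no role in the paper's argument.)
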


\smallskip

This gives a negative solution to Stanley's problem for \ts $k\ge 1$.
Informally, the theorem states that equality cases of the Stanley--Yan
inequality \eqref{eq:SY} cannot be \emph{described} \ts using a finite
number of alternating quantifiers \ts $\exists$ \ts and \ts $\forall$,
unless a standard complexity assumptions fails (namely, that the
\emph{polynomial hierarchy} \ts $\PH$ \ts collapses to a finite
level\footnote{This is a standard assumption in theoretical
computer science that is similar to \ts $\poly \ne \NP$ (stronger, in fact),
and is widely believed by the experts.  If false it
would bring revolutionary changes to the field, see e.g.\ \cite{Aar16,Wig23}.}).
This is an unusual application of computational complexity to a problem in
combinatorics (cf.~$\S$\ref{ss:finrem-quote}, however).
The proof of Theorem~\ref{t:main-negative} is given in Section~\ref{s:main-proof},
and uses technical lemmas developed in  Section~\ref{s:red}--\ref{sec:verify}.

The theorem does not say that no \emph{geometric} \ts description of \eqref{eq:SY} 
can be obtained, or that some large family of equality cases cannot be described.
In fact, the vanishing cases we present below (see Theorem~\ref{t:vanish}), is
an example of the latter.  

The proof of Theorem~\ref{t:main-negative}, uses the \emph{combinatorial
coincidences} \ts approach developed in \cite{CP-coinc,CP-AF}.
We also use the analysis of the \emph{spanning tree counting function} \ts
using continued fractions (see $\S$\ref{ss:intro-st} below).
Paper \cite{CP-AF} is especially notable, as it can be viewed both a
philosophical and (to a lesser extent) a technical prequel to this paper.
There, we prove that the equality cases of the Alexandrov--Fenchel
inequality are not in $\PH$ for \emph{order polytopes} (under the same assumptions).
See~$\S$\ref{ss:finrem-binary} for possible variations of the theorem
to other classes of matroids.

\smallskip

\subsection{Positive results} \label{ss:intro-positive}
For \ts $k=0$, we omit the subscripts:
$$
\iB(\Mf,R,a) \. := \. \big|\big\{A\in \cB(\Mf) \. : \. |A \cap R|=a\big\}\big| \quad
\text{and} \quad
\aP(\Mf,R,a) \. := \. \iB(\Mf,R,a) \. \tbinom{r}{a}^{-1}.
$$
Denote by \ts $\NL(\Mf)$ \ts the set of \emph{non-loops} in~$\Mf$, i.e. elements \ts $x \in X$ \ts such that \. $\{x\}$ \. is an independent set.
For a non-loop \ts $x\in \NL(\Mf)$, denote by \ts
$\Par_{\Mf}(x)\subseteq X$ \ts the set of elements  of \ts $\Mf$ \ts
that are \emph{parallel} to~$x$, i.e. elements \ts $y \in X$ \ts such that \. $\{x,y\}$ \. is not an independent set.
The following result gives a positive solution to Stanley's problem for \ts $k=0$.

\smallskip

\begin{thm}[{\rm \defn{\ts $k=0$ \ts case}}{}]\label{t:main-positive}
Let \ts $\Mf$ \ts be a matroid of rank~$r\geq 2$ with a ground set~$X$.
 Let \ts $R \ssu X$,
and let \ts $1\le a \le r-1$.  Suppose that \. $\aP(\Mf,R,a)>0$. Then the equality
	\begin{equation}\label{eq:comb-1}
			 {\aP(\Mf,R,a)}^2 \ = \ \aP(\Mf,R,a+1) \. \aP(\Mf,R,a-1)
	\end{equation}
holds \.	\underline{if and only if} \.
for every independent set \ts $A \ssu X$ \ts s.t.\ \. $|A| =  r-2$ \ts and \ts  $|A \cap R| =  a-1$,
and every non-loop \ts $x \in \NL(\Mf/A)$,
we have:	\begin{equation}\label{eq:comb-2}
	 |\Par_{\Mf/A}(x)  \cap R| \ = \  \as \ts |\Par_{\Mf/A}(x)  \cap (X-R)| \quad \text{for some \ \ $\as >0$.}
	\end{equation}	
\end{thm}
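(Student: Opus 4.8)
The plan is to reformulate the inequality in terms of Lorentzian polynomials and then localize the equality to rank-two contractions. First I would discard loops (they lie in no basis) and assume $\Mf$ loopless, take the basis generating polynomial $g_{\Mf}(\xb)=\sum_{B\in\cB(\Mf)}\prod_{i\in B}x_i$ (Lorentzian by Br\"and\'en--Huh), and specialize $x_i\mapsto s$ for $i\in R$ and $x_i\mapsto t$ otherwise to get the bivariate Lorentzian form $f(s,t)=\sum_{a=0}^{r}\iB(\Mf,R,a)\,s^{a}t^{r-a}$. Applying $\partial_s^{a-1}\partial_t^{r-a-1}$ and using multilinearity of $g_\Mf$, this equals, up to a positive factor, $q(s,t):=\sum_{A}q_A(s,t)$, summed over independent $A\ssu X$ with $|A|=r-2$, $|A\cap R|=a-1$, where $q_A$ is the rank-two bivariate basis generating form of $\Mf/A$; in terms of the parallel classes $C$ of $\Mf/A$, writing $\vb_C=\big(|C\cap R|,|C\cap(X\setminus R)|\big)$, one has $q_A(s,t)=\tfrac12\big[(\sum_C\vb_C\cdot(s,t))^2-\sum_C(\vb_C\cdot(s,t))^2\big]$. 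A short binomial-coefficient computation shows \eqref{eq:comb-1} is equivalent to $\det\operatorname{Hess}(q)=0$. Since $\aP(\Mf,R,a)>0$, the $st$-coefficient of $q$ is positive, so $\operatorname{Hess}(q)\neq0$; as $q$ is a Lorentzian quadratic of nonnegative trace, $\det\operatorname{Hess}(q)=0$ forces $\operatorname{Hess}(q)$ to be positive semidefinite of rank one, i.e.\ $q=c\,\ell^2$ for some $c>0$ and some linear form $\ell$ with strictly positive coefficients; call $\as>0$ the ratio of its $s$-coefficient to its $t$-coefficient.

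\textbf{Localization.} The key claim would be: if \eqref{eq:comb-1} holds, then $\det\operatorname{Hess}(q_A)=0$ for every admissible $A$, i.e.\ the rank-two Stanley--Yan inequality (Theorem~\ref{t:SY}) is an equality for each $\Mf/A$. Granting this, each $q_A$ is a perfect square $c_A\ell_A^2$ with $c_A>0$ and $\ell_A\neq0$ of nonnegative coefficients, and evaluating $c\,\ell^2=\sum_Ac_A\ell_A^2$ at a vector on the line $\{\ell=0\}$ forces $\ell_A\parallel\ell$ for all $A$. Unravelling $q_A=c_A\ell_A^2$ through the displayed formula then gives $\vb_C\parallel(\as,1)$, hence $|C\cap R|=\as\,|C\cap(X\setminus R)|$, for every parallel class $C$ of every $\Mf/A$; as $\Par_{\Mf/A}(x)$ is the parallel class of $x$, this is exactly \eqref{eq:comb-2}, with one common $\as>0$. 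The converse is easy: if \eqref{eq:comb-2} holds with a common $\as>0$, each $q_A$, hence $q$, is a positive multiple of $\ell^2$, so $\det\operatorname{Hess}(q)=0$.

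\textbf{Proof of the claim, and the main obstacle.} I would prove the localization claim by induction on $|X|$. The base case $r=2$ is tautological ($A=\varnothing$ is the only choice, so $q=q_\varnothing$), and the cases where every element is a loop or coloop, and the reduction to connected $\Mf$, are dispatched directly. For the step, fix a non-loop non-coloop $e$ and split $q=q'+q''$ with $q'$, $q''$ the analogous quadratics of $\Mf\setminus e$ and $\Mf/e$ (shifting the level by one when $e\in R$), both on a smaller ground set; from $\det\operatorname{Hess}(q)=\det\operatorname{Hess}(q')+\det\operatorname{Hess}(q'')+D$ with $D$ the mixed discriminant of the two Hessians, using $\det\operatorname{Hess}(q'),\det\operatorname{Hess}(q'')\le0$ and $D\ge0$ (both from the rank-two case, the latter via AM--GM), one propagates $\det\operatorname{Hess}(q)=0$ to each summand, the hypothesis $\aP(\Mf,R,a)>0$ being exactly what excludes the degenerate single-monomial configurations that would permit cancellation. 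Then the inductive hypothesis on $\Mf\setminus e$ and $\Mf/e$ yields \eqref{eq:comb-2} for all $A$ avoiding, resp.\ containing, $e$; letting $e$ vary covers all $A$. The hard part is precisely this propagation: a sum of Lorentzian quadratics can be a perfect square with no summand being one (e.g.\ $s^2+(2st+t^2)=(s+t)^2$), so one must genuinely use the matroidal coherence of $\{\Mf/A\}$ together with the positivity hypothesis; pinning down how $\aP(\Mf,R,a)>0$ forces the rigidity, plus the attendant bookkeeping and the loop/coloop/disconnectedness reductions, is where the work concentrates. Failing a clean inductive argument, one could instead try to extract the vanishing from the equality cases of the Hodge--Riemann relations underlying the Lorentzian property of $f$.
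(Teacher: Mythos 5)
Your reduction of the theorem to a statement about binary quadratics is sound: the equality \eqref{eq:comb-1} is indeed equivalent to the degeneracy of the quadratic $q=\partial_s^{a-1}\partial_t^{r-a-1}f$, your decomposition $q=\sum_A q_A$ over independent $A$ with $|A|=r-2$, $|A\cap R|=a-1$ is correct, and the converse direction plus the final ``each $q_A$ degenerate with direction $\parallel(\as,1)$ $\Rightarrow$ \eqref{eq:comb-2}'' step are routine (the latter is essentially Lemma~\ref{lem:equ-base}). But the entire content of the forward implication is your localization claim, and that is exactly where the proposal has a genuine gap. The propagation mechanism you sketch does not work: for two hyperbolic binary quadratics with nonnegative coefficients the mixed term $D$ in $\det\operatorname{Hess}(q'+q'')=\det\operatorname{Hess}(q')+\det\operatorname{Hess}(q'')+D$ has no fixed sign (e.g.\ $q'=q''=st$ gives $D=-2$, while $q'=s^2+st$, $q''=st+t^2$ gives $D=+2$), and even granting $D\ge0$ together with $\det\operatorname{Hess}(q'),\det\operatorname{Hess}(q'')\le0$, the identity $0=(\le 0)+(\le 0)+(\ge 0)$ forces nothing. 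Your own example $s^2+(2st+t^2)=(s+t)^2$ is realizable by rank-two matroid forms, so no argument at the level of the $2\times 2$ Hessians alone can succeed; the matroidal coherence must enter, and the proposal never says how. There is a further structural problem with the induction itself: the deletion piece is suspect, both because $\partial_s^{a-1}\partial_t^{r-a-1}(w_e f_{\Mf/e})$ is not the level-shifted quadratic of $\Mf/e$ (the product rule produces an extra cross term, so $q$ does not split cleanly as ``quadratic of $\Mf\setminus e$ plus quadratic of $\Mf/e$''), and because the equality condition is not inherited by single-element deletions in the naive way: deleting $e$ changes the $R$-to-$(X-R)$ ratio inside the parallel class of $e$ in each $\Mf/A$, so equality for $(\Mf,a)$ need not imply equality for $(\Mf\setminus e,a)$, which your inductive step assumes.

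For comparison, the paper performs the localization by contractions only, never deletions, and not at the level of $2\times2$ Hessians. It builds the $n\times n$ matrices $\bC(\Mf,R,a)$ indexed by ground-set elements, proves they are hyperbolic by induction on rank (Proposition~\ref{prop:C-hyp}, via the local--global principle for a combinatorial atlas), and then uses the local--global \emph{equality} principle (Theorem~\ref{t:equality Hyp}, through the kernel characterization of Lemma~\ref{l:equality kernel} and the irreducibility statement of Lemma~\ref{lem:C-supp}) to obtain Lemma~\ref{lem:tran}: equality at level $a$ for $\Mf$ transfers, with the same constant $\as$ and with the needed positivity, to equality at level $a-1$ for every contraction $\Mf/x$, $x\in R\cap\NL(\Mf)$. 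Iterating over elements of $R$ and then (after swapping the roles of $R$ and $X-R$) over elements of $X-R$ reaches the rank-two contractions $\Mf/A$, where Lemma~\ref{lem:equ-base} finishes. In other words, the hard rigidity you need is extracted from spectral information about matrices of size $n$, where irreducibility and the kernel structure are available, rather than from the two-variable quadratics, where (as your example shows) the rigidity is simply false. If you want to salvage your Lorentzian-polynomial framework, the realistic route is your last suggestion: use the equality analysis of the Hodge--Riemann/Lorentzian structure in the full set of variables (or the atlas machinery), not a deletion--contraction induction on the bivariate specializations.
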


\smallskip

We prove the theorem in Section~\ref{s:main-positive} using the
\emph{combinatorial atlas} \ts technology, see~$\S$\ref{ss:hist-atlas}.
This is a technical linear algebraic
approach we developed in \cite{CP-intro,CP} to prove both the inequalities
and the equality cases of matroid and poset inequalities, as well as their
generalizations.  Notably, we obtain the equality cases of Mason's
ultra-log-concave inequality \cite[$\S$1.6]{CP}, and we use a closely
related setup in this case.

\smallskip

\subsection{Vanishing conditions}\label{ss:intro-vanish}
Note that when \ts $\aP(\Mf,R,a)=0$, we always have equality in~\eqref{eq:comb-1}.
The following nonvanishing conditions give a complement to such equality cases:

\smallskip

\begin{prop}[{\rm \defn{nonvanishing conditions for $k=0\ts$}}{}]
\label{p:vanish}
Let \. $\Mf$ \. be a matroid of rank \ts $r=\rk(\Mf)$ \ts with a ground set~$X$,
and let \ts  $R \subseteq X$.
Then, for every \ts $0\le a \le r$, we have:
	\.  $\aP(\Mf,R,a)>0$ \, \underline{if and only if}
	\[   r \. - \. \rk(X \sm R)  \, \leq \, a \, \leq \,  \rk(R). \]
\end{prop}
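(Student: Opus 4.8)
The plan is to characterize exactly when the set $\cB(\Mf,R,a)$ is nonempty, which is equivalent to $\aP(\Mf,R,a)>0$ since $\aP$ differs from $\iB$ only by a positive binomial factor. The key observation is that $\aP(\Mf,R,a)>0$ means there exists a basis $A$ of $\Mf$ with $|A\cap R|=a$, hence $|A\cap(X\sm R)|=r-a$. Writing $A=A_1\sqcup A_2$ with $A_1=A\cap R$ and $A_2=A\cap(X\sm R)$, the set $A_1$ is an independent set of $\Mf$ contained in $R$ of size $a$, so necessarily $a\le\rk(R)$; dually $A_2$ is an independent set contained in $X\sm R$ of size $r-a$, giving $r-a\le\rk(X\sm R)$, i.e. $a\ge r-\rk(X\sm R)$. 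This proves the ``only if'' direction immediately, with no matroid machinery beyond the definition of rank.

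For the ``if'' direction, suppose $r-\rk(X\sm R)\le a\le\rk(R)$. First I would pick an independent set $I_1\subseteq R$ with $|I_1|=a$, which exists because $a\le\rk(R)$. Next I would extend $I_1$ within the restriction $\Mf|(I_1\cup(X\sm R))$: more precisely, I want to enlarge $I_1$ to a basis $A$ of $\Mf$ by adding $r-a$ elements all lying in $X\sm R$. The tool here is the standard matroid exchange/augmentation property: any independent set can be extended to a basis, and in fact can be extended using elements from any fixed set $Y$ as long as $\rk(I_1\cup Y)=r$. So the step reduces to verifying $\rk(I_1\cup(X\sm R))=r$. Since $I_1\subseteq R$, we have $I_1\cup(X\sm R)\supseteq X\sm R$, and I claim $\rk(I_1\cup(X\sm R))\ge a+\rk(X\sm R)\ge a+(r-a)=r$, where the first inequality uses that $I_1$ is independent of size $a$ in $R$ together with submodularity of the rank function (concretely, $\rk(I_1\cup(X\sm R))\ge\rk(I_1)+\rk(X\sm R)-\rk(I_1\cap(X\sm R))=a+\rk(X\sm R)-0$, since $I_1\cap(X\sm R)=\varnothing$). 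Hence $\rk(I_1\cup(X\sm R))=r$, and we can extend $I_1$ to a basis $A$ using only elements of $X\sm R$, giving $|A\cap R|=a$ and therefore $\aP(\Mf,R,a)>0$.

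The main obstacle, though it is a mild one, is the augmentation step: one must be careful that the basis extending $I_1$ adds \emph{only} elements outside $R$, not just that some basis contains $I_1$. This is handled cleanly by applying the augmentation axiom inside the (possibly non-spanning) set $I_1\cup(X\sm R)$ once we know its rank is $r$ — equivalently, one takes a maximal independent subset of $I_1\cup(X\sm R)$ containing $I_1$, which is a basis of $\Mf$ by the rank computation and whose intersection with $R$ is exactly $I_1$ since all its other elements come from $X\sm R$. Everything else is a direct consequence of the submodularity and monotonicity of the matroid rank function, so no deeper structural results are needed.
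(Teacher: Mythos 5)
Your ``only if'' direction is fine, but the ``if'' direction has a genuine gap. Submodularity is used backwards: the submodular inequality reads $\rk(A\cup B)+\rk(A\cap B)\le \rk(A)+\rk(B)$, so it gives an \emph{upper} bound on $\rk(I_1\cup(X\sm R))$, not the lower bound $\rk(I_1\cup(X\sm R))\ge a+\rk(X\sm R)$ that you assert (note that this asserted bound would often exceed $r$, which is already impossible). More importantly, the conclusion you want it for is false: an \emph{arbitrary} independent $I_1\ssu R$ of size $a$ need not extend to a basis using only elements of $X\sm R$. Concretely, take $X=\{u,v,w\}$ represented by $\phi(u)=\phi(v)=(1,0)$, $\phi(w)=(0,1)$, so $r=2$, and let $R=\{u,w\}$, $a=1$; then $r-\rk(X\sm R)=1\le a\le \rk(R)=2$ and the basis $\{v,w\}$ witnesses $\aP(\Mf,R,1)>0$, but the choice $I_1=\{u\}$ gives $\rk(I_1\cup(X\sm R))=\rk(\{u,v\})=1<r$, so $I_1$ cannot be completed inside $I_1\cup(X\sm R)$. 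The point is that the $a$ elements of $R$ cannot be chosen in advance independently of what happens in $X\sm R$; some exchange between the two sides is unavoidable.

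A correct elementary repair is to build the basis from the other end and then walk upward: take a maximal independent subset $J\ssu X\sm R$ and extend it to a basis $B$; maximality of $J$ forces $B\sm J\ssu R$, so $|B\cap R|=r-\rk(X\sm R)\le a$. Then, as long as $|B\cap R|<a\le\rk(R)$, augment $B\cap R$ by some $x\in R$ with $(B\cap R)+x$ independent, and in the unique circuit of $B+x$ pick an element $y\in B\cap(X\sm R)$ (one exists since the circuit is not contained in $(B\cap R)+x$); the basis $B+x-y$ increases $|B\cap R|$ by one, so you hit $a$ exactly. The paper itself avoids this altogether: it deduces Proposition~\ref{p:vanish} as the two-block special case ($S_1\gets R$, $S_2\gets X\sm R$, $c_1\gets a$, $c_2\gets r-a$) of Theorem~\ref{t:vanish}, which is proved by induction using discrete polymatroids. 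Either route works, but your proposal as written does not.
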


\smallskip

The proposition is completely straightforward and is a special case of a more general
Theorem~\ref{t:vanish}, see below.
Combined, Theorem~\ref{t:main-positive} and  Proposition~\ref{p:vanish} give a
complete description of equality cases of the Stanley--Yan~inequality~\eqref{eq:SY}
for \ts $k=0$.

\smallskip

It is natural to compare our positive and negative results, in the complexity
language.  In particular, Theorem~\ref{t:main-negative} shows that \ts $\EBULC_k \not\in \coNP$,
for all \ts $k\ge 1$ (unless $\PH$ collapses).  In other words, it is \emph{very unlikely}
that there is a \emph{witness} \ts for \eqref{eq:SY} being strict that can be verified
in polynomial time.  This is in sharp contrast with the case \ts $k=0$~:

\smallskip

 \begin{cor}\label{cor:k=0}
Let \ts $\Mf$ \ts be a matroid given by a succinct presentation.
Then$:$
$$
\EBULC_0 \. \in \ts \coNP \ts.	
$$
 \end{cor}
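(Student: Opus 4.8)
The plan is to give a polynomial-time verifier for the complement $\overline{\EBULC_0}$, i.e.\ for \emph{strict} inequality in \eqref{eq:SY} with $k=0$; this shows $\overline{\EBULC_0}\in\NP$ and hence $\EBULC_0\in\coNP$. Everything rests on the observation that Theorem~\ref{t:main-positive} and Proposition~\ref{p:vanish} together describe equality in \eqref{eq:comb-1} by a condition of $\coNP$ shape: a single universal quantifier, ranging over the independent sets $A\subseteq X$ with $|A|=r-2$ and the non-loops $x\in\NL(\Mf/A)$ — all of which are objects of size $O(|X|)$ — applied to the inner predicate \eqref{eq:comb-2}, which is an elementary arithmetic test on two nonnegative integers. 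Throughout we use that a succinct presentation of $\Mf$ makes $|X|$ polynomial in the input length and makes rank queries in $\Mf$ computable in polynomial time.

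First I would dispose of the degenerate instances: those with $\aP(\Mf,R,a)=0$, and those with $a\in\{0,r\}$ (together these cover every instance with $r\le 1$). By Proposition~\ref{p:vanish}, the test ``$\aP(\Mf,R,a)=0$'' is the polynomial-time condition ``$a<r-\rk(X\sm R)$ or $a>\rk(R)$''; when it holds, \eqref{eq:SY} forces the right-hand side of \eqref{eq:comb-1} to vanish as well, so \eqref{eq:comb-1} holds and the verifier rejects. When $\aP(\Mf,R,a)>0$ but $a\in\{0,r\}$, one of the indices $a\pm1$ leaves $\{0,\dots,r\}$, so the right-hand side of \eqref{eq:comb-1} is again $0$, now strictly below $\aP(\Mf,R,a)^2>0$; thus \eqref{eq:comb-1} fails and the verifier accepts. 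In every remaining instance we have $\aP(\Mf,R,a)>0$ and $1\le a\le r-1$, hence $r\ge 2$, which is precisely the setting of Theorem~\ref{t:main-positive}.

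In this main case Theorem~\ref{t:main-positive} says that \eqref{eq:comb-1} \emph{fails} if and only if there exist an independent set $A\subseteq X$ with $|A|=r-2$ and $|A\cap R|=a-1$, and a non-loop $x\in\NL(\Mf/A)$, for which \eqref{eq:comb-2} fails; and the one small point to notice is that, writing $p:=|\Par_{\Mf/A}(x)\cap R|$ and $q:=|\Par_{\Mf/A}(x)\cap(X-R)|$, the requirement of \eqref{eq:comb-2} — that $p=\as\ts q$ for \emph{some} $\as>0$ — holds exactly when $p=q=0$ or $p,q>0$, so \eqref{eq:comb-2} fails exactly when precisely one of $p,q$ is zero. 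Hence the certificate for strictness is the pair $(A,x)$, of size $O(|X|)$, and the verifier checks in polynomial time: that $|A|=r-2$ and $\rk(A)=r-2$ (so $A$ is an independent set of size $r-2$); that $|A\cap R|=a-1$; that $\rk(A\cup\{x\})=r-1$ (so $x$ is a non-loop of $\Mf/A$); and, after computing $\Par_{\Mf/A}(x)$ — membership of any $y\in X$ being decidable with $O(1)$ rank queries in $\Mf$ — that precisely one of $p,q$ is zero. Together with the degenerate cases, this makes the verifier accept $(\Mf,R,a)$ if and only if \eqref{eq:comb-1} fails, so $\overline{\EBULC_0}\in\NP$.

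Beyond this bookkeeping there is no genuine obstacle, since all the mathematical content is in Theorem~\ref{t:main-positive} and Proposition~\ref{p:vanish}. The one subtlety worth flagging is that, because the scalar $\as$ in \eqref{eq:comb-2} is quantified after the pair $(A,x)$ and so may depend on it, condition \eqref{eq:comb-2} collapses to the sign condition on $(p,q)$ above — which is precisely what keeps the certificate short and puts $\overline{\EBULC_0}$ in $\NP$ rather than merely in $\PH$.
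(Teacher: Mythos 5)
Your proposal is correct and follows exactly the route the paper intends: the paper derives Corollary~\ref{cor:k=0} directly from the explicit equality description in Theorem~\ref{t:main-positive} together with Proposition~\ref{p:vanish}, and your argument is just a careful write-up of that derivation (poly-size certificate $(A,x)$ for strictness, rank-oracle checks, plus the routine degenerate cases $\aP(\Mf,R,a)=0$ and $a\in\{0,r\}$). The one observation you rightly flag — that ``$p=\as\ts q$ for some $\as>0$'' collapses to the sign condition that not exactly one of $p,q$ is zero — is indeed the point that makes the inner predicate polynomial-time checkable, and it is handled correctly.
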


\smallskip

Here by \defn{succinct} \ts we mean a presentation of a matroid
with an oracle which computes the rank function (of a subset of
the ground set) in polynomial time, see e.g.\ \cite[$\S$5.1]{KM22}.
Matroids with succinct presentation include graphical, transversal
and bicircular matroids (see e.g.\ \cite{Ox,Welsh-matroids}),
certain paving matroids based on Hamiltonian cycles \cite[$\S$3]{Jer},
and matroids given by their representation over fields~$\fq$ or~$\qqq$.

Corollary~\ref{cor:k=0} follows from the
explicit description of the equality cases given in Theorem~\ref{t:main-positive}
and Proposition~\ref{p:vanish}.  See also Section~\ref{s:ex} for several examples,
and $\S$\ref{ss:finrem-NP} for further discussion of computational hardness
of \ts $\EBULC_0.$

\smallskip

\begin{thm}[{\rm \defn{nonvanishing conditions for all $k\ge 0\ts$}}{}]
\label{t:vanish}
Let \. $\Mf=(X,\cI)$ \. be a matroid with a ground set \ts $X$ \ts and \ts
independent sets \ts $\cI \ssu 2^X$.  Let \ts $r=\rk(\Mf)$
\ts be the rank of~$\Mf$.  Let \ts $\bS=(S_1,\ldots,S_\ell)$ \ts be a  set partition of~$X$,
i.e.\  we have \ts
$X = \cup_i \ts S_i$ \ts and \ts $S_i \cap S_j = \emp$ \ts for all \ts $1\le i < j \le \ell$.
Finally, let \ts $\cb=(c_1,\ldots, c_\ell) \in \nn^\ell$.  Then,
there exists an independent set \ts $A \in \cI$ \ts such that
	\begin{equation*}
	 |A \cap S_i|  \ = \  c_i \ \quad \text{for all} \ \quad i \in [\ell]
	\end{equation*}
	\underline{if and only if}
$$
\rk \big(\cup_{i\in L} \ts S_i\big) \ \ge \ \sum_{i\in L} \. c_i \ \quad \text{for all} \ \quad L\subseteq [\ell]\ts, \  L \ne \emp\ts.
$$
\end{thm}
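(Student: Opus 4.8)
The ``only if'' direction is immediate: if $A\in\cI$ satisfies $|A\cap S_i|=c_i$ for all $i\in[\ell]$, then for every nonempty $L\subseteq[\ell]$ the set $A\cap\bigl(\bigcup_{i\in L}S_i\bigr)$ is an independent subset of $\bigcup_{i\in L}S_i$ of cardinality $\sum_{i\in L}c_i$ (the blocks being disjoint), and hence $\rk\bigl(\bigcup_{i\in L}S_i\bigr)\ge\sum_{i\in L}c_i$.

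For the ``if'' direction I would establish the existence of such an $A$ by induction on $\ell+\sum_i c_i$. The base case $\ell=1$ is clear, since an independent set of size $c_1$ in $X=S_1$ exists exactly when $\rk(X)=\rk(S_1)\ge c_1$. For $\ell\ge 2$ I would split into three cases. \emph{(i)}~If some $c_j=0$, pass to the restriction $\Mf|_{X\setminus S_j}$ with block list $(S_i)_{i\ne j}$ and vector $(c_i)_{i\ne j}$: the rank hypotheses are inherited (restriction preserves the rank of every subset of $X\setminus S_j$), the induction parameter drops, and the set produced by induction automatically has $|A\cap S_j|=0=c_j$. \emph{(ii)}~If all $c_j\ge 1$ and some proper nonempty $L\subsetneq[\ell]$ is \emph{tight}, i.e.\ $\rk\bigl(\bigcup_{i\in L}S_i\bigr)=\sum_{i\in L}c_i$, put $Y:=\bigcup_{i\in L}S_i$. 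By induction applied to $\Mf|_Y$ with $(S_i)_{i\in L}$, $(c_i)_{i\in L}$ there is $A_1\subseteq Y$ independent with $|A_1\cap S_i|=c_i$ for $i\in L$; then $|A_1|=\sum_{i\in L}c_i=\rk(Y)$, so $A_1$ is a basis of $Y$ in $\Mf$. The rank hypotheses transfer to $\Mf/Y$ with $(S_i)_{i\notin L}$, $(c_i)_{i\notin L}$, since for nonempty $L'\subseteq[\ell]\setminus L$
\[
\rk_{\Mf/Y}\Bigl(\bigcup_{i\in L'}S_i\Bigr)\ =\ \rk_{\Mf}\Bigl(\bigcup_{i\in L\cup L'}S_i\Bigr)-\sum_{i\in L}c_i\ \ge\ \sum_{i\in L\cup L'}c_i-\sum_{i\in L}c_i\ =\ \sum_{i\in L'}c_i,
\]
so by induction there is $A_2\subseteq X\setminus Y$ independent in $\Mf/Y$ with $|A_2\cap S_i|=c_i$ for $i\notin L$, and then $A:=A_1\cup A_2$ works (a basis of $Y$ together with an independent set of $\Mf/Y$ is independent in $\Mf$, and $A$ meets each block in the prescribed number of elements). \emph{(iii)}~If all $c_j\ge 1$ and no proper nonempty set is tight, then $\rk\bigl(\bigcup_{i\in L}S_i\bigr)\ge\sum_{i\in L}c_i+1$ for every nonempty $L\subsetneq[\ell]$, while $\rk(X)\ge\sum_i c_i$. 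In particular $\rk(S_\ell)\ge c_\ell+1\ge 2$, so $|S_\ell|\ge 2$ and $S_\ell$ contains a non-loop $x$; pass to $\Mf/x$ with block list $(S_1,\dots,S_{\ell-1},S_\ell\setminus\{x\})$ and vector $(c_1,\dots,c_{\ell-1},c_\ell-1)$, apply induction, and set $A:=A'\cup\{x\}$.

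The crux is case \emph{(iii)}: I must check that contracting an \emph{arbitrary} non-loop $x\in S_\ell$ preserves every rank inequality. Using $\rk_{\Mf/x}(Z)=\rk_\Mf(Z\cup\{x\})-1$, for a block set $L'$ not containing $\ell$ the hypothesis leaves slack $\ge 1$ in $\rk_\Mf\bigl(\bigcup_{i\in L'}S_i\bigr)$ to absorb the ``$-1$''; for $L'$ a proper set containing $\ell$ the union $\bigcup_{i\in L'}S_i$ is again proper, hence has slack $\ge 1$, and its new target $\sum_{i\in L'}c_i-1$ is one smaller; and for $L'=[\ell]$ one uses $\rk(X)\ge\sum_i c_i$ directly, the target again dropping by $1$. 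This is precisely where the ``no tight proper set'' hypothesis is indispensable. Secondary points needing care: the bookkeeping of degenerate configurations created along the way (a block becoming empty after a contraction is handled by case \emph{(i)} at the next step of the induction), and the standard matroid fact invoked in case \emph{(ii)} that a basis of $Y$ together with an independent set of $\Mf/Y$ is independent in $\Mf$. As alternatives to the induction, one may instead invoke Rado's theorem on independent transversals for the set family consisting of $c_i$ copies of $S_i$ for each $i\in[\ell]$, or apply the matroid intersection theorem to $\Mf$ and the capacity-$(c_1,\dots,c_\ell)$ partition matroid and observe that an extremal set in the resulting min--max formula may be taken to be a union of blocks.
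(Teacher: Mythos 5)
Your proof is correct, but it follows a genuinely different route from the paper. The paper works inside discrete polymatroid theory: it defines $\cJ_1$ (the achievable vectors $\cb$) and $\cJ_2$ (the vectors satisfying the rank inequalities), invokes Herzog--Hibi to know that $\cJ_2$ is a discrete polymatroid and that $\cJ_1,\cJ_2$ are exactly the lattice points of their convex hulls, and then reduces to vertices of the polytope of $\cJ_2$ with $|\cb|=\rk(X)$; a vertex with all $c_i\ge 1$ must lie on a proper tight rank constraint, and that tight constraint is split by restriction to $S=\cup_{i\in L}S_i$ and contraction $\Mf/S$ --- which is precisely your case \emph{(ii)}, including the (implicit in the paper, explicit in your write-up) verification that the rank inequalities pass to $\Mf/S$ via $\rk_{\Mf/S}(Z)=\rk_\Mf(Z\cup S)-\rk_\Mf(S)$. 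Where you diverge is in how the existence of a tight proper subset is obtained: instead of polyhedral vertex structure, your case \emph{(iii)} handles the ``no proper tight set'' situation directly by contracting a non-loop $x\in S_\ell$ and decrementing $c_\ell$, using the uniform slack $\ge 1$ to absorb the rank drop; your slack computation for the three types of $L'$ is right, and the induction parameter $\ell+\sum_i c_i$ strictly decreases in every case. What each approach buys: the paper's argument is short modulo the cited structural results of \cite{HH02} and places the statement in the polymatroid framework used elsewhere; yours is fully self-contained and elementary, at the cost of one more case, and it makes visible that the theorem is essentially Rado's theorem on independent transversals (your suggested alternative via $c_i$ copies of each $S_i$, or via matroid intersection with a partition matroid, is indeed a valid classical shortcut, though it is not the route the paper takes).
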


\smallskip

One can think of this result as a positive counterpart to the (negative)
Theorem~\ref{t:main-negative}.  In the language of Shenfeld and van Handel
\cite{SvH-duke,SvH-acta}, the \emph{vanishing conditions} are ``trivial''
equality cases of the SY~inequality, in a sense of having a simple geometric
meaning rather than ease of the proof. 
We prove the Theorem~\ref{t:vanish} in Section~\ref{s:vanish} using the
\emph{discrete polymatroid theory}.

Note that Proposition~\ref{p:vanish} follows from Theorem~\ref{t:vanish},
by taking \ts $S_1\gets R$, \ts $S_2 \gets X \sm R$, \ts $c_1 \gets a$,
and \ts $c_2 \gets (r-a)$.
More generally, the complexity of the vanishing for all \ts $k\ge 0$ \ts
follows immediately from Theorem~\ref{t:vanish}, and is worth emphasizing:

\smallskip

 \begin{cor}\label{c:vanish-gen}
Let \ts $\Mf$ \ts be a matroid given by a succinct presentation.
Then, for all fixed~$k\ge 0$, the problem \ts $\big\{\iB_{\bSc}(\Mf,R,a)>^?0\big\}$ \ts is in~$\ts\poly$.
 \end{cor}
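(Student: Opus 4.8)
The plan is to deduce Corollary~\ref{c:vanish-gen} immediately from Theorem~\ref{t:vanish}, by rephrasing the existence of a \emph{basis} with prescribed intersection sizes as the existence of an \emph{independent set} with prescribed intersection sizes relative to a set partition of the whole ground set~$X$. Given the input \ts $R,\ts \bS=(S_1,\ldots,S_k),\ts a,\ts \bc=(c_1,\ldots,c_k)$, I would set
\[ S_0 \. := \. R, \qquad c_0 \. := \. a, \qquad S_{k+1} \. := \. X\sm(R\cup S_1\cup\cdots\cup S_k), \qquad c_{k+1} \. := \. \ups \. = \. r-a-c_1-\cdots-c_k\ts. \]
If \ts $\ups<0$, then no basis can meet the constraints, since a basis $A$ has $|A|=r$ while $|A\cap R|+\sum_{i=1}^k|A\cap S_i|=a+c_1+\cdots+c_k>r$; so the algorithm outputs ``no''. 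Otherwise \ts $\bS'=(S_0,S_1,\ldots,S_{k+1})$ is a set partition of $X$ (some blocks possibly empty) and \ts $\cb'=(c_0,c_1,\ldots,c_{k+1})\in\nn^{k+2}$.

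Next, I would use the elementary equivalence: \ts $\iB_{\bSc}(\Mf,R,a)>0$ \ts \underline{if and only if} \ts there is an independent set \ts $A\in\cI$ \ts with \ts $|A\cap S_i|=c_i$ \ts for all \ts $0\le i\le k+1$. Indeed, every \ts $A\in\cB_{\bSc}(\Mf,R,a)$ \ts automatically satisfies \ts $|A\cap S_{k+1}|=r-a-c_1-\cdots-c_k=\ups$, hence all \ts $k+2$ \ts constraints; conversely, any independent set $A$ meeting those constraints has \ts $|A|=c_0+c_1+\cdots+c_{k+1}=r=\rk(\Mf)$, so $A$ is a basis, and it lies in \ts $\cB_{\bSc}(\Mf,R,a)$ \ts by construction. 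Applying Theorem~\ref{t:vanish} to the partition $\bS'$ and the vector $\cb'$, such an $A$ exists if and only if
\[ \rk\Bigl(\bigcup_{i\in L}S_i\Bigr) \ \ge \ \sum_{i\in L}c_i \qquad \text{for every nonempty} \ \ L\subseteq\{0,1,\ldots,k+1\}\ts. \]

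Finally, since $k$ is fixed there are only \ts $2^{k+2}-1=O(1)$ \ts nonempty subsets $L$ to test. For each $L$ the union \ts $\bigcup_{i\in L}S_i\subseteq X$ \ts is assembled from the input in polynomial time, its rank is obtained by a single call to the rank oracle of the succinct presentation (polynomial time by definition), and the comparison with \ts $\sum_{i\in L}c_i$ \ts is a polynomial-time arithmetic step. Hence the whole test runs in polynomial time, which gives \ts $\bigl\{\iB_{\bSc}(\Mf,R,a)>^?0\bigr\}\in\poly$. There is no real obstacle here: all the substance is carried by Theorem~\ref{t:vanish} (established via discrete polymatroid theory in Section~\ref{s:vanish}), and the rest is bookkeeping. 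The two mildly delicate points are the degenerate case $\ups<0$, dispatched above, and the possibility of empty blocks $S_i$ (e.g.\ $S_0=R$ when $a=0$, or $S_{k+1}$ when $R\cup S_1\cup\cdots\cup S_k=X$); an empty block causes no difficulty, since the rank--cardinality criterion remains well defined and its singleton instance $L=\{i\}$ simply reads \ts $0\ge c_i$, already yielding the correct verdict.
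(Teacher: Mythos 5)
Your proposal is correct and is essentially the argument the paper intends: the corollary is stated as following immediately from Theorem~\ref{t:vanish}, via exactly the partition trick you use (extend $R,S_1,\ldots,S_k$ by the complement with quota $\ups$, mirroring how Proposition~\ref{p:vanish} is derived), and then testing the $O(1)$ rank inequalities with the rank oracle. Your handling of the degenerate cases ($\ups<0$, empty blocks) is a sensible bit of bookkeeping that the paper leaves implicit.
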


\smallskip

\subsection{Total equality cases} \label{ss:intro-total}
Throughout this section, we let \ts $k=0$. We start with a simple
observation whose proof is well-known and applies to
all positive log-concave sequences.

\smallskip

\begin{cor}
\label{c:total-Stanley}
Let \. $\Mf$ \. be a matroid of rank \ts $r\geq 2$ \ts with a ground set~$X$,
and let \ts  $R \subseteq X$.  Suppose  \ts $\aP(\Mf,R,0)>0$ \ts and \ts $\aP(\Mf,R,r)>0$.
Then$:$
\begin{equation}\label{eq:Stanley-cor}
\aP(\Mf,R,1)^r \, \ge \, \aP(\Mf,R,0)^{r-1} \aP(\Mf,R,r).
\end{equation}
Moreover, the equality in~\eqref{eq:Stanley-cor} holds \. \underline{if and only if}
\. \eqref{eq:SY} is an equality for all \. $1\le a \le r-1$.
\end{cor}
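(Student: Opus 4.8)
The plan is to deduce this from the elementary theory of positive log-concave sequences, using the nonvanishing criterion of Proposition~\ref{p:vanish} to secure positivity. Write $p_a := \aP(\Mf,R,a)$ for $0 \le a \le r$, so that \eqref{eq:SY} (with $k=0$) reads $p_a^2 \ge p_{a-1}\ts p_{a+1}$ for $1 \le a \le r-1$. First I would check that $p_0 > 0$ and $p_r > 0$ force $p_a > 0$ for \emph{every} $0 \le a \le r$: by Proposition~\ref{p:vanish}, $p_0 > 0$ gives $r - \rk(X \sm R) \le 0$, hence $\rk(X \sm R) = r$, and $p_r > 0$ gives $\rk(R) = r$; thus $r - \rk(X \sm R) = 0 \le a \le r = \rk(R)$ for all such $a$, and Proposition~\ref{p:vanish} yields $p_a > 0$.

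With all $p_a > 0$, set $d_a := \log(p_a/p_{a-1})$ for $1 \le a \le r$. Then \eqref{eq:SY} at level $a$ is exactly the statement $d_a \ge d_{a+1}$, and it is an \emph{equality} precisely when $d_a = d_{a+1}$. Taking logarithms (a strictly increasing operation, hence equality cases are preserved), inequality \eqref{eq:Stanley-cor} becomes $r\log p_1 \ge (r-1)\log p_0 + \log p_r$, which rearranges to
\[
r\ts d_1 \ \ge \ \sum_{a=1}^{r} d_a, \qquad \text{i.e.} \qquad (r-1)\ts d_1 \ \ge \ d_2 + \cdots + d_r.
\]
The right-hand side is a sum of $r-1$ terms, each $\le d_1$ by the chain $d_1 \ge d_2 \ge \cdots \ge d_r$ coming from \eqref{eq:SY}; this proves \eqref{eq:Stanley-cor}. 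Moreover equality holds if and only if $d_2 = \cdots = d_r = d_1$, i.e.\ if and only if $d_a = d_{a+1}$ for all $1 \le a \le r-1$, i.e.\ if and only if \eqref{eq:SY} is an equality for every $1 \le a \le r-1$, which is the asserted characterization.

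There is no genuine obstacle here; the argument is the standard one for positive log-concave sequences. The only point deserving attention is the initial reduction to strict positivity of the entire sequence $(p_a)_{a=0}^r$ — needed so that the $d_a$ are defined and so that passing to logarithms preserves both the inequality and its equality case — and this is supplied directly by Proposition~\ref{p:vanish}.
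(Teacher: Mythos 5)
Your proof is correct and follows essentially the same route as the paper: both use Proposition~\ref{p:vanish} to get $\aP(\Mf,R,a)>0$ for all $0\le a\le r$, then exploit the monotone chain of consecutive ratios coming from \eqref{eq:SY} and telescope, with equality forcing all links in the chain to be equalities. Your passage to logarithms is only a cosmetic reformulation of the paper's multiplicative argument.
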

\smallskip

For completeness, we include a short proof in~$\S$\ref{ss:total-cor}.
This motivates the following result that is more surprising
than it may seem at first:

\smallskip

\begin{thm}[{\rm \defn{total equality conditions}, \cite{Sta-AF} and \cite{Yan23}}{}]
\label{t:total-SY}
Let \. $\Mf$ \. be a loopless regular matroid of rank \ts $r\geq 2$ \ts with a
ground set~$X$, and let \ts  $R \subseteq X$.  Suppose that \ts $\aP(\Mf,R,0)>0$ \ts
and \ts $\aP(\Mf,R,r)>0$.  Then \. \underline{the following are equivalent}~$:$

\smallskip

{\small $(i)$} \. $\aP(\Mf,R,1)^r = \aP(\Mf,R,0)^{r-1} \aP(\Mf,R,r)$,

\smallskip

{\small $(ii)$} \. $\aP(\Mf,R,a)^2 = \aP(\Mf,R,a+1) \ts \aP(\Mf,R,a-1)$ \, for all \, $a \in \{1, \ldots, r-1\}$,

\smallskip

{\small $(iii)$} \. $\aP(\Mf,R,a)^2 = \aP(\Mf,R,a+1) \ts \aP(\Mf,R,a-1)$ \, for some \, $a \in \{1, \ldots, r-1\}$,

{\small $(iv)$} \. $|\Par_{\Mf}(x)  \cap R| \. = \.  \as |\Par_{\Mf}(x)  \cap (X-R)|$ \,
for all \. $x \in X$ \. and some \. $\as >0$.
\end{thm}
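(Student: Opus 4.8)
The plan is to prove the cycle of implications
$$
(iv) \ \Longrightarrow \ (i) \ \Longleftrightarrow \ (ii) \ \Longrightarrow \ (iii) \ \Longrightarrow \ (iv),
$$
in which $(i)\Leftrightarrow(ii)$ is exactly Corollary~\ref{c:total-Stanley}, and $(ii)\Rightarrow(iii)$ is immediate since $\{1,\dots,r-1\}$ is nonempty for $r\ge 2$. First I would record the reduction coming from the nonvanishing conditions: since $\aP(\Mf,R,0)>0$ and $\aP(\Mf,R,r)>0$, Proposition~\ref{p:vanish} gives $\rk(R)=\rk(X\sm R)=r$, so both $R$ and $X\sm R$ are spanning and $\aP(\Mf,R,a)>0$ for \emph{every} $0\le a\le r$; this makes all the equalities above meaningful and is exactly what lets one invoke the sharp form of Alexandrov's inequality below.

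The engine of the argument is Stanley's linear-algebraic translation. Fix a totally unimodular $r\times n$ matrix $M$ representing the regular matroid $\Mf$, with columns $v_e$ $(e\in X)$, and set $A:=\sum_{e\in R}v_ev_e^{\top}$ and $B:=\sum_{e\in X\sm R}v_ev_e^{\top}$. By the Cauchy--Binet formula and total unimodularity (so $\det(M_C)^2=\mathbf 1[C\in\cB(\Mf)]$), one gets $\det(A+zB)=\sum_a\iB(\Mf,R,a)\,z^{r-a}$, and comparing with the definition of the mixed discriminant,
$$
\aP(\Mf,R,a) \ = \ \mathrm D\big(A[a],\,B[r-a]\big),
$$
where $A$ and $B$ are \emph{positive definite} since $R$ and $X\sm R$ are spanning. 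In this language \eqref{eq:SY} for $k=0$ is Alexandrov's inequality for mixed discriminants. The one place where regularity is essential is the dictionary lemma I would isolate: \emph{$A=\lambda B$ for some $\lambda>0$ if and only if $(iv)$ holds.} The ``if'' direction is elementary — grouping columns by parallel classes and using $v_e=\pm v_P$ for $e\in P$ gives $A-\lambda B=\sum_P(|P\cap R|-\lambda|P\sm R|)\,v_Pv_P^{\top}$, which vanishes under $(iv)$. The ``only if'' direction rests on the fact that for a \emph{simple regular} matroid the rank-one symmetric matrices $\{v_Pv_P^{\top}\}_P$ (one per parallel class) are linearly independent; this fails already for $U_{2,4}$, and for regular matroids it can be extracted from their structure (absence of a $U_{2,4}$-minor, Heller's density bound, or a direct argument).

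Granting the dictionary, $(iv)\Rightarrow(i)$ and $(iv)\Rightarrow(ii)$ are immediate: $(iv)$ gives $A=\lambda B$, hence $\aP(\Mf,R,a)=\mathrm D(\lambda B[a],B[r-a])=\lambda^a\det B$ for all $a$, i.e.\ $\{\aP(\Mf,R,a)\}_{a=0}^r$ is a positive geometric progression, so \eqref{eq:Stanley-cor} is an equality and \eqref{eq:SY} is an equality for every $a$. (This half can also be seen by hand after passing to the simplification $\wt\Mf$: each basis of $\Mf$ is a basis of $\wt\Mf$ together with a choice of one element in each parallel class, whence $\aP(\Mf,R,a)=\lambda^a T$ with $T=\sum_{B'\in\cB(\wt\Mf)}\prod_{e\in B'}|P_e\sm R|>0$, and regularity is not needed.) For the remaining implication $(iii)\Rightarrow(iv)$: if \eqref{eq:SY} is an equality at some $a_0\in\{1,\dots,r-1\}$, rewrite it as an equality case of Alexandrov's inequality whose ``background'' consists of $a_0-1$ copies of $A$ and $r-a_0-1$ copies of $B$, all positive definite; the classical equality characterization then forces $A=\lambda B$ for some $\lambda>0$, and hence $(iv)$ by the dictionary.

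The hard part is the ``only if'' half of the dictionary lemma (equivalently, the structural statement that for regular matroids the contraction criterion of Theorem~\ref{t:main-positive} collapses to condition $(iv)$). If one instead argues through the combinatorial atlas, the path would be: Theorem~\ref{t:main-positive} at level $a_0$ yields proportionality of every parallel class of every rank-two contraction $\Mf/A$ with $|A|=r-2$ and $|A\cap R|=a_0-1$; such a $\Mf/A$ is a regular rank-two matroid, hence (no $U_{2,4}$-minor) has at most three parallel classes, and the rank-two equality analysis pins down a common ratio $z=-1/\lambda$ for $\Mf/A$; a propagation argument across these contractions, together with the reduction to connected $\Mf$ (the disconnected case being handled componentwise through \eqref{eq:Stanley-cor}, which forces a single ratio), upgrades the local ratios to the global constant $\as$ of $(iv)$. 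Either way regularity is indispensable: for $U_{2,4}$ — and more generally whenever a rank-two contraction has four or more parallel classes — \eqref{eq:SY} can be an equality while $(iv)$ fails.
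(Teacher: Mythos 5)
Your architecture --- $(i)\Leftrightarrow(ii)$ via Corollary~\ref{c:total-Stanley}, positivity of all $\aP(\Mf,R,a)$ via Proposition~\ref{p:vanish}, the Cauchy--Binet identification $\aP(\Mf,R,a)=\mathrm{D}(A[a],B[r-a])$ with $A,B$ positive definite, and $(iii)\Rightarrow A=\lambda B$ via the classical equality case of Alexandrov's mixed discriminant inequality --- is sound, and it is essentially Stanley's original route; note the paper does not reprove this theorem but assembles it from citations (Stanley for $(i)\Leftrightarrow(iv)$, Yan for $(iii)\Rightarrow(ii)$ and $(iv)\Rightarrow(ii)$), proving only $(i)\Leftrightarrow(ii)$ here and, separately, $(ii)\Rightarrow(iv)$ for \emph{all} loopless matroids via the atlas (Theorem~\ref{t:Yan-conj}). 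The genuine gap in your write-up is the only-if half of your dictionary lemma: from $\sum_P\bigl(|P\cap R|-\lambda|P\smallsetminus R|\bigr)\,v_P^{}v_P^{\top}=0$ you must conclude that every coefficient vanishes, i.e.\ that the outer products $v_P^{}v_P^{\top}$, one per parallel class of a simple regular matroid, are linearly independent. You flag this as the hard part but offer only ``no $U_{2,4}$-minor, Heller's density bound, or a direct argument.'' Heller's bound merely says the number of classes is at most $\dim \mathrm{Sym}_r=r(r+1)/2$, which is necessary but nowhere near sufficient, and ``no $U_{2,4}$-minor'' is not an argument. A real proof requires work --- for instance it suffices to show every element of a simple regular matroid is a coloop or the exact intersection of two cocircuits, which is easy for graphic (vertex stars) and cosimple cographic matroids (two edge-disjoint paths from $3$-edge-connectivity), but the general regular case ($2$-/$3$-sums, $R_{10}$, or a uniform argument) is not addressed. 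As written, $(iii)\Rightarrow(iv)$ is not proved.

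Your fallback route through Theorem~\ref{t:main-positive} does not close the gap either: equality at the single level $a_0$ gives the parallel-class ratio condition only in contractions $\Mf/A$ with $|A\cap R|=a_0-1$, and ``a propagation argument upgrades the local ratios to the global constant'' is precisely the content of $(iii)\Rightarrow(ii)$ for regular matroids (Yan's Lemma~3.39), which cannot be formal: Theorem~\ref{t:Yan-conj}(2) (see \S\ref{ss:ex-combin}) exhibits a loopless binary matroid where $(iii)$ holds but $(ii)$ and $(iv)$ fail, so any such propagation must use regularity in an essential way, and you do not supply it. Relatedly, your closing claim that ``whenever a rank-two contraction has four or more parallel classes, \eqref{eq:SY} can be an equality while $(iv)$ fails'' is false: Lemma~\ref{lem:equ-base} and Theorem~\ref{t:Yan-conj}(1) hold for all loopless matroids, so a nontrivial equality at $a=1$ always forces the per-class ratio condition regardless of the number of classes (for $U_{2,4}$ itself no nontrivial equality occurs); regularity enters only in passing from a single level $a_0$ to all levels, or, in your linear-algebra route, in the unproved independence of the $v_P^{}v_P^{\top}$. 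Fixing that independence statement (or giving the single-level propagation) is what your proposal is missing.
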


\smallskip

\begin{conj}[{\rm  \cite[Conj.~3.40]{Yan23}}{}] \label{conj:Yan}
The conclusion of Theorem~\ref{t:total-SY} holds for all loopless matroids.
\end{conj}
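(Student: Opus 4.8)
The plan is to prove the cycle $(i)\Rightarrow(ii)\Rightarrow(iii)\Rightarrow(iv)\Rightarrow(i)$, in which $(ii)\Rightarrow(iii)$ is vacuous and $(i)\Leftrightarrow(ii)$ is already Corollary~\ref{c:total-Stanley} (valid for every matroid), and to reduce everything else to Theorem~\ref{t:main-positive}. Observe first that, since $a\mapsto\aP(\Mf,R,a)$ is log-concave by \eqref{eq:SY} and positive at $a=0$ and $a=r$, it is positive on all of $\{0,\dots,r\}$; hence Theorem~\ref{t:main-positive} applies at every $a\in\{1,\dots,r-1\}$, and we may freely trade equality \eqref{eq:comb-1} for condition \eqref{eq:comb-2}. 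The first step I would take is to rewrite \eqref{eq:comb-2} lattice-theoretically: for an independent set $A$ with $|A|=r-2$ and a non-loop $x$ of $\Mf/A$ one has $\Par_{\Mf/A}(x)=H\setminus G$, where $G:=\overline A$ is a rank-$(r-2)$ flat of $\Mf$ and $H:=\overline{A\cup\{x\}}$ is a hyperplane covering $G$; conversely, every covering pair $G\lessdot H$ of flats with $\rk H=r-1$ for which $G$ carries a basis meeting $R$ in exactly $a-1$ elements arises this way. Since $\Mf$ is loopless, every flat is a disjoint union of parallel classes, so $H\setminus G$ is too, and \eqref{eq:comb-2} at $a$ becomes the statement that there is $\as_0>0$ with $|(H\setminus G)\cap R|=\as_0\,|(H\setminus G)\cap(X-R)|$ for all such admissible top-level covering pairs.

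The implication $(iv)\Rightarrow(ii)$ is then easy: if $(iv)$ holds with ratio $\as$, then summing the relation over the parallel classes inside a flat gives $|F\cap R|=\as\,|F\cap(X-R)|$ for every flat $F$ of $\Mf$; applying this to $H$ and to $G$ and subtracting yields the reformulated \eqref{eq:comb-2} with $\as_0:=\as$, at every $a$, so Theorem~\ref{t:main-positive} gives \eqref{eq:comb-1} for all $a\in\{1,\dots,r-1\}$. Then $(iv)\Rightarrow(i)$ follows from Corollary~\ref{c:total-Stanley}.

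The core of the argument is $(iii)\Rightarrow(iv)$. Assume \eqref{eq:comb-1} at some $a$ and let $\as_0>0$ be the constant from the reformulation; I would study the function $\nu(F):=|F\cap R|-\as_0\,|F\cap(X-R)|$ on flats. It is additive over parallel classes and $\nu(\varnothing)=0$, so $(iv)$ is equivalent to $\nu\equiv0$ on the atoms of the lattice of flats, and hence to $\nu\equiv0$ on all flats. What the reformulation gives is $\nu(H)=\nu(G)$ for the admissible top-level covering pairs; since any two distinct hyperplanes meet in a rank-$(r-2)$ flat, this should pin $\nu$ to a single value on all hyperplanes (and on all admissible rank-$(r-2)$ flats), whereupon additivity ($\nu(X)$ equals the number of atoms times their common value, once that value is known to exist) together with $\nu(\varnothing)=0$ ought to force that value to be zero. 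To organize this I would push the equality $\nu(H)=\nu(G)$ downward through the lattice of flats, using that its intervals are again geometric and that the rank-$j$ and rank-$(j+1)$ layers of the Hasse diagram are connected on each connected component of $\Mf$, so that $\nu$ is level-constant and the level-constants coincide as far down as the descent reaches. A cleaner-looking alternative is induction on $r$: restricting \eqref{eq:comb-2} at $a$ for $\Mf$ to the covering pairs that contain a fixed non-loop $w$ (chosen inside or outside $R$ so the $R$-count stays in $\{1,\dots,r-2\}$) gives \eqref{eq:comb-2} at a smaller index, with the same $\as_0$, for the loopless part $\Mf'$ of $\Mf/w$, and the base case $r=2$ is literally $(iv)$.

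The main obstacle, in either route, is the passage from the \emph{top-level} information supplied by \eqref{eq:comb-2} to the \emph{atom-level} statement $(iv)$. In the lattice-descent route, \eqref{eq:comb-2}, even together with all of its contractions, only constrains covering pairs $G\lessdot H$ with $\rk H=r-1$, and only those whose bottom flat carries a basis of a prescribed $R$-type; one must show this restricted top information already determines $\nu$ on the atoms, which I expect to require a modularity or exchange argument in geometric lattices, or the simultaneous use of \eqref{eq:comb-2} at several values of $a$. In the induction-on-$r$ route, the obstruction is that contracting a non-loop $w$ can \emph{merge} several parallel classes of $\Mf$ into a single parallel class of $\Mf'$, so $(iv)$ for $\Mf'$ does not lift verbatim to $(iv)$ for $\Mf$: one has to show that SY-equality forces $\Mf$ to carry enough two-point lines through each parallel class to undo the merging, and this is precisely where looplessness, rather than regularity, must be exploited, and where the $k\ge1$ pathology of Theorem~\ref{t:main-negative} is kept at bay. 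Failing a soft argument of this kind, the fallback is to re-run the combinatorial atlas computation underlying Theorem~\ref{t:main-positive} while tracking the equality locus across all $a$ simultaneously, which by Corollary~\ref{c:total-Stanley} suffices.
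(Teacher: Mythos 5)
The statement you set out to prove is in fact refuted by the paper, so no strategy can close your cycle through \emph{(iii)}. The paper's resolution (Theorem~\ref{t:Yan-conj}) proves only the equivalence {\small $(i)$}~$\Leftrightarrow$~{\small $(ii)$}~$\Leftrightarrow$~{\small $(iv)$} for all loopless matroids, and exhibits a loopless \emph{binary} matroid for which {\small $(iii)$} holds but {\small $(ii)$} (hence {\small $(iv)$}) fails: fix $r\ge 3$, let $V=\ff_2^r$, let $R_0\subset V$ be a subspace of dimension $r-1$, $S_0:=V\sm R_0$, let $R_1,S_1\subseteq S_0$ be two copies of the same nonempty set of vectors, and take the matroid on $X=R_0\sqcup R_1\sqcup S_0\sqcup S_1$ with $R:=R_0\sqcup R_1$ (see \S\ref{ss:ex-combin}). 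Here $\rk(R)=\rk(X\sm R)=r$, so $\aP(\Mf,R,a)>0$ for every $a$, and \eqref{eq:comb-2} holds at $a=1$ with $\as=1$, so by Theorem~\ref{t:main-positive} equality \eqref{eq:comb-1} holds at $a=1$; yet it fails at $a=r-1$. The implication {\small $(iii)$}~$\Rightarrow$~{\small $(ii)$} is genuinely a \emph{regular}-matroid phenomenon (Yan's Lem.~3.39), and the obstacle you yourself flag --- passing from the equality information at a single level $a$ down to the atom-level statement {\small $(iv)$} --- is not a technical hurdle but an actual obstruction: equality at one level does not propagate to the others once the matroid is not regular, and your fallback via Corollary~\ref{c:total-Stanley} cannot help, since that corollary only ties {\small $(i)$} to {\small $(ii)$}, never to {\small $(iii)$}.

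The salvageable part of your outline is the sub-cycle avoiding {\small $(iii)$}. There your plan is close to what the paper actually does: {\small $(i)$}~$\Leftrightarrow$~{\small $(ii)$} is Corollary~\ref{c:total-Stanley}, {\small $(iv)$}~$\Rightarrow$~{\small $(ii)$} was already known for all matroids (Yan, Thm~3.41; your flat-summation argument is in this spirit), and the remaining implication {\small $(ii)$}~$\Rightarrow$~{\small $(iv)$} is proved in Section~\ref{s:total} exactly by your second route: induction on $r$, applying Theorem~\ref{t:main-positive} (note that under {\small $(ii)$} one has \eqref{eq:comb-2} at \emph{every} $a$, which is much stronger than what {\small $(iii)$} gives), contracting a non-loop $x$, and then a counting argument over parallel classes --- using \ts $\NL(\Mf/x)=X\sm\Par_{\Mf}(x)$ \ts and summing over parallel classes to force $|R|=\as|X-R|$ and hence the atom-level identity. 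The merging-of-parallel-classes issue you worry about in the contraction step is resolved there precisely by this global counting, not by producing extra two-point lines; but the resolution is only available because {\small $(ii)$} supplies equality at all levels simultaneously.
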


\smallskip

The equivalence \. {\small $(i)$} \ts  $\Leftrightarrow$ \ts {\small $(ii)$} \.
is the second part of Corollary~\ref{c:total-Stanley} and holds for all matroids.
The implication \. {\small $(ii)$} \ts  $\Rightarrow$ \ts {\small $(iii)$} \. is trivial.
The equivalence \. {\small $(i)$} \ts  $\Leftrightarrow$ \ts {\small $(iv)$} \.
was proved by Stanley for regular matroids \cite[Thm~2.8]{Sta-AF} (see also \cite[Thm~3.34]{Yan23}).
Similarly, the implication \. {\small $(iii)$} \ts  $\Rightarrow$ \ts {\small $(ii)$} \.
was proved in \cite[Lem~3.39]{Yan23} for regular matroids.
The implication \. {\small $(iv)$} \ts  $\Rightarrow$ \ts {\small $(ii)$} \. was proved
in \cite[Thm~3.41]{Yan23} for all matroids.
The following result completely resolves the remaining implications of Yan's
Conjecture~\ref{conj:Yan}.

\smallskip

\begin{thm}\label{t:Yan-conj}
In the  notation of Theorem~\ref{t:total-SY}, we have:

\smallskip
$(1)$ \ \ {\small $(i)$} \ts  $\Leftrightarrow$ \ts {\small $(ii)$} \ts  $\Leftrightarrow$
\ts {\small $(iv)$} \. for all loopless matroids,

\smallskip
$(2)$ \ \ there exists a loopless binary matroid \ts $\Mf$ \ts s.t.\ \. {\small $(iii)$} \ts  holds but not \ts {\small $(ii)$}.
\end{thm}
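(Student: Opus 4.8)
The plan is the following. For part $(1)$, the equivalence $(i)\Leftrightarrow(ii)$ holds for all matroids (the second part of Corollary~\ref{c:total-Stanley}) and $(iv)\Rightarrow(ii)$ holds for all matroids \cite{Yan23}, so it suffices to prove $(ii)\Rightarrow(iv)$ for loopless matroids. First I would note, via Proposition~\ref{p:vanish}, that the hypotheses $\aP(\Mf,R,0),\aP(\Mf,R,r)>0$ force $R$ and $X-R$ to be spanning, whence $\aP(\Mf,R,a)>0$ for all $0\le a\le r$ and Theorem~\ref{t:main-positive} applies for every $a\in\{1,\dots,r-1\}$. Running it through all these $a$ (as $a$ ranges, $|A\cap R|$ sweeps $\{0,\dots,r-2\}$) rephrases $(ii)$ as: for every independent $(r-2)$-set $A$ there is a single $\as(A)>0$ with $|\Par_{\Mf/A}(x)\cap R|=\as(A)\,|\Par_{\Mf/A}(x)\cap(X-R)|$ for all non-loops $x$ of the rank-$2$ matroid $\Mf/A$; equivalently, all parallel classes of $\Mf/A$ carry one common $R$-ratio. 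I would then induct on $r$. The base case $r=2$ is precisely $(iv)$, and it is worth recording why it is equivalent to $(ii)$: writing $p_i=|F_i\cap R|$, $q_i=|F_i\cap(X-R)|$ over the parallel classes $F_1,\dots,F_m$, equality in \eqref{eq:SY} at $a=1$ is the reverse Cauchy--Schwarz identity $B(\mathbf p,\mathbf q)^2=B(\mathbf p,\mathbf p)\,B(\mathbf q,\mathbf q)$ for the form $B(\mathbf u,\mathbf v)=\langle\mathbf u,\mathbf 1\rangle\langle\mathbf v,\mathbf 1\rangle-\langle\mathbf u,\mathbf v\rangle$, whose matrix $\mathbf 1\mathbf 1^{\top}-\mathrm{Id}$ has Lorentzian signature $(1,m-1)$; reverse Cauchy--Schwarz then forces $\mathbf q=\lambda\mathbf p$ with $\lambda>0$, and $|F_i|=p_i+q_i\ge1$ forces every $p_i,q_i>0$.

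For the inductive step $r\ge3$, I would use that $(ii)$ descends to contractions. For a rank-$1$ flat $F'$ of $\Mf$, the matroid $\Mf/F'$ is loopless of rank $r-1$, satisfies the same hypotheses (Proposition~\ref{p:vanish}), and satisfies $(ii)$: the equality conditions for $\Mf/F'$ coming from Theorem~\ref{t:main-positive} concern the contractions $\Mf/(F'\cup A')$ with $A'$ independent of size $r-3$, which are among the conditions $(ii)$ imposes on $\Mf$, because contracting a rank-$1$ flat equals contracting one of its elements and deleting the resulting loops --- an operation that changes no $\aP(\cdot)$. By the inductive hypothesis $(iv)$ holds for $\Mf/F'$; its rank-$1$ flats are exactly the sets $L\setminus F'$ for lines $L\supseteq F'$ of $\Mf$, so for each such pair $|L\cap R|-|F'\cap R|=\as_{F'}\,(|L\cap(X-R)|-|F'\cap(X-R)|)$ with $\as_{F'}>0$, and each $L\setminus F'$ meets both $R$ and $X-R$. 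Since any two rank-$1$ flats span a line, combining these proportionality relations across all $F'$ and $L$, and feeding in the base-case dichotomy applied to the rank-$2$ contractions $\Mf/A$, should force $\as_{F'}$ to be independent of $F'$ and all rank-$1$ flats of $\Mf$ to carry this common $R$-ratio, i.e.\ $(iv)$. I expect this ``local-to-global'' passage to be the main obstacle: Theorem~\ref{t:main-positive} sees hyperplanes relative to independent $(r-2)$-sets, whereas $(iv)$ is a statement about rank-$1$ flats, and reconciling the two families --- using in an essential way that $R$ and $X-R$ are spanning --- is where the real work lies.

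For part $(2)$, the matroid must be binary and non-regular, since $(iii)\Rightarrow(ii)$ for regular matroids \cite{Yan23}. Let $\Mf$ be the Fano matroid $F_7$ with one point $v$ tripled --- equivalently, the vector matroid of the $3\times9$ matrix over $\ff_2$ whose columns are the seven nonzero vectors of $\ff_2^3$ with the column $v$ repeated three times --- so $\Mf$ is loopless and has $F_7$ as a minor, hence is not regular. Let $R$ consist of one copy of $v$ together with the three points of a line $\ell$ of $F_7$ with $v\notin\ell$, and let $X-R$ be the remaining five elements (the other two copies of $v$, and the three points of $F_7$ off $\ell$ and distinct from $v$). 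A direct count of bases by their intersection with $R$ gives $\big(\iB(\Mf,R,0),\iB(\Mf,R,1),\iB(\Mf,R,2),\iB(\Mf,R,3)\big)=(7,21,21,3)$, hence $\big(\aP(\Mf,R,0),\aP(\Mf,R,1),\aP(\Mf,R,2),\aP(\Mf,R,3)\big)=(7,7,7,3)$. As $\aP(\Mf,R,0),\aP(\Mf,R,3)>0$, the hypotheses of Theorem~\ref{t:total-SY} hold, and \eqref{eq:SY} is an equality at $a=1$ (since $7^2=7\cdot7$) but strict at $a=2$ (since $7^2\ne7\cdot3$); that is, $(iii)$ holds for $\Mf$ while $(ii)$ fails. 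Through Theorem~\ref{t:main-positive} the failure at $a=2$ is transparent: $\ell$ lies entirely inside $R$, so contracting a point of $\ell$ produces a parallel class disjoint from $X-R$, violating \eqref{eq:comb-2}.
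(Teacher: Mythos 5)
Your part $(2)$ is correct, and it is a genuinely different witness from the paper's: you triple a point of the Fano plane, whereas the paper builds a ``combination matroid'' from a hyperplane $R_0\ssu \ff_2^r$ together with two duplicated copies of a subset of the complement, and leaves the failure at $a=r-1$ as a direct check. Your counts are right: with $R$ a copy of $v$ plus a line $\ell\not\ni v$, the basis counts are $(7,21,21,3)$, so $\aP=(7,7,7,3)$, giving equality in \eqref{eq:SY} at $a=1$ and strict inequality at $a=2$; and your explanation via Theorem~\ref{t:main-positive} (contracting a point of $\ell$ yields a parallel class inside $R$, violating \eqref{eq:comb-2}) is consistent with the count. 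So part $(2)$ stands on its own.

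Part $(1)$, however, has a genuine gap exactly where you flag it. The reduction to proving {\small $(ii)$}~$\Rightarrow$~{\small $(iv)$}, the applicability of Theorem~\ref{t:main-positive} at every $a$, and the rank-$2$ base case are all fine, but the inductive step is only a hope: ``combining these proportionality relations across all $F'$ and $L$ \dots should force $\as_{F'}$ to be independent of $F'$'' is precisely the statement that needs a proof, and your setup makes it harder than necessary by letting the constant depend on $A$ (note that Theorem~\ref{t:main-positive} actually produces $\as=\aP(\Mf,R,a+1)/\aP(\Mf,R,a)$, which under {\small $(ii)$} is the same for all $a$, so one global $\as$ can be fixed at the outset). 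The paper closes this local-to-global step without rank-$1$ flats at all: contract a single element $x$, apply the inductive hypothesis to $\Mf/x$ to get \ts $|\Par_{\Mf/x}(y)\cap R| = \as\ts|\Par_{\Mf/x}(y)\cap(X-R)|$ \ts for all \ts $y\in\NL(\Mf/x)$; summing over the parallel classes of $\Mf/x$ and using \ts $\NL(\Mf/x)=X\sm\Par_{\Mf}(x)$ \ts (looplessness of $\Mf$) gives \ts $|\Par_{\Mf}(x)\cap R| - \as\ts|\Par_{\Mf}(x)\cap(X-R)| = |R| - \as\ts|X-R|$ \ts for every $x\in X$; summing this identity over the $p$ parallel classes of $\Mf$ yields \ts $(p-1)\big(|R|-\as\ts|X-R|\big)=0$, and since \ts $p\ge r\ge 3$ \ts this forces \ts $|R|=\as\ts|X-R|$, hence {\small $(iv)$}. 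This summation-over-parallel-classes argument is the missing idea; without it (or a substitute), your induction does not go through.
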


\smallskip

The theorem is another example of the phenomenon that regular matroids satisfy certain
matroid inequalities that general binary matroids do not (see e.g.\ \cite{HSW22} and $\S$\ref{ss:finrem-NP}).
The proof of Theorem~\ref{t:Yan-conj} is given in Section~\ref{s:total}, and is
based on Theorem~\ref{t:main-positive}.
The example in Theorem~\ref{t:Yan-conj} part $(2)$ can be found in \S\ref{ss:ex-combin}.

\smallskip

\subsection{Counting spanning trees}\label{ss:intro-st}
At a crucial step in the proof of Theorem~\ref{t:main-negative}, we
give bounds for the relative version of the tree counting function, see below.
This surprising obstacle occupies a substantial part of the proof
(Sections~\ref{s:graph-CF} and~\ref{s:st}).  It is also of independent
interest and closely related to the following combinatorial problem.

Let \ts $G=(V,E)$ \ts be a connected  simple graph.  Denote by \ts $\tau(G)$ \ts
the number of spanning trees in~$G$.  Sedl\'{a}\v{c}ek \cite{Sed70} considered
the smallest number of vertices $\al(N)$  of a planar graph~$G$ with exactly $N$ spanning
trees:  \ts $\tau(G)=N$.\footnote{The original problem considered general rather
than planar graphs, see~$\S$\ref{ss:hist-st}.}

\smallskip

\begin{thm}[{\rm Stong \cite[Cor.~7.3.1]{Stong}}{}] \label{t:Stong}
For all $N\ge 3$, there is a simple planar graph $G$ with
\ts $O\big((\log N)^{3/2}/(\log\log N)\big)$ \ts vertices and exactly
\ts $\tau(G)=N$ \ts spanning trees.
\end{thm}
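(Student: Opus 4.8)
The plan is to prove Theorem~\ref{t:Stong} by an explicit construction based on writing $N$ in a ``mixed-radix'' expansion and realizing each digit by a small planar gadget whose spanning-tree count multiplies. The starting observation is that spanning trees compose well under two elementary graph operations. First, if $G$ is obtained from $G_1$ and $G_2$ by identifying a single edge (a ``$2$-sum'' along an edge $e$ that is neither a bridge nor a loop in either piece), then $\tau(G)$ is a bilinear expression in the pair $(\tau(G_i), \tau(G_i/e))$ of each factor; in particular, series and parallel extensions multiply and add tree counts in a controlled way. Second, attaching a ``fan'' or ``theta''-type gadget on top of a distinguished edge multiplies $\tau$ by a factor that ranges over an interval of integers whose length grows with the gadget size. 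Concretely, a path of $m$ parallel edges has $\tau = m$, and a planar ladder / fan on $m$ vertices has $\tau$ equal to a Fibonacci-type number $\sim \phi^m$; combining a few such blocks in series and parallel lets one hit \emph{any} target integer in a range of size $\exp(\Theta(t))$ using $\Theta(t)$ vertices, but \emph{not} more efficiently, which is why the bound is not simply $O(\log N)$.

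The key steps, in order, would be: \textbf{(1)} Fix a block size $t$ and show that there is a family of planar graphs $\{H_j\}$, each with $O(t)$ vertices and with a distinguished edge $e$, such that $\tau(H_j) = j$ (or $\tau(H_j)/\tau(H_j\!\setminus\! e)$ runs over all residues) for every $j$ in a set of size roughly $c^{\,t}$ for some constant $c>1$; this is the ``digit gadget.'' \textbf{(2)} Show that stacking $k$ such digit gadgets in series along a common edge yields a planar graph $G$ with $\tau(G) = \prod_j \tau(H_j)$ (up to easily-controlled lower-order terms), using $O(kt)$ vertices. \textbf{(3)} Given $N$, write $N$ in the mixed radix where each digit lies in the hittable set of size $\sim c^{\,t}$; this needs $k \approx (\log N)/(t\log c)$ digits, so the total vertex count is $O(kt) = O((\log N)/\log c)$ --- which is $O(\log N)$, \emph{too good}. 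The actual subtlety, and the reason the real bound is $(\log N)^{3/2}/\log\log N$, is that one cannot independently choose all digits while keeping the graph \emph{simple} and \emph{planar}: the parallel-edge multiplicities and the carry interactions force $t$ itself to grow, and the optimal trade-off between $t$ (block size) and the number of blocks, subject to simplicity, gives $t \sim \sqrt{\log N}$ and $k \sim \sqrt{\log N}/\log\log N$. So \textbf{(3$'$)} is really: optimize $t = t(N)$ in the construction so that (a) each digit gadget is simple and planar, (b) the carries introduced when the product overshoots or undershoots $N$ can be absorbed by one extra correction block, and (c) $kt = O((\log N)^{3/2}/\log\log N)$.

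The main obstacle I expect is precisely step~(3$'$): controlling \emph{carries} and \emph{simplicity} simultaneously. Each digit gadget most cheaply realizes its value using many \emph{parallel} edges, but simplicity forbids repeated edges, so parallel classes must be subdivided, which inflates the vertex count by a $\log$-factor per digit unless the digits are bundled into larger blocks; balancing this inflation against the number of blocks is the whole ball game and is where the $\log\log N$ in the denominator comes from. A secondary difficulty is that $\tau$ of a series composition is not exactly the product --- there are cross terms involving $\tau(H_j/e)$ --- so one must either choose gadgets for which $\tau(H_j \!\setminus\! e)$ and $\tau(H_j/e)$ are coprime-friendly (e.g.\ consecutive integers, as for fans) so that the true value of $\tau(G)$ can be steered onto $N$ exactly, or append a final small parallel bundle that adjusts the answer by any additive amount in a suitable range. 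Once these two issues are handled, the vertex bound follows by the arithmetic of the chosen radix. I would also remark that the planarity constraint is used only mildly: the fan/ladder/parallel gadgets and their series compositions along an edge are all outerplanar-ish, so planarity is automatic, and indeed this is why Sedl\'a\v{c}ek's original (non-planar) problem has essentially the same answer.
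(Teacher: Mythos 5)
Your outline has a genuine gap at its core, and in fact the paper does not prove this statement at all: Theorem~\ref{t:Stong} is quoted from Stong~\cite{Stong}, and the paper's own contribution is the different Theorem~\ref{t:st} (planar graphs that need not be simple, $O(\log N\ts\log\log N)$ \emph{edges}), proved by an entirely different route --- the continued-fraction construction of Theorem~\ref{thm:graph-cf} (parallel-edge additions alternated with planar duality, so that the ratio $\tau(G-e)/\tau(G/e)$ realizes any prescribed continued fraction with edge count equal to the sum of the partial quotients) combined with Larcher's Theorem~\ref{thm:Lar} to guarantee a fraction $d/N$ with small quotient sum. So there is no proof in the paper against which your sketch could be matched; it would have to stand on its own as a proof of Stong's theorem, and it does not.

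The fatal step is your~(1)/(3$'$). You posit a family of simple planar ``digit gadgets'' with $O(t)$ vertices whose spanning-tree counts cover a set of size roughly $c^{\ts t}$; but covering essentially all integers in an exponential range with linearly many vertices is precisely the open difficulty (it would give $\al(N)=O(\log N)$, which the paper notes is out of reach and essentially equivalent to Zaremba-type questions), and you give no construction, only the assertion that simplicity forces $t\sim\sqrt{\log N}$. Nothing in the sketch derives that trade-off, and your own accounting is inconsistent: with $t\sim\sqrt{\log N}$ and $k\sim\sqrt{\log N}/\log\log N$ blocks of $O(t)$ vertices you would get $kt\sim\log N/\log\log N$ vertices, which is \emph{stronger} than Stong's bound and again ``too good,'' signalling that the claimed optimization has not actually been carried out. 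Step~(2) is also wrong as stated: gluing along an edge gives $\tau(G'/e')=\tau(G/e)\ts\tau(H/f)$ and $\tau(G'-e')=\tau(G-e)\ts\tau(H/f)+\tau(G/e)\ts\tau(H-f)$ (this is exactly Lemma~\ref{lem:graph-sum}), so the cross terms are not lower-order and cannot be dismissed; controlling them is the whole point of the continued-fraction bookkeeping in Sections~\ref{s:graph-CF}--\ref{s:st}, and in Stong's argument it requires a genuine number-theoretic input, not just a choice of ``coprime-friendly'' gadgets. As it stands the proposal is a plausible-sounding plan whose two quantitative pillars --- the digit-gadget existence and the $t$ versus $k$ optimization under simplicity --- are exactly the parts left unproved.
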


\smallskip

Until this breakthrough, even \ts
$\al(N) = o(N)$ \ts remained out of reach, see \cite{AS13}.
As a warmup, Stong first proves this bound in \cite[Cor.~5.2.2]{Stong},
and this proof already involves a delicate number theoretic argument.
Stong's Theorem~\ref{t:Stong} is much stronger, of course.
The following result is a variation on Stong's theorem,
but has the advantage of having an elementary proof.

\smallskip

\begin{thm}\label{t:st}
For all \ts $N\ge 3$, there is a planar graph $G$ with
\ts $O(\log N \ts \log\log N)$ \ts edges and exactly
\ts $\tau(G)=N$ \ts spanning trees.
\end{thm}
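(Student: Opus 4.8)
The plan is to build the graph $G$ by gluing together simple "gadgets" whose spanning-tree counts multiply, and to represent $N$ in a mixed-radix (or continued-fraction-flavored) way so that only $O(\log N/\log\log N)$ gadgets are needed, each of size $O((\log N)^{\ts o(1)})$ — wait, more honestly, I would aim for the cruder but fully elementary bound of Theorem~\ref{t:st}: $O(\log N\ts\log\log N)$ edges. The two elementary facts I would use are: (a) if $G$ is obtained from $G_1$ and $G_2$ by identifying a single edge (or by taking a "series-parallel composition" along a designated edge), then $\tau$ behaves multiplicatively/additively in a controlled way; and (b) a cycle $C_m$ on $m$ edges has $\tau(C_m)=m$, and a "theta graph" or a bundle of parallel paths between two vertices has $\tau$ equal to a sum of products of path lengths. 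Concretely, the cleanest building block is the following: replacing an edge $e$ of $H$ by a path of length $\ell$ together with $t$ parallel copies of that path multiplies the number of spanning trees in a way that lets one realize a prescribed digit.

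First I would fix a base $b=b(N)$, to be optimized at the end (it will come out to roughly $b\asymp\log N$), and write $N$ in base $b$, say $N=\sum_{i=0}^{d} n_i\ts b^i$ with $d=O(\log N/\log\log N)$ digits $0\le n_i<b$. Second, I would construct, for each digit position, a planar gadget $G_i$ with a distinguished edge $e_i$ such that contracting $e_i$ and deleting $e_i$ give spanning-tree counts in the ratio needed to "carry" the place value $b^i$; the standard trick is a chain of parallel two-edge paths ("diamonds") in series, since $k$ diamonds in series along an edge multiply $\tau$ by $2^k$ in one mode and contribute additively in the other, and with path lengths one can tune the multiplier to be any integer up to $b$ using only $O(\log b)=O(\log\log N)$ edges per position via binary expansion of each multiplier. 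Third, I would assemble the gadgets in series along a common "spine" edge and add one final parallel edge (or short path) to absorb the constant term, so that the total $\tau(G)$ telescopes to exactly $N$; planarity is preserved because each gadget is itself outerplanar and series/parallel composition along an edge preserves planarity. Finally I would count edges: $d+1=O(\log N/\log\log N)$ gadgets, each with $O(\log b)=O(\log\log N)$ edges, for a total of $O\big((\log N/\log\log N)\cdot\log\log N\big)=O(\log N)$ — and the slack in the theorem's $O(\log N\ts\log\log N)$ bound comfortably covers the $O(1)$ losses from carries and the wiring between gadgets.

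The main obstacle, and where I would spend the most care, is getting the arithmetic of the gadget composition to produce \emph{exactly} $N$ rather than merely $\Theta(N)$. Series/parallel edge-identification does not literally multiply $\tau$; the precise identity is $\tau(G_1\ast_e G_2)=\tau(G_1)\ts\tau(G_2/e)+\tau(G_1/e)\ts\tau(G_2)-\tau(G_1/e)\tau(G_2/e)$-type formulas (equivalently, the pair $(\tau(H),\tau(H/e))$ evolves by a $2\times 2$ transfer matrix under adding a gadget), so I would set up the computation as a product of $2\times2$ nonnegative integer matrices acting on the vector $(\tau(H/e),\tau(H{\setminus}e))$, choose each gadget's matrix from a small menu (a "parallel edge" matrix and a "subdivide" matrix suffice to generate all of $\mathrm{SL}_2(\nn)$-like behavior, which is exactly the continued-fraction connection flagged in $\S$\ref{ss:intro-st}), and then the exact value $N$ is achieved by writing the target transfer matrix — or simply the target integer — as a short product in these generators. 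Verifying that a length-$O(\log N)$ such product always exists, with all intermediate entries bounded, is the one genuinely non-routine step; it reduces to the standard fact that the Euclidean algorithm on $N$ (or a base-$b$ variant) terminates in $O(\log N/\log b)$ steps with bounded partial quotients when $b$ is chosen appropriately, which is elementary but needs to be stated and checked. Everything else — planarity, edge count, and the transfer-matrix identities for $\tau$ under edge identification — is a direct verification.
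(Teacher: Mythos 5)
There is a genuine gap, and it sits exactly where you flagged it: in making the construction hit $N$ \emph{exactly} with the claimed edge count. The base-$b$/Horner scheme does not survive the actual algebra of gluing. When you compose a $2$-terminal gadget with deletion/contraction pair $(p,q)$ onto the spine edge (the $2$-sum of Lemma~\ref{lem:graph-sum}), the pair $(x,y)=(\tau(G-e),\tau(G/e))$ transforms as $(x,y)\mapsto(qx+py,\.qy)$: every "multiply by $b$" step also multiplies the companion coordinate $y$ by $b$, and you can never reset $y$ to $1$, so an exact evaluation $N=(\cdots(n_d b+n_{d-1})b+\cdots)b+n_0$ is not implementable; unwinding the composition, a series of such gadgets only produces numbers of the form $\sum_i p_i\prod_{j\ne i}q_j$, and forcing this to be a digit expansion pushes the whole difficulty into realizing pairs such as $(n_i, b^{d-i})$, which is again the original problem. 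The only moves that add to $x$ without scaling $y$ are the $q=1$ moves (parallel edges/subdivisions), and those cost one edge per unit added. Consequently the true cost of reaching $(N,d)$ is the \emph{sum of the partial quotients} of $N/d$, not the number of Euclidean steps, and your concluding reduction to "the standard fact that the Euclidean algorithm terminates in $O(\log N/\log b)$ steps with bounded partial quotients" is not a fact: for a bad $d$ (e.g.\ $d=1$) the quotient sum is $N$; the average over $d$ is $\Theta((\log N)^2)$ (Yao--Knuth), which would only give an $O((\log N)^2)$ edge bound; and the existence, for every $N$, of some $d$ with \emph{bounded} partial quotients is Zaremba's conjecture, which is open. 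Indeed, your claimed total of $O(\log N)$ edges is essentially Zaremba-strength, which the paper explicitly notes is likely very hard (see the remark in \S\ref{ss:intro-st} of the final section).

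What the paper does instead, and what your argument is missing, is a genuine number-theoretic input plus a multiplicative reduction: reduce to $N$ prime by gluing prime-power pieces at vertices (vertex-gluing multiplies $\tau$), and for prime $N$ invoke Larcher's theorem (Theorem~\ref{thm:Lar}) to produce a denominator $d$ with $\as(d/N)=O(\log N\.\log\log N)$; then the continued-fraction-to-planar-graph construction (Theorem~\ref{thm:graph-cf}, together with planar duality) realizes $\tau(G-e)=N$, $\tau(G/e)=d$ exactly, with edge count equal to the quotient sum plus one. Your transfer-matrix picture is the right way to think about the construction, but without Larcher (or some substitute existence statement for a small-quotient-sum fraction with numerator $N$), the elementary part of your plan caps out at $O((\log N)^2)$ edges and does not prove the theorem.
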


\smallskip
Compared to Theorem~\ref{t:Stong}, note that graphs are not required to
be simple, but the upper bound in terms of edges is much sharper
(in fact, it is nearly optimal, see~$\S$\ref{ss:intro-st}).
Indeed, the planarity implies the same asymptotic bound for the number
of vertices and edges in~$G$.\footnote{Although Stong does not
explicitly mention planarity in \cite{Stong},
his construction involves only planar graphs.}
Theorem~\ref{t:st} is a byproduct of the proof of the following lemma
that is an intermediate result in the proof of
Theorem~\ref{t:main-negative}.

\smallskip

For an edge \ts $e\in E$, denote by \ts $G-e$ \ts and \ts $G/e$ \ts
the deletion of~$e$ and the contraction along~$e$.  Define the
\defn{spanning tree ratio} \ts as follows:
$$
\rho(G,e) \, := \, \frac{\tau(G-e)}{\tau(G/e)}\..
$$

\begin{lemma}[{\rm spanning tree ratios lemma}{}]\label{lem:st-ratio}
Let \ts $A,B \in \nn$ \ts such that \ts $1 \le B \le A \le 2 B \le N$.
Then there is a planar graph $G$ with \ts $O\big((\log N) \ts (\log \log N)^2\big)$ \ts edges and
\ts $\rho(G,e)=A/B$ \ts spanning tree ratio.
\end{lemma}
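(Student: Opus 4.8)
The plan is to realize the ratio $A/B$ as a continued fraction expansion, and to build the graph $G$ by composing a small number of elementary gadgets, each of which is a series or parallel connection of edges. Recall that for the spanning tree ratio $\rho(G,e) = \tau(G-e)/\tau(G/e)$, if $G'$ is obtained from $G$ by replacing the edge $e$ with two parallel edges $e_1, e_2$ (keeping track of a distinguished edge), then the ratio transforms in an M\"obius fashion; similarly if $e$ is subdivided into a path of two edges. Concretely, I would think of the gadget as a two-terminal network $N$ with a distinguished edge $e$, and track the pair $(\tau(N-e), \tau(N/e))$ up to a common scalar; adding an edge in parallel to the two terminals sends $(p,q) \mapsto (p+q, q)$ (roughly), and putting a path element in series sends $(p,q) \mapsto (p, p+q)$. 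These are exactly the two generators of $\mathrm{SL}_2(\nn)$ acting on the Stern--Brocot tree, so any reduced fraction $A/B$ with $1 \le B \le A \le 2B$ can be reached.

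The first step is to make the gadget bookkeeping precise: define a \emph{two-terminal labeled graph} to be a planar graph $H$ with two marked vertices $s,t$ and a marked edge $e$, and associate to it the pair $\big(\tau(H{-}e),\, \tau(H{/}e)\big)$, or better the projective class $[\tau(H{-}e) : \tau(H{/}e)]$. I would verify the two composition rules — parallel augmentation (add a new edge between $s$ and $t$) and series augmentation (subdivide, or attach a pendant path) — by a direct deletion-contraction computation, checking planarity is preserved in each case. The second step is the number-theoretic heart: writing $A/B$ via its continued fraction $[a_0; a_1, a_2, \ldots, a_m]$, the naive composition uses $\sum a_i$ gadget steps, which is only $O(\log N)$ in the \emph{number of steps} but could blow up the \emph{edge count} if the partial quotients $a_i$ are large, since a run of $a$ parallel edges costs $a$ edges. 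The fix is to encode each partial quotient $a_i \le N$ in \emph{binary} using a logarithmic-size "multiplier gadget" — a small planar network whose spanning-tree ratio is $a_i$ (or a M\"obius image thereof) built from $O(\log a_i) = O(\log N)$ edges by a doubling construction, analogous to repeated squaring. Stringing together $m = O(\log N)$ such gadgets, each of size $O(\log N)$, times an extra $O(\log\log N)$ factor absorbing the $\log\log$ overhead of, e.g., realizing the individual multipliers or the subdivisions needed to keep things planar, yields the claimed $O\big((\log N)(\log\log N)^2\big)$ bound. (Theorem~\ref{t:st} then follows by taking $B=1$, so that $\rho = \tau(G-e)/\tau(G/e)$ with $\tau(G/e)$ a single spanning tree after suitable choice, giving $\tau(G-e) = N$ with one fewer $\log\log$ factor since no denominator-balancing gadget is needed.)

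The main obstacle I anticipate is controlling the edge count while simultaneously preserving planarity and exact ratio. Each M\"obius generator is cheap, but the continued fraction of a generic $A/B$ has partial quotients that are individually as large as $N$, so a gadget realizing "multiply the current ratio by $a_i$" must itself be built recursively — roughly, $\rho = 2k$ is obtained by doubling $\rho = k$ via a parallel-then-series sandwich, and $\rho = k+1$ by one parallel edge, mirroring the binary expansion of $a_i$. Proving that this recursion (i) terminates with the \emph{exact} integer ratio and not merely an approximation, (ii) stays planar under composition (the series/parallel structure is series-parallel, hence planar, but one must check the global assembly does not force a non-planar crossing when the distinguished edge is threaded through), and (iii) has total size $O((\log N)(\log\log N)^2)$ rather than $O((\log N)^2)$ — this last point requires the continued fraction to have $O(\log N / \log\log N)$ \emph{total bit-length} on average, or an amortization argument across the partial quotients — is where the real work lies. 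I would handle (iii) by the standard fact that $\sum_i \log(a_i+1) \le \log(A) + m = O(\log N)$ for the \emph{sum} of the bit-lengths, so the total edge count across all multiplier gadgets is $O(\log N)$ times a per-gadget overhead of $O((\log\log N)^2)$ coming from the planar subdivisions required to interleave the gadgets without crossings, giving the stated bound. If interleaving turns out to cost only $O(\log\log N)$, the exponent improves, but the weaker bound as stated suffices for the application in Theorem~\ref{t:main-negative}.
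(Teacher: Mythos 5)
Your setup is sound and matches the paper's starting point: the series/parallel moves you describe are exactly the two $+1$ generators (adding a parallel edge adds $1$ to the ratio, the series/dual move adds $1$ to its reciprocal), and composing them realizes any continued fraction $[a_0;a_1,\ldots,a_s]$ with edge count equal to the \emph{sum} $a_0+\cdots+a_s$ of the partial quotients (this is the paper's Theorem~\ref{thm:graph-cf}). The genuine gap is your ``multiplier gadget'': there is no known construction, by doubling or otherwise, that realizes a partial quotient $a_i$ with $O(\log a_i)$ edges. The available single-edge operations only translate the conductance (resp.\ resistance) by $1$; to \emph{double} the ratio you must place two copies of the current network in parallel, which doubles the edge count, so ``repeated squaring'' costs $\Theta(a_i)$ edges, not $O(\log a_i)$. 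This is not a fixable bookkeeping issue but the heart of the problem: if an $O(\log k)$-edge planar gadget with exact integer ratio $k$ existed, one would immediately get planar graphs with $O(\log N)$ edges and exactly $N$ spanning trees, superseding Stong's Theorem~\ref{t:Stong} and proving unconditionally a bound that the paper points out is only known to follow from Zaremba's conjecture. Your amortization estimate $\sum_i\log(a_i+1)=O(\log N)$ is true but irrelevant once each ``bit'' of a quotient cannot be bought for $O(1)$ edges.

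The paper circumvents large partial quotients by number theory rather than by gadgetry. Using Larcher's Theorem~\ref{thm:Lar}, Proposition~\ref{p:NTD} shows that for coprime $B\le A\le 2B$ one can choose $m<B$ so that \emph{both} fractions $m/A$ and $(B-m)/A$ have continued fractions with partial-quotient sum $O\big((\log A)(\log\log A)^2\big)$; then Theorem~\ref{thm:graph-cf-sum} (a two-network joining lemma along the distinguished edge) produces a planar graph whose ratio is $\frac{m}{A}+\frac{B-m}{A}=\frac{B}{A}$ with edge count equal to the two quotient sums plus one, and planar duality inverts this to $A/B$. So the decomposition $B=m+(B-m)$, justified by a ``best case'' metric result on continued fractions, is the missing ingredient your proposal would need in place of the doubling gadget. (Your parenthetical derivation of Theorem~\ref{t:st} is also not how it goes: there one picks, again via Larcher, a good \emph{denominator} $d$ for $d/N$ with $N$ prime and handles composite $N$ multiplicatively, giving the $O(\log N\log\log N)$ bound.)
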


Note that the spanning tree ratios are not attainable by the tools
in~\cite{Stong}.  This is why we need a new approach to the analysis of
the spanning tree counting function, giving the proof of both
Theorem~\ref{t:st} and Lemma~\ref{lem:st-ratio} in Section~\ref{s:st}.

Our approach follows general outlines in \cite{CP-AF,CP-CF},
although technical details are largely different.
Here we use a variation on the celebrated \emph{Haj\'{o}s construction}
\cite{Haj61} (see also \cite{Urq97}), introduced in the context of
graph colorings.  Also, in place of the Yao--Knuth \cite{YK75} ``average case''
asymptotics for continued fractions used in~\cite{CP-AF},
we use more delicate ``best case'' bounds by Larcher \cite{Lar86}.

Finally, note that the lemma gives the spanning tree ratio
\ts $\rho(G,e)$ \ts in the interval \ts $[1,2]$.
In the proof of Theorem~\ref{t:main-negative},
we consider more general ratios.  We are able to avoid
extending Lemma~\ref{lem:st-ratio} by combining
combinatorial recurrences and complexity ideas.

\smallskip

\subsection{Paper structure} \label{ss:intro-structure}
In Section~\ref{s:hist}, we give an extensive historical background of many strains
leading to the main two results (Theorems~\ref{t:main-negative}
and~\ref{t:main-positive}).  The material is much too rich to give a proper review
in one section, so we tried to highlight the results that are most relevant to
our work, leaving unmentioned many major developments.

In Section~\ref{s:def}, we give basic definitions and notation, covering both
matroid theory and computational complexity.  In a short Section~\ref{s:red},
we give a key reduction to the SY equality problem from the matroid bases
coincidence problem.  In Sections~\ref{s:graph-CF} and~\ref{s:st} we relate
spanning trees in planar graphs to continuous fractions, and prove
Theorem~\ref{t:st} and  Lemma~\ref{lem:st-ratio} in~$\S$\ref{ss:st-proof-st-thm} along the way.

Sections~\ref{sec:verify} and~\ref{s:main-proof} contain the proof of
Theorem~\ref{t:main-negative}.  Here we start with the proof of the
Verification lemma (Lemma~\ref{lem:verify}), which uses our spanning
tree results and number theoretic estimates, and prove the theorem
using a complexity theoretic argument.

In Sections~\ref{s:atlas} and~\ref{s:main-positive}, we give a proof
of Theorem~\ref{t:main-positive}.  We start with an overview of our
combinatorial atlas technology (Sections~\ref{s:atlas}).  We give a
construction of the atlas for this problem in~$\S$\ref{ss:main-atlas-construction}
and proceed to prove the theorem.  These two sections are completely
independent from the rest of the paper.

In Section~\ref{s:vanish}, we discuss vanishing conditions and prove
Theorem~\ref{t:vanish}.  We give  examples and  counterexamples to
equality conditions of \eqref{eq:SY} in Section~\ref{s:ex}.  In a short Section~\ref{s:indep},
we present the \emph{generalized Mason inequality}, a natural variation of the
Stanley--Yan inequality for the independent sets.  We conclude with final remarks and
several open problems in Section~\ref{s:finrem}, all of them in  connection
with matroid inequalities and computational complexity.

\medskip


\section{Background}\label{s:hist}

\subsection{Log-concave inequalities}\label{ss:hist-log}
Log-concavity is a classical analytic property going back
to Maclaurin (1929) and Newton (1732).   Log-concavity is closely related
to negative correlation, which also has a long history going back to Rayleigh
and Kirchhoff, see e.g.\ \cite{BBL09}.
Log-concave inequalities for matroids
and their generalizations (morphisms of matroids, antimatroids, greedoids)
is an emerging area in its own right, see \cite{CP,Yan23} for detailed overview.

Stanley was a pioneer in the area of unimodal and log-concave inequalities in
combinatorics, as he introduced both algebraic and geometric techniques
\cite{Sta-Log}, see also \cite{Bra15,Bre89}.
In \cite{Sta-AF}, he gave two applications of the \emph{Alexandrov--Fenchel} (AF)
\emph{inequality} for mixed volumes of convex bodies, to log-concavity of combinatorial
sequences.  One is the \eqref{eq:SY} for regular matroids.
The other is \emph{Stanley's poset inequality} for the number of linear extensions
\cite[Thm~3.1]{Sta-AF} that is extremely well studied in recent years, see a
survey in \cite{CP23-survey}.  Among many variations, we note the \emph{Kahn--Saks
inequality} which was used to prove the first major breakthrough towards the \ts
\emph{$\frac13-\frac23$ conjecture} \ts \cite{KS84}.

Formally, let \ts $P=(X,\prec)$ \ts be a poset with \ts $|X|=n$ \ts elements.
A \defn{linear extension} of $P$ is an order-preserving bijection \ts
$f: X \to [n]$. Denote by \ts $\Ec(P)$ \ts the set of linear extensions of~$P$.
Fix \. $x,z_1, \ldots, z_k\in X$ \. and \. $a,c_1,\ldots,c_k\in [n]$.
Let \ts
$\Ec_{\bzc}(P,x,a)$ \ts be the set of linear extensions \ts $f\in \Ec(P)$,
s.t.\ \. $f(x)=a$ \. and \. $f(z_i)=c_i$ \. for all \. $1\le i \le k$.
\defn{Stanley's poset inequality} is the log-concavity of numbers \ts
$\aN_{\bzc}(P, x,a):=|\Ec_{\bzc}(P, x,a)|$~:
\begin{equation}\label{eq:Sta} \tag{Sta}
\aN_{\bzc}(P, x,a)^2 \, \ge \, \aN_{\bzc}(P, x,a+1) \.\cdot \.  \aN_{\bzc}(P, x,a-1).
\end{equation}

These Stanley's inequalities \eqref{eq:SY} and \eqref{eq:Sta} have
superficial similarities as
they were obtained in the same manner, via construction of combinatorial
polytopes whose volumes and mixed volumes have a combinatorial interpretation.
For regular matroids, Stanley used \emph{zonotopes} spanned by the vectors
of a unimodular representation, while for posets he used \emph{order polytopes} 
\cite{Sta-two}.  Partly motivated by Stanley's paper, Schneider~\cite{Schn-zono} 
gives equality conditions for the AF~inequality in case of zonotopes.
In principle, one should be able to derive Theorem~\ref{t:main-positive} 
in the case of regular matroids from Schneider's result as well.  

\smallskip
\nin
\emph{A word of caution}: \ts Although it may seem that inequalities \eqref{eq:Sta} 
and \eqref{eq:SY} are both consequences of the AF~inequality, and that this paper
and \cite{CP-AF} cover the same or similar ground, in fact the opposite is true.  
While \eqref{eq:Sta} is a direct consequence of the AF~inequality, only the 
\emph{computationally easy} \ts part of the 
\eqref{eq:SY} follows from the AF~inequality.  It took Lorentzian polynomials 
to prove the \emph{computationally hard} \ts part of~\eqref{eq:SY}.  
See Proposition~\ref{p:SY-P} for the formal statement.  

\smallskip

In \cite{Sta-AF}, Stanley asked for equality conditions for both matroid
and poset inequalities that he studied.  He noted that the AF~inequality
has equality conditions known only in a few special cases.
He used one such known special case (dating back to Alexandrov), to
describe equality cases of his matroid log-concave inequality for
regular matroids (Theorem~\ref{t:total-SY}).  The equality conditions
for \eqref{eq:Sta} are now largely understood, see below.

Stanley's inequality \eqref{eq:SY} led to many subsequent developments.
Notably, Godsil \cite{God84} resolved Stanley's question to show that the generating
polynomial
$$
\sum_a \. \aP_{\bSc}(\Mf,R,a) \ts t^a
$$
has only real nonpositive roots (this easily implies log-concavity).
Choe and Wagner \cite{CW06} proved that $\{\aP_{\bSc}(\Mf,R,a)\. : \. 0< a <r \}$ \.
is log-concave for a larger family of matroids with the \emph{half-plane property} (HPP),
see also \cite[$\S$9.1]{Bra15}.

In a remarkable series of papers, Huh and coauthors developed a
highly technical algebraic approach to log-concave inequalities for various
classes of matroids, see an overview in \cite{Huh,Kalai}.
Most famously, Adiprasito, Huh and Katz \cite{AHK}, proved a number of
log-concave inequalities for \emph{general matroids}, some of which were
conjectured many decades earlier.  These results established log-concavity
for the number of independent sets of a matroid according to the size
(\emph{Mason--Welsh conjecture} implied by the \eqref{eq:SY} inequality, see below),
and of the coefficients of the characteristic polynomial
(\emph{Heron--Rota--Welsh conjecture}).
After that much progress followed, eventually leading to the proof
of a host of other matroid inequalities.

\smallskip

\subsection{Lorentzian polynomials} \label{ss:hist-LP}
\emph{Lorentzian polynomials} were introduced by Br\"and\'en and Huh \cite{BH},
and independently by Anari, Oveis Gharan and Vinzant \cite{ALOV-FOCS18}
This approach led to a substantial extension of earlier algebraic and analytic notions,
as well as a major simplification of the earlier proofs.  Specifically, they
showed that the homogeneous \emph{multivariate Tutte polynomial} \ts
of a matroid is a Lorentzian polynomial (see \cite[Thm~4.1]{ALOV-III}
and \cite[Thm~4.10]{BH}). This implied the ultra-log-concave inequality
conjectured by Mason, i.e.\ the strongest of the Mason's conjectures.

Formally, let \ts $\aI(\Mf,a)$ \ts denotes the number of independent
sets in matroid~$\Mf$ of size~$a$.  Mason's weakest conjecture
(the Mason--Welsh conjecture mentioned above) is the log-concave inequality
\begin{equation}\label{eq:Mason-1}\tag{M2}
\aI(\Mf,a)^2 \ \ge \ \aI(\Mf,a+1) \. \aI(\Mf,a-1) \quad \text{for all} \quad 1\le a \le (r-1).
\end{equation}
Similarly, Mason's strongest conjecture (we skip the intermediate one), is the
ultra-log-concave inequality
\begin{equation}\label{eq:Mason-2}\tag{M2}
\aI(\Mf,a)^2 \ \ge \  \left(1+\tfrac{1}{a}\right) \.
\left(1+\tfrac{1}{n-a}\right) \. \aI(\Mf,a+1) \. \aI(\Mf,a-1) \quad \text{for all} \quad 1\le a \le (r-1),
\end{equation}
where \ts $n=|X|$ \ts is the size of the
ground set (see Section~\ref{s:indep}).

Most recently, Yan \cite{Yan23} used Lorentzian polynomials to extend Stanley's result
from regular to general matroids (Theorem~\ref{t:SY}).  The resulting
Stanley--Yan inequality \eqref{eq:SY} is one of the most general matroid results
proved by a direct application of Lorentzian polynomials.

\smallskip

\subsection{Later developments}\label{ss:hist-atlas}
Recently, the authors introduced a linear algebra based
\emph{combinatorial atlas} technology in \cite{CP}, which includes
Lorentzian polynomials as a special case \cite[$\S$5]{CP-intro}.
The authors proved equality conditions and various extensions of both Mason's
ultra-log-concave inequality (for the number of independent sets of matroids),
and for Stanley's poset inequality \eqref{eq:Sta}.
Most recently, the authors used combinatorial atlases to establish
correlation inequalities for the numbers of linear extensions  \cite{CP-corr}.
These results parallel earlier correlation inequalities by Huh, Schr\"oter
and Wang \cite{HSW22}.

In a separate development, Br\"and\'en and Leake introduced
\emph{Lorentzian polynomials on cones} \cite{BL23}.  They were
able to give an elementary proof of the Heron--Rota--Welsh conjecture.
Note that both combinatorial atlas and this new technology give
new proofs of the Alexandrov--Fenchel inequality, see \cite[$\S$6]{CP-intro} and
\cite[$\S$6]{BL23}. This is a central and most general inequality in convex
geometry, with many proofs none of which are truly simple, see e.g.\ \cite[$\S$20]{BZ-book}.

Shenfeld and van Handel \cite{SvH-acta} undertook a major study of equality
cases of the AF~inequality.  They obtained a (very technical)
geometric characterization in the case of convex polytopes, making a progress
on a long-standing open problem in convex geometry, see \cite{Schn1}.
They gave a complete description of equality cases for \eqref{eq:Sta}
in the $k=0$ case, so that equality decision problem is in~$\poly$.

The $k=0$ equality cases of \eqref{eq:Sta} were rederived in~\cite{CP}
using combinatorial atlas, where the result was further extended to
\emph{weighted linear extensions}.  Shenfeld and van Handel's approach
was further extended is in~\cite{vHYZ} to the {Kahn--Saks inequality} (a diagonal
slice of the $k=1$ case), and in~\cite{CP-AF} to
the full $k=1$ case of \eqref{eq:Sta}.
For general \ts $k\ge 2$, Ma and Shenfeld \cite{MS24} gave a technical
combinatorial description of the equality cases of \eqref{eq:Sta}.
Notably, this description involved $\SP$ oracles and thus not naturally in~$\PH$.

\smallskip

\subsection{Negative results} \label{ss:hist-CS}
In a surprising development, the authors in~\cite{CP-AF} showed that for
$k\ge 2$, the equality conditions of Stanley's poset inequality are not
in~$\PH$ unless $\PH$ collapses to a finite level.  In particular, this
implied that the quality cases of the AF inequality for $H$-polytopes
with a concise description, are also not in~$\PH$, unless $\PH$ collapses
to a finite level.

Prior to \cite{CP-AF}, there were very few results on computational
complexity of (equality cases) of combinatorial inequalities.
The approach was introduced by the second author in~\cite{Pak},
as a way to show that certain combinatorial numbers do not have a
\emph{combinatorial interpretation}.  This was formalized as counting
functions not being in $\SP$, see survey \cite{Pak-OPAC}.  Various
examples of functions not in~$\SP$ were given in \cite{IP22},
based on an assortment of complexity theoretic assumptions.

It was shown by Ikenmeyer, Panova and the second author in \cite{IPP24},
that vanishing of $S_n$ characters problem \ts $[\chi^\la(\mu)=^?0]$ \ts is \ts
$\CEP$-complete.  This implies that this problem is not in $\PH$
unless $\PH$ collapses to the  second level (ibid.).  Finally, a key
technical lemma in \cite{CP-AF} is based on the analysis of the
\emph{combinatorial coincidence problem}.  This is a family of
decision problems  introduced and studied in \cite{CP-coinc}.
They are also characterised by a collapse of~$\PH$.

\smallskip

\subsection{Spanning trees}\label{ss:hist-st}
%
%
Sedl\'{a}\v{c}ek \cite{Sed70} and Azarija--\v{S}krekovski \cite{AS13}
considered two closely related functions \ts $\al'(N)$ \ts and \ts $\be'(N)$,
defined to be the minimal number of vertices and edges, respectively,
over \emph{all} (i.e., not necessarily planar) graphs $G$ with \ts $\tau(G)=N$ \ts
spanning trees.  For connected planar graphs, the number of edges is linear
in the number of vertices, so this distinction disappears.

In recent years, there were several applications of continued
fractions to problems in combinatorics.  Notably, Kravitz and Sah \cite{KS21}
used continuous fractions to study a similar problem for the number
\ts $|\Ec(P)|$ \ts of linear extensions of a poset, see also \cite{CP-CF}.
An earlier construction by Schiffler \cite{Sch19} which appeared
in connection with cluster algebras, related continued fractions
and perfect matchings.  We also mention a large literature on
enumeration of lattice paths via continued fractions,
see e.g.\ \cite[Ch.~5]{GJ83}.

\smallskip

\subsection{Counting complexity}\label{ss:hist-count}
The problem of counting the number of bases and more generally,
the number of independent sets of given size, been heavily studied
for various classes of matroids.  Even more generally, both problems
are evaluations of the Tutte polynomial, and other evaluations have
also been considered.  We refer to \cite{Welsh-knots} for both the
introduction to the subject and detailed albeit dated survey of
known results.

Of the more recent work, let us mention $\SP$-completeness for
the number of trees (of all sizes) in a graph \cite{Jer94},
the number of bases in bicircular matroids \cite{GN06},
in balanced paving matroids \cite{Jer}, rational matroids \cite{Snook},
and most recently in binary matroids\footnote{There is a mild controversy
over priority of this result, see a short discussion in \cite[$\S$6.3]{CP-coinc}.}
\cite{KN23}.  We also note that the volumes of both order polytopes and 
zonotopes are $\SP$-hard, see \cite[$\S$3]{BW} and \cite[Thm~1]{DGH}.
See~$\S$\ref{ss:finrem-NP} for further results and applications.

Finally, in a major breakthrough, Anari, Liu, Oveis Gharan and Vinzant \cite{ALOV-RW},
used Lorentzian polynomials to prove that the bases exchange random walk mixes
in polynomial time.  This gave FPRAS for the number of bases of a matroid,
making a fast probabilistic algorithm for approximate counting of bases.
This resolved an open problem by Mihail and Vazirani (1989). Previously,
FPRAS for the number of bases was known for regular matroids \cite{FM92},
paving matroids~\cite{CW96}, and bicircular matroids~\cite{GJ21}.



\medskip

\section{Notations and definitions}\label{s:def}

\subsection{Basic notation} \label{ss:def-basic}
Let \ts $\nn=\{0,1,2,\ldots\}$ \ts and \ts $[n]=\{1,\ldots,n\}$.
For a set \ts $A$ \ts and an element \ts $x \notin A$,
we write \ts $A+x:=A \cup \{x\}$.  Similarly, for an element
\ts $x \in A$, we write \ts $A-x:= A \setminus \{x\}$.

We use bold letters to denote vectors \ts $\vb=(v_1,\ldots,v_d)\in \Bbbk^d$ \ts
over the field \ts $\Bbbk$.
Let \ts $\eb_1,\ldots, \eb_d$ \ts denote the standard basis in \ts $\Bbbk^d$,
and let \ts $\bO = (0,\ldots,0)\in \Bbbk^d$.
Let \ts $\fq$ \ts to denote the finite field with~$q$ elements.
We say that \ts $\vb\in \rr^d$  \ts is
\defnb{strictly positive} \ts if $v_i>0$ \ts for all \ts $1\le i \le d$.
For \ts $\ba=(a_1,\ldots,a_d) \in \nn^d$, denote \ts $|\ba|:= a_1+ \ldots + a_d$.

\smallskip

\subsection{Matroids}\label{ss:def-matroids}
A (finite) \defn{matroid}~$\Mf$ \ts is a pair \ts $(X,\cI)$ \ts of a
\emph{ground set} \ts $X$ \ts with \ts $|X|=n$ \ts elements,
and a nonempty collection of \defn{independent sets}
\ts $\cI \subseteq 2^X$ \ts that satisfies the following:
\begin{itemize}
	\item (\defn{hereditary property}) \, $A\subset B$, \. $B\in \cI$ \, $\Rightarrow$ \, $A\in \cI$\ts, and
	\item (\defn{exchange property}) \,  $A, \ts B\in \cI$, \.  $|A|<|B|$ \, $\Rightarrow$ \,
	$\exists \ x \in B \setminus A$ \. s.t.\ $A+x \in \cI$\ts.
\end{itemize}
The
\defn{rank} of a matroid  is the maximal size of the independent set, i.e.,  $\rk(\Mf) := \max_{A\in \cI} \ts |A|$.
More generally, the \defn{rank} \ts $\rk(A)$ \ts of a subset \ts $A \subseteq X$ \ts is the size of the largest independent set contained in $A$.
A \defnb{basis} of $\Mf$  is a maximal independent set of $\Mf$, or equivalently an independent set with size $\rk(\Mf)$.
We denote by \. $\cB(\Mf)$ \. the set of bases of $\Mf$.

An element $x \in X$ is a \defnb{loop} if $\{x\} \notin \cI$, and is a \defnb{non-loop}
otherwise.  Matroid without loops is called \defn{loopless}.\footnote{Unless stated otherwise,
we allow matroids to have loops and parallel elements.}
We denote by \ts $\NL(\Mf)$ \ts the set of non-loops of $\Mf$.
Two non-loops $x,y \in \NL(\Mf)$ are \defnb{parallel} if $\{x,y\} \notin \cI$.
Note that the parallelship relation between non-loops is an equivalence relation~(see e.g. \cite[Prop~4.1]{CP}).
The equivalence classes of this relation are called \defnb{parallel classes}.


Given matroids \ts $\Mf=(X,\cI)$\ts, \ts $\Mf':=(X',\cI')$\ts, the \defnb{direct sum} \ts $\Mf \oplus \Mf':=(Y,\cJ)$ \ts is a matroid with ground set \ts $Y = X\sqcup X'$, and whose independent sets \ts $A \in \cJ$ \ts are  disjoint unions of  independent sets: \ts $A=I \sqcup I'$, \ts
$I \in \cI$, \ts $I' \in \cI'$.

Let  \ts $x\in \NL(\Mf)$.
The \defnb{deletion} \ts $\Mf - x$ \ts  is the matroid with ground set $X$ and with independent sets
\. $\{  A \subseteq X -x \. : \.  \ts A  \ts \in \cI \}$.
The \defnb{contraction} \ts $\Mf/x$ \ts is the matroid with ground set $X$ and with independent sets
\.  $\{  A \subseteq X -x \. : \. A +x\in \cI \}$.
Note that both $\Mf/x$ and $\Mf-x$ share the same ground set as $\Mf$.
This is slightly different than the usual convention, and is adopted here
for technical reasons that will be apparent in Section~\ref{s:main-positive}.

More generally, for $B \subseteq \NL(\Mf)$,  the
 \defnb{contraction} \ts $\Mf/B$ \ts is the matroid with ground set $X$ and with independent sets
\.  $\{  A \subseteq X -B \. : \.  A \cup B \ts \in \cI \}$\..
Recall the \defnb{deletion--contraction recurrence} \ts for the number of bases of matroids:
\[  \iB(\Mf) \ = \ \iB(\Mf-x) \. + \. \iB(\Mf/x).\]

A \defn{representation} of a matroid $\Mf$ over the field~$\kK$ \ts is a map
\ts $\phi:X \to \Bbbk^d$, such that
$$A\in \cI \quad \Longleftrightarrow \quad \phi(x_1),\ldots, \phi(x_m) \ \text{are linearly
independent over~$\Bbbk$}\ts,
$$
for every subset \ts $A=\{x_1,\ldots, x_m \} \subseteq X$.
Matroid is \defnb{binary} if it has a representation
over~$\ff_2$. Matroid is \defnb{rational} if it has a representation
over~$\qqq$.

Matroid is \defn{regular} (also called \emph{unimodular}),
if it has a representation over every field~$\Bbbk$.  Representation
\ts $\phi: X \to \zz^d$ is called \defn{unimodular} \ts if \ts
$\det\big(\phi(x_1),\ldots,\phi(x_r)\big) =\pm 1$, for every basis \ts
$\{x_1,\ldots, x_r \}\in \cB(\Mf)$.  Regular matroids are known to
have a unimodular representation (see e.g.\ \cite[Lem.~2.2.21]{Ox}).

Let $G=(V,E)$ be a finite connected graph, and let $\cF$ be the set of \emph{forests}
in~$G$ (subsets $F\subseteq E$ with no cycles).  Then \ts $\Mf_G =(E,\cF)$ \ts
is a \defn{graphical matroid} corresponding to~$G$.  Bases of the
graphical matroid \ts $\Mf_G$ \ts are the spanning trees in~$G$,
so \ts $\iB(\Mf_G)=\tau(G)$.  Recall that graphical matroids are regular.


\smallskip

\subsection{Complexity} \label{ss:def-CS}
We refer to \cite{AB,Gold,Pap} for definitions and standard results
in computational complexity, and to \cite{Aar16,Wig19} for a modern
overview.

We assume that the reader is familiar with basic notions and results in
computational complexity and only recall a few definitions.  We use standard
complexity classes: \. $\P$, \. $\FP$, \. $\NP$,\. $\coNP$, \. $\SP$,
\. $\Sigmap_m$, \. $\PH$ \. and \. $\PSPACE$.

The notation \. $\{a =^? b\}$ \. is used to denote the
decision problem whether \ts $a=b$.  We use the \emph{oracle notation} \ts
{\sf R}$^{\text{\sf S}}$ \ts for two complexity classes \ts {\sf R}, {\sf S} $\subseteq \PH$,
and the polynomial closure \ts $\<${\sc A}$\>$ for a problem \ts {\sc A} $\in \PSPACE$.

%
For a counting function \ts $f\in \SP$,
the \defn{coincidence problem} \ts is defined as \.
$\big\{\ts f(x) \. = ^? \ts f(y) \ts \big\}$.
Note the difference with the \defn{equality verification problem} \.
$\big\{ \ts f(x) \. =^? \. g(x) \big\}$.
%
Unless stated otherwise, we use \defn{reduction} \ts to mean
a polynomial Turing reduction.



\medskip

\section{Reduction from coincidences}\label{s:red}

\subsection{Setup} \label{ss:red-setup}
Let \ts $\Mf=(X,\cI)$ \ts be a binary matroid, let \ts $x\in X$ \ts be a non-loop: \ts $x\in \NL(\Mf)$.
Define the \defn{bases ratio}
$$
\rho(\Mf,x) \, := \, \frac{\iB(\Mf-x)}{\iB(\Mf/x)}\..
$$
Denote by  \. $\BBM$ \. the problem of computing the number of bases \ts $\iB(\Mf)$ \ts in~$\Mf$.
Similarly, denote by \. $\NDCR$ \. the problem of computing the bases ratio \ts $\rho(\Mf,x)$.

Let \ts $\Mf=(X,\cI)$ \ts and \ts $\Nf=(Y,\cI')$ \ts be binary matroids,
let \ts $x\in X$ \ts and \ts $y\in Y$ \ts be non-loop elements: \ts $x\in \NL(\Mf)$, \ts
$y\in \NL(\Nf)$.  Consider the following decision problem:
\[
\CDCR \ := \  \big\{\rho(\Mf,x)  \, =^? \,  \rho(\Nf,y) \big\}.
\]
The following is the main technical lemma in the proof.

\smallskip

\begin{lemma}
\label{lem:CDCR-to-EBULC}
	$\CDCR$ \. reduces to \. $\EBULC_{1}$\ts.
\end{lemma}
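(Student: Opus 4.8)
The plan is to give a polynomial-time many-one reduction (in particular a polynomial Turing reduction): given binary matroids $\Mf,\Nf$ with non-loops $x,y$, we build a binary matroid $\Qf$ together with disjoint sets $R,S_1\subseteq X(\Qf)$ so that \eqref{eq:SY} holds for the data $(\Qf,R,\bS,a,\bc)$ with $\bS=(S_1)$, $\bc=(c_1)$, $a=1$ and $c_1=\rk(\Qf)-2$ \emph{if and only if} $\rho(\Mf,x)=\rho(\Nf,y)$. (These are exactly the parameters appearing in the ``moreover'' clause of Theorem~\ref{t:main-negative}.)

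\emph{Preprocessing.} We may assume $x$ and $y$ are not coloops: being a coloop is decidable in polynomial time for a binary matroid given by a representation (compare the rank of the representing matrix with and without the relevant column), and if, say, $x$ is a coloop then $\iB(\Mf-x)=0$, so $\rho(\Mf,x)=0$ and the $\CDCR$ instance is equivalent to the polynomially decidable question whether $y$ is a coloop of $\Nf$. So assume $\alpha:=\iB(\Mf-x)$, $\bar\alpha:=\iB(\Mf/x)$, $\beta:=\iB(\Nf-y)$, $\bar\beta:=\iB(\Nf/y)$ are all positive; then $\rho(\Mf,x)=\alpha/\bar\alpha$, $\rho(\Nf,y)=\beta/\bar\beta$, and the $\CDCR$ instance asks precisely whether $\alpha\bar\beta=\bar\alpha\beta$.

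\emph{Construction (the crux).} The goal is to produce $\Qf,R,S_1$ from $\Mf,\Nf$ by a bounded number of direct sums, $2$-sums, and series/parallel extensions so that, for $a\in\{0,1,2\}$,
\[
\iB_{\bSc}(\Qf,R,a)\;=\;\kappa\cdot\#\big\{A\in\cB(\Mf\oplus\Nf)\,:\,|A\cap\{x,y\}|=a\big\}\;=\;\kappa\cdot\big(\alpha\beta,\ \alpha\bar\beta+\bar\alpha\beta,\ \bar\alpha\bar\beta\big)_a
\]
for one positive integer $\kappa$ independent of $a$, and moreover every basis $A$ counted here has $|A\cap S_1|=\rk(\Qf)-2$. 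Equivalently, $\sum_a\iB_{\bSc}(\Qf,R,a)\,z^a=\kappa\,(\alpha+\bar\alpha z)(\beta+\bar\beta z)$, the latter being the generating function for the bases of $\Mf\oplus\Nf$ by intersection size with $\{x,y\}$. Since direct sums, $2$-sums, and series/parallel extensions all preserve binariness, $\Qf$ is binary; and only polynomially much new data is added, so $\Qf,R,S_1$ are computable in polynomial time. This is where essentially all of the work lies. It cannot be done by the obvious device of taking $\Mf\oplus\Nf$ (with $R=\{x,y\}$) and a fixed ``padding'' matroid $\Wf$ as a direct summand to enforce $|A\cap S_1|=\rk(\Qf)-2$: that would require $\Wf$ to have basis-counts $w_0=w_1=w_2$ with respect to some set, and $w_1^2=w_0w_2$ with $w_0w_2>0$ is impossible for any loopless matroid (it contradicts \eqref{eq:SY} applied to $\Wf$). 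So the padding must instead be fused into $\Qf$ in such a way that the three slices still come out proportional with a common $\kappa$; engineering this while staying binary is the delicate point, and is the reason the reduction is isolated as a separate lemma.

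\emph{The calculation.} Write $r:=\rk(\Qf)$. With $c_1=r-2$ we have $\ups=r-a-c_1=2-a$, hence $\tbinom{r}{0,r-2,2}=\tbinom{r}{2,r-2,0}=\tbinom{r}{2}$ and $\tbinom{r}{1,r-2,1}=2\tbinom{r}{2}$. Using the three values above,
\[
\aP_{\bSc}(\Qf,R,1)^2-\aP_{\bSc}(\Qf,R,0)\,\aP_{\bSc}(\Qf,R,2)\;=\;\frac{\kappa^2}{4\tbinom{r}{2}^2}\Big[(\alpha\bar\beta+\bar\alpha\beta)^2-4\,\alpha\bar\beta\,\bar\alpha\beta\Big]\;=\;\frac{\kappa^2}{4\tbinom{r}{2}^2}\,(\alpha\bar\beta-\bar\alpha\beta)^2\;\ge\;0,
\]
which vanishes exactly when $\alpha\bar\beta=\bar\alpha\beta$. (Consistently, the inequality is also a special case of Theorem~\ref{t:SY}.) Therefore $\big(\Qf,R,\bS,a=1,\bc=(r-2)\big)$ is a YES instance of $\EBULC_1$ iff $\rho(\Mf,x)=\rho(\Nf,y)$, i.e.\ iff the original $\CDCR$ instance is a YES instance. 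Since both the preprocessing and the construction run in polynomial time, this is the desired reduction; the only substantial obstacle is the explicit construction of $\Qf$.
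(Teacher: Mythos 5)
Your proposal sets up the right interface (a three-slice basis count whose SY defect is a perfect square, with $a=1$ and $c_1=\rk-2$), and the final arithmetic is correct, but it does not prove the lemma: the entire content of the statement is the construction of the auxiliary binary matroid realizing the prescribed slice counts, and you explicitly leave that construction unperformed (``this is where essentially all of the work lies''). Saying that it should be producible ``by a bounded number of direct sums, $2$-sums, and series/parallel extensions'' so that the slice polynomial equals $\kappa\ts(\alpha+\bar\alpha z)(\beta+\bar\beta z)$ is a plan, not an argument; without an explicit construction the reduction is not established. (Your side remark that a direct-summand padding cannot work is correct, but it only explains why one easy route fails -- it does not supply a working one.)

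The paper fills exactly this gap, and with a weaker target than the one you set yourself. First it reduces $\CDCR$ to a plain coincidence: for \ts $\Mf':=\Mf\oplus\Nf$ \ts one has \ts $\iB(\Mf'/x-y)=\iB(\Mf/x)\ts\iB(\Nf-y)$ \ts and \ts $\iB(\Mf'/y-x)=\iB(\Mf-x)\ts\iB(\Nf/y)$, so \ts $\rho(\Mf,x)=\rho(\Nf,y)$ \ts iff \ts $q:=\iB(\Mf'/x-y)$ \ts equals \ts $p:=\iB(\Mf'/y-x)$ (this is the separate problem $\CDC$). Then, to encode $\{p=^?q\}$ into $\EBULC_1$, it appends one new coordinate to the $\ff_2$-representation of $\Mf'$ and adds two parallel elements $u,v$ represented by $\eb_{d+1}$, and takes \ts $R=\{x,u\}$, \ts $S_1=X-\{x,y\}$, \ts $c_1=r-1$ \ts (where $r=\rk(\Mf')$, so $c_1$ is the new rank minus~$2$) and $a=1$. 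Every basis of the enlarged matroid is a basis of $\Mf'$ plus exactly one of $u,v$, and the constraint $|A\cap S_1|=r-1$ forces the slice counts for $a=0,1,2$ to be $p$, $p+q$, $q$; that is, the slice polynomial is $(p+q\ts z)(1+z)$ rather than your full product $(\alpha+\bar\alpha z)(\beta+\bar\beta z)$. The SY defect then equals $(p-q)^2/\big(r^2(r+1)^2\big)$, vanishing iff $p=q$. So the target you declared to be the crux is stronger than necessary: after the direct-sum step only the factor $(1+z)$ needs to be engineered, and a single parallel pair in a fresh binary coordinate does it -- this is precisely the construction your proposal postpones.
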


\smallskip

The lemma follows from two parsimonious reductions presented below.

\smallskip

\subsection{Deletion-contraction coincidences}
Let \ts $\Mf=(X,\cI)$ \ts be a binary matroid, and let \ts $x,y\in X$ \ts be non-parallel
and non-loop elements. Consider the following decision problem:
\[ \CDC \ := \ \big\{\iB(\Mf/x \ts - \ts y) \, =^?  \, \iB(\Mf/y  \ts - \ts x) \big\}\..
\]

\begin{lemma}\label{lem:CDC-to-EBULC}
	 $\CDC$ \. reduces to \. $\EBULC_{1}\ts$.
\end{lemma}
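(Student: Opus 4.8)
The plan is to construct, from the pair $(\Mf,x,y)$, a binary matroid $\Nf$ together with a ground-set element to play the role of the "$R$"-marker and a single distinguished part $S_1$, and values $a,c_1$, so that the two sides of the $\EBULC_1$ equality
$$\aP_{\bSc}(\Nf,R,1)^2 \ = \ \aP_{\bSc}(\Nf,R,0)\.\aP_{\bSc}(\Nf,R,2)$$
become (up to fixed multinomial factors that cancel) exactly $\iB(\Mf/x-y)$, $\iB(\Mf/y-x)$, and a controlled third quantity, in such a way that the equality $\aP^2=\aP\aP$ holds if and only if $\iB(\Mf/x-y)=\iB(\Mf/y-x)$. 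The natural device is the following: put $R := \{x,y\}$, so that for $A\in\cB(\Mf)$ the statistic $|A\cap R|\in\{0,1,2\}$ measures how many of $x,y$ a basis uses. Then $\iB(\Mf,R,0)$ counts bases avoiding both $x$ and $y$, i.e.\ bases of $\Mf-x-y$; $\iB(\Mf,R,2)$ counts bases using both, i.e.\ (after contracting) bases of $\Mf/x/y$; and $\iB(\Mf,R,1)$ counts bases using exactly one of $x,y$, which splits as $\iB(\Mf/x-y)+\iB(\Mf/y-x)$. So already the middle slice of $\EBULC$ "sees" the sum of the two numbers in $\CDC$, but we need the product, which requires the extra parameter $S_1$ and a gadget.

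The key step is therefore a gadget construction. I would take $\Nf := \Mf \oplus \Mf_C$ where $\Mf_C$ is a graphical matroid on a suitable small graph (a short cycle, or a parallel bundle) designed so that the direct-sum structure separates the "$x$ versus $y$" choice in the $\Mf$-factor from an independent binary choice recorded by $S_1$ in the $\Mf_C$-factor. Concretely, one wants the $c_1$-slice in the $S_1$-direction to force a single element of a parallel pair in $\Mf_C$, so that $\iB_{\bSc}(\Nf,R,1)$ factors as (number of ways to extend in $\Mf$ using exactly one of $x,y$) times (number of ways in $\Mf_C$), while $\iB_{\bSc}(\Nf,R,0)$ and $\iB_{\bSc}(\Nf,R,2)$ each pick up a different but explicitly computable $\Mf_C$-contribution. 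After dividing by the multinomial coefficients $\binom{r}{a,c_1,\ups}$ to pass from $\iB_{\bSc}$ to $\aP_{\bSc}$, the goal is to arrange the $\Mf_C$-factors and the multinomials so that the $\EBULC_1$ identity collapses exactly to $\iB(\Mf/x-y)=\iB(\Mf/y-x)$ — possibly after first handling the degenerate cases where one side of $\CDC$ vanishes, which correspond to $x$ or $y$ becoming a loop or coloop after the deletion/contraction and can be detected and reduced away directly. Parsimony of the reduction is automatic from the direct-sum multiplicativity $\iB(\Mf\oplus\Mf')=\iB(\Mf)\iB(\Mf')$ and the deletion–contraction recurrence recalled in $\S\ref{ss:def-matroids}$.

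The main obstacle I expect is getting the arithmetic to line up \emph{exactly}: the multinomial normalizations in the definition of $\aP_{\bSc}$ mean that $\aP^2 = \aP\cdot\aP$ is not literally $\iB_{\text{mid}}^2 = \iB_{\text{low}}\cdot\iB_{\text{high}}$ but carries binomial weights depending on $r=\rk(\Nf)$, $a$, $c_1$, and $\ups=r-a-c_1$. One must choose the gadget $\Mf_C$ so that these weights, together with the (fixed, gadget-determined) counts of bases of $\Mf_C$ restricted to the various $S_1$-slices, combine to kill all the "parasitic" terms and leave a clean biconditional. A clean way to do this, following the philosophy of the setup in $\S\ref{ss:red-setup}$, is to route everything through the \emph{bases ratio}: show that for the constructed $\Nf$ the equality in $\EBULC_1$ is equivalent to $\rho(\Mf',x')=\rho(\Nf',y')$ for explicit auxiliary matroids, and then match this against $\CDC$ via $\rho$'s relation to deletion/contraction. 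This also makes the reduction in Lemma~\ref{lem:CDC-to-EBULC} dovetail with the reduction in Lemma~\ref{lem:CDCR-to-EBULC}, since both feed through $\EBULC_1$ by the same mechanism. I would expect the verification that the chosen $\binom{r}{a,c_1,\ups}$ factors cancel to be the one genuinely fiddly computation, and I would isolate it as a short arithmetic claim rather than inlining it.
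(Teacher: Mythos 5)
There is a genuine gap here: you never actually construct the reduction, and the one gadget you do sketch cannot be made to work. With $\Nf:=\Mf\oplus\Mf_C$, $R=\{x,y\}\ssu X$, and $S_1$ inside the gadget, every slice count factors: the number of bases of $\Nf$ with $|A\cap R|=a'$ and $|A\cap S_1|=c_1$ equals $\iB(\Mf,\{x,y\},a')\cdot h$, where $h$ (the number of gadget bases meeting $S_1$ in $c_1$ elements) is the \emph{same} constant for all $a'$. So up to one common factor and the multinomial normalizations, the three quantities entering the SY equality are $\iB(\Mf-x-y)$, $\iB(\Mf/x-y)+\iB(\Mf/y-x)$, and $\iB(\Mf/\{x,y\})$; no choice of $\Mf_C$, $c_1$, or binomial bookkeeping can ever recover the two summands of the middle term separately, so the equality case of $\EBULC_1$ for such an $\Nf$ can never become the coincidence $\iB(\Mf/x-y)=\iB(\Mf/y-x)$ demanded by $\CDC$. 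Your fallback of ``routing through the bases ratio'' just restates the goal and does not supply the missing construction.

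The missing idea, which is the paper's, is to place the $k=1$ constraint on the \emph{original} ground set so that it forces every counted basis to use exactly one of $x,y$, and to adjoin an auxiliary parallel pair so that the $R$-statistic can distinguish the two cases. Concretely: extend the binary representation by one coordinate and add two new elements $u,v$, both represented by $\eb_{d+1}$, so that the bases of the new matroid $\Mf'$ (of rank $r+1$) are exactly $A+u$ and $A+v$ with $A\in\cB(\Mf)$; then take $S=X-\{x,y\}$ with $c=r-1$ (forcing exactly one of $x,y$ in each counted basis) and $R=\{x,u\}$ with $a=1$. The slices $a+1,\,a,\,a-1$ then count exactly $\iB(\Mf/x-y)$, $\iB(\Mf/x-y)+\iB(\Mf/y-x)$, and $\iB(\Mf/y-x)$, and the multinomials satisfy $\binom{r+1}{2,\ts r-1,\ts 0}=\binom{r+1}{0,\ts r-1,\ts 2}=\tfrac12\binom{r+1}{1,\ts r-1,\ts 1}$, so the SY difference becomes $\tfrac{1}{r^2(r+1)^2}\big(\iB(\Mf/x-y)-\iB(\Mf/y-x)\big)^2$. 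Thus equality holds if and only if the two counts coincide: the engine of the reduction is precisely the AM--GM identity $(A+B)^2\ge 4AB$ with equality iff $A=B$, supplied by that factor $4$ from the multinomials --- the very cancellation you deferred is where the construction lives, and your plan as stated does not reach it.
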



\begin{proof}
Let \. $\phi : X\to \ff_2^d$ \. be a binary representation of $\Mf$.
Let \. $X':=X \cup \{u,v\}$, where \ts $u,v$ \ts are two new elements.
Consider a matroid \ts $\Mf'=(X',\cI')$ \ts  defined by its binary representation
\. $\phi':X' \to \mathbb{F}_2^{d+1}\ts$, where
	\[ \phi'(z) \ := \
	\begin{cases}
	(\phi(z),0 ) & \text{ for } \ z \in X,\\
	\quad \,\. (\bO,1) & \text{ for }  \ z \in \{u,v\}.
	\end{cases} \]
That is, we append a zero to the vector representation of all \ts $z \in X$,
and we represent $u,v$ by the basis vector~$\eb_{d+1}\ts$.

Let \ts $r:=\rk(\Mf)$ \ts be the rank of~$\Mf$, and let \ts $n:=|X|$ \ts be the number of elements.
Note that $\Mf'$ is a matroid of rank \ts $r+1$ \ts and with \ts $n+2$ \ts elements.
Note also that the bases of $\Mf'$ are of the form \ts $A+u$ \ts and \ts $A+v$, where \ts $A\in \cB(\Mf)$ \ts
is a basis of $\Mf$.

To define the reduction in the lemma, let
\begin{align*}
&R \, := \ \{x,u\}, \qquad  a \. := \ 1, \\
&S  \, := \ X-\{x,y\}, \qquad  c \. := \. r -1.
\end{align*}
It then follows that
\[ 		\iB_{Sc}(\Mf',R,a+1)  \, = \,   \iB(\Mf/x-y). \\ \]
Indeed, $\iB_{S c}(\Mf',R,a+1)$ are subsets \. $A\subseteq X\cup\{u,v\}$ \. that are of the form
\[ A \cap R \ = \ \{x,u\}, \quad   \quad A \cap \{y,v\}  \ = \ \varnothing, \quad A-\{u\} \. \in \. \cB(\Mf).\]
It then follows that \ts $A-\{x,u\}$ \ts is a basis of  \ts $\Mf/x-y$\ts,
and that this correspondence is a bijection, proving our claim.  By the argument as above, we have:
	\begin{alignat*}{2}
		& \iB_{Sc}(\Mf',R,a) \, &&= \,  \iB(\Mf/x-y) \. + \. \iB(\Mf/y-x),\\
	& \iB_{Sc}(\Mf',R,a-1) \, &&= \,  \iB(\Mf/y-x).
	\end{alignat*}

We have:
	 \begin{align*}
	 	&\aP_{Sc}(\Mf',R,a)^2 \. - \. \aP_{Sc}(\Mf',R,a+1) \. \aP_{Sc}(\Mf',R,a-1) \\
	 	 & \quad = \ \tfrac{1}{r^2(r+1)^2} \,  \big[ \iB_{Sc}(\Mf',R,a)^2 \. -\. 4 \. \iB_{Sc}(\Mf',R,a+1) \. \iB_{Sc}(\Mf',R,a-1)  \big] \\
	 	 & \quad = \ \tfrac{1}{r^2(r+1)^2} \,  \big[ \big(\iB(\Mf/x-y) \. + \. \iB(\Mf/y-x)\big)^2 \. - \. 4 \. \iB(\Mf/x-y) \. \iB(\Mf/y-x) \big] \\
	 	 	 	 & \quad = \ \tfrac{1}{r^2(r+1)^2}  \, \big( \iB(\Mf/x-y) \. - \. \iB(\Mf/y-x) \big)^2.
	 \end{align*}
	 Therefore, we have:
	 \[ \aP_{Sc}(\Mf',R,a)^2  =  \aP_{Sc}(\Mf',R,a+1) \. \aP_{Sc}(\Mf',R,a-1)  \ \  \Longleftrightarrow \ \  \iB(\Mf/y-x)  =  \iB(\Mf/y-x),  \]
	 which completes the proof of the reduction.
\end{proof}


\smallskip

\subsection{Back to ratio coincidences}
Lemma~\ref{lem:CDCR-to-EBULC} now follows from the following reduction.

\begin{lemma}\label{lem:CDCR-to-CDC}
$\CDCR$ \. reduces to \. $\CDC$.
\end{lemma}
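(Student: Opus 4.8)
The plan is to reduce the problem $\CDCR = \{\rho(\Mf,x) =^? \rho(\Nf,y)\}$ to $\CDC$ by combining the two matroids $\Mf$ and $\Nf$ into a single binary matroid in which the cross-multiplied equality $\rho(\Mf,x) = \rho(\Nf,y)$, i.e.\ $\iB(\Mf-x)\ts\iB(\Nf/y) = \iB(\Nf-y)\ts\iB(\Mf/x)$, is expressed as a deletion-contraction coincidence of the form $\iB(\Lc/p - q) = \iB(\Lc/q - p)$. First I would recall the identity $\iB(\Mf) = \iB(\Mf-x) + \iB(\Mf/x)$, which shows that the bases ratio $\rho(\Mf,x)$ depends only on the pair of numbers $\big(\iB(\Mf-x),\ts\iB(\Mf/x)\big)$, and so the natural target is to build a matroid whose relevant restriction/contraction counts are exactly the products appearing in the cross-multiplied equality.

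The key construction uses direct sums and parallel connection. Given binary representations $\phi\colon X\to\ff_2^d$ of $\Mf$ and $\psi\colon Y\to\ff_2^{d'}$ of $\Nf$, I would form $\Mf' := \Mf/x$ and $\Nf' := \Nf-y$ (as binary matroids on the same ground sets, per the paper's convention), and then consider a binary matroid $\Lc$ on ground set $X \sqcup Y \sqcup \{p,q\}$ built so that: bases of $\Lc$ correspond to choosing a basis in one ``side'' together with the element $p$ or $q$, in such a way that $\Lc/p - q$ restricts to the direct sum $(\Mf-x)\oplus(\Nf/y)$ while $\Lc/q - p$ restricts to $(\Mf/x)\oplus(\Nf-y)$. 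Concretely, one appends a coordinate to separate the two blocks and uses two new basis vectors $\eb$, $\eb'$ to represent $p$ and $q$ so that $p$ forces the ``$\Mf$-minus / $\Nf$-contract'' configuration and $q$ forces the complementary one; by the multiplicativity of $\iB$ over direct sums, $\iB(\Lc/p-q) = \iB(\Mf-x)\ts\iB(\Nf/y)$ and $\iB(\Lc/q-p) = \iB(\Mf/x)\ts\iB(\Nf-y)$. One must check that $p,q$ are non-loop and non-parallel in $\Lc$ (immediate from the construction: they are distinct basis vectors appended outside the span of the $\Mf$- and $\Nf$-coordinates), so the pair $(\Lc,p,q)$ is a valid instance of $\CDC$. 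Then $\rho(\Mf,x) = \rho(\Nf,y) \Longleftrightarrow \iB(\Mf-x)\ts\iB(\Nf/y) = \iB(\Mf/x)\ts\iB(\Nf-y) \Longleftrightarrow \iB(\Lc/p-q) = \iB(\Lc/q-p)$, which is exactly the $\CDC$ instance. The reduction is clearly polynomial-time (and in fact parsimonious on the relevant count difference), and composing it with Lemma~\ref{lem:CDC-to-EBULC} yields Lemma~\ref{lem:CDCR-to-EBULC}.

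The main obstacle I anticipate is producing a genuinely binary representation of $\Lc$ that glues $\Mf/x$ and $\Nf-y$ together correctly while keeping $p$ and $q$ non-parallel and ensuring no spurious bases arise — in particular one must be careful that contracting $p$ (respectively $q$) and deleting the other really does collapse $\Lc$ onto the intended direct sum and does not leave extra rank or stray independent sets coming from the gluing coordinate. A secondary subtlety is the degenerate cases where one of $\iB(\Mf-x)$, $\iB(\Mf/x)$, $\iB(\Nf-y)$, $\iB(\Nf/y)$ vanishes; since $x\in\NL(\Mf)$ and $y\in\NL(\Nf)$ are non-loops, $\iB(\Mf/x)>0$ and $\iB(\Nf/y)>0$, so $\rho$ is well-defined, and the equality of ratios is still faithfully captured by equality of the two products, so the cross-multiplied form causes no trouble. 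Once the representation is pinned down the rest is a routine verification via direct-sum multiplicativity of the basis-counting function.
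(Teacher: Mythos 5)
You have the right target identity --- cross-multiplying $\rho(\Mf,x)=\rho(\Nf,y)$ into $\iB(\Mf-x)\ts\iB(\Nf/y)=\iB(\Mf/x)\ts\iB(\Nf-y)$ and realizing both products via multiplicativity of $\iB$ over direct sums --- but the construction you sketch does not deliver it, and you acknowledge as much by calling the gluing the ``main obstacle.'' If $p$ and $q$ are represented by new basis vectors lying outside the span of the $\Mf$- and $\Nf$-coordinates, then contracting $p$ and deleting $q$ does not touch $x$ or $y$ at all: $\Lc/p-q$ restricted to $X\sqcup Y$ is just $\Mf\oplus\Nf$, so both counts equal $\iB(\Mf)\ts\iB(\Nf)$ and your putative $\CDC$ instance is an equality for every input, which destroys the reduction. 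There is no mechanism by which contracting an added, independent element can force the deletion of $x$ on one side and the contraction of $y$ on the other, and you never specify one; this is a genuine gap, not a routine verification left to the reader.

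The fix is simpler than what you attempt: no new elements are needed. Take $\Mf':=\Mf\oplus\Nf$ and use the pair $(x,y)$ itself as the $\CDC$ instance. Since $x$ lies in the $\Mf$-block and $y$ in the $\Nf$-block, contraction and deletion act independently on the two summands, giving $\iB(\Mf'/x-y)=\iB(\Mf/x)\ts\iB(\Nf-y)$ and $\iB(\Mf'/y-x)=\iB(\Mf-x)\ts\iB(\Nf/y)$; equality of these two numbers is exactly your cross-multiplied condition $\rho(\Mf,x)=\rho(\Nf,y)$. Moreover $x$ and $y$ are non-loops of $\Mf'$ (being non-loops of their summands, they belong to $\NL(\Mf')$), and they are non-parallel since $\{x,y\}$ is a disjoint union of independent sets and hence independent in the direct sum, so $(\Mf',x,y)$ is a legitimate $\CDC$ input and the reduction is trivially polynomial. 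This one-line construction is precisely the paper's proof.
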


\begin{proof}
Let \ts $\Mf,\Nf, x,y$ \ts be the input of $\CDCR$.
Let \ts $\Mf':=\Mf \oplus \Nf$ \ts be the direct sum of
matroids \ts $\Mf$ and $\Nf$.  Note that \ts $\Mf'$ \ts is also binary.
We have:	
\begin{align*}
		\iB(\Mf'/x \ts - \ts y) \. = \.  \iB(\Mf/x) \. \iB(\Nf-y) \quad \text{and}
\quad \iB(\Mf'/y \ts - \ts x) \. = \.  \iB(\Mf-x) \. \iB(\Nf/y),
\end{align*}
which proves the reduction.
\end{proof}

\medskip

\section{Planar graphs and  continued fractions}\label{s:graph-CF}


\subsection{Graph theoretic definitions}\label{ss:graph-CF-def}
Throughout this paper  $G=(V(G),E(G))$ will be a graph with vertex set $V(G)$
and edge set $E(G)$, possibly with loops and multiple edges.
We will write $V$ and $E$ when the  underlying graph $G$ is clear from the context.

For an edge \ts $e=(v,w) \in E$, the \defnb{deletion} $G-e$ is the graph obtained by deleting
the edge~$e$ from the graph, and the \defnb{contraction} $G/e$ is the graph obtained
by identifying $v$ and $w$, and removing the resulting loops.
Recall that \ts $\at(G)$ \ts  denotes  the number of spanning trees in~$G$.  Note that
\ts $\at(G)$ \ts satisfies the \defnb{deletion-contraction} recurrence for every non-loop $e \in E$:
\[ \at(G) \, = \,  \at(G-e) \. + \. \at(G/e). \]

Let \ts $G=(V,E)$ \ts be a planar graph.  For every planar embedding of~$G$, the  \defnb{dual graph} \ts
$G^\ast=(V^\ast,E)$ \ts is the graph where vertices of $G^\ast$ are faces of~$G$, and each edge is  incident to  faces of $G$ that are separated from each other by the edge in the planar embedding.
While the dual graph $G^\ast$ can depend on the given planar embedding of~$G$, we will
not emphasize that as our proof is constructive and the embedding will be clear
from the context.

Note that deletion and contraction for dual graphs swap their meaning.  Formally,
for an edge \ts $e \in E$ \ts that is neither a bridge nor a loop, we have:
\begin{equation}\label{eq:tree-dual}
	\at(G-e) \, = \,  \at(G^\ast/e) \quad \text{and} \quad \at(G/e) \, = \, \at(G^\ast-e).
\end{equation}
Therefore, \ts $\rho(G^\ast,e) = \rho(G,e)^{-1}$.

\smallskip

\subsection{Continued fraction representation}\label{ss:st-CF-rep}
Given \. $a_0\geq 0$\., \. $a_1, \ldots, a_s \geq 1 $\., where  \ts $s \geq 0$,
the corresponding \defnb{continued fraction} \ts is defined as follows:
\[ [a_0\ts ; \ts a_1,\ldots, a_s] \ := \ a_0  \. +  \.  \cfrac{1}{a_1  \. +  \. \cfrac{1}{\ddots  \ +  \.  \frac{1}{a_s}}}\,.
\]

Integers \ts $a_i$ \ts are called \defn{quotients} \ts or \defn{partial quotients},
see e.g.\ \cite[$\S$10.1]{HardyWright}.  We refer to \cite[$\S$4.5.3]{Knuth98}
for a detailed asymptotic analysis of the quotients in connection with
the Euclidean algorithm, and further references.

\smallskip

The following result gives a connection between spanning trees and
continued fractions.  It is inspired by a similar construction for
perfect matchings given in \cite[Thm~3.2]{Sch19}.

\smallskip

\begin{thm}\label{thm:graph-cf}
	Let \. $a_0,  \ldots, a_s \ \geq 1 $\..
	Then there exists a  connected loopless bridgeless planar graph \ts $G=(V,E)$ \ts and an edge  \ts $e \in E$,
	such that
	\[ \frac{\at(G-e)}{\at(G/e)}
 \ = \   [a_0 \ts ; \ts a_1,\ldots,a_s]
	\]
	and \. $|E| \. = \. a_0+\ldots +a_s+1$\..
\end{thm}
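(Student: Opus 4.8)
The plan is to build the graph inductively on $s$, using the fact that continued fractions themselves satisfy a simple recursion: $[a_0; a_1,\ldots,a_s] = a_0 + 1/[a_1; a_2,\ldots,a_s]$. The key building blocks correspond to the two elementary operations on spanning-tree ratios: adding an edge in series (which adds $1$ to the ratio) and taking the planar dual (which inverts the ratio). Concretely, I would maintain a connected loopless bridgeless planar graph $G$ with a distinguished edge $e$, and track the pair $\bigl(\at(G-e),\at(G/e)\bigr)$ as I perform these operations, always keeping the edge count equal to $a_0 + \ldots + a_s + 1$.

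\textbf{Base case.} For $s = 0$ I need a graph $G$ with an edge $e$ such that $\at(G-e)/\at(G/e) = a_0$ and $|E| = a_0 + 1$. Take $G$ to be a cycle of length $a_0+1$; then $G/e$ is a cycle of length $a_0$ so $\at(G/e) = 1$, while $G-e$ is a path so $\at(G-e) = 1$ — wait, that gives ratio $1$, not $a_0$. Instead take $G$ to be a "theta graph": two vertices joined by one edge $e$ and by a path of length $a_0$ (i.e.\ $a_0+1$ edges total, forming $a_0$ parallel-in-series two-path... ). Then $G-e$ is a path ($\at = 1$) and $G/e$ is a bouquet of $a_0$ loops on the multi-edge... this needs care. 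The clean choice: let $G$ consist of $a_0$ edges in series between $u$ and $v$ forming a path $P$, plus one extra edge $e = (u,v)$ in parallel; this graph has $a_0+1$ edges, $G/e$ contracts $u,v$ leaving $a_0$ loops so $\at(G/e)=1$, and $G-e = P$ with $\at(P) = 1$. That is still ratio $1$. The correct base object giving ratio $a_0$ is the \emph{dual}: $a_0$ edges in parallel between $u,v$ together with one edge $e$ in series — but then $e$ is a bridge. The resolution is that ratio $a_0$ with $a_0+1$ edges requires $G-e$ to have $a_0$ spanning trees and $G/e$ to have $1$; take $a_0$ parallel edges between $u$ and $v$ and realize $e$ by the construction for $s\ge 1$ starting from $[a_0]$. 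I would therefore fold the base case into the inductive step, or simply use: $G$ = the cycle $C_{a_0+1}$ has a \emph{different} edge playing the role, but the truly reliable approach is the graph with $a_0$ parallel edges plus one more parallel edge $e$, and instead track $[0;a_0] = 1/a_0$ — then dualize.

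\textbf{Inductive step.} Given a graph $G_{s}$ with edge $e_s$ realizing $[a_1; a_2,\ldots,a_s]$ with $a_1+\ldots+a_s+1$ edges, form $G^\ast_s$ with the dual edge, whose ratio by \eqref{eq:tree-dual} is $1/[a_1;a_2,\ldots,a_s]$. Now I attach a path of $a_0$ new edges in series with $e_s^\ast$ — more precisely, subdivide appropriately and designate a new edge $e$ — so that, using the deletion–contraction recurrence $\at(G-e) = \at(G-e-e') + \at(G/e'-e)$ together with the series-composition rule, the new ratio becomes $a_0 + 1/[a_1;\ldots;a_s] = [a_0;a_1,\ldots,a_s]$, adding exactly $a_0$ edges. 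The bookkeeping: placing an edge $f$ in series with $e$ multiplies $\at(G/e)$ unchanged but replaces $\at(G-e)$ by $\at(G-e) + \at(G/e)$ in the combined ratio; iterating $a_0$ times adds $a_0$ to the ratio. Planarity, looplessness and bridgelessness are preserved: the dual of a connected bridgeless loopless planar graph is again connected bridgeless loopless planar, and series extension of a non-bridge edge keeps it a non-bridge (the series path plus the rest of the cycle through $e$ survives).

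\textbf{Main obstacle.} The delicate point is carrying all four adjectives — connected, loopless, bridgeless, planar — through both the duality step and the series-extension step simultaneously, while keeping $e$ a non-bridge non-loop so that \eqref{eq:tree-dual} applies at the next iteration. Series extension is fine, but I must check that after extension $e$ still lies on a cycle (it does, since $G_s - e_s$ was connected, hence $e_s$ lay on a cycle, hence so does $e$ after subdividing). The duality step requires the graph to be $2$-edge-connected at each stage so that $G^\ast$ has no loops; this follows from bridgelessness, which the induction maintains. I expect the write-up to spend most of its effort verifying these structural invariants and the exact edge count $\sum a_i + 1$, the arithmetic of the ratio transformation being immediate from deletion–contraction and the continued-fraction recursion.
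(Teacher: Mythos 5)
Your skeleton is the same as the paper's (induction on $s$, planar duality to invert the ratio, an elementary one-edge move to add $1$, and edge-count bookkeeping), but the inductive step as you state it would fail because you have series and parallel swapped. The move that adds $1$ to $\rho(G,e)=\tau(G-e)/\tau(G/e)$ is adding a new edge $e'$ \emph{parallel} to $e$ and taking $e'$ as the new distinguished edge: then $\tau(G'/e')=\tau(G/e)$ and $\tau(G'-e')=\tau(G)=\tau(G-e)+\tau(G/e)$, which is exactly the bookkeeping formula you wrote down --- but that formula describes the parallel move, not the series move. Subdividing $e$ (your ``attach a path of $a_0$ new edges in series'') does the dual thing: it leaves $\tau(G-e)$ unchanged and replaces $\tau(G/e)$ by $\tau(G-e)+\tau(G/e)$, hence adds $1$ to $1/\rho$, not to $\rho$. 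Consequently your stated order of operations --- dualize to get ratio $1/[a_1;\ldots,a_s]$, then series-extend $a_0$ times --- yields $1/\bigl(a_0+[a_1;\ldots,a_s]\bigr)=[0;a_0,a_1,\ldots,a_s]$ rather than $[a_0;a_1,\ldots,a_s]$. The fix is immediate and is what the paper does: after dualizing, add $a_0$ parallel copies of the distinguished edge (Lemma~\ref{lem:graph-cf-1} applied $a_0$ times to $H^\ast$); equivalently, you could series-extend first and dualize last.

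The base case is also never pinned down, and the computations there are off: for the cycle $C_{a_0+1}$ with edge $e$ one has $\tau(C_{a_0+1}/e)=\tau(C_{a_0})=a_0$ (not $1$) and $\tau(C_{a_0+1}-e)=1$, so the cycle realizes $1/a_0$, and its planar dual --- the bundle of $a_0+1$ parallel edges between two vertices, with $\rho=a_0/1$ --- realizes $a_0$ with exactly $a_0+1$ edges, and is connected, loopless, bridgeless (since $a_0\ge 1$ there are at least two parallel edges) and planar. This is precisely the paper's base case. Once the base case and the parallel-edge lemma are stated correctly, your induction and the edge count $a_0+\cdots+a_s+1$ go through exactly as in the paper, and the structural invariants you worry about (looplessness and bridgelessness are exchanged by duality, and adding a parallel edge preserves all four properties) are immediate rather than delicate.
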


\smallskip

We start with the following lemma.

\smallskip

\begin{lemma}\label{lem:graph-cf-1}
	Let $G=(V,E)$ be a connected loopless bridgeless planar graph, and let $e \in E$.
	Then there exists a connected loopless bridgeless planar graph $G'=(V',E')$ and  $e'\in E'$ such that
	\[  \frac{\at(G'-e')}{\at(G'/e')}  \ = \  1 \. + \. \frac{\at(G-e)}{\at(G/e)}
\]
	and  \. $|E'| \ts = \ts |E|+1$.
\end{lemma}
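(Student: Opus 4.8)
The plan is to realize the operation "add $1$" on the spanning tree ratio by a single edge addition that, in the dual picture, corresponds to subdividing or doubling an edge appropriately. Recall from \eqref{eq:tree-dual} that $\rho(G^\ast,e)=\rho(G,e)^{-1}$, so it is natural to look for a construction whose effect on $\rho$ is a Möbius transformation. The identity $1+x = \frac{1}{\,1/(1+x)\,}$ suggests passing to the dual, performing an operation there that sends $y=1/\rho(G,e)$ to $\frac{y}{y+1}$ (a "parallel extension" in the dual), and dualizing back.

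Concretely, I would proceed as follows. First, take a planar embedding of $G$ and let $G^\ast$ be its dual, with $e$ the same edge viewed in $G^\ast$; by \eqref{eq:tree-dual} we have $\tau(G^\ast - e)=\tau(G/e)$ and $\tau(G^\ast/e)=\tau(G-e)$. Second, form $H$ from $G^\ast$ by adding a new edge $e'$ in parallel with $e$ (that is, with the same two endpoints, which keeps the graph planar since $e$ lies on the boundary of a face), and still consider the edge $e'$. Then $|E(H)| = |E(G^\ast)|+1 = |E|+1$. Using the deletion--contraction recurrence on $e'$ together with the fact that $H - e' = G^\ast$ and $H/e' = G^\ast/e$, and that $e$ becomes a loop in $H/e'$ so $\tau(H/e' - e)=\tau(H/e'/e)=\tau(G^\ast/e)$, I compute
\[
\frac{\tau(H - e')}{\tau(H/e')} \ = \ \frac{\tau(G^\ast)}{\tau(G^\ast/e)} \ = \ \frac{\tau(G^\ast-e)+\tau(G^\ast/e)}{\tau(G^\ast/e)} \ = \ 1 + \frac{\tau(G^\ast - e)}{\tau(G^\ast/e)} \ = \ 1 + \frac{\tau(G/e)}{\tau(G-e)}\,,
\]
so $\rho(H,e') = 1 + \rho(G,e)^{-1}$. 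Third, dualize once more: set $G' = H^\ast$ with the induced embedding and keep the edge $e'$. By \eqref{eq:tree-dual} again, $\rho(G',e') = \rho(H,e')^{-1} = \big(1+\rho(G,e)^{-1}\big)^{-1}$, which is not quite what we want — so instead of taking the dual of $H$ I should take the dual of $H$ with respect to $e'$ only on one side, i.e.\ I want $G'$ with $\rho(G',e') = 1 + \rho(G,e)$. The cleaner route: add $e'$ in \emph{series} with $e$ in $G$ itself (subdivide a copy), which on the primal side gives $\tau(G'-e') = \tau(G'/e') + \tau(\text{something})$; after checking the two-edge series/parallel reductions one gets exactly $\rho(G',e') = 1 + \rho(G,e)$ with $|E'|=|E|+1$. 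I would verify this by the deletion--contraction recurrence directly: if $e'$ is a new edge forming a path of length two with $e$ between their common endpoints replaced appropriately, then $\tau(G'/e') = \tau(G)$ and $\tau(G'-e') = \tau(G-e') $ where deleting $e'$ leaves $e$ as a bridge or not, and a short case analysis yields the claim.

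The main obstacle I anticipate is \textbf{bookkeeping the planarity and the loopless/bridgeless conditions} through the construction: adding the new edge $e'$ must not create a loop (so $e'$ cannot be a self-loop), must not create a bridge (so $e'$ must lie on a cycle, which forces us to add it across a face so that it is parallel to an existing path), and the embedding must be updated consistently so that \eqref{eq:tree-dual} continues to apply at the next inductive step in the proof of Theorem~\ref{thm:graph-cf}. Once the right local gadget is pinned down, the spanning-tree count identity itself is a one-line application of deletion--contraction, and the edge count $|E'| = |E|+1$ is immediate. I would therefore spend most of the write-up carefully specifying where $e'$ is inserted (across the face of the embedding bounded in part by $e$) and checking that $G'$ remains connected, loopless, bridgeless, and planar.
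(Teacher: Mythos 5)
Your final construction does not work: adding $e'$ \emph{in series} with $e$ (i.e.\ subdividing) gives the wrong ratio. If $G'$ is $G$ with $e$ subdivided into $e$ followed by $e'$, then contracting $e'$ recovers $G$, so $\at(G'/e')=\at(G)=\at(G-e)+\at(G/e)$, while deleting $e'$ leaves the new vertex pendant on $e$, so $\at(G'-e')=\at(G-e)$. Hence
\[
\frac{\at(G'-e')}{\at(G'/e')} \ = \ \frac{\at(G-e)}{\at(G-e)+\at(G/e)} \ = \ \Bigl(1+\rho(G,e)^{-1}\Bigr)^{-1},
\]
which is at most $1$ and is exactly the quantity you had already rejected after your double-dualization attempt — not $1+\rho(G,e)$. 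So the ``short case analysis'' you defer to cannot yield the claim; the series gadget realizes the M\"obius map $\rho\mapsto \rho/(1+\rho)$, not $\rho\mapsto 1+\rho$.

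The fix is the operation you in fact computed correctly in your second step, just applied to the wrong graph: add $e'$ \emph{in parallel} to $e$ in $G$ itself (no dualization at all). Then $G'-e'\cong G$ and $G'/e'\cong G/e$ (the old copy of $e$ becomes a loop under contraction and is removed), so
\[
\frac{\at(G'-e')}{\at(G'/e')} \ = \ \frac{\at(G-e)+\at(G/e)}{\at(G/e)} \ = \ 1+\frac{\at(G-e)}{\at(G/e)},
\]
with $|E'|=|E|+1$; this is precisely the paper's one-line proof. Your worry about the structural conditions is also easily dispatched for this gadget: the endpoints of $e$ are distinct since $G$ is loopless, so $e'$ is not a loop; $e$ and $e'$ form a $2$-cycle, so neither is a bridge and no new bridge can arise from adding an edge; and $e'$ can be drawn inside a face bounded by $e$, preserving planarity and connectivity.
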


\begin{proof}
	Let $G'$ be obtained from $G$ by
	adding an edge $e'$ that is parallel to $e$.
	Note that $G'-e'$ is isomorphic to $G$, and $G'/e'$ is isomorphic to $G/e$,
	and it follows that
	\begin{align*}
& \at(G'/e') \, = \, \at(G/e) \quad \text{and} \quad
\at(G'-e') \, = \,  \at(G) \, = \,  \at(G-e) \. + \. \at(G/e). 	
	\end{align*}
This implies the lemma.
\end{proof}

\smallskip

\subsection{Proof of Theorem~\ref{thm:graph-cf}}
We use induction on~$s$.
For \ts $s=0$,
let $H$ be the cycle graph on $a_0+1$ vertices, and let $f$ be any edge of~$H$.
Note that $H/f$ is the cycle graph on $a_0$ vertices, while $H-f$ is the path graph on $a_0+1$ vertices.
Thus we have
\[ \at(H/f) \, = \, a_0 \quad \text{and} \quad \at(H-f) \, = \,  1.
\]
We also have \. $|E(H)|=a_0+1$.
It then follows that
\[
\frac{\at(H^\ast-f)}{\at(H^\ast/f)}  \ = \ \frac{\at(H/f)}{\at(H-f)} \ = \  a_0\ts,
\]
and the claim follows by taking \ts $G \gets H^\ast$ \ts and \ts $e \gets f$.

For $s\ge 1$, by induction there exists a connected loopless
bridgeless planar graph \ts $H$ \ts and \ts $f \in E(H)$ \ts such that
\[ \frac{\at(H-f)}{\at(H/f)}  \ = \ [a_1;a_2,\ldots,a_s]\ts,
\]
and with \. $|E(H)|=a_1+\ldots + a_s+1$.

Now, by applying Lemma~\ref{lem:graph-cf-1} for $a_0$ many times  to $H^\ast$, there exists a graph $G$ and a  $e \in E(G)$ such that
\[ \frac{\at(G-e)}{\at(G/e)} \ = \  a_0 + \frac{\at(H^\ast-f)}{\at(H^\ast/f)} \ = \ a_0 + \frac{\at(H/f)}{\at(H-f)}  \ = \  a_0  \. + \. \frac{1}{[a_1;a_2,\ldots,a_s]} \ = \  [a_0;a_1,\ldots,a_s],\]
and with \. $|E(G)| \. = \. a_0  +  |E(H^\ast)| \. = \.  a_0+\ldots+a_s+1$.
This completes the proof. \qed
\smallskip

\subsection{Sums of continued fractions}\label{ss:st-sums-CF}
We now extend Theorem~\ref{thm:graph-cf} to sums of two continued fractions:
\smallskip

\begin{thm}\label{thm:graph-cf-sum}
	Let \. $a_0, \ldots, a_s,   b_0,\ldots, b_t \geq 1 $\..
Then there exists a  connected loopless bridgeless planar graph \ts $G=(V,E)$ \ts and an edge \ts $e \in E$,
such that
\[ \frac{\at(G-e)}{\at(G/e)} \ = \   \frac{1}{[a_0 \ts ; \ts a_1,\ldots,a_s]} \. + \. \frac{1}{[b_0 \ts ; \ts b_1,\ldots,b_t]}
\]
and \. $|E| \. = \. a_0+\ldots +a_s + b_0+\ldots +b_t+1$\..
\end{thm}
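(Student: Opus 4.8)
\smallskip

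The plan is to realize the two summands $1/[a_0;a_1,\ldots,a_s]$ and $1/[b_0;b_1,\ldots,b_t]$ separately by combining Theorem~\ref{thm:graph-cf} with planar duality, and then to glue the two realizations together by an operation that \emph{adds} spanning tree ratios. First, by Theorem~\ref{thm:graph-cf} there are connected loopless bridgeless planar graphs $G_1,G_2$ and edges $e_1=(u_1,v_1)\in E(G_1)$, $e_2=(u_2,v_2)\in E(G_2)$ with $\at(G_1-e_1)/\at(G_1/e_1)=[a_0;a_1,\ldots,a_s]$, $\at(G_2-e_2)/\at(G_2/e_2)=[b_0;b_1,\ldots,b_t]$, and $|E(G_1)|=a_0+\ldots+a_s+1$, $|E(G_2)|=b_0+\ldots+b_t+1$. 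Passing to planar duals, each $G_i^\ast$ is again connected loopless bridgeless planar (so, in particular, $2$-edge-connected), $e_i$ stays neither a loop nor a bridge, $|E(G_i^\ast)|=|E(G_i)|$, and by \eqref{eq:tree-dual} one has $\rho(G_i^\ast,e_i)=\rho(G_i,e_i)^{-1}$; hence $(G_1^\ast,e_1)$ realizes $1/[a_0;a_1,\ldots,a_s]$ and $(G_2^\ast,e_2)$ realizes $1/[b_0;b_1,\ldots,b_t]$.

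Next I would set up the gluing. For a graph $H$ with two distinguished vertices $p,q$, write $F(H;p,q)$ for the number of spanning forests of $H$ with exactly two components, one containing $p$ and one containing $q$; equivalently $F(H;p,q)=\at(H')$ where $H'$ is $H$ with $p$ and $q$ identified, and in particular $\at(G/e)=F(G-e;p,q)$ whenever $e=(p,q)\in E(G)$. The only identity needed is the standard $2$-sum formula: if $H=H_1\cup H_2$ with $H_1\cap H_2=\{p,q\}$ and no shared edges, then $\at(H)=\at(H_1)\,F(H_2;p,q)+F(H_1;p,q)\,\at(H_2)$ and $F(H;p,q)=F(H_1;p,q)\,F(H_2;p,q)$, both proved by a one-line deletion--contraction on an edge of one side. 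Now let $H_1:=G_1^\ast-e_1$, $H_2:=G_2^\ast-e_2$, and build $G$ from disjoint copies of $H_1,H_2$ by identifying $u_1$ with $u_2$ (call it $u$) and $v_1$ with $v_2$ (call it $v$), and adding a new edge $e=(u,v)$. Then $G-e=H_1\cup H_2$ and $G/e$ identifies $u$ with $v$, so
\[
\rho(G,e)\ =\ \frac{\at(G-e)}{\at(G/e)}\ =\ \frac{\at(H_1)\,F(H_2;u,v)+F(H_1;u,v)\,\at(H_2)}{F(H_1;u,v)\,F(H_2;u,v)}\ =\ \frac{\at(H_1)}{F(H_1;u,v)}+\frac{\at(H_2)}{F(H_2;u,v)}.
\]
Since $F(H_i;u,v)=F(G_i^\ast-e_i;u_i,v_i)=\at(G_i^\ast/e_i)$ and $\at(H_i)=\at(G_i^\ast-e_i)$, the $i$-th term equals $\rho(G_i^\ast,e_i)$, so $\rho(G,e)=1/[a_0;a_1,\ldots,a_s]+1/[b_0;b_1,\ldots,b_t]$, as required, while $|E(G)|=|E(H_1)|+|E(H_2)|+1=(|E(G_1^\ast)|-1)+(|E(G_2^\ast)|-1)+1=a_0+\ldots+a_s+b_0+\ldots+b_t+1$.

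Finally I must verify that $G$ can be taken connected, loopless, bridgeless and planar. Connectedness and looplessness are immediate: $H_1$ and $H_2$ are connected (each $G_i^\ast$ is $2$-edge-connected, so stays connected after deleting $e_i$), they share $u$ and $v$, and $e=(u,v)$ with $u\ne v$ since $G_i^\ast$ has no loops. For bridgelessness, $e$ lies on a cycle ($e$ together with any $u$--$v$ path in $H_1$), and any $f\in E(H_1)$ whose deletion disconnects $H_1$ must separate $u$ from $v$ in $H_1$ (the two pieces of $H_1-f$ are rejoined only by $e_1$ inside the $2$-edge-connected $G_1^\ast$), so $G-f$ still joins $u$ to $v$ through $H_2+e$ and $f$ is not a bridge of $G$; symmetrically for $H_2$. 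For planarity, $e_i$ borders exactly two faces of the plane graph $G_i^\ast$, which merge into a single face of $H_i$ whose boundary contains both $u_i$ and $v_i$; re-embedding $H_1,H_2$ with this face outermost and placing the two drawings so that they meet precisely at $u$ and $v$, the edge $e$ can be inserted into the resulting common face without crossings. The main obstacle I anticipate is not a single computation but exactly this last bookkeeping: carrying the ``connected/loopless/bridgeless/planar'' package through duality, through the deletions $G_i^\ast-e_i$ (which can create local bridges, so the $2$-edge-connectivity of $G_i^\ast$ is essential), and through the gluing, and in particular justifying that $e$ genuinely fits into a common face — where the ``two faces at $e_i$ merge'' observation does the work. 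The $2$-sum identity and the arithmetic are routine.
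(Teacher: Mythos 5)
Your proposal is correct and follows essentially the same route as the paper: realize the two continued fractions via Theorem~\ref{thm:graph-cf}, pass to planar duals using \eqref{eq:tree-dual}, and glue the two duals along the distinguished edge (your ``identify $u_1{=}u_2$, $v_1{=}v_2$ and add $e$'' construction is exactly the paper's edge-identification in Lemma~\ref{lem:graph-sum}, and your $2$-sum formula with $F(H;p,q)$ is the same deletion/connection case analysis). Your explicit verification of the connected/loopless/bridgeless/planar package is a welcome addition, since the paper asserts these properties without spelling out the details.
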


\smallskip

We start with the following lemma.

\smallskip

\begin{lemma}\label{lem:graph-sum}
	Let $G,H$ be connected loopless bridgeless planar graphs, and let \ts $e\in E(G)$\ts, \ts $f\in E(H)$\ts.
	Then there exists a connected loopless bridgeless planar graph \ts $G'$ \ts  and an edge \ts $e' \in E(G')$,
	such that
\[ \frac{\at(G'-e')}{\at(G'/e')} \ = \  \frac{\at(G-e)}{\at(G/e)} \. + \. \frac{\at(H-f)}{\at(H/f)}
\]
	and \. $E(G') \. = \. E(G)+E(H)-1$.
\end{lemma}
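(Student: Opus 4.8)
The plan is to realize the sum of the two spanning-tree ratios as a single spanning-tree ratio of a \emph{parallel connection}, plus a single extra edge. Write $e=(u_1,u_2)\in E(G)$ and $f=(w_1,w_2)\in E(H)$; since $G$ and $H$ are loopless, $u_1\neq u_2$ and $w_1\neq w_2$, and since they are bridgeless, neither $e$ nor $f$ is a bridge. Let $K$ be the graph obtained from disjoint copies of $G-e$ and $H-f$ by identifying $u_1$ with $w_1$ (call the resulting vertex $s$) and $u_2$ with $w_2$ (call it $t$), and put $G':=K+e'$, where $e'=(s,t)$ is a new edge. Then $|E(G')|=(|E(G)|-1)+(|E(H)|-1)+1=|E(G)|+|E(H)|-1$, as required, and $G'$ is connected (deleting the non-bridge $e$ leaves $G$ connected, and gluing connected graphs keeps them connected) and loopless (as $s\neq t$); bridgelessness and planarity I will check at the end.

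First I would record the elementary identity: for a connected graph $\Gamma$ with distinct vertices $p,q$, the number $\at_2(\Gamma;p,q)$ of spanning forests of $\Gamma$ with exactly two components — one containing $p$, the other $q$ — equals $\at\big(\Gamma/\{p\sim q\}\big)$, where $\Gamma/\{p\sim q\}$ identifies $p$ and $q$ (discarding the resulting loops); the correspondence merges/splits the two parts of the forest at the identified vertex. Since contracting an edge is the same as deleting it and then identifying its endpoints, this gives $\at_2(G-e;u_1,u_2)=\at(G/e)$ and $\at_2(H-f;w_1,w_2)=\at(H/f)$, and also $\at(G'-e')=\at(K)$ and $\at(G'/e')=\at(K/\{s\sim t\})=\at_2(K;s,t)$.

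The combinatorial core is the pair of product formulas
\begin{align*}
\at(K) \ &= \ \at(G-e)\,\at_2(H-f;w_1,w_2)\ +\ \at_2(G-e;u_1,u_2)\,\at(H-f),\\
\at_2(K;s,t) \ &= \ \at_2(G-e;u_1,u_2)\,\at_2(H-f;w_1,w_2),
\end{align*}
which I would prove via the restriction map $F\mapsto\big(F\cap E(G-e),\,F\cap E(H-f)\big)$. Since the two parts of $K$ meet only in $\{s,t\}$, any connected component of $F\cap E(G-e)$ avoiding $\{s,t\}$ would be a full component of $F$; hence a spanning forest of $K$ restricts on each part to a spanning forest each of whose components meets $\{s,t\}$, i.e.\ to either a spanning tree (joining $s$ and $t$) or a two-component forest separating them. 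A short case check — both parts joining $s,t$ creates a cycle; both parts separating $s,t$ disconnects $K$ — yields the explicit bijections underlying the two displayed identities. Substituting the translations of the previous paragraph and dividing (the denominators $\at(G/e),\at(H/f)$ are nonzero since $G/e,H/f$ are connected) gives
\[
\frac{\at(G'-e')}{\at(G'/e')}\ =\ \frac{\at(G-e)\,\at(H/f)+\at(G/e)\,\at(H-f)}{\at(G/e)\,\at(H/f)}\ =\ \frac{\at(G-e)}{\at(G/e)}+\frac{\at(H-f)}{\at(H/f)}.
\]
(Conceptually, $\at(G-e)/\at(G/e)$ is the effective conductance between the endpoints of $e$ in $G-e$, and $K$ is the parallel connection of the two one-terminal-pair networks, so the conductances add.)

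Finally I would verify bridgelessness and planarity of $G'$. For bridgelessness: $e'$ is not a bridge because $G'-e'=K$ is connected; and any other edge $g$, say of the $G-e$ part, lies on a cycle $C$ of $G$, and if $C$ uses $e=(u_1,u_2)$ one replaces $e$ by the two-edge path through $e'$ to obtain a cycle of $G'$ through $g$. For planarity: fix planar embeddings of $G$ and $H$ with $f$ on the outer face of $H$; since $e$ is not a bridge it borders two distinct faces of $G$, and one may draw a scaled copy of the embedding of $H-f$ inside one of those faces, gluing $w_1,w_2$ (now both on the outer boundary of that copy) onto $u_1,u_2$; in the resulting planar embedding of $K$ the vertices $s,t$ lie on a common face, and $e'$ is drawn inside it. The step I expect to demand the most care is precisely this planarity argument — aligning $w_1,w_2$ with $u_1,u_2$ and certifying a common face of $K$ containing both $s$ and $t$ (the standard fact that $2$-sums of planar graphs are planar) — whereas the enumeration identities are routine bijections once the restriction map and the $\at_2$ bookkeeping are in place.
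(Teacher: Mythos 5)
Your construction (delete $e$ and $f$, glue the endpoints, add one new edge $e'$) yields exactly the paper's graph obtained by identifying the edges $e$ and $f$, and your two product formulas for $\at(K)$ and $\at_2(K;s,t)$ are precisely the paper's identities for $\at(G'-e')$ and $\at(G'/e')$, established by the same restriction/case analysis on spanning trees. So the proposal is correct and essentially the same proof as the paper's, just with more explicit verification of the edge count, bridgelessness and planarity.
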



\begin{proof}
Let \ts $e=(x,y)\in E(G)$ \ts and let \ts $f=(u,v)\in E(H)$.
Consider graph
$$G' \, := \, G\oplus H \ts / \ts (x,u), \ts (y,v)
$$
obtained by taking the disjoint
union of $G$ and $H$, then identifying $e$ and $f$.
Denote by \ts $e'\in E(G')$ \ts the edge resulted from
identifying $e$ and $f$.
	
First, note that
	\begin{align}\label{eq:graph-sum-1}
		\at(G'/e') \  = \  \at(G/e) \. \at(H/f).
	\end{align}
This is because \ts $G'/e' = (G/e \oplus H/f)/(x,u)$, i.e.\ can be obtained by identifying $x$ with $u$
in the disjoint union of $G/e$ and $H/f$.

Second, note that
		\begin{align}\label{eq:graph-sum-2}
		\at(G'-e') \  = \  \at(G-e) \. \at(H/f) \. + \. \at(G/e) \. \at(H-f).
	\end{align}
Indeed, let $T$ be a spanning tree of $G'-e'$.
There are two possibilities.
First, $x$ and $y$ are connected in $T$ through a path in $G$.
Then, restricting $T$ to edges of $G$ gives us a spanning tree in $G-e$, while restricting $T$ to edges of $H$ gives us a spanning tree of $H/f$.
This bijection gives us the first term in the RHS of \eqref{eq:graph-sum-2}.

Second, suppose that $x$ and $y$ are connected in $T$ through a path in $H$.
Then, restricting~$T$ to edges of $G$ gives us a spanning tree in $G/e$, while restricting $T$ to edges of $H$ gives us a spanning tree of $H-f$.
This bijection gives us the second term in the RHS of \eqref{eq:graph-sum-2}.

The lemma now follows by combining \eqref{eq:graph-sum-1} and \eqref{eq:graph-sum-2}.
\end{proof}

\smallskip

\subsection{Proof of Theorem~\ref{thm:graph-cf-sum}}
	By Theorem~\ref{thm:graph-cf}, there exists connected loopless bridgeless planar graphs $G,H$ and  \ts $e\in E(G)$, \ts $f \in E(H)$ \ts such that
	\[ \frac{\at(G-e)}{\at(G/e)}
\ = \   [a_0 \ts ; \ts a_1,\ldots,a_s]\ts, \qquad \frac{\at(H-f)}{\at(H/f)}
\ = \   [b_0 \ts ; \ts b_1,\ldots,b_t]\ts,
\]
and with \. $|E(G)| \. = \. a_0+\ldots +a_s+1$, \. $|E(H)| \. = \. b_0+\ldots +b_t+1$.
Applying Lemma~\ref{lem:graph-sum} to \ts $(G^\ast,e)$ \ts and \ts $(H^\ast,f)$,
gives a planar graph \ts $G'$ \ts  and \ts $e' \in E(G')$,
	such that
	\begin{align*}
	 \frac{\at(G'-e')}{\at(G'/e')} \ &= \  \frac{\at(G^\ast-e)}{\at(G^\ast/e)} \. + \. \frac{\at(H^\ast-f)}{\at(H^\ast/f)} \ = \ \frac{\at(G/e)}{\at(G-e)} \. + \. \frac{\at(H/f)}{\at(H-f)} \\
	 & = \  \frac{1}{[a_0 \ts ; \ts a_1,\ldots,a_s]} \. + \. \frac{1}{[b_0 \ts ; \ts b_1,\ldots,b_t]}
	\end{align*}
	and \. $E(G') = E(G^\ast)+E(H^\ast)-1 =  a_0+\ldots+a_s \ts + \ts b_0+\ldots+b_t \ts + \ts 1$, as desired.  \qed

\medskip

\section{Counting spanning trees}\label{s:st}

In this section, we prove Theorem~\ref{t:st} and Lemma~\ref{lem:st-ratio}.


\subsection{Proof of Theorem~\ref{t:st}}\label{ss:st-proof-st-thm}
For \ts $\alpha\in \qqq_{>0}$, consider the sum of the quotients of~$\alpha$~:
\[
\as(\alpha) \ := \ a_0 \. + \. \ldots \. + \. a_s \quad \text{where} \quad  \alpha \ = \ [a_0 \ts ; \ts a_1,\ldots,a_s].
\]
We will need the following theorem from number theory.
\smallskip

\begin{thm}[{\rm Larcher \cite[Cor.~2]{Lar86}}{}]\label{thm:Lar}
For  \ts $m\geq 9$ \ts and  $L \geq 2$, the set
\[
\Big\{ d \in [m] \  : \ \gcd(d,m)=1 \ \ \text{and} \ \  \as\left(\tfrac{d}{m}\right) \ \leq \  L \.  \tfrac{m}{\phi(m)} \. \log m \. \log \log m \Big\}
\]
contains at least \. $\left(1-\tfrac{16}{\sqrt{L}}\right) \. \phi(m)$ \. many elements, where $\phi$ is the Euler's totient function.
\end{thm}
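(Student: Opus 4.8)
The plan is probabilistic. Fix $m\ge 9$ and $L\ge 2$; the claim is vacuous for $L\le 256$, so we may assume $L>256$, which leaves ample room in all constants. Let $d$ be uniform among the $\phi(m)$ residues in $[m]$ coprime to $m$, write $d/m=[0;a_1,\ldots,a_\ell]$ for its continued fraction expansion, and set $\Theta:=L\cdot\frac{m}{\phi(m)}\log m\log\log m$; we must bound by $16/\sqrt{L}$ the probability that $\as(d/m)=\sum_i a_i$ exceeds $\Theta$. Split $\as(d/m)=\sum_{i\,:\,a_i\le K}a_i+\sum_{i\,:\,a_i>K}a_i$ at $K:=\lceil\log m\rceil$, and recall that the length $\ell$ is always $O(\log m)$, since consecutive convergent denominators grow at least as fast as Fibonacci numbers and the last one equals $m$. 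The workhorse is the following \emph{counting lemma} --- a finite-level form of the Gauss--Kuzmin law $\Pr[a_i=k]\asymp k^{-2}$ --- with an absolute implied constant: for every integer $t\ge 1$,
\[
\sum_{\substack{1\le d\le m\\ \gcd(d,m)=1}}\#\big\{\,i\ge 1\,:\,a_i(d/m)\ge t\,\big\}\ =\ O\!\left(\frac{m\log m}{t}\right).
\]
(For $t=1$ this is just $\sum_d(\ell_d+1)\le m\cdot O(\log m)$.) Granting it, Markov's inequality at two different moment orders finishes the proof. Summing the lemma over $t\le K$ gives $\tfrac1{\phi(m)}\sum_d\sum_i\min(a_i,K)=O\!\left(\tfrac{m}{\phi(m)}\log m\log\log m\right)=O(\Theta/L)$, so the small-quotient part exceeds $\Theta/2$ with probability $O(1/L)$. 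Subadditivity of the square root, a dyadic decomposition in the size of $a_i$, and the lemma give $\tfrac1{\phi(m)}\sum_d\sum_{i\,:\,a_i>K}\sqrt{a_i}=O\!\left(\tfrac{m\log m}{\phi(m)\sqrt{K}}\right)=O\!\left(\tfrac{m}{\phi(m)}\sqrt{\log m}\right)$; applying Markov to $\sqrt{\sum_{a_i>K}a_i}\le\sum_{a_i>K}\sqrt{a_i}$, squaring the threshold, and invoking the classical bound $m/\phi(m)=O(\log\log m)$ shows the large-quotient part exceeds $\Theta/2$ with probability $O(1/\sqrt{L})$. A union bound gives failure probability $O(1/\sqrt{L})$, and a routine tracking of constants (easy since $L>256$) yields $16/\sqrt{L}$.

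It remains to prove the counting lemma, and this is where I expect the real work. To each pair $(d,i)$ with $a_i(d/m)\ge t$ one attaches a Diophantine witness built from the convergents $p_j/q_j$ of $d/m$ and the Euclidean remainders $\varrho_0=m>\varrho_1=d>\cdots>\varrho_\ell=1$: setting $\beta_i=[a_i;a_{i+1},\ldots,a_\ell]\ge t$, one has $q_{i-1}\le m/t$ and $|q_{i-1}d-p_{i-1}m|=m/(q_{i-1}\beta_i+q_{i-2})\le m/(t q_{i-1})$, so $p_{i-1}/q_{i-1}$ approximates $d/m$ exceptionally well. Equivalently, and more conveniently, the recurrence $q_j=a_jq_{j-1}+q_{j-2}$ shows that $(d,i)$ corresponds bijectively --- up to boundary cases from the identity $[\ldots,a,1]=[\ldots,a+1]$ --- to a quadruple $(A,B,C,D)=(\varrho_{i-1},\varrho_i,q_{i-1},q_{i-2})$ of integers with
\[
CA+DB=m,\qquad \gcd(A,B)=\gcd(C,D)=1,\qquad A\ge tB\ge t,\qquad 0\le D<C.
\]
For fixed $(A,B)$ the congruence $CA\equiv m\pmod{B}$ confines $C$ to a single residue class modulo $B$ inside the interval $(m/(A+B),\,m/A\,]$, leaving at most $O(m/A^2)+1$ admissible pairs $(C,D)$; the main term sums to $\sum_{B\le m/t}\sum_{A\ge tB}m/A^2=\sum_{B\le m/t}O(m/(tB))=O(m\log m/t)$, as needed. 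The ``$+1$'' term matters only when $A(A+B)>m$, i.e.\ $C<\sqrt{m}$ (the witnessing convergent is ``early''); here one dualizes, fixing $(C,D)$ and counting the at most one solution $A$ per residue class modulo $D$ inside the valid window, whose length the constraint $A\ge tB$ keeps down to $O(m/(tC^2))$.

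I expect the genuine obstacle to be making this last count uniform in $m$ and $t$. Iterating the naive lattice-point bound ``window length over modulus, plus one'' loses a factor $\log m$, because $\sum_{D<C}1/D\asymp\log C$ and then $\sum_{C<\sqrt{m}}(\log C)/C\asymp(\log m)^2$; recovering the sharp $O(m\log m/t)$ requires a genuine double-counting or cancellation argument --- in the spirit of the average-case analysis of the Euclidean algorithm (Heilbronn, Dixon, Tonkov) and its refinements --- showing that for the vast majority of pairs $(C,D)$ the short window contains \emph{no} admissible $A$. Once the counting lemma is in hand, the remainder of the argument is purely formal.
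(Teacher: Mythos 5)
A preliminary remark: the paper does not prove this statement at all — Theorem~\ref{thm:Lar} is imported verbatim from Larcher \cite[Cor.~2]{Lar86} and used as a black box — so your attempt can only be judged on its own terms. On those terms, your probabilistic skeleton is sound: truncating at $K\approx\log m$, handling the truncated part by a first-moment (Markov) bound and the tail by a square-root-moment bound, and spending $m/\phi(m)=O(\log\log m)$ exactly where you spend it, does give failure probability $O(1/\sqrt L)$ \emph{once one has} the counting lemma $\sum_{d}\#\{i:a_i(d/m)\ge t\}=O(m\log m/t)$. But that lemma is precisely what you do not prove, and it is the entire content of the theorem. Your quadruple encoding $(A,B,C,D)=(\varrho_{i-1},\varrho_i,q_{i-1},q_{i-2})$ with $CA+DB=m$ is correct and your main term is fine, but, as you yourself concede, the ``$+1$'' boundary term is not controlled by the argument you give: summing $1$ over the admissible $(A,B)$ (or, after dualizing, over $(C,D)$ with $C\lesssim\sqrt m$) costs on the order of $m$, which swamps $m\log m/t$ exactly in the regime $t\gg\log m$ that matters, and announcing that ``a genuine double-counting or cancellation argument'' is needed is an acknowledgement of a gap, not a proof. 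The gap is real but fillable, and more cheaply than you fear: drop the quadruple and use the approximation property directly. If $a_i\ge t$, set $q=q_{i-1}$, $p=p_{i-1}$; then $q\le m/t$ (since $q_i\ge tq_{i-1}$ and $q_i\le m$) and $0<|qd-pm|=\varrho_i\le m/(tq)$ (from $m=q_{i-1}\varrho_{i-1}+q_{i-2}\varrho_i\ge tq_{i-1}\varrho_i$). For fixed $q$ and fixed $s$ with $\gcd(q,m)\mid s$, the congruence $qd\equiv s\ (\mathrm{mod}\ m)$ has at most $\gcd(q,m)$ solutions $d\in[m]$, and none otherwise; so the pairs $(d,i)$ mapping to a given $q$ number at most $2m/(tq)$, and summing over $q\le m/t$ gives $O\bigl((m/t)\log m\bigr)$ with no boundary term at all.

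A second, lesser defect: the theorem carries the explicit constant $16$ for all $m\ge 9$ and all $L$ with $16/\sqrt L<1$, while your argument yields $O(1/\sqrt L)$ with unspecified absolute constants coming from the counting lemma, from the bound on the length of the expansion, and from $m/\phi(m)\ll\log\log m$. Markov with an implied constant $C$ only delivers the claim for $L$ above a threshold of order $C^2$, so ``routine tracking of constants, easy since $L>256$'' is not automatic: for $L$ between $256$ and that constant-dependent threshold the statement is non-vacuous and your proof as written does not cover it. This is repairable by making the three inputs explicit, but it has to be done rather than asserted.
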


\smallskip

First, assume that $N$ is prime and note that \ts $\phi(N)=N-1$.
By Larcher's Theorem~\ref{thm:Lar}, there exists \ts $d<N$ \ts such that
\[
\as\big(\tfrac{d}{N}\big) \ \leq  \   C \. \log N \. \log \log N \quad \text{for some} \quad C \ts >\ts 0\ts .
\]
By Theorem~\ref{thm:graph-cf} and planar duality \eqref{eq:tree-dual},
there exists a planar graph \ts $G=(V,E)$ \ts and edge \ts $e\in E$, such that
	\[ \frac{\at(G-e)}{\at(G/e)}
\ = \   \frac{N}{d} \quad \text{ and } \quad |E(G)| \ \leq  \ 1 \. + \.  C \. \log N \. \log \log N\ts.
\]
The conclusion follows by taking  \ts $(G-e)$.

\smallskip
In full generality, let \. $N \ts = \ts p_1^{b_1} \ts \cdots \ts p_\ell^{b_\ell}$ \.
be the prime factorization of~$N$.  Let \ts $G_i=(V_i,E_i)$, \ts $1\le i \le \ell$,
be the planar graphs constructed above:
$$
\tau(G_i) \. = \. p_i \quad \text{and} \quad
|E_i|  \, \le \,  C \. \log p_i \. \log \log p_i\,.
$$
Finally, let \ts $G=(V,E)$ \ts be a union of \ts $b_i$ \ts copies of \ts $G_i$ \ts
attached at vertices, so that \ts $G$ \ts is planar and connected.  Clearly,
\ts $\tau(G) = N$ \ts and
\begin{align*}
 |E| \ \leq \  \sum_{i=1}^\ell  \.   b_i \.  C \ts \log p_i \ts \log \log p_i
  \ \leq \ \left(\sum_{i=1}^\ell    b_i \. \log p_i \right) \. C\. \log \log N \   = \  C \. \big(\log N\big) \. \log \log N\ts,
\end{align*}
as desired.  \qed

\smallskip

\subsection{Number theoretic estimates}\label{ss:st-NT}
We start with the following number theoretic estimates that is  based on Larcher's Theorem~\ref{thm:Lar}.


\smallskip

\begin{prop}\label{p:NTD}
	There exists   constants \ts $C, K >0$, such that for all coprime integers \ts $A,B$ \ts which satisfy \. $C < B \leq A \leq 2B$,
	there exists a positive integer \. $m:=m(A,B)$ \. such that $m <  B$,
	and
	\[ \as\left(\tfrac{m}{A}\right) \, \leq \,  K \ts (\log A) \ts (\log \log A)^2  \quad \text{ and } \quad   \as\left(\tfrac{B-m}{A}\right) \, \leq \,  K \ts (\log A) \ts (\log \log A)^2.  \]
\end{prop}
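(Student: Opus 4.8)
The plan is to apply Larcher's Theorem~\ref{thm:Lar} not to the single modulus $A$ but to \emph{all divisors} of $A$ at once, so as to also cover the residues that are not coprime to $A$. Fix a large enough absolute constant $L$ (any $L\ge 2\cdot 10^4$ will do), set the target bound $T := K\,(\log A)(\log\log A)^2$ with $K=K(L)$ to be chosen below, and call $d\in\{1,\dots,A\}$ \emph{good} if $\as(d/A)\le T$. A good $d$ trivially satisfies the required estimate, so it suffices to produce a good $m\in\{1,\dots,B-1\}$ for which $B-m$ is also good.

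To lower bound the number of good residues, let $g$ run over the divisors of $A$ with $n := A/g\ge 9$. Larcher's Theorem~\ref{thm:Lar} applied with modulus $n$ supplies at least $\big(1-16/\sqrt L\big)\,\phi(n)$ residues $d'\in[n]$ coprime to $n$ with
\[
\as\!\left(\tfrac{d'}{n}\right)\ \le\ L\cdot\tfrac{n}{\phi(n)}\,\log n\,\log\log n\ \le\ L\,c\,(\log A)(\log\log A)^2,
\]
where we used the classical bound $n/\phi(n)=O(\log\log n)$ (with constant $c$) and $9\le n\le A$. Taking $K:=Lc$ makes each $d:=g\,d'$ good; moreover $\gcd(d,A)=g$ and $\as(d/A)=\as(d'/n)$, so distinct pairs $(g,d')$ yield distinct good residues. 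Summing over $g$ and using $\sum_{n\mid A}\phi(n)=A$ — discarding the divisors $n\le 8$, whose contribution to this sum is at most $\sum_{n=1}^{8}\phi(n)=22$ — we get at least $\big(1-16/\sqrt L\big)(A-22)$ good residues in $[1,A]$, hence at most $\tfrac{16}{\sqrt L}A+22$ \emph{bad} ones.

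The last step is a union bound. For $m\in\{1,\dots,B-1\}$ we also have $1\le B-m\le B-1$, and since $B\le A$ both $m$ and $B-m$ lie in $[1,A]$ with no wraparound; thus $m$ can fail only if $m$ is bad or $B-m$ is bad, so the number of failing $m$ is at most $2\big(\tfrac{16}{\sqrt L}A+22\big)=\tfrac{32}{\sqrt L}A+44$. Since $A\le 2B$ forces $B-1\ge\tfrac A2-1$, this is strictly less than $B-1$ as soon as $\big(\tfrac12-\tfrac{32}{\sqrt L}\big)A>45$; choosing $L$ with $\tfrac{32}{\sqrt L}<\tfrac14$ and then $C$ large enough (e.g.\ $C=200$) forces a good $m\in[1,B-1]$ with $B-m$ good. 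For this $m$ we conclude $m<B$ and $\as(m/A),\,\as((B-m)/A)\le T=K(\log A)(\log\log A)^2$, as required. (The coprimality of $A,B$ is not used.)

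The main obstacle is the one addressed in the second paragraph: applying Larcher's theorem directly to modulus $A$ controls only the $\phi(A)$ residues coprime to $A$, which can be a vanishing fraction of $[1,A]$ when $A$ has many small prime factors, so the decomposition $B=m+(B-m)$ cannot in general be found among coprime residues alone. Spreading Larcher's estimate over all divisors of $A$ repairs this, at the unavoidable cost of an extra $\log\log A$ factor coming from $n/\phi(n)$ — which is precisely why the exponent on $\log\log A$ in the statement is $2$ rather than $1$.
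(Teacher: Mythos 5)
Your proof is correct, and its overall strategy coincides with the paper's: use Larcher's Theorem~\ref{thm:Lar} to show that all but a small fraction of numerators $d\in[A]$ have $\as(d/A)$ below the target bound, then conclude by a counting/union-bound argument (the paper phrases it as inclusion--exclusion, bounding each bad set by $0.2\ts A$ and using $A\le 2B$) that some $m<B$ makes both $m/A$ and $(B-m)/A$ good. Where you genuinely add something is the treatment of residues not coprime to $A$: the paper's proof invokes Larcher's theorem only at the single modulus $A$ (after replacing the threshold $K(\log A)(\log\log A)^2$ by the smaller $K\ts\tfrac{A}{\phi(A)}\log A\log\log A$), which literally controls only the $\phi(A)$ coprime numerators; your decomposition $d=g\ts d'$ with $g\mid A$, applying Larcher at every modulus $n=A/g\ge 9$ and summing via $\sum_{n\mid A}\phi(n)=A$ (discarding the bounded contribution of $n\le 8$), is exactly the argument needed to justify that step for all $d\in[A]$, at the cost of the extra $\log\log$ factor absorbed into $K$. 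So your write-up is, in effect, a fully detailed version of the paper's proof, supplying the divisor-sum step that the paper leaves implicit; the explicit constants ($L\ge 2\cdot 10^4$, $C=200$) and the observation that coprimality of $A,B$ is never used are both fine.
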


\smallskip

%
%
%
%
%

\begin{proof}[Proof of Proposition~\ref{p:NTD}]
	Define
	\begin{align*}
		\vt(A,B) \, := \, \big|\big\{ \ts m \in [B] \, : \, \as\left(\tfrac{m}{A}\right) \. \leq \.  K \ts (\log A) \ts (\log \log A)^2, \ \as\left(\tfrac{B-m}{A}\right) \. \leq \.  K \ts (\log A) \ts (\log \log A)^2 \ts \big\} \big|\ts.
	\end{align*}
We will prove a stronger claim, that \. $\vt(A,B) = \Omega\big(B\big)$ \.  as \. $C\to \infty$. 
	
	It follows from the inclusion-exclusion, that
	\begin{align*}
	 \vt(A,B) \, \ge & \, B \. - \.   \big|\big\{\ts m \in [B] \, : \,   \as\left(\tfrac{m}{A}\right) \ > \  K \ts (\log A) \ts (\log \log A)^2 \ts  \big\} \big| \\
	 & \ \ \. - \.  \big|\big\{ \ts m \in [B] \, : \,   \as\left(\tfrac{B-m}{A}\right) \ > \  K \ts (\log A) \ts (\log \log A)^2  \ts  \big\} \big|.
	 \end{align*}
	On the other hand, we have
	\begin{align*}
		& \big|\{ m \in [B]  \, : \,   \as\left(\tfrac{m}{A}\right) \ > \  K \ts (\log A) \ts (\log \log A)^2  \  \} \big|  \\
& \quad\leq \ 	\big|\{ m \in [A] \, : \,  \as\left(\tfrac{m}{A}\right) \ > \  K \ts (\log A) \ts (\log \log A)^2  \  \} \big| \\
		&  \quad\quad \leq  \    	\big|\{ m \in [A] \, : \,  \as\left(\tfrac{m}{A}\right) \ > \  K \. \tfrac{A}{\phi(A)} \. \log A \. \log \log A  \  \} \big| \    \leq \  \ 0.2 \ts A ,
	\end{align*}
	where the second inequality is because \. $\tfrac{A}{\phi(A)} <  \log \log A$ \. for sufficiently large $A$, and the third inequality is because of Larcher's Theorem~\ref{thm:Lar}.
Similarly, we have
	\begin{align*}
		\big|\{ m \in [B]  \, : \,      \as\left(\tfrac{B-m}{A}\right) \ > \  K \. \log A \. (\log \log A)^2 \  \} \big| \ \leq \ 0.2 \ts  A.
	\end{align*}
	
	Combining these inequalities, we get
	\begin{align*}
		\vt(A,B) \ \geq \ B  \.  - \. 0.4 \. A  \  \geq  \ 0.2 \ts B,
	\end{align*}
	and the result follows.
\end{proof}

\smallskip

\subsection{Proof of Lemma~\ref{lem:st-ratio}}\label{ss:st-proof}
It then follows from Proposition~\ref{p:NTD}, that there exists fixed \ts $K>0$ \ts and  an integer \ts $m < B$,  such that
	\[ \as\left(\tfrac{m}{A}\right) \ \leq \  K \ts (\log A) \ts (\log \log A)^2  \quad \text{ and } \quad   \as\left(\tfrac{B-m}{A}\right) \ \leq \  K \ts (\log A) \ts (\log \log A)^2.  \]	
Let \ts $[a_0,\ldots,a_s]$ \ts and \ts $[b_0,\ldots, b_t]$ \ts be a continued fraction representation of \ts $A/m$ \ts and \ts $A/(B-m)$, respectively.
By Theorem~\ref{thm:graph-cf-sum}, there exists a connected loopless bridgeless planar graph $G$ and an edge $e \in E(G)$, such that
\[ \frac{\at(G-e)}{\at(G/e)} \ = \   \frac{1}{[a_0 \ts ; \ts a_1,\ldots,a_s]} \. + \.
\frac{1}{[b_0 \ts ; \ts b_1,\ldots,b_t]} \ = \  \frac{B}{A}
\]
and
\[ |E(G)| \ = \as\left(\tfrac{m}{A}\right)  \. + \. \as\left(\tfrac{B-m}{A}\right) +1 \ \leq  \  2 \ts K \ts (\log A) \ts (\log \log A)^2  \ts + \ts 1 \ = \  O\big((\log N) \ts (\log \log N)^2\big).
\]
Taking the dual graph \ts $G^\ast$ \ts gives the result.  \qed

\medskip

\section{Verification of matroid bases ratios}\label{sec:verify}

Throughout this and the next section, we assume that all matroids
are binary and given by their binary representations.

\subsection{Setup}
Let \ts $\Mf=(X,\cI)$ \ts be a binary matroid, let \ts $x\in \NL(\Mf)$,
and let \ts $A, B \in \nn$, where \ts $B>0$.
 Consider the following decision problem:
\[
\VDCR \ := \ \left\{ \rho(\Mf,x) \, =^? \, \tfrac{A}{B}  \right\}.
\]

\smallskip

\begin{lemma}[{\rm verification lemma}{}]
	\label{lem:verify}
	\.  $\NP^{\<\VDCR\>} \. \subseteq \. \NP^{\<\CDCR\>}.$
\end{lemma}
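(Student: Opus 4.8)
The goal is to show $\NP^{\<\VDCR\>} \subseteq \NP^{\<\CDCR\>}$, i.e.\ that an $\NP$-machine with access to an oracle for the verification problem $\VDCR$ can be simulated by an $\NP$-machine with access to an oracle for the coincidence problem $\CDCR$. The standard way to do this is to show that a single query $\rho(\Mf,x) =^? A/B$ to the $\VDCR$ oracle can be decided using the $\CDCR$ oracle together with polynomially bounded nondeterministic guessing. Since $\NP^{\<\VDCR\>}$ allows polynomially many queries and nondeterminism, and $\CDCR \in \PSPACE$ so its polynomial closure absorbs the bookkeeping, it suffices to reduce one verification query to the coincidence problem.

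\smallskip
The key idea is to \emph{manufacture} a second binary matroid $\Nf$ with a distinguished non-loop $y$ whose bases ratio $\rho(\Nf,y)$ equals the target fraction $A/B$, and then query the $\CDCR$ oracle on the pair $(\Mf,x)$, $(\Nf,y)$. Then $\rho(\Mf,x) = A/B$ if and only if $\rho(\Mf,x) = \rho(\Nf,y)$. To build such an $\Nf$ of polynomial size, we use the graph-theoretic machinery of Sections~\ref{s:graph-CF}--\ref{s:st}: Lemma~\ref{lem:st-ratio} (and Theorems~\ref{thm:graph-cf}, \ref{thm:graph-cf-sum}) produces, for suitable $A,B$, a planar graph $G$ with $\rho(G,e) = A/B$ and $O\big((\log N)(\log\log N)^2\big)$ edges, where $N$ is an a~priori bound on numerator and denominator. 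Taking $\Nf$ to be the graphical matroid $\Mf_G$ (which is binary, being regular) and $y \gets e$ gives $\rho(\Nf,y) = \at(G-e)/\at(G/e) = A/B$, and this matroid has size polynomial in the input. The first step of the argument is therefore to establish an a~priori polynomial bound on the bit-length of $A$ and $B$: the number of bases $\iB(\Mf-x)$ and $\iB(\Mf/x)$ of a binary matroid on $n$ elements is at most $2^n$, so if $\rho(\Mf,x) = A/B$ in lowest terms then both $A$ and $B$ are at most $2^n$; we may also assume the input fraction $A/B$ is given in lowest terms with $A,B \le 2^n$, else the verification query is trivially ``no''.

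\smallskip
The main obstacle is that Lemma~\ref{lem:st-ratio} only constructs spanning-tree ratios in the interval $[1,2]$ (it requires $1 \le B \le A \le 2B$), whereas the target ratio $A/B$ can be an arbitrary positive rational of polynomial bit-length. This is exactly the gap flagged at the end of $\S$\ref{ss:intro-st}: ``in the proof of Theorem~\ref{t:main-negative}, we consider more general ratios\ldots{} we are able to avoid extending Lemma~\ref{lem:st-ratio} by combining combinatorial recurrences and complexity ideas.'' Concretely, I plan to handle a general ratio $A/B$ by first nondeterministically guessing a certificate that reduces the general case to the $[1,2]$ regime, using the Stern--Brocot / continued-fraction structure: one guesses the (polynomially many, polynomially bounded) partial quotients of $A/B$ and verifies them, then builds the matroid realizing $A/B$ by iterating the ``$1+x$'' and ``direct sum'' operations of Lemmas~\ref{lem:graph-cf-1} and~\ref{lem:graph-sum} (equivalently, adding parallel edges, taking duals, and gluing), each step of which multiplies the edge count only polynomially. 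Alternatively, and perhaps more cleanly, one reduces the verification $\rho(\Mf,x) =^? A/B$ to a verification of a ratio in $[1,2]$ by the deletion--contraction recurrence: replacing $(\Mf,x)$ by an auxiliary matroid obtained by attaching parallel copies and taking duals shifts $\rho$ by integer translations and inversions, so after $O(\log(A+B))$ such moves the target ratio lands in $[1,2]$, each move being polynomial-time computable and verifiable with one further $\CDCR$ query or no query at all. Either route keeps the number of oracle calls and the guessed certificates polynomial, which is all that is needed; the bulk of the write-up will be verifying that the guessed data can be checked in $\NP^{\<\CDCR\>}$ and that the constructed matroid indeed has the claimed ratio, which follows mechanically from the identities in Section~\ref{s:graph-CF}.
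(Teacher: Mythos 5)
Your high-level architecture is the same as the paper's: manufacture a polynomial-size auxiliary binary matroid $\Nf$ (a graphical matroid of a planar graph) with $\rho(\Nf,y)=A/B$, then decide the verification query with a single $\CDCR$ query on the pair $(\Mf,x),(\Nf,y)$, after dispatching the trivial cases ($A=0$, coprimality, $A,B\le\binom{n}{r}$). Your route (b) for the integer part is also legitimate: dualizing and adding an element parallel to $x$ realize $t\mapsto 1/t$ and $t\mapsto t+1$ on the bases ratio, so the target can be brought into $(1,2)$ in a bounded number of moves; the paper instead uses the bound $\rho(\Mf,x)\le n$ (Lemma~\ref{lem:r-bound}) and applies Lemma~\ref{lem:graph-cf-1} \ts $\lfloor A/B\rfloor-1$ \ts times on the graph side. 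So far, so good.

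The gap is in the step that is the actual content of the lemma: realizing a prescribed ratio $A/B$ (even one already in $[1,2]$) by a graph with polynomially many edges inside the $\NP$ computation. Your route (a) does not work as described: the partial quotients of $A/B$ are deterministically computable by the Euclidean algorithm (there is nothing to guess), and while their \emph{number} is polynomial, their \emph{values} are not polynomially bounded; since the construction of Theorems~\ref{thm:graph-cf} and~\ref{thm:graph-cf-sum} produces a graph with \ts $a_0+\ldots+a_s+1$ \ts edges, i.e.\ the \emph{sum} of the quotients, a target such as \ts $A/B=(2^n+1)/2^n=[1;2^n]$ \ts would force an exponential-size graph, so ``each step multiplies the edge count only polynomially'' is false for the repeated-$+1$ realization of a single large quotient. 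This is exactly the obstruction the paper's proof is built to circumvent: one does \emph{not} realize the target via its own continued fraction, but writes \ts $B/A'$ (with $B\le A'\le 2B$) as \ts $m/A'+(B-m)/A'$ \ts for a splitting integer $m$ whose existence with both quotient-sums \ts $O(\log A'\ts(\log\log A')^2)$ \ts is guaranteed by Larcher's theorem via Proposition~\ref{p:NTD}; since no deterministic way to find such an $m$ is known, $m$ is \emph{nondeterministically guessed} and then verified in polynomial time by computing the two continued fractions — this guess is precisely why the statement is an inclusion of $\NP$-oracle classes rather than a Turing reduction. Your proposal never identifies this certificate (route (b) reduces to the $[1,2]$ regime and then tacitly appeals to Lemma~\ref{lem:st-ratio}, which is a pure existence statement). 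To close the gap you must either guess $m$ and rerun the construction as the paper does, or guess the realizing graph $G$ outright and verify \ts $\at(G-e)/\at(G/e)=A/B$ \ts in polynomial time via the matrix-tree theorem, with Lemma~\ref{lem:st-ratio} guaranteeing that a polynomial-size witness exists; as written, neither of your routes produces a polynomial-size $\Nf$, so the simulation does not go through.
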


\smallskip

The proof is broadly similar to that in \cite{CP-AF}, but with
major technical differences.
We start with the following simple result.

\smallskip

\begin{lemma}\label{lem:r-bound}
Let \ts $\Mf=(X,\cI)$ \ts be a matroid on \ts $n=|X|$ \ts elements,
and let \ts $x\in X$ \ts be a non-loop  of~$\Mf$.
	Then \. $\rho(\Mf,x) \le n$.
\end{lemma}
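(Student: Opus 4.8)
The plan is to bound $\rho(\Mf,x) = \iB(\Mf-x)/\iB(\Mf/x)$ by exhibiting, for a fixed basis $A \in \cB(\Mf/x)$, an injection from $\cB(\Mf-x)$ into $\cB(\Mf/x) \times X$. Concretely, $\cB(\Mf/x) = \{B \subseteq X - x : B + x \in \cB(\Mf)\}$ consists of the $(r-1)$-subsets $B$ with $B+x$ a basis of $\Mf$, while $\cB(\Mf-x) = \{T \subseteq X-x : T \in \cB(\Mf)\}$ consists of the $r$-subsets of $X-x$ that are bases of $\Mf$. Fix any $A_0 \in \cB(\Mf/x)$ (if $\cB(\Mf/x) = \varnothing$ then $\rho(\Mf,x)$ is not finite, but in the setting of the lemma $x$ is a non-loop so one can arrange this, or simply note the statement is about the finite case). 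Given $T \in \cB(\Mf-x)$, since $A_0 + x$ and $T$ are both bases of $\Mf$, repeated use of the basis exchange property lets me swap elements of $A_0+x$ into $T$; in particular there is some $y \in T \setminus (A_0+x)$ — equivalently $y \in T$ not in $A_0$ — such that $(A_0 + x) - x + y = A_0 + y$...

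Here is the cleaner route I would actually take. Since $A_0 + x \in \cB(\Mf)$ and $T \in \cB(\Mf)$ with $x \notin T$, the exchange property gives an element $y = y(T) \in T \setminus (A_0 + x)$ such that $(A_0 + x) - x + y \in \cB(\Mf)$, i.e. $A_0 + y \in \cB(\Mf)$ with $y \neq x$, so $A_0 + y \in \cB(\Mf - x)$. I then claim the map $T \mapsto \bigl(T - y(T) + x,\ y(T)\bigr)$ is an injection $\cB(\Mf - x) \hookrightarrow \cB(\Mf/x) \times X$: first, $T - y(T)$ has size $r-1$ and $(T - y(T)) + x$ is a basis of $\Mf$ — this needs $T - y(T) + x \in \cB(\Mf)$, which follows from another application of exchange (swapping $y(T)$ out of $T$ and $x$ in). Given the pair $(B, y)$ in the image, one recovers $T = B - x + y$, so the map is injective. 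Hence $\iB(\Mf-x) \le |X| \cdot \iB(\Mf/x) = n \cdot \iB(\Mf/x)$, which is exactly $\rho(\Mf,x) \le n$.

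The step I expect to be the main obstacle is making the choice of $y(T)$ canonical and simultaneously ensuring \emph{both} $A_0 + y(T) \in \cB(\Mf)$ (so that the second coordinate is constrained to lie in a bounded set, or rather so the argument is symmetric) and $T - y(T) + x \in \cB(\Mf)$. The cleanest fix is to drop the first requirement entirely: one only needs, for each $T \in \cB(\Mf-x)$, \emph{some} $y \in T$ with $T - y + x \in \cB(\Mf) = \cB(\Mf)$, equivalently $(T-y) + x$ independent of size $r$; since $T$ is a basis and $T + x$ has a unique circuit through $x$ (the fundamental circuit $C(x,T)$), any $y \in C(x,T) - x$ works, and such $y$ exists because $x$ is a non-loop so $C(x,T)$ is not just $\{x\}$... actually $x$ being a non-loop guarantees $C(x,T) \neq \{x\}$ precisely when $x \notin \cl(\varnothing)$, which holds. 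Then $T \mapsto (T - y + x, y)$ with $y$ chosen as, say, the least element of $C(x,T) - x$ is well-defined and injective by the recovery $T = B + y - x$, giving the bound. I would present this fundamental-circuit version as the main argument and relegate the exchange-property bookkeeping to a one-line remark.
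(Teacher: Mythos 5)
Your proposal is correct and is essentially the paper's own argument: both bound $\rho(\Mf,x)$ by an explicit injection $\cB(\Mf-x)\hookrightarrow \cB(\Mf/x)\times X$, sending a basis $T$ avoiding $x$ to $(T-y+x,\ts y)$ for a canonically chosen $y\in T$ with $T-y+x\in\cB(\Mf)$, with injectivity via the recovery $T=B-x+y$. The only (immaterial) difference is how the existence of $y$ is justified --- you use the fundamental circuit of $x$ with respect to $T$, while the paper applies the symmetric basis exchange property against a fixed basis containing $x$; your observation that the extra condition $A_0+y\in\cB(\Mf)$ can be dropped matches the paper, which likewise only constrains the second coordinate to lie in $X$.
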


\begin{proof}
To prove that
\[   \rho(\Mf,x) \, = \, \frac{\iB(\Mf-x)}{\iB(\Mf/x)} \, \leq \, n,
\]
we construct an explicit injection  \. $\ga : \cB(\Mf-x) \to   \cB(\Mf/x) \times X$.
Fix a basis \ts $A \in \cB(\Mf)$ \ts such that \ts $x \in A$.  Such basis~$A$
exists since $x$ is not a loop.
By the  symmetric bases exchange property, for every basis \ts $B\in \cB(\Mf)$ \ts
such that \ts $x \notin B$, there exists \ts $y \in B$, such that
\. $B':=B-y + x$ \. is a basis of~$\Mf$. Now take lex-smallest such~$y$, and
define \. $\ga(B) := (B',y)$.  Note that map~$\ga$ is an injection because $B$
can be recovered from $(B',y)$ by taking \. $B=B'-x+y$. This completes
the proof.  \end{proof}

\smallskip

\subsection{Proof of Lemma~\ref{lem:verify}}
We now simulate \. $\VDCR$ \. with an oracle for \. $\CDCR$ \. as follows.

Let \ts $\Mf=(X,\cI)$ \ts be a binary matroid of rank \ts $\rk(\Mf)=r$ \ts
on \ts $n=|X|$ elements.  Let \ts $x\in \NL(\Mf)$ \ts and \ts $A, B \in \nn$,
where \ts $B>0$.  We can assume that \. $A\geq 1$, as otherwise \ts $\VDCR$ \ts is
equivalent with checking if matroid \ts $\Mf-x$ \ts has rank~$(r-1)$.

Without loss of generality we can assume that integers $A$ and $B$ are coprime.
Since we have \. $\iB(\Mf-x) \leq \binom{n}{r}$ \. and \. $\iB(\Mf/x) \leq \binom{n}{r}$,
we can also assume that
\begin{equation}\label{eq:verify-1}
 1 \, \leq \, A, \. B  \, \leq \, \binom{n}{r},
\end{equation}
as otherwise \.  $\VDCR$ \. fails.
Similarly, by Lemma~\ref{lem:r-bound} that we can also assume that
\begin{equation}\label{eq:verify-2}
   \frac{A}{B} \, \leq  \, n\ts.
\end{equation}

Let $A'$ be the positive integer given by
\[  A' \ := \   B \. + \. A \. - \.  \bigg\lfloor \frac{A}{B} \bigg\rfloor B.   \]
Note that \. $B \. \leq \. A'\.  \leq 2 B$.  From this point on we proceed
following the proof of Lemma~\ref{lem:st-ratio}.

It follows from Proposition~\ref{p:NTD} that there exists fixed $K>0$ and  an integer \. $m < B$ \.  such that
	\[ \as\left(\tfrac{m}{A'}\right) \ \leq \  K \. \log A' \. (\log \log A')^2  \quad \text{ and } \quad   \as\left(\tfrac{B-m}{A'}\right) \ \leq \  K \. \log A' \. (\log \log A')^2.  \]	
At this point we \emph{guess} \ts such~$m$.  Since computing the quotients of \ts $m/A'$ \ts
can be done in polynomial time, we can verify in polynomial time that \ts $m$ \ts satisfies
the inequalities above.

Let \ts $[a_0,\ldots,a_s]$ \ts and \ts $[b_0,\ldots, b_t]$ \ts be a continued fraction representation of \ts $A'/m$ \ts and \ts $A'/(B-m)$, respectively.
By Theorem~\ref{thm:graph-cf-sum}, there exists a connected loopless bridgeless planar graph $G$ and an edge $e \in E(G)$ such that
\[ \frac{\at(G-e)}{\at(G/e)} \ = \   \frac{1}{[a_0 \ts ; \ts a_1,\ldots,a_s]} \. + \. \frac{1}{[b_0 \ts ; \ts b_1,\ldots,b_t]} \ = \  \frac{B}{A'}
\]
and
\begin{align*}
 |E(G)| \ & = \ \as\left(\tfrac{m}{A'}\right)  \. + \. \as\left(\tfrac{B-m}{A'}\right) \. + \. 1 \ \leq  \  2 \ts K \ts \log A' \. (\log \log A')^2  \ts + \ts 1  \\
  \, & \le_{\eqref{eq:verify-1}}  \  2\ts K \ts \log 2 \tbinom{n}{r} \. \Big(\log \log 2  \tbinom{n}{r}\Big)^2 \. + \. 1 \ = \   O\big(n \. (\log n)^2\big).
\end{align*}

Let $G'$ and $e'$ be the graph and edge obtained by applying Lemma~\ref{lem:graph-cf-1} for \. $\lfloor A/B\rfloor-1$ \. many times to the planar dual $G^\ast$ of $G$.
Then we have
\[  \frac{\at(G'-e')}{\at(G'/e')} \ = \  \left\lfloor \frac{A}{B} \right\rfloor  \. - \. 1 \. + \. \frac{\at(G^\ast-e)}{\at(G^\ast/e)} \ = \  \left\lfloor \frac{A}{B} \right\rfloor \. - \. 1 \. + \. \frac{A'}{B} \ = \ \frac{A}{B} \]
and
\[ E(G') \ = \  \left\lfloor \tfrac{A}{B} \right\rfloor  \. - \. 1  \. + \. |E(G)| \ \leq_{\eqref{eq:verify-2}} \ n \. - \. 1  \. + \. |E(G)| \ = \ O(n \. (\log n)^2).  \]
Now, let \ts $\Nf:=(E(G'),\cJ)$ \ts be the graphical matroid corresponding to $G'$,
where $\cJ$ are spanning forests in~$G'$, and let $y =e$.  Then we have
\[ \rho(\Nf,y) \, = \, \rho(G',e') \, = \,   \frac{\at(G'-e')}{\at(G'/e')} \, = \, \frac{A}{B} \..
\]
Thus, the decision problem \ts $\VDCR$ \ts  with input \ts $\Mf, x, A,B$ \ts can be simulated
by \ts $\CDCR$ \ts with input \ts $\Mf,x,\Nf,y$. This completes the proof. \qed

\medskip

\section{Proof of Theorem~\ref{t:main-negative}} \label{s:main-proof}

\subsection{Two more reductions}\label{ss:main-proofs-two-reductions}
We also need two minor technical lemmas:

\begin{lemma} \label{lem:more}
	For all \ts $k> \ell$, \.
	$\EBULC_\ell$ \. reduces to \. $\EBULC_{k}$\..
\end{lemma}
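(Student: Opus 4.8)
The plan is a routine padding argument: we transform an instance of $\EBULC_\ell$ into an instance of $\EBULC_k$ by adjoining $k-\ell$ dummy constraints that are satisfied automatically by every basis, so that the two instances have the same answer.

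Concretely, suppose we are given an $\EBULC_\ell$ instance: a binary matroid $\Mf=(X,\cI)$ with binary representation $\phi:X\to\ff_2^d$, a subset $R\subseteq X$, pairwise disjoint subsets $S_1,\ldots,S_\ell\subseteq X$ (each disjoint from $R$), and integers $a,c_1,\ldots,c_\ell$. First I would form the matroid $\Mf'=(X',\cI')$ with $X':=X\cup\{z_{\ell+1},\ldots,z_k\}$ by extending the representation to $\phi':X'\to\ff_2^d$ with $\phi'(z_i):=\bO$ for $\ell<i\le k$; each new element $z_i$ is then a loop, so $\cB(\Mf')=\cB(\Mf)$ and $\rk(\Mf')=\rk(\Mf)=:r$. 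Then I would output the $\EBULC_k$ instance consisting of the matroid $\Mf'$, the same $R$ and $a$, the pairwise disjoint sets $\bS':=(S_1,\ldots,S_\ell,\{z_{\ell+1}\},\ldots,\{z_k\})$, and $\bc':=(c_1,\ldots,c_\ell,0,\ldots,0)$. Since $k$ and $\ell$ are fixed, this transformation runs in polynomial time.

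It remains to check that the reduction preserves the answer. Because each $z_i$ is a loop it lies in no basis of $\Mf'$, so the added constraints $|A\cap\{z_i\}|=0$ hold for every $A\in\cB(\Mf')$; hence $\cB_{\bS'\bc'}(\Mf',R,a')=\cB_{\bSc}(\Mf,R,a')$, and in particular $\iB_{\bS'\bc'}(\Mf',R,a')=\iB_{\bSc}(\Mf,R,a')$, for every $a'$. Moreover, appending zeros to $\bc$ changes neither $\ups=r-a-c_1-\ldots-c_\ell$ nor the normalizing multinomial coefficient, since $\binom{r}{a,c_1,\ldots,c_\ell,0,\ldots,0,\ups}=\binom{r}{a,c_1,\ldots,c_\ell,\ups}$. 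Therefore $\aP_{\bS'\bc'}(\Mf',R,a')=\aP_{\bSc}(\Mf,R,a')$ for all $a'$, so the equality defining $\EBULC_k$ for $(\Mf',R,\bS',a,\bc')$ holds if and only if the equality defining $\EBULC_\ell$ for $(\Mf,R,\bS,a,\bc)$ holds.

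There is no real obstacle in this argument; the only points that need a line of justification are that the adjoined elements are genuinely loops (so that the set of bases, and hence every count $\iB$, is unchanged) and that padding $\bc$ with zeros leaves the multinomial normalization invariant. An essentially identical reduction works using empty dummy sets $S_{\ell+1}=\ldots=S_k=\varnothing$ instead of loop elements, leaving $\Mf$ itself unchanged, if one prefers not to modify the matroid.
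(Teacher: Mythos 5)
Your proposal is correct and is essentially the paper's argument: a padding reduction adding $k-\ell$ vacuous constraints with $c_i=0$, which leaves every count $\iB$ and the multinomial normalization (hence every $\aP$) unchanged. The paper simply takes the variant you mention in your last sentence, setting $S_{\ell+1}=\ldots=S_k=\varnothing$ without modifying the matroid; introducing loop elements is an equivalent cosmetic choice.
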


\begin{proof}[Proof of Lemma~\ref{lem:more}]
	Let $\Mf,R,a, \bS=(S_1,\ldots,S_\ell), \cb=(c_1,\ldots,c_\ell)$ be an input 	$\EBULC_\ell$.
	Let \ts $S_{\ell+1}=\ldots=S_{k}=\varnothing$ \ts and \ts $c_{\ell+1}=\ldots=c_k=0$.
	Let \. $\bS':=(S_1,\ldots,S_k)$ \ts and \ts $\bc':=(c_1,\ldots,c_k)\ts$.
	It then follows that
	\[ \aP_{\bScp}(\Mf,R,a) \, = \,   \aP_{\bSc}(\Mf,R,a). \]
	We conclude that the decision problem \. $\EBULC_\ell$ \.
	with input $\Mf,R,a,\bS,\cb$, \.
	 is equivalent to the decision problem
	\. $\EBULC_k$ \. with input $\Mf,R,a,\bS',\cb'$.  \end{proof}

\smallskip

\begin{lemma}\label{lem:BBM}
$\BBM$ \. is polynomial time equivalent to \. $\NDCR$\..
\end{lemma}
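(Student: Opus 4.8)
The plan is to prove the two (polynomial-time Turing) reductions, \ts$\NDCR$\ts reduces to \ts$\BBM$\ts and \ts$\BBM$\ts reduces to \ts$\NDCR$, separately; throughout, all matroids are binary and given by binary representations. For the first reduction: given a binary matroid \ts$\Mf=(X,\cI)$\ts with a representation \ts$\phi:X\to\ff_2^d$\ts and a non-loop \ts$x\in\NL(\Mf)$, we construct binary representations of \ts$\Mf-x$\ts and \ts$\Mf/x$\ts in polynomial time — for \ts$\Mf-x$\ts simply set the column of $x$ to $\bO$ (equivalently, delete it), and for \ts$\Mf/x$\ts apply an invertible row operation over $\ff_2$ carrying $\phi(x)$ to $\eb_1$ and then project onto the last $d-1$ coordinates (the standard fact that contractions of representable matroids are representable, which here is routine $\ff_2$-linear algebra). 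Two calls to the \ts$\BBM$\ts oracle return \ts$\iB(\Mf-x)$\ts and \ts$\iB(\Mf/x)$; since \ts$x$\ts is a non-loop we have \ts$\iB(\Mf/x)\ge 1$, so we may output the reduced fraction \ts$\rho(\Mf,x)=\iB(\Mf-x)/\iB(\Mf/x)$.

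For the main reduction, \ts$\BBM$\ts reduces to \ts$\NDCR$, I would recurse on the rank \ts$r=\rk(\Mf)$. If \ts$r=0$, output \ts$1$. Otherwise pick any non-loop \ts$x\in\NL(\Mf)$\ts (one exists since \ts$r\ge 1$) and test, by Gaussian elimination over $\ff_2$, whether \ts$x$\ts is a coloop, i.e.\ whether \ts$\rk(X-x)<r$. If \ts$x$\ts is a coloop then every basis of \ts$\Mf$\ts contains \ts$x$, so \ts$\iB(\Mf)=\iB(\Mf/x)$, and we recurse on \ts$\Mf/x$, which has rank \ts$r-1$. If \ts$x$\ts is not a coloop, the deletion--contraction recurrence gives
\[
\iB(\Mf) \ = \ \iB(\Mf-x) \. + \. \iB(\Mf/x) \ = \ \big(1+\rho(\Mf,x)\big)\. \iB(\Mf/x),
\]
so we query the \ts$\NDCR$\ts oracle on \ts$(\Mf,x)$\ts to obtain the rational \ts$\rho(\Mf,x)$\ts and recurse on \ts$\Mf/x$. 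Either way the rank strictly decreases, so the recursion has at most \ts$r\le n$\ts levels and makes at most \ts$n$\ts oracle calls; unwinding it yields \ts$\iB(\Mf)=\prod_i\big(1+\rho(\Mf_i,x_i)\big)$, the product over those steps at which \ts$x_i$\ts was not a coloop (empty product $=1$), with \ts$\Mf_0=\Mf$\ts and \ts$\Mf_{i+1}=\Mf_i/x_i$.

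The one point requiring care — and the main reason the argument is not entirely mechanical — is that, unlike the graphical case, the deletion--contraction recurrence for \ts$\iB$\ts genuinely fails at coloops (a single coloop has \ts$\iB=1$\ts whereas \ts$\iB(\Mf-x)=\iB(\Mf/x)=1$), which is exactly why the coloop case must be split off as above. The rest is routine bookkeeping: each \ts$\iB(\Mf_i-x_i),\,\iB(\Mf_i/x_i)\le\binom{n}{\rk(\Mf_i)}\le 2^n$\ts has \ts$O(n)$\ts bits, hence each queried ratio is a fraction of \ts$O(n)$-bit integers, the product of at most \ts$n$\ts of the factors \ts$\big(\iB(\Mf_i-x_i)+\iB(\Mf_i/x_i)\big)/\iB(\Mf_i/x_i)$\ts has numerator and denominator of \ts$O(n^2)$\ts bits and is formed in polynomial time, and it equals the integer \ts$\iB(\Mf)$. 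Since both reductions run in polynomial time and are Turing reductions, this shows that \ts$\BBM$\ts and \ts$\NDCR$\ts are polynomial-time equivalent.
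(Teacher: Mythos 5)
Your proof is correct and follows essentially the same route as the paper: the easy direction is just two oracle calls on $\Mf-x$ and $\Mf/x$, and the hard direction is the telescoping product $\iB(\Mf)=\prod_i\bigl(1+\rho(\Mf_{i-1},x_i)\bigr)$ along a contraction sequence of successively chosen non-loops (the paper fixes a greedy basis up front, which amounts to the same recursion). Your one genuine addition is splitting off the coloop case, where the correct factor is $1$ rather than $1+\rho$; the paper's one-line proof glosses over this (the stated deletion--contraction recurrence for $\iB$ fails at coloops, since then $\iB(\Mf-x)=\iB(\Mf/x)=\iB(\Mf)$), so your case distinction is a welcome refinement of the same argument rather than a different approach.
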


\begin{proof}[Proof of Lemma~\ref{lem:BBM}]
Note that \ts $\NDCR$ \ts reduces to \ts $\BBM$ \ts by definition.
In the opposite direction, let $\Mf$ be a binary matroid of rank \ts $r=\rk(\Mf)$.
Compute a basis  $\{x_1,\ldots, x_{r}\}$  of $\Mf$ by a greedy algorithm.
Denote by  \. $\Mf_i$  \. the contraction of $\Mf$ by  $\{x_1,\ldots, x_i\}$.
We have
\[ \iB(\Mf) \ = \   \frac{\iB(\Mf_0)}{\iB(\Mf_1)} \. \cdot \. \frac{\iB(\Mf_1)}{\iB(\Mf_2)} \cdots    \ = \  \left(1 + \frac{\iB(\Mf_0-x_1)}{\iB(\Mf_0/x_1)}  \right) \. \left(1 + \frac{\iB(\Mf_1-x_2)}{\iB(\Mf_1/x_2)}  \right) \cdots, \]
which gives the desired reduction.
\end{proof}

\smallskip

\subsection{Putting everything together}
First, we need the following recent result$\ts:$

\smallskip

\begin{thm}[{\rm Knapp--Noble \cite[Thm~53]{KN23}}{}]
$\BBM$ is \ts $\SP$-complete for binary matroids.
\end{thm}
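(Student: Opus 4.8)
The statement has two halves. Containment, $\BBM\in\SP$, is routine: given a binary matroid $\Mf$ presented by a matrix $A$ over $\ff_2$ with columns indexed by the ground set, a nondeterministic machine computes $r=\rk(A)$ by Gaussian elimination over $\ff_2$, guesses an $r$-subset $S$ of columns, and accepts iff those columns are linearly independent; the number of accepting branches is exactly $\iB(\Mf)$. So the content is in the hardness direction, and I would first emphasize why this is delicate: for a \emph{regular} matroid with a totally unimodular representation $A$ of full row rank, the Cauchy--Binet formula gives $\iB(\Mf)=\det(AA^{\top})$ (each maximal minor is $0$ or $\pm1$, so its square is the indicator of a basis), whence $\BBM$ is polynomial-time computable on regular matroids — in particular on graphic and cographic matroids, where this recovers the Matrix--Tree theorem. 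Consequently any hardness proof must genuinely use binary matroids that are \emph{not} regular, the minimal obstruction being the Fano matroid $F_7$.

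For the hardness I would reduce from an established $\SP$-complete problem: either the number of forests $f(G)$ of a graph $G$, which equals $T_{M(G)}(2,1)$ and is $\SP$-complete by Jaeger--Vertigan--Welsh, or the number of bases of a rational matroid (Snook \cite{Snook}, quoted above); the latter matches the general philosophy, since Vertigan's analysis of the Tutte polynomial over a fixed field already isolates the evaluation at $(1,1)$, i.e.\ $\BBM$, as one of the hard ones for $\ff_2$-representable matroids. The plan is to convert ``count independent sets of \emph{all} sizes'' into ``count bases'' by adjoining an $\ff_2$-gadget that lets unused rank be completed by free coordinate vectors, so that a basis of the enlarged binary matroid $\widehat\Mf$ corresponds to a pair consisting of an independent set $I$ of the original matroid together with a completion of $I$ inside the gadget, contributing a weight depending only on $|I|$. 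Repeating the construction with $O(n)$ variants of the gadget (different sizes, or parallel thickenings) yields $O(n)$ linear equations in the unknowns $f_j = $ (number of independent sets of size $j$), with a Vandermonde-type, hence invertible, coefficient matrix; solving recovers all $f_j$, and in particular $\sum_j f_j = f(G)$. All intermediate matroids are binary and polynomial-size, and the linear algebra is polynomial, so this is a polynomial Turing reduction.

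The crux — and the step I expect to fight with — is the gadget design, precisely because the naive ``thickening/stretching plus interpolation'' recipe collapses on regular matroids, where $\iB$ is already polynomial. So the gadget must be engineered to push $\widehat\Mf$ out of the regular class while keeping it binary, keeping its size polynomial, and keeping the interpolation system nonsingular; the natural vehicle is to splice in Fano-type components $F_7$, whose maximal minors are not confined to $\{0,\pm1\}$. I would expect the actual argument either to carry out such a construction explicitly, or to bypass interpolation with a direct gadget reduction from a combinatorial $\SP$-complete problem — e.g.\ counting satisfying assignments of a formula, or the permanent of a $0/1$ matrix — encoding variables and clauses by small binary-matroid pieces so that bases of the assembled matroid biject with the objects being counted. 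In every version, the real technical burden is verifying that the $\ff_2$-encoding is faithful, i.e.\ that the independence relation of $\widehat\Mf$ behaves exactly as the reduction requires.
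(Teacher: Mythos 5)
The first thing to say is that the paper does not prove this statement at all: it is imported as a black box, cited to Knapp--Noble \cite[Thm~53]{KN23} (with the footnote in Section~\ref{ss:hist-count} even flagging a priority discussion), so there is no ``paper's proof'' to match your argument against. Judged on its own terms, your proposal establishes only the easy half. The membership argument ($\BBM\in\SP$ by guessing an $r$-subset of columns and checking independence over $\ff_2$) is fine, and your framing of the difficulty is accurate: regular matroids admit polynomial-time basis counting via Cauchy--Binet/matrix-tree, so any hardness proof must exploit genuinely non-regular binary matroids. But the hardness direction --- which is the entire content of the theorem --- is left as a plan whose key step you yourself flag as ``the step I expect to fight with.'' That is a genuine gap, not a presentational one.

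Concretely, the interpolation scheme you sketch does not survive scrutiny as stated. You want a binary gadget such that the number of ways to complete an independent set $I$ of the source matroid to a basis of the enlarged matroid depends only on $|I|$, with $O(n)$ gadget variants producing an invertible (Vandermonde-type) system for the numbers $f_j$. The natural gadgets with that ``completion count depends only on size'' property are uniform-like, and binary matroids exclude $U_{2,4}$ as a minor, so such gadgets are simply not available inside the binary class; on the other hand, any regular gadget (thickenings, stretchings, free elements, graphic pieces) keeps the construction inside a class where basis counting is in $\P$, so the reduction would prove nothing --- indeed it would have to fail. Splicing in Fano copies is named as a hope, but no construction is given, and no verification that the resulting $\ff_2$-independence structure yields a size-only completion count or a nonsingular system. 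This is precisely why $T(1,1)$ for binary matroids resisted the standard Jaeger--Vertigan--Welsh-style interpolation arguments and remained open until Knapp--Noble, whose actual proof proceeds through different machinery (their analysis of the greedoid Tutte polynomial) rather than the gadget-plus-interpolation route you outline. So the proposal correctly diagnoses the obstruction but does not overcome it, and the theorem remains unproved by your argument.
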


\smallskip

By Lemma~\ref{lem:BBM}, we conclude that \ts $\NDCR$ \ts is \ts $\SP$-hard.  We then have:
\begin{equation}\label{eq:together-LE}
	\ComCla{PH} \. \subseteq \. \P^{\SP} \. \subseteq \. \P^{\<\NDCR\>} \. \subseteq \. \NP^{\<\VDCR\>}\.,
\end{equation}
where the first inclusion is Toda's theorem \cite{Toda}, the second inclusion is because \ts  $\NDCR$ \ts
is $\SP$-hard, and the third inclusion is because one can simulate \ts $\NDCR$ \ts by first
guessing and then verifying the answer.


  We now have:
\begin{equation}\label{eq:together-verify}
\aligned
&	\NP^{\<\VDCR\>} \. \subseteq \. \NP^{\<\CDCR\>} \\
& \hskip2.cm \subseteq \. \NP^{\<\EBULC_{1}\>} \. \subseteq \. \NP^{\<\EBULC_{k}\>}
\endaligned
\end{equation}
where the first inclusion is the Verification Lemma~\ref{lem:verify},
the second inclusion is Lemma~\ref{lem:CDCR-to-EBULC},
and the third inclusion is Lemma~\ref{lem:more}.

Now, suppose \ts $\EBULC_{k} \in \PH$.  Then \ts $\EBULC_{k} \in \ts \Sigmap_m$ \ts
for some~$m$.  Combining \eqref{eq:together-LE} and \eqref{eq:together-verify}, this implies:
\begin{equation}\label{eq:together-collapse}
	\PH \. \subseteq \. \NP^{\<\EBULC_{1}\>} \. \subseteq \.  \NP^{\Sigmap_m} \. \subseteq \. \Sigmap_{m+1}\,,
\end{equation}
as desired. \qed

\medskip

\section{Combinatorial atlas}\label{s:atlas}

In this section we give a brief review of the theory of combinatorial atlas.
This is the main tool used to prove Theorem~\ref{t:main-positive}.
We refer the reader to \cite[$\S3,\S4${}]{CP-intro} for an introduction,
and to \cite{CP} for a more detailed discussion on this topic.

\smallskip

\subsection{The setup}\label{ss:atlas-setup}
Let \. $\Qf=(\Vf, \Ef)$ \.  be a (possibly infinite) acyclic digraph.
%
We denote by  \ts $\Vfm\subseteq\Vf$ \ts be the set of  \defn{sink vertices} in~$\Qf$ (i.e. vertices without outgoing edges).  Similarly, denote by
\ts $\Vfp:=\Vf\sm \Vfm$ \ts the \defn{non-sink vertices}.  We write \ts $\vfs$ \ts
for the set of out-neighbor vertices \ts $v'\in \Vf$, such that \ts $(v,v')\in \Ef$.

\smallskip

\begin{definition}\label{def:atlas}
	{\rm
		Let \ts $\ar$ \ts be a positive integer.
		A \defna{combinatorial atlas} \ts $\AA$ \ts of dimension \ts $\ar$ \ts
		is an acyclic digraph   \. $\Qf:=(\Vf, \Ef)$ \. with an additional structure:
		
		\smallskip
		
		$\circ$ \ Each vertex \ts $\vf \in \Vf$ \ts is associated with a  pair \ts $(\bM_{\vf}, \hb_{\vf})$\ts, where \ts
		$\bM_{\vf}$ \ts is a  nonnegative
		symmetric
		
		\quad   $\ar \times \ar$ \ts matrix,
		and \ts $\hb_{\vf}\in \rr_{\ge 0}^\ar$ \ts is a nonnegative vector.
		
		\smallskip
		
		$\circ$ \ The outgoing edges of each vertex \ts $\vf \in \Vfp$ \ts
		are labeled with indices \ts $i\in [\ar]$, without repetition.

		\quad \.
		We denote the edge labeled \ts $i$ \ts as  \ts
		$\ef^{\<i\>}=(\vf,\vf^{\<i\>})$, where \ts $1 \leq i \leq \ar$.
		
		\smallskip
		
		$\circ$ \ Each edge $\ef^{\<i\>}$ is associated to a linear transformation \.
		$\bT^{\langle i \rangle}_{\vf}: \ts \Rb^{\ar} \to \Rb^{\ar}$.
		
		\smallskip
		
		\nin
		Whenever clear, we drop the subscript~$\vf$ to avoid cluttering.
		We call \. $\bM=(\aM_{ij})_{i,j \in [\ar]}$ \. the \defn{associated matrix} of $\vf$,
		and \. $\hb = (\ah_i)_{i \in [\ar]}$  \. the \defn{associated vector} of~$\vf$.
		In notation above, we have \ts $\vf^{\<i\>}\in \vfs$, for all \ts $1\le i \le \ar$.
		
	}
\end{definition}

\smallskip

A matrix $\bM$ is called \defn{hyperbolic}, if
\begin{equation}\label{eq:Hyp}
	\langle \vb, \bM \wb \rangle^2 \  \geq \  \langle \vb, \bM \vb \rangle  \langle \wb, \bM \wb \rangle \quad \text{for every \ \, $\vb, \wb \in \Rb^{\ar}$, \ \  such that \ \ $\langle \wb, \bM \wb \rangle > 0$}. \tag{Hyp}
\end{equation}
For the atlas \ts $\AA$, we say that \ts $\vf\in \Vf$ \ts is \defn{hyperbolic},
if the associated matrix \ts $\bM_{\vf}$ \ts is hyperbolic, i.e.\ satisfies \eqref{eq:Hyp}.
We say that atlas \ts $\AA$ \ts satisfies \ts\defng{hyperbolic property} \ts if every
\ts $\vf\in \Vf$ \ts is hyperbolic.

\smallskip

Property \eqref{eq:Hyp} is equivalent to the following property:
\begin{equation}\label{eq:OPE}\tag{OPE}
	\text{$\bM$ \. has at most \. \defng{one positive eigenvalue} \. (counting multiplicity).}
\end{equation}
The equivalence between these three properties are well-known in the literature, see e.g.\
\cite{Gre}, \cite[Thm~5.3]{COSW}, \cite[Lem.~2.9]{SvH19}  and \cite[Lem.~2.5]{BH}.

\smallskip

\begin{lemma}[{\cite[Lem.~5.3]{CP}}]\label{l:Hyp is OPE}
	Let \ts {\rm $\bM$} \ts be a self-adjoint  operator on $\Rb^{\ar}$ for an inner product $\langle \cdot, \cdot \rangle$.
	Then$:$
	\[  	\text{\ts {\rm $\bM$} \ts satisfies \eqref{eq:Hyp} \ts}
	\ \Longleftrightarrow  \  \text{{\rm $\bM$} \ts satisfies \eqref{eq:OPE}}. \]
\end{lemma}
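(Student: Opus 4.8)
The plan is to deduce both implications from the spectral theorem for the self-adjoint operator $\bM$, reducing everything to the behaviour of $\bM$ on two-dimensional subspaces. Throughout I write $q(z):=\langle z,\bM z\rangle$ for the associated quadratic form on $\Rb^\ar$; by the spectral theorem $\Rb^\ar$ has an orthonormal basis of eigenvectors of $\bM$, so \eqref{eq:OPE} is just the assertion that $q$ is negative semidefinite on a subspace of codimension at most one.

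The direction \eqref{eq:Hyp} $\Rightarrow$ \eqref{eq:OPE} I would prove by contraposition. If $\bM$ had at least two positive eigenvalues, choose orthonormal eigenvectors $\vb,\wb$ with positive eigenvalues $\mu,\nu$. Then $\langle\vb,\bM\wb\rangle=\nu\langle\vb,\wb\rangle=0$, whereas $\langle\vb,\bM\vb\rangle\,\langle\wb,\bM\wb\rangle=\mu\nu>0$, which violates \eqref{eq:Hyp}. For the main direction \eqref{eq:OPE} $\Rightarrow$ \eqref{eq:Hyp}, fix $\vb,\wb\in\Rb^\ar$ with $q(\wb)>0$. If $\vb$ and $\wb$ are linearly dependent then \eqref{eq:Hyp} holds with equality, so assume they span a plane $U$. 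In the basis $\{\vb,\wb\}$ of $U$, the form $q|_U$ is represented by the symmetric matrix
\[
\mathbf{N}\ :=\ \begin{pmatrix}\langle\vb,\bM\vb\rangle & \langle\vb,\bM\wb\rangle\\ \langle\vb,\bM\wb\rangle & \langle\wb,\bM\wb\rangle\end{pmatrix},
\]
and by Sylvester's law of inertia the number of positive eigenvalues of $\mathbf{N}$ equals the positive index of inertia of $q|_U$. Since any subspace on which $q|_U$ is positive definite is also one on which $q$ is positive definite, \eqref{eq:OPE} forces this index to be at most one; as the diagonal entry $\langle\wb,\bM\wb\rangle=q(\wb)$ is positive, $\mathbf{N}$ is not positive definite, hence $\det\mathbf{N}\le 0$, which is precisely \eqref{eq:Hyp}. (Equivalently, $\det\mathbf{N}\le 0$ says that the upward parabola $t\mapsto q(\vb+t\wb)$ has a real root, i.e.\ that $q$ fails to be positive definite on $U$.)

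I do not expect a real obstacle: this is a standard fact about quadratic forms of Lorentzian signature, and the references cited after the lemma (\cite{Gre}, \cite[Thm~5.3]{COSW}, \cite[Lem.~2.9]{SvH19}, \cite[Lem.~2.5]{BH}) give alternative proofs. The only point worth spelling out carefully in the write-up is the monotonicity of the positive index of inertia under restriction to a subspace, together with the elementary dichotomy for a symmetric $2\times2$ matrix (positive determinant plus one positive diagonal entry forces positive definiteness); everything else follows at once from the spectral theorem.
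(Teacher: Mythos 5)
Your proof is correct. Note that the paper does not actually prove this lemma: it is quoted from \cite[Lem.~5.3]{CP}, with the equivalence described as well known (cf.\ \cite{Gre}, \cite[Thm~5.3]{COSW}, \cite[Lem.~2.9]{SvH19}, \cite[Lem.~2.5]{BH}), so there is no in-paper argument to compare against. Your write-up is the standard one and essentially the argument used in those references: the contrapositive for \eqref{eq:Hyp} $\Rightarrow$ \eqref{eq:OPE} via two orthonormal eigenvectors with positive eigenvalues, and for the converse the restriction of the quadratic form to the plane spanned by $\vb,\wb$, where Sylvester's law of inertia (together with the monotonicity of the positive index under restriction, and the $2\times 2$ dichotomy ``positive diagonal entry plus positive determinant implies positive definiteness'') yields $\det\mathbf{N}\le 0$, which is exactly \eqref{eq:Hyp}. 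The two points you flag --- that the basis $\{\vb,\wb\}$ of the plane need not be orthonormal (handled by inertia, not by eigenvalues of $\mathbf{N}$ per se), and the degenerate case of linearly dependent $\vb,\wb$ --- are indeed the only places where care is needed, and you handle both.
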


\smallskip

\subsection{Log-concave inequalities}\label{ss:atlas-ineq}
We say that atlas \ts $\AA$ \ts satisfies \ts\defng{inheritance property} \ts if for
every non-sink vertex \ts $\vf\in \Vfp$, we have:
\begin{equation}\label{eq:Inh}\tag{Inh}
	\begin{split}
		&	  (\bM \vb )_{i} \ = \ \big \langle   \bT^{\<i\>}\vb, \, \bM^{\<i\>} \,
		\bT^{\<i\>}\hb \big \rangle \quad \text{ for every} \ \ \, i \in \supp(\bM) \text{ \ \ and \ \. }  \vb \in \Rb^{\ar},
	\end{split}
\end{equation}
where \ts \ts $\bT^{\<i\>}=\bT^{\<i\>}_{\vf}$\., \ts $\hb=\hb_{\vf}$ \. and
\. $\bM^{\<i\>}:=\bM_{{\vf}^{\<i\>}}$ \. is the matrix associated with~$\vf^{\<i\>}$\..

\smallskip

Similarly, we say that atlas \ts $\AA$ \ts satisfies \ts\defng{pullback property} \ts if for
every non-sink vertex \ts $\vf\in \Vfp$, we have:
\begin{equation}\label{eq:Pull}\tag{Pull}
	\sum_{i\ts\in\ts \supp(\bM)} \. \ah_{i} \, \big \langle   \bT^{\<i\>}\vb, \, \bM^{\<i\>} \,  \bT^{\<i\>}\vb  \big \rangle  \ \geq \ \langle \vb, \bM \vb \rangle \qquad \text{ for every } \vb \in \Rb^{\ar},
\end{equation}
and we say that atlas \ts $\AA$ \ts satisfies \ts\defng{pullback equality property} \ts if for
every non-sink vertex \ts $\vf\in \Vfp$, we have:
\begin{equation}\label{eq:PullEq}\tag{PullEq}
	\sum_{i\ts\in\ts \supp(\bM)} \. \ah_{i} \, \big \langle   \bT^{\<i\>}\vb, \, \bM^{\<i\>} \,  \bT^{\<i\>}\vb  \big \rangle  \ = \ \langle \vb, \bM \vb \rangle \qquad \text{ for every } \vb \in \Rb^{\ar}.
\end{equation}
Clearly \eqref{eq:PullEq}  implies \eqref{eq:Pull}.

\smallskip

We say that a non-sink vertex \ts $\vf\in \Vfp$ \ts is \defn{regular} if the following positivity conditions are satisfied\footnote{In \cite{CP-intro}, there was an additional assumption that
	\ts $\bM_{\vf} \hb_{\vf}$ \ts
	\eqref{eq:hPos} \ts is strictly positive
	when restricted to the support of \ts $\bM_{\vf}$\ts.
	Note that this additional assumption is redundant here because we assume that \ts $\bM_{\vf}$ \ts is a nonnegative matrix.}:
\begin{align}\label{eq:Irr}\tag{Irr}
	& \text{The associated matrix \ts $\bM_{\vf}$ \ts restricted to its support is irreducible.} \\
	\label{eq:hPos} \tag{h-Pos} &\text{Vector  \ts $\hb_{\vf}$ \ts  is strictly positive when restricted to the support of \ts $\bM_{\vf}$\..}
\end{align}

The following theorem gives  log-concave inequalities for combinatorial objects that can be represented by atlases.

\smallskip

\begin{thm}[{\rm {\em \defna{local–global principle}, {\em see}~\cite[Thm~5.2]{CP}, \cite[Thm~3.4]{CP-intro}}}{}]
	\label{t:Hyp}
	Let $\AA$ be a combinatorial atlas that satisfies properties \eqref{eq:Inh} and \eqref{eq:Pull}, and
	let \ts $\vf\in \Vfp$ \ts be a non-sink regular vertex of~$\Qf$.
	Suppose every out-neighbor of \ts $\vf$ \ts is hyperbolic.
	Then \ts $\vf$ \ts is also hyperbolic.
\end{thm}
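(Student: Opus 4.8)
\medskip

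\noindent\emph{Proof plan.} The plan is to show that the associated matrix $\bM := \bM_\vf$ has at most one positive eigenvalue, i.e.\ satisfies \eqref{eq:OPE}; by Lemma~\ref{l:Hyp is OPE} this is equivalent to \eqref{eq:Hyp}, that is, to $\vf$ being hyperbolic. First I would pass to the support $S := \supp(\bM)$. If $S = \nothing$ then $\bM = \bO$ and there is nothing to prove; otherwise, permuting coordinates so that $S$ comes first, $\bM$ becomes the block matrix $\operatorname{diag}(\bM|_S,\, \bO)$, so the positive eigenvalues of $\bM$ are exactly those of $\bM|_S$, and all hypotheses survive restriction to the coordinate subspace $\Rb^S$ (applying \eqref{eq:Inh}, \eqref{eq:Pull} and the hyperbolicity of the out-neighbors only to vectors extended from $\Rb^S$ by zero). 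Thus I may assume $S = [\ar]$, in which case $\bM$ has no zero row, is irreducible by \eqref{eq:Irr}, and $\hb := \hb_\vf$ is strictly positive by \eqref{eq:hPos}. A one-line computation then gives $(\bM\hb)_i = \sum_j \aM_{ij}\ah_j > 0$ for every $i \in [\ar]$, since row $i$ of $\bM$ is nonzero, $\hb$ is strictly positive, and all entries of $\bM$ and $\hb$ are nonnegative.

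Next I would derive a pointwise quadratic upper bound on $\bM$ from the out-neighbors. Fix $i \in [\ar]$. By \eqref{eq:Inh} applied to $\vb = \hb$ we have $\langle \bT^{\<i\>}\hb,\, \bM^{\<i\>}\bT^{\<i\>}\hb\rangle = (\bM\hb)_i > 0$, so $\bT^{\<i\>}\hb$ is an admissible denominator vector in the hyperbolicity inequality \eqref{eq:Hyp} for the out-neighbor matrix $\bM^{\<i\>}$; applying \eqref{eq:Hyp} to the pair $(\bT^{\<i\>}\vb,\, \bT^{\<i\>}\hb)$ and using \eqref{eq:Inh} once more for the cross term $\langle \bT^{\<i\>}\vb,\, \bM^{\<i\>}\bT^{\<i\>}\hb\rangle = (\bM\vb)_i$ yields
\[
\langle \bT^{\<i\>}\vb,\, \bM^{\<i\>}\bT^{\<i\>}\vb\rangle \ \leq \ \frac{(\bM\vb)_i^2}{(\bM\hb)_i} \qquad \text{for all } \vb \in \Rb^{\ar}.
\]
Multiplying by $\ah_i \geq 0$, summing over $i$, and using \eqref{eq:Pull} gives, for all $\vb \in \Rb^{\ar}$,
\[
\langle \vb, \bM\vb\rangle \ \leq \ \sum_{i=1}^{\ar} \frac{\ah_i\,(\bM\vb)_i^2}{(\bM\hb)_i} \ = \ \langle \vb,\, \bM\bD\bM\vb\rangle, \qquad \text{where } \ \bD := \operatorname{diag}\big(\ah_1/(\bM\hb)_1,\, \ldots,\, \ah_{\ar}/(\bM\hb)_{\ar}\big) \succ 0.
\]
Equivalently, conjugating by $\bD^{1/2}$, the symmetric nonnegative matrix $W := \bD^{1/2}\bM\bD^{1/2}$ — which is irreducible because it has the same off-diagonal zero pattern as $\bM$ — satisfies $W^2 \succeq W$, so every eigenvalue of $W$ lies in $(-\infty, 0] \cup [1, \infty)$.

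The step I expect to be the main obstacle is that this bound alone is too weak: its right-hand side is nonnegative, so it does not by itself cap the number of positive eigenvalues of $\bM$, and the additional leverage has to come from the equality case. Taking $\vb = \hb$ makes both sides of the displayed bound equal to $\langle \hb, \bM\hb\rangle = \sum_i \ah_i (\bM\hb)_i$, so $\hb$ lies in the kernel of the positive semidefinite matrix $\bM\bD\bM - \bM$; hence $\bM\bD\bM\hb = \bM\hb$, and multiplying on the left by $\bD^{1/2}$ shows that the vector $\vb_1 := \bD^{1/2}\bM\hb$ satisfies $W\vb_1 = \vb_1$. Since the $i$-th coordinate of $\vb_1$ equals $\sqrt{\ah_i (\bM\hb)_i} > 0$, the vector $\vb_1$ is strictly positive and nonzero, so the Perron--Frobenius theorem for the nonnegative irreducible symmetric matrix $W$ forces $1$ to be the (simple) Perron eigenvalue of $W$. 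Together with $W^2 \succeq W$, this means every eigenvalue of $W$ other than the simple eigenvalue $1$ is strictly smaller than $1$ and hence nonpositive; so $W$ has exactly one positive eigenvalue. Finally, $\bM = \bD^{-1/2} W \bD^{-1/2}$ is congruent to $W$ via the invertible matrix $\bD^{1/2}$, so by Sylvester's law of inertia $\bM$ also has exactly one positive eigenvalue. This gives \eqref{eq:OPE}, hence \eqref{eq:Hyp}, so $\vf$ is hyperbolic, as desired.
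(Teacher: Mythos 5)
Your proof is correct: the reduction to $\supp(\bM)$ is legitimate since $(\bM\vb)_i$ and $\langle\vb,\bM\vb\rangle$ depend only on the coordinates in the support, the bound $\langle \bT^{\<i\>}\vb,\ts\bM^{\<i\>}\bT^{\<i\>}\vb\rangle \le (\bM\vb)_i^2/(\bM\hb)_i$ follows exactly as you say from \eqref{eq:Inh} together with hyperbolicity of the out-neighbors (admissible because $(\bM\hb)_i>0$ by \eqref{eq:Irr} and \eqref{eq:hPos}), and the finish via $\bM\bD\bM\succeq\bM$, the fixed vector $\bD^{1/2}\bM\hb$, Perron--Frobenius and Sylvester's law of inertia is sound. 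Note that this paper does not prove Theorem~\ref{t:Hyp} but imports it from \cite{CP,CP-intro}; your argument is essentially the same strategy as the cited proofs --- combining \eqref{eq:Inh}, \eqref{eq:Pull} and the out-neighbors' hyperbolicity into a quadratic bound on the support and then running a Perron--Frobenius eigenvalue argument --- with the symmetrization $\bD^{1/2}\bM\bD^{1/2}$ making the eigenvalue bookkeeping especially clean.
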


\smallskip

\subsection{Equality conditions for log-concave inequalities}\label{ss:atlas-equality}
In our applications, the pullback property \eqref{eq:PullEq} is more involved than the inheritance
property~\eqref{eq:Inh}.  Below we give sufficient conditions for
\eqref{eq:PullEq} that are easier to establish.

\smallskip

 We say that $\AA$ satisfies the
\defng{identity property}, if for every non-sink vertex \ts
$\vf\in \Vfp$ \ts and every \ts $i \in \supp(\bM)$, we have:
\begin{equation}\label{eq:Iden}\tag{Iden}
	\bT^{\<i\>}: \Rb^{\ar} \to \Rb^{\ar} \ \text{ is the identity mapping.}
\end{equation}	
We say that $\AA$ satisfies the \defng{transposition-invariant property},  if for every non-sink vertex \ts
$\vf\in \Vfp$,
\begin{equation}\label{eq:TPInv}\tag{T-Inv}
	\aM_{jk}^{\<i\>}	\ = \  \aM_{ki}^{\<j\>}	 \ = \ \aM_{ij}^{\<k\>} \qquad 	\text{ for every  $i, j, k \in \supp(\bM)$.}
\end{equation}

We say that $\AA$ has the \defng{decreasing support property}, if for every non-sink vertex \ts $\vf \in \Vfp$,
\begin{equation}\label{eq:Decsupp}
	\tag{DecSupp}
	\supp(\bM) \ \supseteq \ \supp\big(\bM^{\<i\>}\big) \quad \text{ for every $i \in \supp(\bM)$.}
\end{equation}

\smallskip

\begin{thm}[{\rm cf.~\cite[Thm~6.1]{CP},~\cite[Thm~3.8]{CP-intro}}{}]
	\label{t:Pull}
	Let \ts $\AA$ \ts be a combinatorial atlas that satisfies \eqref{eq:Inh}, \eqref{eq:Iden}, \eqref{eq:TPInv} and \eqref{eq:Decsupp}.
	Then  \ts $\AA$ \ts also satisfies  \eqref{eq:PullEq}.
\end{thm}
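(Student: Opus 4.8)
The plan is to fix a single non-sink vertex, use the identity property to strip away all the linear transformations, and reduce \eqref{eq:PullEq} to an entrywise identity among the associated matrices that is then checked by a symmetry/relabeling argument. Concretely, I would fix $\vf \in \Vfp$ and abbreviate $\bM := \bM_\vf$, $\hb := \hb_\vf$, and $\bM^{\<i\>} := \bM_{\vf^{\<i\>}}$. Since \eqref{eq:Iden} makes each $\bT^{\<i\>}$ the identity map, the property \eqref{eq:PullEq} to be proved reads $\sum_{i\in\supp(\bM)}\ah_i\,\langle\vb,\bM^{\<i\>}\vb\rangle = \langle\vb,\bM\vb\rangle$ for all $\vb\in\Rb^{\ar}$, while the inheritance property \eqref{eq:Inh} reads $(\bM\vb)_i = \langle\vb,\bM^{\<i\>}\hb\rangle$ for all $i\in\supp(\bM)$ and all $\vb$. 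Evaluating the latter at the standard basis vectors $\eb_j$ yields the entrywise identity
\[
\aM_{ij} \ = \ \sum_{k}\aM^{\<i\>}_{jk}\,\ah_k \qquad \text{for all } i\in\supp(\bM) \text{ and all } j.
\]

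Next I would expand $\langle\vb,\bM\vb\rangle = \sum_{i,j}\aM_{ij}\,v_iv_j$. Because $\bM$ is symmetric and nonnegative, its rows (and columns) indexed outside $\supp(\bM)$ vanish, so only $i\in\supp(\bM)$ contribute; inserting the entrywise identity above gives
\[
\langle\vb,\bM\vb\rangle \ = \ \sum_{i\in\supp(\bM)}\ \sum_{j,k}\aM^{\<i\>}_{jk}\,\ah_k\,v_iv_j.
\]
The decreasing-support property \eqref{eq:Decsupp} now guarantees that, for $i\in\supp(\bM)$, the term $\aM^{\<i\>}_{jk}$ is zero unless $j,k\in\supp(\bM^{\<i\>})\subseteq\supp(\bM)$, so the triple sum ranges only over $i,j,k\in\supp(\bM)$ — which is exactly the regime where the transposition-invariant property \eqref{eq:TPInv} holds. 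Applying $\aM^{\<i\>}_{jk} = \aM^{\<k\>}_{ij}$ and then cyclically renaming the three summation indices transforms the right-hand side into $\sum_{i\in\supp(\bM)}\ah_i\sum_{j,k}\aM^{\<i\>}_{jk}\,v_jv_k = \sum_{i\in\supp(\bM)}\ah_i\,\langle\vb,\bM^{\<i\>}\vb\rangle$, which is precisely the left-hand side of \eqref{eq:PullEq}. Since $\vb$ and $\vf$ were arbitrary, \eqref{eq:PullEq} follows for the whole atlas.

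The only genuinely delicate point, and the step I would flag as the main obstacle, is the support bookkeeping around the use of \eqref{eq:TPInv}: the relation $\aM^{\<i\>}_{jk} = \aM^{\<k\>}_{ij}$ is only postulated for indices in $\supp(\bM)$, so one must carefully confirm — via \eqref{eq:Decsupp} together with the symmetry and nonnegativity of the associated matrices — that every surviving term of the sum already lies in that range before the relabeling is carried out. I would also remark that the identity property \eqref{eq:Iden} is exactly what keeps the argument this short: in the general situation of \cite[Thm~6.1]{CP} the operators $\bT^{\<i\>}$ cannot be discarded, and the same computation has to be conjugated through them, which is considerably more involved.
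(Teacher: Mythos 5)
Your proof is correct, and the delicate point you flag (that \eqref{eq:Decsupp} forces all three indices of every surviving term into $\supp(\bM)$ before \eqref{eq:TPInv} may be applied) is handled exactly as you describe. Note that this paper does not prove Theorem~\ref{t:Pull} itself but quotes it from \cite[Thm~6.1]{CP} and \cite[Thm~3.8]{CP-intro}; your computation --- use \eqref{eq:Iden} to discard the transformations, turn \eqref{eq:Inh} into the entrywise identity $\aM_{ij}=\sum_k \aM^{\<i\>}_{jk}\ts\ah_k$ for $i\in\supp(\bM)$, and relabel the triple sum via \eqref{eq:TPInv} --- is essentially the argument given in those references.
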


\smallskip

A \defn{global pair} \ts $\fb,\gb \in \Rb^{\ar}$ \ts
is a pair of nonnegative vectors, such that
\begin{equation}
	\label{eq:PosGlob} \tag{Glob-Pos}  \text{$\fb \ts +\ts\gb$ \ is a strictly positive vector}\ts.
\end{equation}
Here \ts $\fb$ \ts and \ts $\gb$ \ts are global in a sense that they are the same for all
vertices \ts $\vf\in \Vf$.

\smallskip

Fix a number \ts $\as>0$.
We say that a vertex \ts $\vf \in \Vf$ \ts satisfies~\eqref{eq:sEqu}, if
\begin{equation}\label{eq:sEqu}
	\tag{s-Equ}  \langle \fb , \bM \fb \rangle  \ = \  \as \, \langle \gb , \bM \fb \rangle \ = \  \as^2 \, \langle \gb , \bM \gb \rangle,
\end{equation}
where \ts $\bM=\bM_{\vf}$ \ts as above.
Observe that \eqref{eq:sEqu} implies that equality occurs in \eqref{eq:Hyp} for substitutions \ts $\vb\gets \gb$ \ts
and \ts $\wb\gets \fb$, since
\[
	\langle \gb , \bM \fb \rangle^2  \ = \  \as \,\. \langle \gb , \bM \gb \rangle \ \as^{-1} \. \langle \fb , \bM \fb \rangle \ = \ \langle \gb , \bM \gb \rangle  \ \langle \fb , \bM \fb \rangle\..
\]
We say that the atlas \ts $\AA$ \ts satisfies \defng{$\asr$-equality property}
if~\eqref{eq:sEqu} holds for every \ts $\vf \in \Vf$.

\smallskip

A vertex \ts $\vf \in \Vfp$ \ts is called \defn{functional source} if
the following conditions are satisfied:
\begin{align}
	\label{eq:ProjGlob} \tag{Glob-Proj}  &\af_j \ = \  \big(\bT^{\<i\>} \fb \big)_j \quad \text{ and }  \quad  \ag_j \ = \  \big(\bT^{\<i\>} \gb \big)_j  \quad  \forall  \ i \in \supp(\bM), \ j \in \supp(\bM^{\<i\>}), \\
	\label{eq:hGlob} \tag{h-Glob}  &\text{$\fb\.=\.\hb_{\vf}$\..} 
\end{align}	
Here condition~\eqref{eq:ProjGlob} means that \ts $\fb,\gb$ \ts are fixed points of the
projection \ts $\bT^{\<i\>}$ \ts when restricted to the support.

We say that an edge \ts $\ef^{\<i\>}=(\vf, \vf^{\<i\>})\in \Ef$ \ts is \defn{functional} if
$\vf$ is a functional source and \. $i \in \supp(\bM) \, \cap \, \supp(\hb)$.
A vertex $\wf \in \Vf$ is a \defn{functional target}
of~$\vf$, if there exists a directed path \. $\vf\to \wf$ \. in $\Qf$ consisting
of only functional edges.  Note that a functional target is not necessarily
a functional source.

The following lemma gives another equivalent condition to check \eqref{eq:sEqu}.

\smallskip

\begin{lemma}[{\cite[Lem~7.2]{CP}}]\label{l:equality kernel}
	Let $\bMr$ be a nonnegative symmetric hyperbolic $\arr \times \arr$ matrix.
	Let \ts $\fb,\gb \in \Rb^{\arr}$ \ts be nonnegative vectors, let \ts $\asr>0$,
	and let \ts $\zb := \fb \ts - \ts \asr  \gb$.
	Then \eqref{eq:sEqu} holds \  \underline{if and only if}  \ $\bMr \zb =0$.	
\end{lemma}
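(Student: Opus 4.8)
The plan is to decouple the chain of equalities \eqref{eq:sEqu} into three orthogonality relations involving the single vector $\zb = \fb - \asr\gb$, and then use the hyperbolicity of $\bMr$ to upgrade the resulting statement ``$\zb$ is $\bMr$-null and $\bMr$-orthogonal to a vector of positive $\bMr$-norm'' into the kernel statement $\bMr\zb = 0$. The equivalence thus breaks into a routine algebraic reduction and one genuinely geometric step.

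First I would carry out the bookkeeping. Write $c$ for the common value in \eqref{eq:sEqu}, so that (using $\asr > 0$) we have $\langle\fb,\bMr\fb\rangle = c$, $\langle\gb,\bMr\fb\rangle = \langle\fb,\bMr\gb\rangle = c/\asr$, and $\langle\gb,\bMr\gb\rangle = c/\asr^2$, where the middle equality is symmetry of $\bMr$. Expanding $\langle\fb,\bMr\zb\rangle$ and $\langle\gb,\bMr\zb\rangle$ via $\zb = \fb - \asr\gb$, one sees that the first identity in \eqref{eq:sEqu} is equivalent to $\langle\fb,\bMr\zb\rangle = 0$, the second to $\langle\gb,\bMr\zb\rangle = 0$, and then $\langle\zb,\bMr\zb\rangle = \langle\fb,\bMr\zb\rangle - \asr\langle\gb,\bMr\zb\rangle = 0$ follows; conversely these two relations give back \eqref{eq:sEqu}. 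Hence \eqref{eq:sEqu} is equivalent to the system $\langle\fb,\bMr\zb\rangle = \langle\gb,\bMr\zb\rangle = \langle\zb,\bMr\zb\rangle = 0$, and since $\bMr\zb = 0$ trivially implies this system (and therefore \eqref{eq:sEqu}), the entire content of the statement is the implication that this system forces $\bMr\zb = 0$.

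For that implication I would argue geometrically. If $\bMr = 0$ there is nothing to prove, so assume $\bMr \neq 0$ and put $\wb := \fb + \gb$, which is strictly positive by \eqref{eq:PosGlob}; since $\bMr$ is a nonzero nonnegative matrix, $\langle\wb,\bMr\wb\rangle = \sum_{i,j} w_i w_j (\bMr)_{ij} > 0$, and by the system above $\langle\zb,\bMr\wb\rangle = \langle\zb,\bMr\fb\rangle + \langle\zb,\bMr\gb\rangle = 0$ while $\langle\zb,\bMr\zb\rangle = 0$. It then remains to invoke the following standard fact about a symmetric matrix with at most one positive eigenvalue (property \eqref{eq:OPE}): if $\langle\wb,\bMr\wb\rangle > 0$, $\langle\zb,\bMr\wb\rangle = 0$, and $\langle\zb,\bMr\zb\rangle = 0$, then $\bMr\zb = 0$. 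To establish this I would decompose $\Rb^\arr = \Rb\wb \oplus W$ with $W := \{\vb : \langle\vb,\bMr\wb\rangle = 0\}$ (a valid splitting since $\langle\wb,\bMr\wb\rangle \neq 0$), note that $\bMr$ restricted to $W$ is negative semidefinite — otherwise $\bMr$ would be positive definite on the plane spanned by $\wb$ and a vector of $W$ of positive $\bMr$-norm, contradicting \eqref{eq:OPE} — and then observe that $\zb \in W$ with $\langle\zb,\bMr\zb\rangle = 0$ forces $\langle\zb,\bMr\vb\rangle = 0$ for all $\vb \in W$ by the Cauchy--Schwarz inequality for the positive semidefinite form $-\bMr|_W$; together with $\langle\zb,\bMr\wb\rangle = 0$ this gives $\langle\zb,\bMr\vb\rangle = 0$ for every $\vb \in \Rb^\arr$, i.e.\ $\bMr\zb = 0$ by symmetry of $\bMr$.

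The crux, such as it is, is this last geometric fact: it is the only place where hyperbolicity is genuinely used and the only place where the nonnegativity of $\fb, \gb$ (via \eqref{eq:PosGlob}) and of $\bMr$ enters, namely to ensure that $\wb = \fb + \gb$ has positive $\bMr$-norm. One could instead diagonalize $\bMr$ and track the signs of the eigencomponents of $\zb$, but the $\bMr$-orthogonal splitting is cleaner and coordinate-free. All the remaining steps — the reduction to the three orthogonality relations and the trivial direction — are routine bookkeeping.
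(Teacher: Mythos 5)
Your proof is correct, and since the paper does not reprove this lemma (it is quoted from \cite[Lem~7.2]{CP}), there is no internal proof to compare against; your route --- reducing \eqref{eq:sEqu} to the orthogonality relations $\langle\fb,\bMr\zb\rangle=\langle\gb,\bMr\zb\rangle=0$, then exploiting \eqref{eq:OPE} via negative semidefiniteness of $\bMr$ on the $\bMr$-orthogonal complement of a vector of positive $\bMr$-norm together with Cauchy--Schwarz --- is the standard argument for equality statements of this kind in the combinatorial atlas and Shenfeld--van Handel literature, and each individual step checks out (the equivalence with the two orthogonality relations, the positivity $\langle\wb,\bMr\wb\rangle>0$ for a nonzero nonnegative matrix and strictly positive $\wb$, the two-positive-eigenvalue contradiction with \eqref{eq:OPE}, and the Cauchy--Schwarz step forcing $\langle\zb,\bMr\vb\rangle=0$ for all $\vb\in\Rb^{\arr}$).

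The one point to flag is your appeal to \eqref{eq:PosGlob}. As the lemma is stated, $\fb,\gb$ are merely nonnegative vectors and no global-pair condition is imposed, yet some such hypothesis is genuinely necessary: for $\bMr=\left(\begin{smallmatrix}0&1\\1&0\end{smallmatrix}\right)$, $\fb=(1,0)$, $\gb=(0,0)$, $\asr=1$, condition \eqref{eq:sEqu} reads $0=0=0$ while $\bMr\zb=(0,1)\ne 0$. So the strict positivity of $\wb=\fb+\gb$ (or, alternatively, the assumption that one of the three quantities in \eqref{eq:sEqu} is nonzero, in which case the same argument runs with $\fb$ or $\gb$ in place of $\wb$) is not optional; it is supplied by the global-pair setup of \cite{CP}, under which the lemma is stated and proved there, and it holds in every application in this paper, where $\fb,\gb$ are the indicator vectors of $R$ and $X-R$, so $\fb+\gb$ is the all-ones vector. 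Granting that reading, your proof is complete; you were right to single this out as the only place where nonnegativity of $\bMr,\fb,\gb$ enters.
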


\smallskip

The following result gives equality conditions to log-concave inequalities that are
derived from Theorem~\ref{t:Hyp}.  This is the main result in this section and is a key
to the proof of Theorem~\ref{t:main-positive} we give in the next section.

\smallskip

\begin{thm}[{\rm {\em \defn{local–global equality principle},~\cite[Thm~7.1]{CP}}}{}]
	\label{t:equality Hyp}
	Let \ts $\AA$ \ts be a combinatorial atlas that satisfies properties
	\eqref{eq:Inh}, \eqref{eq:Pull}.  Suppose also \ts $\AA$ \ts
	satisfies property \eqref{eq:Hyp} for every vertex \ts $v\in \Vf$.
	Let \ts $\fb,\gb$ \ts be a global pair of~$\AA$.
	Suppose a non-sink vertex \ts $\vf \in \Vfp$ \ts satisfies \eqref{eq:sEqu}
	with constant $\asr>0$.  Then every functional target of $\vf$ also
	satisfies \eqref{eq:sEqu} with the same constant~$\asr$.
\end{thm}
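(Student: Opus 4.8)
The plan is to reduce the statement to a one‑step propagation along a single functional edge and then iterate. If $\wf$ is a functional target of $\vf$, fix a directed path $\vf = \vf_0 \to \vf_1 \to \cdots \to \vf_\ell = \wf$ of functional edges; if $\ell=0$ there is nothing to prove, and if $\ell\ge 1$ then each of $\vf_0,\ldots,\vf_{\ell-1}$ is a functional source, being the tail of a functional edge. So it suffices to prove the following claim: \emph{if $\vf\in\Vfp$ is a functional source satisfying \eqref{eq:sEqu} with constant $\asr>0$ and $\ef^{\<i\>}=(\vf,\vf^{\<i\>})$ is a functional edge, then $\vf^{\<i\>}$ satisfies \eqref{eq:sEqu} with the same constant $\asr$}. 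The theorem then follows by induction on $\ell$.

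To prove the claim, set $\zb := \fb - \asr\gb$, $\bM := \bM_\vf$, $\bM^{\<i\>} := \bM_{\vf^{\<i\>}}$. By Lemma~\ref{l:equality kernel} applied to the (hyperbolic) matrix $\bM$, property \eqref{eq:sEqu} at $\vf$ is equivalent to $\bM\zb = 0$, and likewise \eqref{eq:sEqu} at $\vf^{\<i\>}$ is equivalent to $\bM^{\<i\>}\zb = 0$; I would in fact verify the three scalar identities of \eqref{eq:sEqu} at $\vf^{\<i\>}$ directly. A normalization comes first. Since $\vf$ is a functional source, \eqref{eq:ProjGlob} says that for every $i\in\supp(\bM)$ the transformation $\bT^{\<i\>}$ fixes $\fb$ and $\gb$, hence $\zb$, on $\supp(\bM^{\<i\>})$; as a nonnegative symmetric matrix depends only on the coordinates in its support, it follows that $\bM^{\<i\>}\bT^{\<i\>}\fb=\bM^{\<i\>}\fb$ (and similarly with $\gb$ or $\zb$ in place of $\fb$) and $\langle\bT^{\<i\>}\vb,\bM^{\<i\>}\wb\rangle = \langle\vb,\bM^{\<i\>}\wb\rangle$ whenever $\vb,\wb\in\{\fb,\gb,\zb\}$. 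Using this and \eqref{eq:hGlob} (so $\hb_\vf=\fb$), the inheritance identity \eqref{eq:Inh} at $\vb=\zb$ reduces to $0 = (\bM\zb)_i = \langle\zb,\bM^{\<i\>}\fb\rangle$ for all $i\in\supp(\bM)$, i.e.\ $\langle\fb,\bM^{\<i\>}\fb\rangle = \asr\langle\gb,\bM^{\<i\>}\fb\rangle$ — the first of the three identities of \eqref{eq:sEqu} at $\vf^{\<i\>}$.

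Next I would establish that $\langle\fb,\bM^{\<i\>}\fb\rangle>0$ for every \emph{functional} $i$. If instead $\langle\fb,\bM^{\<i\>}\fb\rangle=0$, then $\langle\gb,\bM^{\<i\>}\fb\rangle=0$ too (using $\asr>0$ and the identity just derived), so $\langle\fb+\gb,\bM^{\<i\>}\fb\rangle=0$; as $\bM^{\<i\>}\fb$ has nonnegative entries and $\fb+\gb$ is strictly positive by \eqref{eq:PosGlob}, this forces $\bM^{\<i\>}\fb=0$, and then $\bM^{\<i\>}\bT^{\<i\>}\hb_\vf=\bM^{\<i\>}\fb=0$ turns \eqref{eq:Inh} into $(\bM\vb)_i=0$ for all $\vb\in\Rb^\ar$, contradicting $i\in\supp(\bM)$, which holds because $\ef^{\<i\>}$ is functional. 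With $\langle\fb,\bM^{\<i\>}\fb\rangle>0$ in hand, the hyperbolicity \eqref{eq:Hyp} of $\bM^{\<i\>}$ with $\wb\gets\fb$, $\vb\gets\zb$ gives $0 = \langle\zb,\bM^{\<i\>}\fb\rangle^2 \ge \langle\zb,\bM^{\<i\>}\zb\rangle\,\langle\fb,\bM^{\<i\>}\fb\rangle$, so $\langle\zb,\bM^{\<i\>}\zb\rangle\le 0$ for every functional $i$. Applying the pullback inequality \eqref{eq:Pull} at $\vf$ to $\vb=\zb$ and using the normalization together with $\ah_i=\af_i$, one gets $\sum_{i\in\supp(\bM)}\af_i\,\langle\zb,\bM^{\<i\>}\zb\rangle \ge \langle\zb,\bM\zb\rangle = 0$; the summands with $\af_i=0$ drop out, the remaining summands are exactly those with $i$ functional and are each $\le 0$ with $\af_i>0$, so $\langle\zb,\bM^{\<i\>}\zb\rangle=0$ for every functional $i$. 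Finally, $\langle\zb,\bM^{\<i\>}\fb\rangle=0$ and $\langle\zb,\bM^{\<i\>}\zb\rangle=0$, together with $\fb-\zb=\asr\gb$, yield by elementary algebra all three identities of \eqref{eq:sEqu} at $\vf^{\<i\>}$ with the same constant $\asr$. This proves the claim, and the theorem follows.

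I expect the crux to be the step showing $\langle\fb,\bM^{\<i\>}\fb\rangle>0$ for functional $i$: without it the term $\langle\zb,\bM^{\<i\>}\zb\rangle$ need not be nonpositive and the pullback argument collapses. Its resolution hinges on two structural features of the atlas — the nonnegativity of the associated matrices and the strict positivity \eqref{eq:PosGlob} of the global pair — which upgrade the vanishing of the scalar $\langle\fb,\bM^{\<i\>}\fb\rangle$ to the vanishing of the vector $\bM^{\<i\>}\fb$, whence \eqref{eq:Inh} gives the contradiction with $i\in\supp(\bM)$. Everything else is bookkeeping with the functional‑source conditions plus the one‑variable hyperbolic (reverse Cauchy--Schwarz) inequality.
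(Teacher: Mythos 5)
Your proof is correct: the reduction to a single functional edge followed by induction along the path, the use of Lemma~\ref{l:equality kernel} at the source, the inheritance identity \eqref{eq:Inh} (with $\hb_\vf=\fb$ and \eqref{eq:ProjGlob}) giving $\langle\zb,\bM^{\<i\>}\fb\rangle=(\bM\zb)_i=0$, the positivity of $\langle\fb,\bM^{\<i\>}\fb\rangle$ via nonnegativity of the matrices and \eqref{eq:PosGlob}, and the pullback \eqref{eq:Pull} forcing $\langle\zb,\bM^{\<i\>}\zb\rangle=0$ for each functional~$i$, all check out. Note that this paper does not prove the statement itself but imports it from \cite[Thm~7.1]{CP}; your argument is essentially the same propagation argument given there, so there is nothing to fix.
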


\medskip

\section{Proof of Theorem~\ref{t:main-positive}}\label{s:main-positive}

\subsection{Combinatorial atlas construction}\label{ss:main-atlas-construction}
Let \ts $\Mf:=(X,\cI)$ \ts be a matroid with rank \ts $r\geq 2$ \ts on \ts $n:=|X|$ \ts elements.
Let \ts $R \subseteq X$.  Denote by \ts $X^*$ \ts the set of finite words in the alphabet~$X$.
A word is called \defnb{simple} if it contains each letter at most once; we consider only simple words in this paper.
For a word \ts $\alpha \in X^*$\ts, the \defnb{length} \ts $|\alpha|$ \ts of $\alpha$ is  the number of letters in  $\alpha$.
For two words \ts $\alpha,\beta \in X^*$\ts, we denote by \ts $\alpha\beta \in X^*$ \ts the concatenation of \ts $\alpha$ \ts and \ts $\beta$\ts.


Let $a \in \{0,\ldots, r\}$.
A word \. $\gamma = x_1\ldots x_{r}\in X^*$ \. of length $r$ is called \defnb{compatible} \ts
with a triple \ts $(\Mf,R,a)$, \ts if
\begin{itemize}
	\item $\{x_1,\ldots, x_r \}$ forms a basis of $\Mf$, and
	\item $x_1,\ldots, x_a \in R$  \. and \. $x_{a+1}, \ldots, x_{r} \in X-R$.
\end{itemize}
We denote by \ts $\Comp(\Mf,R,a)$ \ts  the set of words compatible with
\ts $(\Mf,R,a)$.
Note that every such word  \. $\gamma \in \Comp(\Mf,R,a)$ \. is simple.
It also follows that
\begin{equation}\label{eq:Comp-P}
	|\Comp(\Mf,R,a)| \ = \  r! \.  \aP(\Mf,R,a).
\end{equation}

For every  \ts $a \in [r-1]$, we denote by
\. $\bC(\Mf,R, a) := \bigl(\aC_{x\ts y}\bigr)_{x,y \in X}$ \. the symmetric \ts $n \times n$ \ts
matrix given by
\begin{align}\label{eq:C-1} \tag{DefC-1}
	\aC_{x\ts y} \ := \
	\begin{cases}
		\ \big|\Comp(\Mf/\{x, y\},R, a-1 )\big|, & \ \  \text{if } \ \ x\neq y \ \text{ and } \ \{x,y\} \in \cI\ts,\\
		\hskip4.6cm 0, & \ \  \text{if } \ \ x=y \ \text{ or } \ \{x,y\} \notin \cI\ts.
	\end{cases}
\end{align}
Equivalently, \. $\aC_{x,y}$ \. is given by
\begin{align}\label{eq:C-2} \tag{DefC-2}
	\aC_{x\ts y} \ := \
	\begin{cases}
				\ \big|\big\{\gamma \. : \. x\gamma y \in \Comp(\Mf,R,a)\big\}\big| & \text{for }  \ \ x \in R, \. y \in X-R, \\
				\ \big|\big\{\gamma \. : \. xy\gamma  \in \Comp(\Mf,R,a+1)\big\}\big| & \text{for } \ \ x,y \in R, \\
				\ \big|\big\{\gamma \. : \. \gamma xy  \in \Comp(\Mf,R,a-1)\big\}\big| & \text{for } \ \ x,y \in X-R. \\
	\end{cases}
\end{align}
Both definitions will be frequently used throughout this section.
It follows from the definition that \ts $\bC$ \ts is a nonnegative symmetric matrix,
and the diagonal entries of \ts $\bC$ \ts are equal to $0$.

Let \. $\fb,\gb \in \rr^n$ \. be the indicator vector of $R$ and $X-R$, respectively.
It follows from \eqref{eq:C-2} and \eqref{eq:Comp-P} that
\begin{equation}\label{eq:Cfg}\tag{Cfg}
\begin{split}
& \<\fb, \bC(\Mf,R,a) \gb \> \ = \  {r!} \. \aP(\Mf,R,a), \qquad
\<\fb, \bC(\Mf,R,a) \fb \> \ = \  {r!} \. \aP(\Mf,R,a+1),\\
&\<\gb, \bC(\Mf,R,a) \gb \> \ = \  {r!} \. \aP(\Mf,R,a-1).
\end{split}
\end{equation}

Let \. $\Qf:=\Qf(\Mf,R,a)  :=  (\Vf,\Ef)$ \. be the acyclic graph with \.  $\Vf=   \Vf^0 \. \cup \. \Vf^1$,
where
\begin{align*}
	\Vf^1 \, & := \ \{t \in \rr \mid 0 \leq t \leq 1  \}, \qquad \Vfm  \  := \ X.
\end{align*}
For a non-sink vertex  \ts $v=t \in \Omega^1$ \ts and $x \in X$, the corresponding outneighbor in $\Omega^0$ is  \ts $v^{\<x\>}:= x$\ts.

Define the combinatorial atlas \ts $\AA=\AA(\Mf,R,a)$ \ts of dimension \. $\ar= n$ \.
corresponding to \ts matroid $\Mf$, subset \ts $R\subseteq X$ \ts, integer \ts
$a \in \{2,\ldots, r-1\}$, by the acyclic graph~$\Qf$ and the linear algebraic
data defined as follows.
For each vertex \ts $\vf=x \in \Vf^0$\ts, the associated matrix is
\[ \bM_v \ := \
\bC(\Mf/x,R,a-1) \quad  \text{ if  $x$ is a non-loop of $\Mf$},
\]
and is equal to the zero matrix otherwise. Note that the ground set of $\Mf/x$ is still $X$ (instead of $X-x$) under our convention.
For each vertex \ts $\vf=t \in \Vf^1$\ts,  the associated matrix is
\[  \bM \ := \ \bM_{\vf} \ := \  t \, \bC(\Mf,R,a)  \ +  \ (1-t) \, \bC(\Mf,R,a-1), \]
and the  associated vector \. $\hb := \hb_{\vf} \in \Rb^{\ar}$ \. is defined to have coordinates
\[  \ah_x \ := \
\begin{cases}
	t & \text{ if } \ \  x \in R\.,\\
	1-t & \text{ if } \ \ x \in X-R\..
\end{cases} \]
Finally, let
the linear transformation \. $\bT^{\<x\>}: \Rb^{\ar} \to \Rb^{\ar}$ \.
associated to the edge \ts $(\vf,\vf^{\<x\>})$ to be the identity map.

%

\smallskip

\subsection{Properties of the constructed matrices}

In this subsection we gather properties of the matrix \ts $\bC(\Mf,R,a)$ \ts that will be used in the proof.
Recall that \ts $\NL(\Mf)$ \ts is the set of non-loops of~$\Mf$.
\smallskip

\begin{lemma}\label{lem:C-supp}
	Let $\Mf$ be a matroid of rank $r\geq 2$, let $R \subseteq X$,
	and let $a\in [r-1]$ such that $\aP(\Mf,R,a)>0$.
	Then we have:
	\begin{itemize}
				\item \. the support of \. $\bC\.(\Mf,R,a)$ \. is equal to $\NL(\Mf)$, and
		\item \. matrix \. $\bC\.(\Mf,R,a)$ \. is irreducible.
	\end{itemize}
\end{lemma}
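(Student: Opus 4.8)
The plan is to recast both assertions as statements about the \emph{support graph} $\Gamma$ of $\bC := \bC(\Mf,R,a)$, i.e.\ the graph on vertex set $X$ with an edge $\{x,y\}$ precisely when $\aC_{x\ts y}>0$. Since $\bC$ is nonnegative and symmetric, $\supp(\bC)$ is exactly the set of non-isolated vertices of $\Gamma$, and $\bC$ restricted to its support is irreducible if and only if $\Gamma$, restricted to $\supp(\bC)$, is connected. So it suffices to establish: \emph{(i)} loops of $\Mf$ are isolated in $\Gamma$; \emph{(ii)} there is a basis $B$ of $\Mf$ spanning a connected subgraph of $\Gamma$; and \emph{(iii)} every non-loop of $\Mf$ is $\Gamma$-adjacent to a vertex of $B$. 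Indeed, (i) gives $\supp(\bC)\subseteq\NL(\Mf)$, while (ii) and (iii) together yield the reverse inclusion and connectivity of $\Gamma$ on $\NL(\Mf)$, hence both conclusions of the lemma.

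For (i): if $x$ is a loop then $\{x\}\notin\cI$, so $\{x,y\}\notin\cI$ for every $y\in X$ by the hereditary property, whence $\aC_{x\ts y}=0$ for all $y$ by \eqref{eq:C-1}. For (ii): since $\aP(\Mf,R,a)>0$, by \eqref{eq:Comp-P} the set $\Comp(\Mf,R,a)$ is nonempty, so I will fix a word $z_1\cdots z_r\in\Comp(\Mf,R,a)$ and set $B:=\{z_1,\ldots,z_r\}$, so that $B\cap R=\{z_1,\ldots,z_a\}$ and $B\setminus R=\{z_{a+1},\ldots,z_r\}$, both nonempty since $1\le a\le r-1$. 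For any $z_i\in B\cap R$ and $z_j\in B\setminus R$, the set $B\setminus\{z_i,z_j\}$ is a basis of $\Mf/\{z_i,z_j\}$ containing exactly $a-1$ elements of $R$, so ordering it with those $a-1$ elements first produces a word in $\Comp(\Mf/\{z_i,z_j\},R,a-1)$; hence $\aC_{z_i\ts z_j}>0$ by \eqref{eq:C-1}. Thus $\Gamma$ contains the complete bipartite graph between $B\cap R$ and $B\setminus R$, which is connected, and in particular $B\subseteq\supp(\bC)$.

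For (iii), let $x$ be a non-loop with $x\notin B$ (the case $x\in B$ is covered by (ii)). Since $\{x\}\in\cI$, repeated use of the exchange property augments $\{x\}$ by elements of $B$ to a basis $B'=\{x\}\cup(B\setminus\{z\})$ for some $z\in B$, and then $|B'\cap R|=a+\epsilon$ with $\epsilon\in\{-1,0,1\}$ according to whether $x$ and $z$ lie in $R$. I will check that one can always pick $y\in B\setminus\{z\}$ with $|\{x,y\}\cap R|=\epsilon+1$: when $\epsilon=1$ we have $x\in R$ and take $y\in B\cap R$; when $\epsilon=-1$ we have $x\notin R$ and take $y\in B\setminus R$; when $\epsilon=0$ we take $y$ from whichever of $B\cap R$, $B\setminus R$ does not contain $x$ --- in each case the relevant part of $B$ is nonempty because $1\le a\le r-1$, and $y\ne z$ for parity reasons. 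With such a $y$, the set $B'\setminus\{x,y\}$ is a basis of $\Mf/\{x,y\}$ containing exactly $|B'\cap R|-|\{x,y\}\cap R|=a-1$ elements of $R$, so $\aC_{x\ts y}>0$ by \eqref{eq:C-1}, proving (iii). Assembling (i)--(iii) gives $\supp(\bC)=\NL(\Mf)$ and that $\Gamma$ is connected on $\NL(\Mf)$, hence $\bC$ restricted to its support is irreducible. The one place needing genuine care is this last case analysis --- tracking how deleting $z$ shifts the $R$-count of the basis and confirming a partner $y$ of the correct $R$-parity survives in $B$ in all three cases; everything else is a direct reading of \eqref{eq:C-1} and \eqref{eq:C-2} against the matroid axioms.
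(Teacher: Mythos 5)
Your proof is correct and follows essentially the same route as the paper's: fix a basis $B$ witnessing $\aP(\Mf,R,a)>0$, note via \eqref{eq:C-1} that its $R$-part and $(X-R)$-part are joined in the support graph, and attach every other non-loop to $B$ through a basis-exchange argument with a short case analysis on $R$-membership. The only cosmetic differences are that you obtain $B'=\{x\}\cup(B\setminus\{z\})$ by repeated augmentation where the paper invokes symmetric exchange, and you record the full bipartite adjacency inside $B$ rather than a single edge between fixed $x\in B\cap R$ and $y\in B\setminus R$.
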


\begin{proof}
	Since
$\aP(\Mf,R,a)>0$,
there exists a basis $B$ of $\Mf$ such that \. $|B \cap R|=a$\..
Since $a \in [r-1]$, this implies that there exists \ts $x \in R$  \ts and \ts $y \in X-R$ \ts such that   \ts $x,y\in B$\ts.
It also follows from \eqref{eq:C-1} that $x$ and $y$ are contained in the same irreducible component of the matrix \ts $\bC(\Mf,R,a)$.
Now, let $z$ be an arbitrary non-loop  of $\Mf$.
For the first claim it suffices to show that \. $z \in \supp \. (\bC(\Mf,R,a))$\., and for the second claim it suffices to show that  that $z$ is contained in the same irreducible component as $x$ and $y$.
We will without loss of generality assume that $z \in R$, as the proof of the other case is analogous.

There are now two possibilities.
First suppose that $z \in B$.
Then \ts $B':=B-y-z$ \ts is a basis of \. $\Mf/\{y,z\}$ \. such that $|B'\cap R|=a-1$.
This implies that  \. $|\Comp(\Mf/\{y,z\},R,a-1)|>0$\., so it follows from \eqref{eq:C-1} that $z$ is contained in the support of \. $\bC(\Mf,R,a)$ \. and $z$ is contained in the same irreducible component as $y$.

Now suppose that  \ts $z \notin B$\ts.
By the symmetric bases exchange property, there exists $z' \in B$ such that \. $A:=B-z'+z$  \. is a basis of $\Mf$.
Now, if  \. $|A\cap R|=a$ \. (i.e. $z' \in R$),
then  \ts $A':=A-y-z$ \ts is a basis of \. $\Mf/\{y,z\}$ \. satisfying $|A'\cap R|=a-1$.
This implies that  \. $|\Comp(\Mf/\{y,z\},R,a-1)|>0$, so it follows from \eqref{eq:C-1} that $z$ is contained in the support of \. $\bC(\Mf,R,a)$ \. and $z$ is contained in the same irreducible component as $y$.
On the other hand,
if \. $|A\cap R|=a+1$ \. (i.e. $z' \notin R$),
then  \ts $A':=A-x-z$ \ts is a basis of \. $\Mf/\{x,z\}$ \. satisfying $|A'\cap R|=a-1$.
This implies that  \. $|\Comp(\Mf/\{x,z\},R,a-1)|>0$\., so it follows from \eqref{eq:C-1} that $z$ is contained in the support of \. $\bC(\Mf,R,a)$ \. and $z$ is contained in the same irreducible component as $x$.
This completes the proof.
\end{proof}

%

\smallskip

\begin{lemma}\label{lem:C-center}
	Let $\Mf$ be a matroid of rank $r\geq 2$, let $R \subseteq X$,
and let \. $a \in \{2,\ldots,r-1\}$ \. such that \. $\aP(\Mf,R,a)>0$ \. and \. $\aP(\Mf,R,a-1)>0$\..
Then, for every $x \in X$ that is not a loop,
\[ \aP(\Mf/x,R,a-1)>0. \]
\end{lemma}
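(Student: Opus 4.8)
The plan is to deduce the claim from the nonvanishing criterion of Proposition~\ref{p:vanish}, applied to the contracted matroid $\Mf/x$. Recall that, under the convention adopted in this paper, $\Mf/x$ is a matroid with ground set $X$ in which $x$ has become a loop, and $\rk(\Mf/x)=r-1$ because $x$ is a non-loop of $\Mf$. (Proposition~\ref{p:vanish} may be invoked here with no circularity: its proof, via discrete polymatroid theory in Section~\ref{s:vanish}, does not use the combinatorial atlas of the present section.) By that proposition, $\aP(\Mf/x,R,a-1)>0$ is equivalent to
\[
(r-1)-\rk_{\Mf/x}(X\sm R)\ \le\ a-1\ \le\ \rk_{\Mf/x}(R),
\]
and since $a\in\{2,\ldots,r-1\}$ we have $a-1\in\{1,\ldots,r-2\}$, which lies in the admissible range $\{0,\ldots,r-1\}$ for the rank of $\Mf/x$, so there is no edge-case issue.

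To evaluate the rank function of $\Mf/x$, recall the contraction rank identity: for every $Y\subseteq X$ and every non-loop $x$ one has $\rk_{\Mf/x}(Y)=\rk_\Mf(Y\cup\{x\})-1$ (using $\rk_\Mf(\{x\})=1$). Substituting $Y=X\sm R$ and $Y=R$, the two displayed inequalities become, respectively,
\[
r-\rk_\Mf\big((X\sm R)\cup\{x\}\big)\ \le\ a-1
\qquad\text{and}\qquad
a\ \le\ \rk_\Mf\big(R\cup\{x\}\big).
\]
Both now follow from monotonicity of $\rk_\Mf$ together with Proposition~\ref{p:vanish} applied to $\Mf$ itself. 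Indeed, since $\aP(\Mf,R,a)>0$, the proposition gives $a\le\rk_\Mf(R)\le\rk_\Mf(R\cup\{x\})$, which is the second inequality. And since $\aP(\Mf,R,a-1)>0$, it gives $r-\rk_\Mf(X\sm R)\le a-1$; combining with $\rk_\Mf\big((X\sm R)\cup\{x\}\big)\ge\rk_\Mf(X\sm R)$ yields the first inequality. This proves the lemma.

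This is a bookkeeping lemma rather than a hard one, so I do not expect a genuine obstacle; the only points that need care are the paper's nonstandard contraction convention (ground set unchanged, rank dropping by one, $x$ turning into a loop), the contraction rank identity, and verifying that the integer $a-1$ lies in the range where Proposition~\ref{p:vanish} applies. If a forward reference to Proposition~\ref{p:vanish} were to be avoided, one could instead argue directly: fix bases $B,B'$ of $\Mf$ with $|B\cap R|=a-1$ and $|B'\cap R|=a$, and produce a basis of $\Mf$ containing $x$ whose intersection with $R$ has size $a-1$ if $x\notin R$ and size $a$ if $x\in R$; deleting $x$ then gives the desired basis of $\Mf/x$. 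That route, however, is considerably more case-heavy (it essentially re-derives the relevant half of Proposition~\ref{p:vanish} via the symmetric exchange property), so I would prefer the argument via Proposition~\ref{p:vanish}.
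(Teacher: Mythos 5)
Your proof is correct, but it takes a genuinely different route from the paper. The paper proves this lemma directly with the symmetric basis exchange property: assuming WLOG $x\in R$, it fixes bases $A,B$ of $\Mf$ with $|A\cap R|=a$ and $|B\cap R|=a-1$, exchanges $x$ into each to get bases $A',B'\ni x$ with $|A'\cap R|\in\{a,a+1\}$ and $|B'\cap R|\in\{a-1,a\}$, and in the remaining case interpolates between $A'$ and $B'$ by further exchanges to reach a basis through $x$ meeting $R$ in exactly $a$ elements, whose deletion of $x$ is the desired basis of $\Mf/x$ --- in other words, exactly the ``case-heavy'' direct argument you sketch and decline in your last paragraph. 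Your argument instead quotes the nonvanishing criterion of Proposition~\ref{p:vanish} for $\Mf/x$, reduces it via the contraction rank identity $\rk_{\Mf/x}(Y)=\rk_{\Mf}(Y\cup\{x\})-1$ (valid under the paper's convention, where $\Mf/x$ keeps ground set $X$ and $x$ becomes a loop) to the two inequalities $a\le\rk_{\Mf}(R\cup\{x\})$ and $r-\rk_{\Mf}\big((X\sm R)\cup\{x\}\big)\le a-1$, and gets both from monotonicity of the rank together with Proposition~\ref{p:vanish} applied to $\Mf$; the range check $a-1\in\{0,\ldots,r-1\}$ is also in order. This is shorter and cleaner than the paper's proof, and you are right that there is no circularity, since Theorem~\ref{t:vanish} is proved in Section~\ref{s:vanish} by discrete polymatroid theory with no reference to the atlas machinery; the only cost is a forward dependence of Section~\ref{s:main-positive} on Section~\ref{s:vanish}, which the paper's self-contained exchange argument avoids (and which presumably explains the authors' choice).
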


\begin{proof}
We will without loss of generality assume that $x\in R$, as as the proof of the other case is analogous.
	By the assumption, there exists a basis $A$ and $B$ of $\Mf$, such that
	\. $|A \cap R|=a$ \. and \. $|B \cap R|=a-1$\..
Applying the symmetric bases exchange property to $x$ and $A$, we get that
there exists a basis $A'$ of $\Mf$ such that $x \in A'$ and  \. $|A'\cap R| \in \{a,a+1\}$.
	Similarly, by applying the symmetric bases exchange property to $x$ and $B$, there exists
	a basis $B'$ of $\Mf$ such that $x \in B'$ and  \. $|B'\cap R| \in \{a-1,a\}$.

	If either \. $|A'\cap R|=a$ \. or \. $|B'\cap R|=a$ \. then we are done,
	since $A'-x$ (resp. $B'-x$) is then  a basis of $\Mf/x$ for which \. $|(A'-x) \cap R| =a-1$ \. (resp. $|(B'-x) \cap R| =a-1$).
	So we assume that \. $|A'\cap R|=a-1 $ \. and \. $|B' \cap R|=a+1$\..
	Then by applying the bases exchange properties to $A'$ and $B'$ (possibly more than once),
	there exists a basis $C'$ of $\Mf$ such that
	\. $|C'\cap R|=a$\., and the claim follows by the same argument as before.
\end{proof}

\smallskip

\subsection{Properties of the atlas}
In this subsection we show that the atlas \ts $\AA(\Mf,R,a)$ \ts constructed above,
satisfies properties introduced in the previous section.

\smallskip

\begin{lemma}\label{lem:atlas-prop-1}
	Let $\Mf$ be a matroid of rank $r\geq 2$, let $R \subseteq X$,
	and let \. $a\in \{2,\ldots, r-1\}$.
	Then the atlas \. $\AA(\Mf,R,a)$ \. satisfies \eqref{eq:Inh}, \eqref{eq:Iden}, \eqref{eq:TPInv}.
\end{lemma}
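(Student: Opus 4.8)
The plan is to verify the three properties one at a time, straight from the construction of $\AA(\Mf,R,a)$. Property \eqref{eq:Iden} requires no work, since by construction every edge transformation $\bT^{\<x\>}$ is the identity map on $\Rb^{\ar}$.

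For \eqref{eq:TPInv}, fix a non-sink vertex $\vf=t\in\Vfp$ and indices $i,j,k\in\supp(\bM)$, where $\bM=\bM_\vf$. By \eqref{eq:C-1} an entry of $\bC(\,\cdot\,,R,\cdot)$ indexed by a loop vanishes, so $\supp(\bM)\subseteq\NL(\Mf)$ and in particular $\bM^{\<i\>}=\bC(\Mf/i,R,a-1)$. Then \eqref{eq:C-1} gives $\aM_{jk}^{\<i\>}=\bigl|\Comp\bigl((\Mf/i)/\{j,k\},R,a-2\bigr)\bigr|$ when $j\ne k$ and $\{j,k\}\in\cI(\Mf/i)$, and $\aM_{jk}^{\<i\>}=0$ otherwise. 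Since $(\Mf/i)/\{j,k\}=\Mf/\{i,j,k\}$ and, because $i$ is a loop of $\Mf/i$, the condition ``$j\ne k$ and $\{j,k\}\in\cI(\Mf/i)$'' is equivalent to ``$i,j,k$ pairwise distinct and $\{i,j,k\}\in\cI(\Mf)$'', we conclude that $\aM_{jk}^{\<i\>}$ equals $\bigl|\Comp(\Mf/\{i,j,k\},R,a-2)\bigr|$ in the latter case and $0$ otherwise. This value is symmetric in $i,j,k$, hence $\aM_{jk}^{\<i\>}=\aM_{ki}^{\<j\>}=\aM_{ij}^{\<k\>}$, which is \eqref{eq:TPInv}.

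For \eqref{eq:Inh}: since $\bT^{\<i\>}=\id$ and $\bM$ is symmetric, the property is equivalent to the system of identities $\aM_{ij}=\bigl(\bM^{\<i\>}\hb\bigr)_j$ over all $i\in\supp(\bM)$ and all $j$. The key step is an auxiliary ``letter-peeling'' identity
\[
\bigl(\bC(\Nf,R,b)\,\hb\bigr)_j \ = \ t\,\bigl|\Comp(\Nf/j,R,b)\bigr| \ + \ (1-t)\,\bigl|\Comp(\Nf/j,R,b-1)\bigr|,
\]
valid for any matroid $\Nf$ with ground set $X$, any integer $b$ with $1\le b\le\rk(\Nf)-1$, and any non-loop $j$ of $\Nf$. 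I would prove it by expanding $\bigl(\bC(\Nf,R,b)\hb\bigr)_j=t\sum_{k\in R}\aC_{jk}(\Nf,R,b)+(1-t)\sum_{k\in X-R}\aC_{jk}(\Nf,R,b)$ and evaluating each of the four sums (according as $j\in R$ or $j\in X-R$) with the word description \eqref{eq:C-2}, tracking the differing index shifts in its three cases: each sum counts the words of the appropriate $\Comp(\Nf,R,\cdot)$ in which $j$ sits at a fixed position (the first if $j\in R$, the last or next-to-last if $j\in X-R$), and erasing that occurrence of $j$ is a bijection onto $\Comp(\Nf/j,R,b)$ or $\Comp(\Nf/j,R,b-1)$. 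Applying this with $\Nf=\Mf/i$ and $b=a-1$ — which is legitimate precisely because $a\in\{2,\dots,r-1\}$ and $\rk(\Mf/i)=r-1$ for the non-loop $i$ — and using $(\Mf/i)/j=\Mf/\{i,j\}$, yields $\bigl(\bM^{\<i\>}\hb\bigr)_j=t\,\bigl|\Comp(\Mf/\{i,j\},R,a-1)\bigr|+(1-t)\,\bigl|\Comp(\Mf/\{i,j\},R,a-2)\bigr|$ when $j$ is a non-loop of $\Mf/i$; by \eqref{eq:C-1} this equals $t\,\aC_{ij}(\Mf,R,a)+(1-t)\,\aC_{ij}(\Mf,R,a-1)=\aM_{ij}$. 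When $j$ is a loop of $\Mf/i$ (that is, $j=i$ or $\{i,j\}\notin\cI(\Mf)$), both $\bigl(\bM^{\<i\>}\hb\bigr)_j$ and $\aM_{ij}$ are $0$, so the identity holds there as well.

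The hard part will be purely bookkeeping: correctly disposing of the degenerate cases (coinciding indices, loops created by a contraction, dependent pairs or triples), and — in the letter-peeling identity — verifying that the constraint $1\le b\le\rk(\Nf)-1$ keeps every distinguished letter inside the correct block, which is the single place where the hypothesis $a\in\{2,\dots,r-1\}$ genuinely enters. Everything else is a routine unwinding of \eqref{eq:C-1} and \eqref{eq:C-2}, entirely parallel to the verification of the corresponding atlas properties for Mason's inequality in \cite[$\S$1.6]{CP}.
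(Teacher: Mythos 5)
Your proposal is correct and takes essentially the same route as the paper's proof: \eqref{eq:Iden} by construction, \eqref{eq:TPInv} via the symmetric count $\bigl|\Comp(\Mf/\{i,j,k\},R,a-2)\bigr|$ with loops handled trivially, and \eqref{eq:Inh} by linearity reduced to entrywise identities verified by counting compatible words with a distinguished letter in a fixed position. Your ``letter-peeling'' identity is just the paper's summation of the entries of $\bM^{\<x\>}$ over first letters in $R$ and last letters in $X-R$ packaged as a separate lemma, and the hypothesis $a\in\{2,\ldots,r-1\}$ enters at exactly the same point.
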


\begin{proof}
	The condition \eqref{eq:Iden} follows directly from the definition.
	For \eqref{eq:Inh},
	let \. $\vf:=t \in \Vf^1$\., and let \ts $x \in X$\ts.
	By linearity, it suffices to prove that, for every $y \in X$,
	\begin{align*}
		\aM_{xy} \ = \  \big\< \bT^{\<x\>} \eb_y, \bM^{\<x\>} \,  \bT^{\<x\>} \hb \big\>.
	\end{align*}
Now we have
	\begin{align*}
		 &\big\< \bT^{\<x\>} \eb_y, \bM^{\<x\>} \,  \bT^{\<x\>} \hb \big\> \ = \   \big\< \eb_y, \bM^{\<x\>} \,   \hb \big\> \ = \   t \. \sum_{z \in R} \aM^{\<x\>}_{yz} \ +  \ (1-t) \. \sum_{z \in S} \aM^{\<x\>}_{yz}.
\end{align*}
Now note that, if either $x$ or $y$ is a loop of $\Mf$, then it follows from the definition~\eqref{eq:C-1}
that the sum above is $0$.
Also note that in this case we also have \ts $\aM_{xy}=0$ \ts by definition~\eqref{eq:C-1}.
Hence it suffices to consider the case when both $x$ and $y$ are non-loops of $\Mf$.
Now, continuing the equation above,
\begin{align*}
	\big\< \bT^{\<x\>} \eb_y, \bM^{\<x\>} \,  \bT^{\<x\>} \hb \big\>  \ 	 &=_{\eqref{eq:C-2}} \  t \. \sum_{z \in R}
		  \big|\big\{\gamma \. : \. z\gamma \in \Comp(\Mf/\{x,y\},R,a-1) \big\}\big| \\
		  &\qquad \ + \ (1-t) \.  \sum_{z \in S} \big|\big\{\gamma \. : \. \gamma z \in \Comp(\Mf/\{x,y\},R,a-2) \big\}\big| \\
		 &= \ t \. \big|\Comp(\Mf/\{x,y\},R,a-1)\big|  \ + \ (1-t)  \.  \big|\Comp(\Mf/\{x,y\},R,a-2)\big|  \\
		 &=_{\eqref{eq:C-1}} \  \aM_{xy}\..
	\end{align*}
This completes the proof of \eqref{eq:Inh}.

For \eqref{eq:TPInv}, it suffices to show that
\[  \aM^{\<x\>}_{yz} \ = \ \aM^{\<y\>}_{zx} \ = \  \aM^{\<z\>}_{xy}  \]
holds for all \ts $x,y,z \in X$.
Note that all three numbers are equal to $0$ if either one of $x$, $y$, or $z$ is a loop of $\Mf$,
so we assume that $x,y,z\in \NL(\Mf)$.  In this case, it follows from \eqref{eq:C-1} that
\begin{align*}
	\aM^{\<x\>}_{yz} \ = \ \aM^{\<y\>}_{zx} \ = \  \aM^{\<z\>}_{xy}  \ = \  \big|\Comp(\Mf/\{x,y,z\},R,a-2)\big|\ts.
\end{align*}
This completes the proof of  \eqref{eq:TPInv} and finishes the proof of the lemma.
\end{proof}

\smallskip

\begin{lemma}\label{lem:atlas-prop-2}
	Let $\Mf$ be a matroid of rank $r\geq 2$, let $R \subseteq X$,
and let \. $a \in \{2,\ldots,r-1\}$, such that \. $\aP(\Mf,R,a)>0$ \. and \. $\aP(\Mf,R,a-1)>0$.
	Then the atlas \. $\AA(\Mf,R,a)$ \. satisfies \eqref{eq:Decsupp}.
\end{lemma}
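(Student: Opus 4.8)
The plan is to verify the decreasing support property \eqref{eq:Decsupp} only at the non-sink vertices of $\Qf(\Mf,R,a)$, which are exactly the vertices $\vf = t \in \Vf^1$; every vertex of $\Vf^0 = X$ is a sink, so \eqref{eq:Decsupp} is vacuous there. I would carry this out in three short steps.

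First I would determine $\supp(\bM_t)$, where $\bM_t = t\,\bC(\Mf,R,a) + (1-t)\,\bC(\Mf,R,a-1)$. Since $a \in \{2,\ldots,r-1\}$, both $a$ and $a-1$ lie in $[r-1]$, and by hypothesis $\aP(\Mf,R,a)>0$ and $\aP(\Mf,R,a-1)>0$; so Lemma~\ref{lem:C-supp} applies to each summand and gives $\supp(\bC(\Mf,R,a)) = \supp(\bC(\Mf,R,a-1)) = \NL(\Mf)$. Because $t$ and $1-t$ are nonnegative with at least one of them strictly positive, and both matrices are nonnegative, the $x$-th row of $\bM_t$ is nonzero precisely when $x$ is a non-loop, so $\supp(\bM_t) = \NL(\Mf)$ for every $t \in [0,1]$.

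Next I would fix $x \in \supp(\bM_t) = \NL(\Mf)$ and examine the associated matrix $\bM_x = \bC(\Mf/x,R,a-1)$ of the out-neighbor $\vf^{\<x\>} = x$. Since $x$ is a non-loop, $\rk(\Mf/x) = r-1$, hence $a-1 \in [\rk(\Mf/x)-1]$; and Lemma~\ref{lem:C-center}, whose hypotheses are precisely those we are assuming, gives $\aP(\Mf/x,R,a-1)>0$. Applying Lemma~\ref{lem:C-supp} to the matroid $\Mf/x$ then yields $\supp(\bM_x) = \NL(\Mf/x)$. Finally I would note that $\NL(\Mf/x) \subseteq \NL(\Mf)$: if $y \in \NL(\Mf/x)$, then $\{x,y\} = \{y\}+x$ is independent in $\Mf$, so $\{y\} \in \cI$ by the hereditary property, i.e.\ $y$ is a non-loop of $\Mf$. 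Combining, $\supp(\bM_x) = \NL(\Mf/x) \subseteq \NL(\Mf) = \supp(\bM_t)$, which is exactly \eqref{eq:Decsupp}.

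I do not expect a genuine obstacle in this argument: it rests entirely on Lemma~\ref{lem:C-supp} and Lemma~\ref{lem:C-center}, both already established. The only points that need care are checking, before each invocation of Lemma~\ref{lem:C-supp}, that the relevant rank index lies in the admissible range (this is where $a \in \{2,\ldots,r-1\}$ and $\rk(\Mf/x) = r-1$ are used), and handling the endpoint values $t \in \{0,1\}$ separately when computing $\supp(\bM_t)$ in the first step.
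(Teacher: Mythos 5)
Your proof is correct and follows essentially the same route as the paper: compute $\supp(\bM_t)=\NL(\Mf)$ via Lemma~\ref{lem:C-supp} and conclude via the containment $\NL(\Mf/x)\subseteq\NL(\Mf)$. The only difference is cosmetic: the paper obtains $\supp\big(\bM^{\<x\>}\big)\subseteq\NL(\Mf/x)$ directly from the definition \eqref{eq:C-1}, so your extra appeal to Lemma~\ref{lem:C-center} (to pin down the support of $\bM^{\<x\>}$ exactly) is valid but not needed.
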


\begin{proof}
Let $v =t \in \Vf^1$.  In the notation above, we have:
$$\bM \. := \. \bM_v \. = \. t \. \bC(\Mf,R,a)
\. +  \. (1-t) \. \bC(\Mf,R,a-1).
$$
It follows from Lemma~\ref{lem:C-supp}, that the support of~$\bM$
is equal to the set \ts $\NL(\Mf)$ \ts of non-loop elements of~$\Mf$.
Let $x$ be an arbitrary element of $X$.
If $x$ is a loop of $\Mf$, then
\[  \supp(\bM^{\<x\>}) \ = \  \varnothing  \ \subseteq \  \supp(\bM).\]
If $x$ is not a loop of $\Mf$, then
\[  \supp(\bM^{\<x\>}) \ \subseteq \  \NL(\Mf/x)  \ \subseteq  \ \NL(\Mf) \ =  \supp(\bM),\]
as desired.
\end{proof}

\smallskip

\subsection{Hyperbolicity of the constructed atlas}
The following proposition is the main technical result
we need in the proof of Theorem~\ref{t:main-positive}.

\smallskip

\begin{prop}\label{prop:C-hyp}
		Let $\Mf$ be a matroid of rank $r\geq 2$, let \ts $R \subseteq X$,
	and let \ts $a \in [r-1]$, such that \. $\aP(\Mf,R,a)>0$.
	Then the matrix  $\bC\.(\Mf,R,a)$ satisfies \eqref{eq:Hyp}.
\end{prop}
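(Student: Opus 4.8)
The plan is to prove the proposition by induction on the rank $r$ (refined, in the boundary cases, by the size of the ground set), feeding the structural lemmas already established for the atlas $\AA(\Mf,R,\cdot)$ into the \emph{local--global principle} (Theorem~\ref{t:Hyp}). Note first that the hypothesis $\aP(\Mf,R,a)>0$ with $1\le a\le r-1$ forces, via Proposition~\ref{p:vanish}, that both $R$ and $X\sm R$ have rank $\ge 1$.

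\smallskip

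For the base case $r=2$ we have $a=1$, and by \eqref{eq:C-1} the entry $\aC_{xy}$ equals $1$ exactly when $\{x,y\}$ is independent, i.e.\ when $x,y$ are non-loops lying in distinct parallel classes, and $0$ otherwise. Thus $\bC(\Mf,R,1)$ is, up to zero rows and columns at the loops, the adjacency matrix of the complete multipartite graph on $\NL(\Mf)$ with parts the parallel classes; it can be written $J_0-\bigoplus_i J_{n_i}$, where $J_0$ is the all-ones matrix and $J_{n_i}$ the all-ones block on the $i$-th parallel class. This is a rank-one positive semidefinite perturbation of a negative semidefinite matrix, hence has at most one positive eigenvalue, i.e.\ satisfies \eqref{eq:OPE}; by Lemma~\ref{l:Hyp is OPE} it satisfies \eqref{eq:Hyp}.

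\smallskip

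For the inductive step assume $r\ge 3$. The generic situation is that one of $\aP(\Mf,R,a-1)>0$ (with $a\ge 2$) or $\aP(\Mf,R,a+1)>0$ (with $a\le r-2$) holds; in the first case we work inside the atlas $\AA(\Mf,R,a)$, in the second inside $\AA(\Mf,R,a+1)$. For a non-sink vertex $t\in(0,1)$ the associated vector $\hb$ has coordinates in $\{t,1-t\}$ and is therefore strictly positive, while by Lemma~\ref{lem:C-supp} the associated matrix is irreducible on its support $\NL(\Mf)$, so $t$ is a regular vertex. By Lemma~\ref{lem:atlas-prop-1} the atlas satisfies \eqref{eq:Inh}, \eqref{eq:Iden} and \eqref{eq:TPInv}, and by Lemma~\ref{lem:atlas-prop-2} it satisfies \eqref{eq:Decsupp}; hence Theorem~\ref{t:Pull} gives \eqref{eq:PullEq} and a fortiori \eqref{eq:Pull}. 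The out-neighbors of $t$ are the sinks $x\in X$, whose matrices $\bC(\Mf/x,R,a-1)$ (respectively $\bC(\Mf/x,R,a)$) vanish when $x$ is a loop and otherwise, by Lemma~\ref{lem:C-center}, satisfy the nonvanishing hypothesis of the proposition for the rank-$(r-1)$ matroid $\Mf/x$; by the induction hypothesis they are hyperbolic. Theorem~\ref{t:Hyp} now shows that every $t\in(0,1)$ is hyperbolic. Since having at most one positive eigenvalue is preserved under limits (eigenvalues depend continuously on the matrix), letting $t\to 1^-$ (respectively $t\to 0^+$) and observing that $\bM_t$ converges to $\bC(\Mf,R,a)$ proves the claim in this case.

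\smallskip

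It remains to treat the boundary cases, where neither atlas satisfies the hypotheses above; here Proposition~\ref{p:vanish} imposes rigid structure. If $\rk(R)=a$ and $\rk(X\sm R)=r-a$ (so $(R,X\sm R)$ is a modular pair whose ranks sum to $\rk(\Mf)$), every basis of $\Mf$ is the union of a basis of $\Mf|_R$ and a basis of $\Mf|_{X\sm R}$, so $\bC(\Mf,R,a)$ has vanishing $R\times R$ and $(X\sm R)\times(X\sm R)$ blocks and an off-diagonal block that factors as a product of a function of the $R$-coordinate and a function of the $(X\sm R)$-coordinate; being block-anti-diagonal with a rank-one off-diagonal block, it has exactly one positive eigenvalue. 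The remaining case is when one of $R$, $X\sm R$ has rank one while the other spans $\Mf$ — say $R$ has rank one and $a=1$. Collapsing the parallel class of $R$ to a single element $x_0$, the number of positive eigenvalues of $\bC(\Mf,R,1)$ equals that of a symmetric matrix obtained by bordering a scalar multiple of the Hessian at the all-ones point of the basis-generating polynomial of $\Mf|_{X\sm R}$ with a zero diagonal corner and a row and column recording the single-element contractions of the elementary quotient $\Mf/x_0$. I expect this final stratum to be the main obstacle: its hyperbolicity is not delivered by the atlas machinery and must be extracted from the Lorentzian property of matroid (and morphism) basis-generating polynomials — the same circle of ideas underlying the $k=0$ Mason inequality in \cite{CP} — and one must also verify that the case analysis is exhaustive and that the combined (rank, ground-set-size) induction terminates.
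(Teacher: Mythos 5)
Your base case, your generic inductive case, and your treatment of the fully degenerate case $\rk(R)=a$, $\rk(X\sm R)=r-a$ all track the paper's proof: the paper collapses parallel classes by row/column operations where you invoke a rank-one-perturbation eigenvalue count, and it rescales the anti-diagonal block to reduce to the rank-two base-case matrix where you compute the spectrum directly; these variations are fine. The genuine gap is exactly the stratum you flag and leave open: $a=1$ with $\aP(\Mf,R,2)=0$ but $\aP(\Mf,R,0)>0$ (equivalently $\rk(R)=1$), together with its mirror $a=r-1$, $\rk(X\sm R)=1$. Your proposed route through a bordered Hessian of the basis-generating polynomial of the restriction to $X\sm R$ is not carried out, and importing the Lorentzian property of matroid basis polynomials at this point would both leave the step unproved and abandon the self-contained atlas induction; as written, the proposal does not establish \eqref{eq:Hyp} for $\bC(\Mf,R,1)$ in this case, nor does it set up the auxiliary induction on ground-set size that you yourself say still needs to be checked.

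The paper closes this case with a short trick that stays inside the atlas framework: choose a basis $A$ with $|A\cap R|=1$ and an element $y\in A\sm R$, form $\Mf'$ by adjoining a new element $x'$ parallel to~$y$, and set $R':=R+x'$. Then $\Mf'$ still has rank~$r$, while $\aP(\Mf',R',2)>0$ and $\aP(\Mf',R',1)>0$, so the generic argument run in the atlas $\AA(\Mf',R',2)$ (whose sink matrices come from rank-$(r-1)$ contractions and are therefore covered by the induction hypothesis on rank) shows that $\bC(\Mf',R',1)$ satisfies \eqref{eq:Hyp}. Finally, $\bC(\Mf,R,1)$ is the principal submatrix of $\bC(\Mf',R',1)$ obtained by deleting the row and column of~$x'$, and \eqref{eq:Hyp}, being equivalent to \eqref{eq:OPE}, is inherited by principal submatrices via eigenvalue interlacing. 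Substituting this parallel-extension argument for your unfinished bordered-Hessian step completes the proof; everything else in your proposal is essentially the paper's argument.
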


\begin{proof}
	We prove the claim by induction on the rank $r$ of $\Mf$.
	First suppose that $r=2$.
	Note that this implies $a=1$.
	Write
\. $ \bigl(\aC_{x\ts y}\bigr)_{x,y \in X} := \bC(\Mf,R, a)$.
It then follows from \eqref{eq:C-1} that
\begin{align}\label{eq:baseC-1}
	\aC_{x,y} \ = \  \begin{cases}
	1 & \text{if }  x\neq y \. \text{ and } \. \{x,y\} \in \cI ,\\
	0 & \text{if } x=y \. \text{ or } \. \{x,y\} \notin \cI.
	\end{cases}
\end{align}
In particular, this shows that the $x$-row (respectively, $x$-column) of $\aC$ is identical to the
$y$-row (respectively, $y$-column) of $\aC$ whenever $x,y$ are non-loops in the same parallel class.
In this case, deduct the $y$-row and
$y$-column of $\aC$ by the $x$-row and $x$-column of $\aC$.
It then follows that the resulting matrix has $y$-row and $y$-column is equal to zero,
and note that \eqref{eq:Hyp} is preserved under this transformation.

Now, apply the above linear transformation repeatedly and remove the zero rows and columns,
and let $\aC'$ be the resulting matrix.  It suffices to prove that $\aC'$ satisfies \eqref{eq:Hyp}.
Note that $\aC'$ is a \ts $p \times p$ \ts matrix (where~$p$ is the number of parallel
classes of $\Mf$), with $0$s at the diagonal entries and $1$s as the non-diagonal entries.
It follows from direct calculations that $(p-1)$ is the only positive eigenvalue of~$\aC'$,
and it follows that $\aC'$ (and thus $\aC$) satisfies \eqref{eq:Hyp}.
This proves the base case of the induction.

We now assume that \ts $r \geq 3$, and that the claim holds for matroids of rank~$(r-1)$.
First, suppose that we have \. $\aP(\Mf,R,a+1)=\aP(\Mf,R,a-1)=0$.
Then \ts $\Mf=\Mf_1\oplus \Mf_2$ \ts is a direct sum of matroids $\Mf_1$ and $\Mf_2\ts$,
where $\Mf_1$ (resp.~$\Mf_2$) is the matroid obtained from $\Mf$ by restricting the
ground set to $R$ (resp.~$S$).  It then follows that
\[ \aC_{x,y} \ = \  \begin{cases}
	\iB(\Mf_1/x) \. \iB(\Mf_2/y) & \text{if }  x \in \NL(\Mf) \cap  R \. \text{ and } \. y \in \NL(\Mf) \cap S,\\
	0 & \text{otherwise}.
\end{cases}  \]

Rescale the rows and columns of
\ts $x \in R$ \ts by \ts $\iB(\Mf_1/x)$\ts,
and the rows and columns of
\ts $y \in S$ \ts by \ts $\iB(\Mf_2/y)$, and note that these rescalings preserve hyperbolicity.
Then \. $\bC$ \. becomes a special case of the matrix in \eqref{eq:baseC-1}, which was already shown to satisfy \eqref{eq:Hyp}.
So we can assume that either \. $\aP(\Mf,R,a+1)>0$ \. or \. $\aP(\Mf,R,a-1)>0$\..
By the symmetry, we can without loss of generality assume that \. $\aP(\Mf,R,a-1)>0$.

We split the proof into three parts.
First assume that $a \geq 2$.
Let \ts $\AA(\Mf,R,a)$ \ts be the atlas defined in \S\ref{ss:main-atlas-construction}.
It follows from Lemma~\ref{lem:atlas-prop-1} and Lemma~\ref{lem:atlas-prop-2} (note that these lemmas require $a\geq 2$), that
this atlas satisfies \eqref{eq:Inh}, \eqref{eq:Iden}, \eqref{eq:TPInv}, and \eqref{eq:Decsupp}.
We now show that, for every sink vertex $\vf=x \in \Vf^0$,
the matrix $\bM_v$ satisfies \eqref{eq:Hyp}.
If $x$ is a loop of $\Mf$, then \. $\bM_v$ \. is equal to the zero matrix, which satisfies \eqref{eq:Hyp}.
If $x$ is a non-loop of $\Mf$,
then
 by definition \. $\bM_v$ \. is equal to  \. $\bC(\Mf/x,R,a-1)$\.. Also note that \. $\aP(\Mf/x,R,a-1)>0$ \. by Lemma~\ref{lem:C-center}. It then follows from the induction assumption that
 \. $\bM_v$ \.  satisfies \eqref{eq:Hyp}.

Now  every condition in Theorem~\ref{t:Hyp} has been verified in the paragraph above, so it follows that every non-sink regular vertex in $\Qf$ is hyperbolic.
On the other hand,
we have from Lemma~\ref{lem:C-supp} that the the vertex \ts $\vf:=t \in \Vf^1$ \ts is  regular if and only if \. $t \in (0,1)$\..
Hence this implies that, for every $t \in (0,1)$, the matrix
\[ \bM_{\vf} \ = \  t \. \bC(\Mf,R,a)  \ +  \ (1-t) \. \bC(\Mf,R,a-1),   \]
satisfies \eqref{eq:Hyp}.
By taking the limit \ts $t \to 0$ \ts and \ts $t\to 1$\ts,  it then follows that \. $\bC(\Mf,R,a)$ \. and \. $\bC(\Mf,R,a-1)$ \. also satisfies \eqref{eq:Hyp}.

For the second case, assume that $a=1$ and  $\aP(\Mf,R,a+1)>0$.
Then let $a':=a+1=2$\.. Note that we have   \. $\aP(\Mf,R,a')>0$, \. $\aP(\Mf,R,a'-1) >  0$\..
By the same argument as before, we  conclude that
\. $\bC(\Mf,R,a) \. = \.  \bC(\Mf,R,a'-1)$ \.
satisfies \eqref{eq:Hyp}.

For the third case, assume that \. $a=1$ \. and \. $\aP(\Mf,R,a+1)=0$.
Since \. $\aP(\Mf,R,1)>0$, there exists \. $A \in \cB(\Mf)$ \. such
that \ts $|A \cap R|=1$.  Since  \. $|A|=r\geq 2$, there exists $y \in X-R$
such that $y \in A$. Let $\Mf'$ be the matroid obtained by adding an element
$x'$ that is parallel to $y$, and let \. $R':=R +x'$.  Observe that $\Mf'$
has the same rank as $\Mf$.  Note also that \. $\aP(\Mf',R',2)>0$ \.
because \. $A':=A-y+x'$ \. is a basis of $\Mf'$ and satisfies \ts $|A'\cap R'|=2$.

Finally, note that  \ts $\bC(\Mf,R,1)$ \ts can be obtained from \ts
$\bC(\Mf',R',1)$ \ts by removing the row and column corresponding to~$x'$.
By the same argument as the second case, we conclude that
\ts $\bC(\Mf',R',a)$ \ts satisfies \eqref{eq:Hyp}.
Since \eqref{eq:Hyp} is a property that is preserved under restricting to principal submatrices,
it then follows that \ts $\bC(\Mf,R,a)$ \ts also satisfies \eqref{eq:Hyp}, and the proof is complete.
\end{proof}

\smallskip

\subsection{Proof of Theorem~\ref{t:main-positive}}

We will first prove Theorem~\ref{t:main-positive} under the assumption that the rank \ts $r=\rk(\Mf)=2$.
Recall that  \ts $\Par_{\Mf}(x)$ \ts denotes the set of elements  of $\Mf$ that are parallel to~$x$.
\smallskip

\begin{lemma}\label{lem:equ-base}
		Let $\Mf:=(X,\cI)$ be a matroid of rank $2$, and  let $R \subseteq X$
	such that
	\. $\aP(\Mf,R,1)>0$.
	Let $\as>0$ be a positive real number.
	Then
	\begin{equation}\label{eq:base-1}
	 {\aP(\Mf,R,2)} \ = \  \as \. \aP(\Mf,R,1) \ = \ \as^2 \. \aP(\Mf,R,0)
	\end{equation}
	\underline{if and only if}, for every non-loop $x$  of $\Mf$,
	\begin{equation}\label{eq:base-2}
	 |\Par_{\Mf}(x)  \cap R| \ = \  \as \. |\Par_{\Mf}(x)  \cap (X-R)|.
	\end{equation}
\end{lemma}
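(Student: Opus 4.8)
The plan is to recognize \eqref{eq:base-1} as the equality condition \eqref{eq:sEqu} for the matrix $\bM := \bC(\Mf,R,1)$, and then to invoke the hyperbolicity of this matrix (Proposition~\ref{prop:C-hyp}) through the kernel criterion of Lemma~\ref{l:equality kernel}. Let $\fb,\gb \in \rr^n$ be the indicator vectors of $R$ and $X-R$, as in \S\ref{ss:main-atlas-construction}. Specializing the identities \eqref{eq:Cfg} to $r = 2$, $a = 1$ gives
\[
\langle \fb, \bM\fb\rangle = 2\ts\aP(\Mf,R,2), \qquad \langle \fb, \bM\gb\rangle = 2\ts\aP(\Mf,R,1), \qquad \langle \gb, \bM\gb\rangle = 2\ts\aP(\Mf,R,0),
\]
so, as $\bM$ is symmetric, \eqref{eq:base-1} is literally the assertion that $\bM$ satisfies \eqref{eq:sEqu} with the given constant $\as$. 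The hypothesis $\aP(\Mf,R,1) > 0$ allows us to apply Proposition~\ref{prop:C-hyp}: $\bM$ is a nonnegative symmetric matrix satisfying \eqref{eq:Hyp}. (There is no circularity: Proposition~\ref{prop:C-hyp} is proved by induction on the rank, with the $r = 2$ base case settled directly from the explicit form \eqref{eq:baseC-1}.) Hence Lemma~\ref{l:equality kernel} applies with $\zb := \fb - \as\gb$, and \eqref{eq:base-1} holds if and only if $\bM\zb = 0$.

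It then remains to show that $\bM\zb = 0$ is equivalent to \eqref{eq:base-2}. Let $P_1, \ldots, P_p$ be the parallel classes of $\Mf$; these partition $\NL(\Mf)$, and $\Par_\Mf(x) = P_i$ (the parallel class of $x$) whenever $x \in P_i$. Put $a_i := |P_i \cap R|$, $b_i := |P_i \cap (X-R)|$, $A := |\NL(\Mf)\cap R| = \sum_i a_i$, and $B := |\NL(\Mf)\cap(X-R)| = \sum_i b_i$. By \eqref{eq:baseC-1}, the $x$-row of $\bM$ is the indicator vector of $\NL(\Mf)\setminus P_i$ when $x \in P_i$, and is zero when $x$ is a loop; a direct computation then gives, for $x \in P_i$,
\[
(\bM\zb)_x \ = \ (A - a_i) \. - \. \as(B - b_i) \ = \ (A - \as B) \. - \. (a_i - \as b_i),
\]
and $(\bM\zb)_x = 0$ when $x$ is a loop. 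Thus $\bM\zb = 0$ if and only if $a_i - \as b_i = A - \as B$ for every $i \in [p]$. Summing this relation over $i$ and using $\sum_i(a_i - \as b_i) = A - \as B$ yields $(p-1)(A - \as B) = 0$. Since $\aP(\Mf,R,1) > 0$ forces the existence of a basis of $\Mf$ with one element in $R$ and one in $X-R$ — which must lie in two distinct parallel classes — we have $p \ge 2$, hence $A = \as B$ and therefore $a_i = \as b_i$ for all $i$; this is exactly \eqref{eq:base-2}. Conversely, if \eqref{eq:base-2} holds then $A = \sum_i a_i = \as\sum_i b_i = \as B$, so each coordinate $(\bM\zb)_x$ vanishes and $\bM\zb = 0$.

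The only delicate point is this last equivalence, and within it the direction $\bM\zb = 0 \Rightarrow$ \eqref{eq:base-2}: by itself $\bM\zb = 0$ only says that $a_i - \as b_i$ is independent of $i$, and it is precisely the standing hypothesis $\aP(\Mf,R,1) > 0$ (equivalently, $p \ge 2$) that rules out a nonzero common value. All the rest is bookkeeping — matching \eqref{eq:base-1} with \eqref{eq:sEqu} via \eqref{eq:Cfg}, and evaluating $\bM\zb$ via \eqref{eq:baseC-1} — once Proposition~\ref{prop:C-hyp} and Lemma~\ref{l:equality kernel} are in hand.
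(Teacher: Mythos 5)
Your proof is correct and follows essentially the same route as the paper: match \eqref{eq:base-1} with \eqref{eq:sEqu} via \eqref{eq:Cfg}, reduce to $\bM\zb=0$ by Lemma~\ref{l:equality kernel}, and compute the kernel of the explicit rank-$2$ matrix \eqref{eq:baseC-1}. Your treatment is in fact slightly more careful than the paper's, spelling out the hyperbolicity hypothesis needed for Lemma~\ref{l:equality kernel} and the summation argument (using $p\ge 2$) showing the common value $A-\as B$ must vanish, which the paper leaves implicit in its kernel description.
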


\begin{proof}
	Let \. $\bM:=\bC(\Mf,R,1)$\., and recall that  \. $\fb,\gb \in \rr^n$ \. is the indicator vector of $R$ and $X-R$ respectively.
	It follows from \eqref{eq:Cfg} that 	
 \eqref{eq:base-1} is equivalent to
\begin{equation}
	  \langle \fb , \bM \fb \rangle  \ = \  \as \, \langle \fb , \bM \gb \rangle \ = \  \as^2 \, \langle \gb , \bM \gb \rangle.
\end{equation}
i.e. \eqref{eq:sEqu} holds.
It then follows from Lemma~\ref{l:equality kernel} that \eqref{eq:base-1} is equivalent to \. $\zb:=\fb-\as \gb$ \. is contained in the kernel of $\bM$.
On the other hand, the matrix $\bM$ is described by
\begin{align*}
	\aM_{x,y} \ = \
	\begin{cases}
	1 & \text{if }  x\neq y \. \text{ and } \. \{x,y\} \in \cI ,\\
0 & \text{if } x=y \. \text{ or } \. \{x,y\} \notin \cI.
	\end{cases}
\end{align*}
It then follows that the kernel of $\bM$ is the set of vectors $\vb \in \rr^n$ such that, for every non-loop $x$ of $\Mf$,
\[ \sum_{y \in \Par_{\Mf}(x)} \av_y \ = \ 0. \]
Substituting \ts $\vb \gets \zb$\ts, the equation above is equivalent to
	\[ |\Par_{\Mf}(x)  \cap R| \ - \  \as \. |\Par_{\Mf}(x)  \cap (X-R)| \ = \ 0, \]
and the lemma follows.
\end{proof}

\smallskip

We now give an intermediate lemma which takes us halfway towards
Theorem~\ref{t:main-positive}.

\smallskip

\begin{lemma}\label{lem:tran}
	Let \ts $\Mf:=(X,\cI)$ \ts be a matroid of rank \. $r\geq 3$, let \ts $R \subseteq X$,
and let \ts $a \in \{2,\ldots, r-1\}$, such that  \. $\aP(\Mf,R,a)>0$.
Finally, let $\as>0$. Then
\begin{equation}\label{eq:tran-1}
	 {\aP(\Mf,R,a+1)} \ = \  \as \. \aP(\Mf,R,a) \ = \ \as^2 \. \aP(\Mf,R,a-1)
\end{equation}
holds \. \underline{if and only if} \. for every  \. $x \in R \cap \NL(\Mf)$, we have:
\begin{equation}\label{eq:tran-2}
	 {\aP(\Mf/x,R,a)} \ = \  \as \. \aP(\Mf/x,R,a-1) \ = \ \as^2 \. \aP(\Mf/x,R,a-2) >0.
\end{equation}
\end{lemma}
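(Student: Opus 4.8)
The plan is to read both conditions off the combinatorial atlas $\AA(\Mf,R,a)$ constructed in \S\ref{ss:main-atlas-construction} (available since $a\in\{2,\ldots,r-1\}$), and to connect them through the local--global equality principle, Theorem~\ref{t:equality Hyp}. The key is the choice of vertex: I would take the non-sink vertex $\vf := 1 \in \Vf^1$, whose associated matrix is $\bM_\vf = \bC(\Mf,R,a)$ and whose associated vector is $\hb_\vf = \fb$, the indicator of $R$ (write $\gb$ for the indicator of $X-R$). By $\eqref{eq:Cfg}$, property $\eqref{eq:sEqu}$ at $\vf$ with constant $\as$ is precisely $\eqref{eq:tran-1}$. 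Dually, for a non-loop $x\in R$ the sink $\vf^{\<x\>}=x$ carries the matrix $\bC(\Mf/x,R,a-1)$, so $\eqref{eq:Cfg}$ applied to the rank-$(r-1)$ matroid $\Mf/x$ shows that $\eqref{eq:sEqu}$ at this sink with constant $\as$ is exactly the two equalities in $\eqref{eq:tran-2}$. Since $\hb_\vf=\fb$, condition $\eqref{eq:hGlob}$ holds and $\eqref{eq:ProjGlob}$ is automatic (all $\bT^{\<x\>}$ are the identity), so $\vf$ is a functional source; and because $\supp(\bM_\vf)=\NL(\Mf)$ by Lemma~\ref{lem:C-supp} while $\supp(\hb_\vf)=R$, the functional targets of $\vf$ are exactly the sinks $x\in R\cap\NL(\Mf)$ (a nonempty set, as $\aP(\Mf,R,a)>0$ and $a\ge 1$).

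For the forward direction, assume $\eqref{eq:tran-1}$. First note $\aP(\Mf,R,a-1) = \as^{-1}\aP(\Mf,R,a)>0$, so all nonvanishing hypotheses are in force. I would then verify that $\AA(\Mf,R,a)$ satisfies every hypothesis of Theorem~\ref{t:equality Hyp}: $\eqref{eq:Inh}$, $\eqref{eq:Iden}$, $\eqref{eq:TPInv}$ by Lemma~\ref{lem:atlas-prop-1}; $\eqref{eq:Decsupp}$ by Lemma~\ref{lem:atlas-prop-2}, hence $\eqref{eq:PullEq}$ (so also $\eqref{eq:Pull}$) by Theorem~\ref{t:Pull}; and $\eqref{eq:Hyp}$ at every vertex --- at each non-loop sink $x$ this is Proposition~\ref{prop:C-hyp} for $\Mf/x$, legitimate because Lemma~\ref{lem:C-center} gives $\aP(\Mf/x,R,a-1)>0$, and at each non-sink $t\in[0,1]$ it follows exactly as in the $a\ge 2$ case of the proof of Proposition~\ref{prop:C-hyp}. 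Theorem~\ref{t:equality Hyp} then propagates $\eqref{eq:sEqu}$ (with constant $\as$) from $\vf$ to every functional target, i.e.\ to $\bC(\Mf/x,R,a-1)$ for all $x\in R\cap\NL(\Mf)$; translating back via $\eqref{eq:Cfg}$ yields the two equalities of $\eqref{eq:tran-2}$, and the positivity $\aP(\Mf/x,R,a-2)=\as^{-1}\aP(\Mf/x,R,a-1)>0$ follows once more from Lemma~\ref{lem:C-center}.

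For the converse, assume $\eqref{eq:tran-2}$ holds for every $x\in R\cap\NL(\Mf)$; here I would argue combinatorially, with no atlas needed. For each $j\in\{-1,0,1\}$, every word compatible with $(\Mf,R,a+j)$ begins --- since $a+j\ge 1$ --- with a non-loop letter in $R$, and deleting that letter is a bijection $\Comp(\Mf,R,a+j)\to\bigsqcup_{x\in R\cap\NL(\Mf)}\Comp(\Mf/x,R,a-1+j)$; with $\eqref{eq:Comp-P}$ this gives $r\,\aP(\Mf,R,a+j)=\sum_{x\in R\cap\NL(\Mf)}\aP(\Mf/x,R,a-1+j)$ for $j\in\{-1,0,1\}$ (these three identities are also $\eqref{eq:PullEq}$ at $\vf$ evaluated at $\fb$ and $\gb$). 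Summing the equalities of $\eqref{eq:tran-2}$ over $x\in R\cap\NL(\Mf)$ and dividing by $r$ recovers $\eqref{eq:tran-1}$.

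I expect the main obstacle to be the forward direction: there is no elementary (e.g.\ Cauchy--Schwarz) shortcut from the single-sequence equality $\eqref{eq:tran-1}$ to the family of equalities $\eqref{eq:tran-2}$ indexed by $x\in R\cap\NL(\Mf)$, so the local--global equality principle must be invoked, and with it one must re-check that $\AA(\Mf,R,a)$ meets all of its hypotheses. Within that, the delicate input is hyperbolicity at every vertex, which rests on Proposition~\ref{prop:C-hyp} together with the nonvanishing supplied by Lemma~\ref{lem:C-center}; the remaining pieces --- the $\eqref{eq:Cfg}$ translation and the identification of $\vf=1$ as a functional source whose functional targets are exactly the relevant sinks --- are bookkeeping.
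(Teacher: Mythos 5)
Your proposal is correct and follows essentially the same route as the paper: the forward direction via the atlas $\AA(\Mf,R,a)$ at the vertex $t=1$, translating \eqref{eq:tran-1} into \eqref{eq:sEqu} through \eqref{eq:Cfg} and propagating it to the sinks $x\in R\cap\NL(\Mf)$ by Theorem~\ref{t:equality Hyp} (with hyperbolicity from Proposition~\ref{prop:C-hyp} and nonvanishing from Lemma~\ref{lem:C-center}), and the converse via the counting identity $r\,\aP(\Mf,R,a+j)=\sum_{x\in R\cap\NL(\Mf)}\aP(\Mf/x,R,a+j-1)$, which is exactly the paper's argument (stated there by counting bases through each $x$ rather than by deleting the first letter of compatible words). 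Your bookkeeping of the functional edges ($\supp(\bM)\cap\supp(\hb)=R\cap\NL(\Mf)$) and of the hyperbolicity of the convex combinations at non-sink vertices is, if anything, slightly more explicit than the paper's.
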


\begin{proof}
We first prove the \. $\Leftarrow$ \. direction.
Note that
 \begin{align*}
 	\aP(\Mf,R,a+1) \ &= \
 	\tbinom{r}{a+1}^{-1} \. \big|\big\{ B \in \cB(\Mf) \. : \.  |B \cap R|=a+1 \big\}\big|,\\
 	&= \ \tbinom{r}{a+1}^{-1} \. \tfrac{1}{a+1} \.  \sum_{x \in R  \. \cap \. \NL(\Mf)} \big|\{ B \in \cB(\Mf) \. : \.  |B \cap R|=a+1, x \in B \}\big|\\
 	&= \ \tbinom{r}{a+1}^{-1} \. \tfrac{1}{a+1} \.  \sum_{x \in R  \. \cap \. \NL(\Mf)} \big|\{ B' \in \cB(\Mf/x) \. : \.  |B' \cap R|=a \}\big|\\
 	\ &= \   \frac{1}{r} \. \sum_{x \in R  \. \cap \. \NL(\Mf)} \aP(\Mf/x,R,a).
 \end{align*}
 Applying \eqref{eq:tran-2} to the RHS, we get
 \[	\aP(\Mf,R,a+1) \  = \  \frac{1}{r} \. \sum_{x \in R  \. \cap \. \NL(\Mf)}  \as  \aP(\Mf/x,R,a-1) \ = \  \as  \aP(\Mf,r,a). \]
By the same argument, we also get \. $\aP(\Mf,R,a)  =  \as  \aP(\Mf,R,a-1)$, as desired.

We now prove the \. $\Rightarrow$ \. direction.
Let  \. $\AA(\Mf,R,a)$ \. be the combinatorial atlas defined in \S\ref{ss:main-atlas-construction}.
It follows from the assumption, that \. $\aP(\Mf,r,a)>0$ \. and \. $\aP(\Mf,r,a-1)>0$\..
  It follows from Lemma~\ref{lem:atlas-prop-1} and Lemma~\ref{lem:atlas-prop-2} that
this atlas satisfies \eqref{eq:Inh}, \eqref{eq:Iden}, \eqref{eq:TPInv}, and \eqref{eq:Decsupp}.
It also follows from Proposition~\ref{prop:C-hyp}, that this atlas satisfies \eqref{eq:Hyp}.
Recall that $\fb,\gb \in \rr^n$ is the indicator vector of the subset $R$ and $X-R$, respectively.
It follows from definition that $\fb,\gb$ is a global pair for this atlas, and that the edge  \. $(1,x)$ \.  is a functional edge for every $x \in R$.

 Let \. $\bM:=\bC(\Mf,R,a)$ \. be the matrix associated to \ts $\vf=1 \in \Vf^1$.
It follows from \eqref{eq:Cfg} that  \eqref{eq:tran-1} is  equivalent to  $\vf=1$ satisfying  \eqref{eq:sEqu}.
By Theorem~\ref{t:equality Hyp}, this implies that every vertex $x \in \Vf^0$ contained in $R$ also satisfies \eqref{eq:sEqu}
with the same constant~$\as$. In other words, for every $x \in R$, we have:
\begin{equation}
	\langle \fb , \bM^{\<x\>} \fb \rangle  \ = \  \as \. \langle \fb , \bM^{\<x\>} \gb \rangle \ = \  \as^2 \ts \langle \fb , \bM^{\<x\>} \gb \rangle.
\end{equation}
On the other hand, for every $x \in R$ that is a non-loop of $\Mf$,
we have  \. $\bM^{\<x\>}$ \. is equal to \. $\bC(\Mf/x,R,a-1)$ \. by definition, so it follows from \eqref{eq:Cfg}  that
\begin{align*}
	\langle \fb , \bM^{\<x\>} \fb \rangle \ &= \    (r-1)! \. \aP(\Mf/x,R,a), \qquad 	\langle \fb , \bM^{\<x\>} \gb \rangle \ = \  (r-1)! \. \aP(\Mf/x,R,a-1),\\
			\langle \gb , \bM^{\<x\>} \gb \rangle \ &= \    (r-1)! \. \aP(\Mf/x,R,a-2).
\end{align*}
Finally  note that  \. $\aP(\Mf/x,R,a)>0$ \. by Lemma~\ref{lem:C-center}. This completes the proof.
\end{proof}

\smallskip

\begin{proof}[Proof of Theorem~\ref{t:main-positive}]
Note that \eqref{eq:comb-1} is equivalent to
\begin{equation}\label{eq:comb-3}
 {\aP(\Mf,R,a+1)} \ = \  \as \. \aP(\Mf,R,a) \ = \ \as^2 \. \aP(\Mf,R,a-1) \ > \ 0,
\end{equation}
for some positive $\as >0$.
It then suffices to show that \eqref{eq:comb-3} is equivalent to \eqref{eq:comb-2} with the same $\as >0$.

By applying Lemma~\ref{lem:tran} for $a-1$ many times, we have that \eqref{eq:comb-3} is equivalent to
\begin{equation}\label{eq:comb-4}
	 {\aP(\Mf/A,R,2)} \ = \  \as \ts \aP(\Mf/A,R,1) \ = \ \as^2 \ts \aP(\Mf/A,R,0) \ > \ 0,
\end{equation}
for every \ts $A \subseteq R$ \ts that is independent in~$\Mf$, and such that \ts $|A|=a-1$.
Now note that \eqref{eq:comb-4} is equivalent to
\begin{equation}\label{eq:comb-5}
	{\aP(\Mf/A,X-R,r-a-1)} \ = \  \as \ts \aP(\Mf/A,X-R,r-a) \ = \ \as^2 \ts \aP(\Mf/A,X-R,r-a+1) \ > \ 0,
\end{equation}
for every $A \subseteq R$ that is independent in $\Mf$ and such that \ts $|A|=a-1$.
By applying Lemma~\ref{lem:tran} for $r-a-1$ many times, it then follows that \eqref{eq:comb-5} is equivalent to
\begin{equation}\label{eq:comb-6}
	{\aP(\Mf/B,X-R,0)} \ = \  \as \ts \aP(\Mf/B,X-R,1) \ = \ \as^2 \ts \aP(\Mf/B,X-R,2) >0,
\end{equation}
for every $B \subseteq R$ that is independent in $\Mf$ and such that \ts $|B|=r-2$ \ts and \ts $|B\cap R|=a-1$\ts.
Noting that $\Mf/B$ is a matroid of rank 2, it then follows that  \eqref{eq:comb-6} is equivalent to
\begin{equation}\label{eq:comb-7}
	{\aP(\Mf/B,R,2)} \ = \  \as \ts \aP(\Mf/B,R,1) \ = \ \as^2 \ts \aP(\Mf/B,R,0) >0,
\end{equation}
for every $B \subseteq R$ that is independent in $\Mf$ and such that \ts $|B|=r-2$ \ts and \ts $|B\cap R|=a-1$\ts.
The  theorem now follows by applying Lemma~\ref{lem:equ-base} to \eqref{eq:comb-7}.
\end{proof}

%
%

\medskip

\section{Vanishing conditions}\label{s:vanish}


\subsection{Setup} \label{ss:vanish-setup}
A \defnb{discrete polymatroid} \ts $\Df$ \ts is a pair \ts $([n], \cJ)$ \ts
 of a
ground set \ts $[n]:=\{1,\ldots,n\}$ \ts and a nonempty finite collection \ts
$\cJ$  \ts of integer points \ts $\ba = (a_1,\ldots,a_n)\in \nn^n$ \ts
that satisfy the following:
\begin{itemize}
	\item  \, $\ba \in \cJ$, \. $\bb \in \nn^{n}$ \. s.t.\ \. $\bb \leqslant \ba$ \ $\Rightarrow$ \  $\bb \in \cJ$\ts, and
	\item  \, $\ba, \bb \in \cJ$,  \. $|\ba|< |\bb|$ \ $\Rightarrow$ \
	$\exists \ts i \in [n]$ \. s.t.\ \. $a_i < b_i$ \. and \. $\ba+\eb_i \in \cJ$\ts.
\end{itemize}
Here \ts $\bb \leqslant \ba$ \ts is a componentwise inequality, \. $|\ba|:= a_1+\ldots+a_n$, and \ts
$\{\eb_1,\ldots,\eb_n\}$ \ts is a standard linear basis in~$\rr^n$.
When \ts $\cJ\subseteq \{0,1\}^n$, discrete polymatroid \ts $\Df$ \ts is a matroid.
The role of bases in discrete polymatroids
is played by \emph{maximal elements} with respect to the order~``$\leqslant$''.
These are also called \defng{M-convex sets} in \cite[$\S$1.4]{Mur03} and~\cite[$\S$2]{BH}.
We refer to \cite{HH02} and \cite{Mur03}
for further details on discrete polymatroids.

\smallskip

\subsection{Proof of Theorem~\ref{t:vanish}}
 Consider \. $\Df_1:=([\ell], \cJ_1)$ \. defined by
 \begin{equation}\label{eq:J1}
  \cJ_1 \ := \ \big\{ \.  \cb \in \nn^\ell \ : \ \exists \ts A \in \cI \ \ \text{such that} \ \
   |A \cap S_i| = c_i \ \ \text{for all} \ \ i \in [\ell] \big\}.
 \end{equation}
It follows from the matroid exchange property that $\Df_1$ is a discrete polymatroid.
Similarly, consider \.  $\Df_2:=\big([\ell], \cJ_2\big)$ \.  defined by
\begin{equation}\label{eq:J2}
	\cJ_2 \ := \ \Big\{ \.  \cb \in \nn^\ell \, : \	\sum_{i \in L}  c_i  \ \leq \ \rk\big( \cup_{i \in L} S_i\big) \ \ \text{for all} \  \ L \in 2^{[\ell]} \Big \}.
\end{equation}
 It follows from \cite[Thm~8.1]{HH02}, that $\Df_2$ is a discrete polymatroid.

The theorem claims that \ts $\cJ_1 = \cJ_2\ts$.  We prove the claim by induction on~$\ell$.
The case \ts $\ell=1$ \ts is trivial.  We now assume that \ts $\ell>1$,
and that the claim holds for smaller values.
Note that  \. $\cJ_1 \subseteq \cJ_2$ \. by definition,
so it suffices to show that \. $\cJ_2 \subseteq \cJ_1$\..

Let \ts $\cP_1,\cP_2\ssu \rr_+^\ell$ \ts be convex hulls of
\ts $\cJ_1,\cJ_2 \ssu \nn^\ell$, respectively.
Note that \ts $\cP_i$ \ts are convex polytopes with vertices in \ts $\nn^\ell$,
and with \. $\cP_i \cap \nn^\ell=\cJ_i\ts$, see \cite[Thm~3.4]{HH02}.
Hence the theorem follows by showing that all vertices of $\cP_2$ belong to~$\cJ_1$.
In fact, because $\cP_2$ is closed downward under $\leqslant$,
it suffices to prove the claim for every vertex  \ts $\bc$ \ts
of \ts $\cP_2$ \ts  satisfying \ts $|\bc|=\rk(X)$.

First suppose that \. $c_i=0$ \.  for some $i\in [\ell]$.
Then it follows from  induction that $\bc \in \cJ_1$ by applying the theorem to the matroid $\Mf$ restricted to the ground set $X \setminus S_i$.
So we assume that \. $c_i \geq 1$ \. for all $i\in [\ell]$.
Since \. $\cb$  \. is a vertex of $\cP_2$,
there exists a non-empty \. $L \subsetneq 2^{[\ell]}$,  
such that
 \[ 	\sum_{i \in L}  c_i  \ = \ \rk\big( \cup_{i \in L} S_i\big).\]
 Let \. $S:= \cup_{i \in L} S_i$\..
 On one hand, it follows from induction that there exists an independent set $A_1$ of the matroid $\Mf$ restricted to the ground $S$, that satisfies
 \[   |A_1 \cap S_i| \, = \, c_i  \quad \text{for all} \quad  i \in L\ts.
 \]
 On the other hand, it again follows from induction that there exists an independent set $A_2$ of the matroid $\Mf/S$ that satisfies
  \[   |A_2 \cap S_i| \, = \, c_i  \quad \text{for all} \quad i \notin L\ts.
  \]
  Let \. $A := A_1 \cup A_2\ts$.
  Since \. $\rk(S)=\sum_{i \in L} c_i$\., it follows that \. $A$ \. is an independent set of $\Mf$ satisfying
  \[   |A \cap S_i| \ = \ c_i\quad \text{for all} \quad i \in [\ell]\ts.
  \]
 This implies that \ts $\cb \in \cJ_1$, which completes the proof. \qed

\medskip

\section{Total equality cases} \label{s:total}

\subsection{Proof of Corollary~\ref{c:total-Stanley}}  \label{ss:total-cor}
To simplify the notation, denote \, $\psc_a := \aP(\Mf,R,a)$.
By Proposition~\ref{p:vanish}, we have \ts $\psc_a>0$ \ts for all \ts $0\le a \le r$.
Writing \eqref{eq:SY} for all \ts $1\le a < r$, we get:
\begin{equation}\label{eq:cons-1}
\frac{\psc_1}{\psc_0} \  \geq \   \frac{\psc_2}{\psc_1}       \ \geq \  \frac{\psc_3}{\psc_2} \ \geq \ \cdots \ \geq \ \frac{\psc_{r}}{\psc_{r-1}} \ > \ 0 \ts.
\end{equation}
This gives:
	 \begin{align}\label{eq:cons-2}
	 	\left(\frac{\psc_1}{\psc_0}\right)^r \ \geq \   \frac{\psc_1 \cdot \psc_2 \. \cdots \. \psc_r}{\psc_0\cdot \psc_1 \. \cdots \. \psc_{r-1} }  \ = \   \frac{\psc_r}{\psc_0} \.,
	 \end{align}
and proves the first part.  For the second part, observe that all inequalities
in \eqref{eq:cons-1} must be equalities, which implies the second part.  \qed

\smallskip

\subsection{Proof of Theorem~\ref{t:Yan-conj}, part~$(1)$}
As described in the introduction, it remains to show the
\. {\small $(ii)$} \ts  $\Rightarrow$ \ts {\small $(iv)$} \. implication.
It follows from Theorem~\ref{t:main-positive}, that
		\begin{equation}\label{eq:2.8-1}
		|\Par_{\Mf/A}(x)  \cap R| \ = \  \as  |\Par_{\Mf/A}(x)  \cap (X-R)|,
	\end{equation}	
	for every independent set \ts $A \in \cI$ \ts of size
	\ts $|A| =  r-2$,  and every \ts $x \in \NL(\Mf/A)$.
	It thus suffices to show that \eqref{eq:2.8-1} implies {\small $(iv)$} for the same value of $\as$.

We use induction on $r$.
For $r=2$, the claim follows immediately by applying \eqref{eq:2.8-1},
since $\Mf$ is loopless and we must have \ts $A=\emp$ \ts in this case.

For $r>2$, suppose the claim holds for all matroids of rank $(r-1)$.
Then, for all $x \in X$, it follows from applying the claim to \ts $\Mf/x$,
that
			\begin{equation}\label{eq:2.8-2}
		|\Par_{\Mf/x}(y)  \cap R| \ = \  \as \. |\Par_{\Mf/x}(y)  \cap (X-R)|,
	\end{equation}	
	for \. $y \in \NL(\Mf/x)$.
	In particular, it then follows from \eqref{eq:2.8-2} that
	\begin{equation}\label{eq:2.8-3}
	 |\NL(\Mf/x) \cap R| \ = \ \as \.   |\NL(\Mf/x) \cap (X-R)|.
	\end{equation}
	On the other hand, it follows
	from \ts $\NL(\Mf)=X$, that
	\begin{equation}\label{eq:2.8-4}
	   \NL(\Mf/x) \, = \,  X  \setminus \Par_\Mf(x).
	\end{equation}
By combining \eqref{eq:2.8-3} and \eqref{eq:2.8-4}, we conclude:
\begin{equation}\label{eq:2.8-5}
 |\Par_\Mf(x) \cap R| \. - \. \as  |\Par_\Mf(x) \cap (X-R)| \ = \  |R| \. - \. \as |X-R|,
\end{equation}
for all $x \in X$.  Summing the equation above over all parallel classes of~$\Mf$, we then have
\[   |R| \. - \. \as |X-R| \ = \  p \. \big( |R| \. - \. \as  |X-R| \big), \]
where \ts $p\geq r \geq 3$ \ts is the number of parallel classes of~$\Mf$.
This implies \. $|R|= \as |X-R|$.
Together with \eqref{eq:2.8-5} this gives
\[   |\Par_\Mf(x) \cap R| \, = \, \as   |\Par_\Mf(x) \cap (X-R)|  \quad \text{ for all } \quad x \in X,\]
as desired. \qed

\medskip

\section{Examples and counterexamples} \label{s:ex}

\subsection{Double matroid} \label{ss:ex-double}
Let \ts $\Mf$ \ts be a loopless matroid with a ground set~$X$ that is
given by a representation \ts $\phi: X\to \rr^r$.  Denote by $X'$ the second
copy of~$X$.  Define a matroid  \ts
$\Mf^2$ \ts with the ground set \ts
$Y := X\sqcup X'$, that is given by a representation \ts
$\psi: Y \to \rr^r$, where \ts
$\psi(x):= \phi(x)$ \ts and \ts  $\psi(x'):= -\phi(x)$.

Now let \ts $R\gets X$ \ts and \ts $S\gets X'$.  By the symmetry, observe that
\begin{equation}\label{eq:ex-par}
	|\Par_{\Mf^2}(y)  \cap R| \ = \  |\Par_{\Mf^2}(y)  \cap S|,
\end{equation}
for all \ts $y \in Y$.  In other words, matroid $\Mf^2$ satisfies
condition \ts {\small $(iv)$} \ts in Theorem~\ref{t:total-SY} with \ts $\as=1$.
By Theorem~\ref{t:Yan-conj}, we conclude that \ts $\Mf^2$ \ts has total equality
in the Stanley--Yan inequality, i.e.\ condition \ts {\small $(ii)$} \ts in
Theorem~\ref{t:total-SY}.

Note that \eqref{eq:ex-par} is a special case of \eqref{eq:comb-2} with \ts $\as=1$.  
In fact, it is easy to modify this example to make the ratio to be any rational number.   
Indeed, to get the ratio \ts $\as = a/b$, take $a$ copies of~$R$ and $b$ of~$S$.  
One can make all vectors to be distinct by taking different multiples (over~$\rr$).  
We omit the easy details.

\smallskip

\subsection{Linear matroid} \label{ss:ex-double}
Fix \ts $r\ge 3$.
Let \ts $X=\ff_2^r$ \ts and let \ts $\Mf$ \ts be a
binary matroid with a ground set~$\ts X$ \ts in its natural representation.
Let \ts $R\ssu X$ \ts be a subspace of dimension \ts $(r-1)$.
Note that $\0$ is the only loop in~$\Mf$.

Take an independent set of vectors \ts $A \subset X$ \ts such that
\. $|A| =  r-2$ \. and \.   $|A \cap R| =  0$.
Since \ts $A\ne \emp$, it is easy to see that for every non-loop \ts
$x \in \NL(\Mf/A)$, we have:
\begin{equation}\label{eq:par-1}
	|\Par_{\Mf/A}(x)  \cap R| \ = \   |\Par_{\Mf/A}(x)  \cap (X-R)|\ts.
\end{equation}	
This is \eqref{eq:comb-2} for \ts $a=1$, with \ts $\as=1$.
By Theorem~\ref{t:main-positive}, this implies that \eqref{eq:comb-1}
also holds for $a=1$.

Finally, note that \ts $\aP(\Mf,R,0)=0$ \ts in this case, which is why
this is not an example of total equality condition \ts {\small $(ii)$} \ts in
Theorem~\ref{t:total-SY} (and why the theorem is inapplicable in any event).

\smallskip

\subsection{Combination matroid} \label{ss:ex-combin}
The previous two examples illustrate different reasons for the equality
\eqref{eq:comb-1} to hold for $a=1$.  The following matroid is a combination
of the two which still gives equality for $a=1$, but not for $a>1$.

Fix \ts $r\ge 3$ \ts and let \ts $V= \ff_2^r$\ts.
Let \ts $R_0\ssu V$ \ts be a subspace of dimension \ts $(r-1)$,
let \ts $S_0 := V\sm R_0$ \ts be the complement.
Let \ts $R_1, S_1\ssu S_0$ \ts be two copies of the same nonempty
set of vectors.  Finally, let \ts $\Mf$ \ts be a matroid
on the ground set \ts $X:= R_0 \sqcup R_1 \sqcup S_0 \sqcup S_1$ \ts and let
\ts $R:=R_0 \sqcup R_1\ts$.

Clearly, \ts $\rk(R) = \rk(X\sm R)=r$, so \ts $\aP(\Mf,R,0)>0$ \ts and \ts
\ts $\aP(\Mf,R,r)>0$.  We have \eqref{eq:par-1} by a direct computation.
By Theorem~\ref{t:main-positive}, this again implies that \eqref{eq:comb-1}
holds for $a=1$.  On the other hand,  one
can directly check that \eqref{eq:comb-2} (and thus \eqref{eq:comb-1}) does not hold for \ts $a=r-1$.
We omit the details.

In summary, this gives an example when total equality condition
\ts {\small $(ii)$} \ts in Theorem~\ref{t:total-SY} fails, even though
\ts {\small $(iii)$} \ts holds.  This disproves Conjecture~\ref{conj:Yan}
and proves the second part of Theorem~\ref{t:Yan-conj}.\footnote{To be precise,
matroid~$\Mf$ is not loopless.  To fix this, remove $\0$ from~$R_0$.}

\medskip

\section{Generalized Mason inequality} \label{s:indep}

Let \ts $\Mf$ \ts be a matroid or rank \ts $r=\rk(\Mf)$, with a ground set~$X$
of size \ts $|X|=n$.  Denote by \ts $\cI(\Mf)\subseteq 2^X$ \ts the set of
independent sets of~$\Mf$.  Fix integers \ts $k \geq 0$ \ts and
\. $0\le a, c_1,\ldots,c_k \le r$. Additionally, fix disjoint subsets \ts
$S_1,\ldots, S_k \ssu X$, and let \ts $R:= X \sm \cup_i  S_i$\ts.
%
Define
\[
\cI_{\bSc}(\Mf,a)  \ := \ \big\{ \ts A \in \cI(\Mf)  \, : \,  |A \cap R|=a, \.  |A \cap S_1|=c_1, \. \ldots \., \. |A \cap S_k|=c_k  \ts \big\}\ts,
\]
and let \. $\aI_{\bSc}(\Mf,a):=|\cI_{\bSc}(\Mf,a)|$.  Let \ts $m:=n-c_1-\ldots-c_k$ \ts
and denote by \ts $\mathscr{F}_m$ \ts a free matroid on~$m$ elements.
Substituting the direct sum \. $\Mf \. \gets \. \Mf \oplus \mathscr{F}_m$ \. into the
Stanley--Yan inequality, we obtain a log-concave inequality:
\[
{\aI_{\bSc}(\Mf,a)}^2 \ \geq \   \aI_{\bSc}(\Mf,a+1) \, \aI_{\bSc}(\Mf,a-1)\ts.
\]
This is the argument that was used by Stanley in \cite[Thm~2.9]{Sta-AF}, to obtain
Mason's log-concave inequality \eqref{eq:Mason-1} in the case \ts $k=0$.
The following ultra-log-concave inequality is a natural extension.

\smallskip

\begin{thm}[{\rm \defn{generalized Mason inequality}}{}]\label{t:indep}
	For all \ts $ 1\le a\le \min\{r-1,m-1\}$, we have:
\begin{equation}\label{eq:gen-Mason}
{\aIr_{\bSc}(\Mf,a)}^2 \ \geq \  \big(1+\tfrac{1}{a}\big) \ts
\big(1+\tfrac{1}{m-a}\big) \,  \aIr_{\bSc}(\Mf,a+1) \, \aIr_{\bSc}(\Mf,a-1).
\end{equation}	
\end{thm}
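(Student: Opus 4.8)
The plan is to derive~\eqref{eq:gen-Mason} from the Lorentzian-polynomial formalism that underlies Mason's strongest conjecture~\eqref{eq:Mason-2}, which is exactly the $k=0$ case of the theorem. Throughout, write $n=|X|$. Recall that the homogenized multivariate generating function of the independent sets of $\Mf$,
\[
h_\Mf\big(\{w_x\}_{x\in X},\ts z\big) \ := \ \sum_{A\in\cI(\Mf)}\ \Big(\prod_{x\in A}w_x\Big)\. z^{\ts n-|A|},
\]
is a \emph{Lorentzian polynomial} of degree $n$: this is the multivariate form of~\eqref{eq:Mason-2}, cf.~\cite{BH,ALOV-FOCS18,AHK}. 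Its support is $M$-convex by the matroid exchange property, and each monomial $w^A z^{n-|A|}$ has degree $n$.

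First I would \emph{symmetrize} $h_\Mf$: substitute $w_x\gets t$ for every $x\in R$, and $w_x\gets u_i$ for every $x\in S_i$ and $i\in[k]$. This is the pullback of $h_\Mf$ along a linear map with nonnegative (in fact $0/1$) matrix entries, an operation that preserves the Lorentzian property, so
\[
H(t,u_1,\ldots,u_k,z) \ = \ \sum_{A\in\cI(\Mf)}\ t^{\ts|A\cap R|}\.\Big(\prod_{i=1}^k u_i^{\ts|A\cap S_i|}\Big)\. z^{\ts n-|A|}
\]
is again Lorentzian, of degree $n$. Next I would extract the coefficient of $u_1^{c_1}\cdots u_k^{c_k}$, i.e.\ apply the operator $\prod_{i=1}^{k}\tfrac1{c_i!}\ts\partial_{u_i}^{c_i}$ and then set $u_1=\cdots=u_k=0$. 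Each differentiation $\partial_{u_i}$ sends a Lorentzian polynomial to a Lorentzian polynomial (or to $0$) and lowers the degree by one, and each specialization $u_i\gets 0$ likewise preserves the property; so the outcome is either the zero polynomial — in which case $\aI_{\bSc}(\Mf,a)=0$ for all $a$ by Theorem~\ref{t:vanish} and there is nothing to prove — or a Lorentzian polynomial
\[
\widetilde H(t,z) \ = \ \sum_{a}\ \aI_{\bSc}(\Mf,a)\. t^{\ts a}\. z^{\ts m-a}
\]
in two variables, homogeneous of degree $m=n-c_1-\cdots-c_k$ (the degree drops by exactly $\sum_i c_i$). Finally, a bivariate homogeneous Lorentzian polynomial $\sum_a b_a\. t^a z^{m-a}$ satisfies, essentially by definition (the single Hessian inequality for $2$-variable Lorentzian polynomials), the bound $b_a^{\ts 2}\ge\big(1+\tfrac1a\big)\big(1+\tfrac1{m-a}\big)\ts b_{a+1}\ts b_{a-1}$ for all $1\le a\le m-1$; indeed this is the binomial identity $\binom{m}{a}^2\big/\big(\binom{m}{a+1}\binom{m}{a-1}\big)=\big(1+\tfrac1a\big)\big(1+\tfrac1{m-a}\big)$ together with log-concavity of the binomially normalized coefficients. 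Applied to $\widetilde H$, this is precisely~\eqref{eq:gen-Mason} on the range $1\le a\le m-1$, which contains the range $1\le a\le\min\{r-1,m-1\}$ claimed in the theorem.

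The conceptual heart, and the main obstacle, is the passage from $h_\Mf$ to $\widetilde H$: one cannot simply invoke the $k=0$ case as a black box, because $\aI_{\bSc}(\Mf,a)$ is a \emph{sum} over the admissible configurations on $S_1,\ldots,S_k$ of counts each obeying an inequality of the type~\eqref{eq:gen-Mason}, and a sum of ultra-log-concave sequences need not be ultra-log-concave. What makes the argument work is that the Lorentzian structure survives both symmetrization and coefficient extraction, and that the degree of the resulting bivariate polynomial is exactly $m$, so the normalizing binomial in the conclusion is $\binom{m}{a}$ and the constant is exactly $\big(1+\tfrac1a\big)\big(1+\tfrac1{m-a}\big)$ — rather than the $\binom{r}{a}$-type constant that a direct application of~\eqref{eq:SY} to $\Mf\oplus\mathscr{F}_m$ would produce. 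As an alternative route, one could instead run the combinatorial atlas of~$\S$\ref{s:atlas} with the constraints $|A\cap S_i|=c_i$ frozen as parameters, generalizing the atlas proof of Mason's ultra-log-concave inequality in~\cite{CP}; the bookkeeping is heavier but the mechanism is the same.
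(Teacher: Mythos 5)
Your argument is correct and is essentially the paper's own proof: both start from the Lorentzian property of the homogenized independent-set generating polynomial of \cite{BH}, symmetrize the variables over $R$ and the $S_i$ (a nonnegative linear substitution, i.e.\ diagonalization), extract the $u_1^{c_1}\cdots u_k^{c_k}$ coefficient via directional derivatives and zero substitutions, and conclude from the fact that a bivariate homogeneous Lorentzian polynomial of degree $m$ has ultra-log-concave coefficients, with the constant $\big(1+\tfrac1a\big)\big(1+\tfrac1{m-a}\big)$ coming from the ratio of binomials $\binom{m}{a}$. The only cosmetic differences are your $1/c_i!$ normalization and the explicit (and harmless) handling of the identically-zero case.
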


\smallskip

This inequality is an easy consequence of the 
results by Br\"and\'en and Huh~\cite{BH}.  We include a short proof for completeness.

\smallskip

\begin{proof}[Proof of Theorem~\ref{t:indep}]
We assume that \ts $X=[n]$.
Let \. $f_{\Mf} \in \nn[w_0,w_1,\ldots, w_{n}]$ \. be a multivariate polynomial
defined by
\[f_\Mf(w_0, w_1,\ldots, w_n) \ := \ \sum_{A \in \cI(\Mf)} \ts  w_{0}^{n-|A|} \. \prod_{i\in A} \. w_i\..
\]
It is shown in \cite[Thm~4.10]{BH}, that \. $f_{\Mf}$ \. is Lorentzian.

Take the following substitution:  \. $w_0\gets y$, \. $w_i\gets x$ \ts for \ts $i\in R$, and
\. $w_i\gets z_j$ \ts for \ts $i\in S_j$\ts, \. $1\le j \le k$.
Let \. $g_{\Mf}(x,y,z_1,\ldots, z_k)$ \. be the resulting polynomial, and let
\[ h_{\Mf}(x,y) \ := \ \frac{\partial^{c_1 \ts + \. \ldots \. + \ts c_k}}{\partial z_1^{c_1} \. \cdots \. \partial z_k^{c_k}} \. \Bigr|_{z_1,\ldots, z_k=0} \ g_{\Mf}(x,y,z_1,\ldots, z_k)\ts.   \]
Since the Lorentzian property is preserved under diagonalization, taking directional derivatives, and zero substitutions, see
\cite[$\S$2.1]{BH}, it follows  that \ts $h_{\Mf}(x,y)$ \ts is  a Lorentzian polynomial with degree $m$.

Now note that the coefficients \. $[x^ay^{m-a}] \ts h_\Mf(x,y)$ \. is equal to  \. $\aI_{\bSc}(\Mf,a) \. c_1! \cdots c_k! $ \. by definition.
Recall now that a bivariate homogeneous polynomial
with nonnegative coefficients is Lorentzian if and only if the sequence of coefficients form an
ultra-log-concave sequence with no internal zeros.  This implies the result.
\end{proof}

\medskip

\section{Final remarks and open problems} \label{s:finrem}

\subsection{Computational complexity ideas}  \label{ss:finrem-quote}
Looking into recent developments, one cannot help but admire
Rota's prescience and keen understanding of mathematical development:

\smallskip

\begin{center}\begin{minipage}{13.cm}%
{{\em ``Anyone who has worked with matroids has come away with the conviction
that the notion of a matroid is one of the richest and most useful concepts
of our day.  Yet, we long, as we always do, for one idea that will allow us
to see through the plethora of disparate points of view.''} \cite{Rota}
}
\end{minipage}\end{center}

\smallskip

\nin
Arguably, the idea of \defna{hyperbolicity} \ts is what unites both the
combinatorial Hodge theory, Lorentzian polynomials and the combinatorial
atlas approaches, even if technical details vary considerably.  On the other
hand, our complexity theoretic approach is as ``disparate'' as one could
imagine, leaving many mathematical and philosophical questions
unanswered.\footnote{Some of these questions related to the nature
of the $\poly$ vs.\ $\NP$ problem are addressed in \cite[$\S\S$1--4]{Aar16}.}

That an open problem in the old school matroid theory was resolved using tools
and ideas from computational complexity might be very surprising
to anyone who had not seen theoretical computer science permeate
even the most distant corners of mathematics.  To those finding
themselves in this predicament, we recommend a recent survey \cite{Wig23},
followed by richly detailed monograph \cite{Wig19}.

\smallskip

\subsection{Negative results for other matroids}  \label{ss:finrem-binary}
One can ask if Theorem~\ref{t:main-negative} extends to other families
of matroids given by a succinct presentation.  In fact, our proof is
robust enough, and extends to every family of matroids which satisfies the
following:

{\small $(1)$} \ts computing the number of bases is \ts $\SP$-complete, and

{\small $(2)$} \ts the family includes all (loopless, bridgeless) graphical matroids.

\nin
Notably, matroids realizable over~$\zz$ obviously satisfy {\small $(2)$},
and satisfy {\small $(1)$} by \cite{Snook}.  On the other hand, paving matroids
based on Hamiltonian cycles considered in \cite[$\S$3]{Jer}, easily satisfy
{\small $(1)$}, but are very far from {\small $(2)$}.

For \emph{bicircular matroids}, property {\small $(1)$} was proved in \cite{GN06}.
Unfortunately, not all graphical matroids are bicircular matroids,
see~\cite{Mat77}.  In fact, not all graphical matroids are necessarily
\emph{transversal} (see e.g.\ \cite[Ex.~1.6.3]{Ox}), or even a \emph{gammoid}
(see e.g.\ \cite[Exc.~11(ii)~in~$\S$12.3]{Ox}).
Nevertheless, we believe the following:

\begin{conj}\label{conj:bicircular}
Theorem~\ref{t:main-negative} holds for bicircular matroids.
\end{conj}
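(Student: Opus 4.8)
The plan is to re-run the entire argument of Section~\ref{s:main-proof} with the class of binary matroids replaced by the class of bicircular matroids, isolating the single place where the hypothesis ``the family contains all loopless bridgeless graphical matroids'' (item~{\small$(2)$} of \S\ref{ss:finrem-binary}) was actually used. Two of the three external inputs transfer immediately. First, $\BBM$ is $\SP$-complete for bicircular matroids by Gim\'enez--Noy \cite{GN06}, so the argument of Lemma~\ref{lem:BBM} (which uses only the deletion--contraction recurrence and the greedy basis algorithm, both available from the host-graph presentation) shows that $\NDCR$ is $\SP$-hard for bicircular matroids. Second, the chain of reductions $\CDCR \to \CDC \to \EBULC_{1}$ from Section~\ref{s:red} stays inside the bicircular class: direct sums of bicircular matroids are bicircular (disjoint union of host graphs), so Lemma~\ref{lem:CDCR-to-CDC} is unchanged; and in Lemma~\ref{lem:CDC-to-EBULC} the matroid $\Mf'$ is $\Mf$ together with a two-element parallel class forming its own connected component, which is realized bicircularly by adjoining a new vertex carrying two loops — one checks the basis correspondence $\cB(\Mf') = \{A+u,\ A+v : A \in \cB(\Mf)\}$ survives verbatim. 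Toda's theorem and the final collapse computation \eqref{eq:together-collapse} need no change.

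Everything therefore reduces to re-proving the Verification Lemma~\ref{lem:verify} — i.e.\ $\NP^{\langle\VDCR\rangle}\subseteq\NP^{\langle\CDCR\rangle}$ — for bicircular matroids, which in the binary case was powered by the spanning-tree-ratio constructions of Sections~\ref{s:graph-CF}--\ref{s:st}. The plan is to build a ``bicircular continued-fraction calculus'' paralleling Theorem~\ref{thm:graph-cf-sum} and Lemma~\ref{lem:st-ratio}: since the bases-counting function of any matroid obeys deletion--contraction, the ratio $\rho(\Mf,x) = \iB(\Mf-x)/\iB(\Mf/x)$ of a bicircular matroid satisfies the same formal identities as $\tau(G-e)/\tau(G/e)$, so one needs only (a) small bicircular gadgets realizing each partial quotient $\rho = a_{0}$; (b) a ``$\rho\mapsto 1+\rho$'' step; (c) a ``$\rho\mapsto 1/\rho$'' step (the graphical proof used planar duality, $\rho(G^{\ast},e)=\rho(G,e)^{-1}$, which has no bicircular counterpart, so one instead designs each gadget together with its ``reciprocal'' gadget obtained by interchanging the roles of deletion and contraction); and (d) an additive step $\rho_{1}+\rho_{2}$ amalgamating two gadgets, as in Lemma~\ref{lem:graph-sum}. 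Granting (a)--(d), one realizes every $A/B$ in $[1,2]$ with $O\big((\log N)(\log\log N)^{2}\big)$ elements, at which point Larcher's Theorem~\ref{thm:Lar}, Proposition~\ref{p:NTD}, and the reduction of general ratios to ratios in $[1,2]$ carry over unchanged, since they are purely number-theoretic and combinatorial. An attractive alternative to building the calculus from scratch would be a direct polynomial reduction $(G,e)\mapsto(H,f)$ from the graphical bases-ratio problem to the bicircular one with $\rho_{B(H)}(f)$ a simple function of $\tau(G-e)/\tau(G/e)$, which would let one invoke Sections~\ref{s:graph-CF}--\ref{s:st} as a black box.

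The main obstacle I anticipate is that the operations driving the calculus do not in general preserve the class of bicircular matroids: contracting a non-loop element of a bicircular matroid can produce a frame (bias) matroid that is not bicircular, and — unlike for graphical matroids — parallel \emph{graph} edges are \emph{not} parallel \emph{matroid} elements of a bicircular matroid (a two-edge multigraph carries the free bicircular matroid; the smallest parallel pair instead comes from two loops at a common vertex). This makes the naive ``add a parallel edge'' and ``planar dual'' moves unavailable and forces the gadgets to be engineered so that every intermediate object is manifestly the bicircular matroid of an explicit graph. One disciplined route is to carry out the whole construction inside the minor-closed class of bias (frame) matroids, and then verify, using a Matthews-type characterization \cite{Mat77}, that the particular witnesses produced are realizable as genuine bicircular matroids — possibly at the cost of an extra logarithmic factor in the witness size, which is harmless. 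Making either the native gadget design or this closure argument work is where I expect essentially all the difficulty to lie; by contrast, the complexity-theoretic skeleton of Theorem~\ref{t:main-negative} and the number theory of Section~\ref{sec:verify} should transfer with only cosmetic changes.
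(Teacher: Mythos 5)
There is a genuine gap, and it is exactly the one the paper itself leaves open: the statement you are addressing is stated in the paper as a \emph{conjecture} (Conjecture~\ref{conj:bicircular}), with no proof, and with the explicit remark in \S\ref{ss:finrem-binary} that a proof along these lines would first require a bicircular analogue of Theorem~\ref{t:st} (this is Conjecture~6.1 in \cite{CP-coinc}, ``which needs to be obtained first''), and then an extension to bases ratios as in Lemma~\ref{lem:st-ratio}. Your proposal reproduces this analysis but does not close the gap. The parts you claim transfer are, for the most part, correctly identified: the $\SP$-hardness input \cite{GN06}, the reductions of Section~\ref{s:red} (your realization of the two-element parallel class by two graph loops at a new vertex, so that $\Mf'=\Mf\oplus U_{1,2}$ stays bicircular, is a nice check), Toda's theorem, the number theory of \S\ref{ss:st-NT}, and the collapse computation \eqref{eq:together-collapse} are indeed insensitive to the matroid class. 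But the whole weight of Theorem~\ref{t:main-negative} rests on the Verification Lemma~\ref{lem:verify}, and there the witness $\Nf$ produced in Section~\ref{sec:verify} is a \emph{graphical} matroid; since the $\CDCR$ oracle in the bicircular version needs both inputs bicircular, and not all graphical matroids are bicircular \cite{Mat77}, that witness cannot be reused. Your items (a)--(d) --- small bicircular gadgets for partial quotients, the $\rho\mapsto 1+\rho$ step, a substitute for planar duality $\rho(G^\ast,e)=\rho(G,e)^{-1}$, and the amalgamation step --- are precisely the missing mathematical content, and you leave all of them unproved; in particular no bicircular operation inverting a bases ratio is exhibited, and no family realizing a prescribed ratio $A/B$ with polylogarithmically many elements is constructed. ``Granting (a)--(d)'' is granting the conjecture's hard part.

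A further internal tension: you flag, as your ``main obstacle,'' that contraction may take you out of the bicircular class, yet you also claim Lemma~\ref{lem:BBM} transfers ``immediately.'' That reduction invokes the ratio oracle on the iterated contractions $\Mf_i=\Mf/\{x_1,\ldots,x_i\}$, so if your closure worry is genuine it already affects this step and must be addressed (either by proving the relevant closure for the contractions actually used, or by reformulating $\NDCR$ so that its inputs remain presented as bicircular matroids of explicit host graphs). In sum, what you have is a correct audit of which ingredients of Sections~\ref{s:red}--\ref{s:main-proof} are class-independent, together with a reduction of the conjecture to the same open construction problem the paper points to; it is a reasonable research plan, but not a proof of the statement.
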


Note that not all bicircular matroids are binary, see \cite[Cor.~5.1]{Zas87},
so the conjecture would not imply Theorem~\ref{t:main-negative}.  If one is
to follow the approach in this paper, a starting point would be Conjecture~6.1
in \cite{CP-coinc} which is analogous to Theorem~\ref{t:st} in this case,
and needs to be obtained first.  Afterwards, perhaps the proof can be
extended to bases ratios as in Lemma~\ref{lem:st-ratio}.

\smallskip

\subsection{Generalized Mason inequality}  \label{ss:finrem-Mason}
Denote by \ts $\EMason_k$ \ts the equality of \eqref{eq:gen-Mason}
decision problem.  As we mentioned earlier, \ts \ts $\EMason_0$ \ts
is in \ts $\coNP$.  In fact, it is \ts $\coNP$-complete, see
Corollary~\ref{cor:EMason} below.
By analogy with Theorem~\ref{t:main-negative}, it would be interesting
to see what happens for general~$k:$

\begin{op} \label{op:gen-Mason}
For what \ts $k>0$,  \ts $\EMason_k$ \ts is in \ts $\PH$?
\end{op}

In particular, any explicit description of equality cases for
 \ts $\EMason_1$ \ts would be a large step forward and potentially
 very difficult.  We note aside that the combinatorial atlas approach
 can also be used to prove \eqref{eq:gen-Mason}.  Unfortunately,
the specific construction we have in mind cannot be used to describe
the equality cases, at least not without major changes.

\smallskip

\subsection{Completeness}\label{ss:finrem-NP}
As evident from this paper, the computational complexity of equality
cases for matroid inequalities is very interesting and remains
largely unexplored.  Of course, for Mason's log-concave inequality
\eqref{eq:Mason-1}, the equality cases are trivial since the
sequence satisfies a stronger inequality \eqref{eq:Mason-2}.
On the other hand, for Mason's ultra-log-concave inequality \eqref{eq:Mason-2},
the equality cases have a simple combinatorial description:
$\girth(\Mf) > a+1$, i.e.\ the size of the minimal circuit in the
matroid has to be at least~$a+2$, see \cite{MNY21} and \cite[$\S$1.6]{CP}
using Lorentzian polynomials and combinatorial atlases, respectively.

Now, the \ts {\sc GIRTH} \. $:= \. \big\{\girth(\Mf) \le^? a+1\big\}$ \.
is in \ts $\NP$ \ts for matroids with concise presentation.  The problem
is easily in~$\poly$ for graphical matroids via taking powers of adjacency
matrix.  Recently, it was shown to be in~$\poly$ for regular matroids
in~\cite{FGLS18}.  Famously, the problem was shown
to be $\NP$-complete for binary matroids by Vardy~\cite{Var97}.
This gives:

\begin{cor}\label{cor:EMason}
$\EMason_0$ \ts is \ts $\coNP$-complete for binary matroids.
\end{cor}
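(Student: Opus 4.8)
The plan is to identify $\EMason_0$, up to complementation, with the matroid girth decision problem $\problem{GIRTH}$ discussed in $\S$\ref{ss:finrem-NP} --- deciding, on input $(\Mf,a)$, whether $\girth(\Mf)\le a+1$ --- which Vardy \cite{Var97} proved $\NP$-complete for binary matroids.

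First I would unwind the definition: for $k=0$ we have $m=n$ and $R=X$, so $\aIr_{\bSc}(\Mf,a)$ is simply $\aI(\Mf,a)$, the number of size-$a$ independent sets of $\Mf$, and \eqref{eq:gen-Mason} is exactly Mason's ultra-log-concave inequality \eqref{eq:Mason-2}. Since $\aI(\Mf,a)>0$ for every $0\le a\le r$, there are no vanishing degeneracies in the range $1\le a\le r-1$, so the equality characterization recalled in $\S$\ref{ss:finrem-NP} applies verbatim: equality holds in \eqref{eq:Mason-2} at index $a$ if and only if $\girth(\Mf)>a+1$, see \cite{MNY21} and \cite[$\S$1.6]{CP}. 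In other words, on inputs $(\Mf,a)$ with $1\le a\le\rk(\Mf)-1$ the problems $\EMason_0$ and $\problem{GIRTH}$ are complementary.

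For membership, a witness to the \emph{failure} of equality at index $a$ is a circuit $C$ of $\Mf$ with $|C|\le a+1$; given the binary representation of $\Mf$ one checks in polynomial time, by Gaussian elimination over $\ff_2$, that $C$ is dependent while every proper subset of $C$ is independent. Hence the complement of $\EMason_0$ lies in $\NP$, so $\EMason_0\in\coNP$. For $\coNP$-hardness I would give a polynomial-time many-one reduction from $\problem{GIRTH}$ to the complement of $\EMason_0$: on input $(\Mf,a)$ with $\Mf$ binary, compute $r=\rk(\Mf)$; if $1\le a\le r-1$, output $(\Mf,a)$ itself, which by the equivalence above is a yes-instance of the complement of $\EMason_0$ precisely when it is a yes-instance of $\problem{GIRTH}$. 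The boundary values of $a$ are decided directly in polynomial time and mapped to a fixed trivial instance with the correct answer: for $a=0$, $\girth(\Mf)\le 1$ holds iff $\Mf$ has a loop; for $a\ge r$, every circuit of a rank-$r$ matroid has at most $r+1\le a+1$ elements, so $\girth(\Mf)\le a+1$ holds iff $\Mf$ has any circuit at all, i.e.\ iff $\Mf$ is not a free matroid, both conditions being decidable from the representation. Since $\problem{GIRTH}$ is $\NP$-complete for binary matroids \cite{Var97}, the complement of $\EMason_0$ is $\NP$-hard; combined with its membership in $\NP$ this makes it $\NP$-complete, and therefore $\EMason_0$ is $\coNP$-complete.

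The argument is essentially routine once the girth characterization of the equality cases of \eqref{eq:Mason-2} is invoked; the only mild care needed is the bookkeeping of the boundary values of $a$ in the reduction (so that the index produced is a legal input to $\EMason_0$), together with the elementary fact that a circuit of a rank-$r$ matroid has at most $r+1$ elements, which disposes of the large-$a$ instances.
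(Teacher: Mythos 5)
Your proof is correct and takes essentially the same route as the paper: both rest on the characterization of the equality cases of Mason's ultra-log-concave inequality by \ts $\girth(\Mf)>a+1$ \ts combined with Vardy's $\NP$-completeness of the {\sc GIRTH} problem for binary matroids. The additional bookkeeping you supply (handling the boundary values of~$a$ and verifying a small-circuit witness by Gaussian elimination over $\ff_2$) just makes explicit what the paper leaves implicit.
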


We believe that the upper bound in Corollary~\ref{cor:k=0} is optimal:

\begin{conj} \label{conj:SY-coNP}
$\EBULC_0$ \. is \. $\coNP$-complete for binary and for bicircular matroids.	
\end{conj}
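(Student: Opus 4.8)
\smallskip

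The argument has two parts. The membership $\EBULC_0\in\coNP$ for both classes is already Corollary~\ref{cor:k=0}, since binary matroids (via Gaussian elimination) and bicircular matroids (since the rank of a pseudoforest is computable in polynomial time) admit succinct presentations. The whole content of Conjecture~\ref{conj:SY-coNP} is therefore the $\coNP$-hardness, equivalently the $\NP$-hardness of the complement $\overline{\EBULC_0}$, which asks whether the Stanley--Yan inequality \eqref{eq:SY} is \emph{strict}. By Proposition~\ref{p:vanish} the hypothesis $\aP(\Mf,R,a)>0$ is decidable in polynomial time, and by Theorem~\ref{t:main-positive} a triple $(\Mf,R,a)$ lies in $\overline{\EBULC_0}$ if and only if $\aP(\Mf,R,a)>0$ and there is \emph{no} constant $\as>0$ for which \eqref{eq:comb-2} holds simultaneously over all independent sets $A$ with $|A|=r-2$, $|A\cap R|=a-1$ and all non-loops $x\in\NL(\Mf/A)$. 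Since each such $\Mf/A$ has rank~$2$, this is the assertion that some rank-$2$ minor $\Mf/A$ has a parallel class contained entirely in $R$ or entirely in $X-R$, or else that two parallel classes, possibly in different such minors, have disagreeing $R$-to-$(X-R)$ ratios. The plan is to construct matroids in which the presence of such an imbalance encodes an $\NP$-complete problem.

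For \emph{binary} matroids I would reduce from $\problem{GIRTH}$, which is $\NP$-complete for binary matroids by Vardy~\cite{Var97} (the same ingredient that yields Corollary~\ref{cor:EMason}), fixing $a\gets 1$ as in Theorem~\ref{t:main-negative}. Given a binary matroid $\Nf$ on a ground set $E$ and a target $g$, the construction first builds a \emph{balanced backbone}, in the spirit of the examples in Section~\ref{s:ex}: place into $R'$ and into $X'-R'$ two disjoint copies of a spanning set of the ambient $\ff_2$-space, so that in every rank-$2$ contraction of the resulting matroid $\Mf'$ each parallel class splits evenly between the two copies and a priori $\as=1$ (this also guarantees $\aP(\Mf',R',1)>0$). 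Then place the ground set $E$ of $\Nf$ inside $R'$ and nowhere in $X'-R'$. A circuit of $\Nf$ of size $\le g$ should then collapse, upon contracting a suitable independent set $A\subseteq X'-R'$ with $|A|=r'-2$ (where $r'=\rk(\Mf')$), to a single parallel class of $\Mf'/A$ lying entirely in $R'$, which violates \eqref{eq:comb-2}; conversely, if $\girth(\Nf)>g$ then no contraction produces such a monochromatic class and \eqref{eq:comb-2} holds with $\as=1$. This would give $(\Mf',R',1)\in\overline{\EBULC_0}$ if and only if $\girth(\Nf)\le g$. The main obstacle is the design and the \emph{faithfulness} of the gadget: one has to attach $E$ so that small circuits of $\Nf$ really do appear as monochromatic parallel classes in some rank-$2$ contraction (over $\ff_2$ this forces some parity bookkeeping on circuit sizes), and one has to verify that in the large-girth case the gadget does not accidentally create an unbalanced or monochromatic class in \emph{any} of the exponentially many rank-$2$ contractions $\Mf'/A$. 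The latter is a global structural analysis of all rank-$2$ minors of $\Mf'$ and is where the real work lies.

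For \emph{bicircular} matroids the same template applies in outline, but two ingredients are not in place. One needs an $\NP$-complete decision problem intrinsic to bicircular matroids to play the role of $\problem{GIRTH}$; a natural candidate is a small-circuit problem phrased through the explicit description of bicircular circuits as theta-subgraphs and tight or loose handcuffs, but its $\NP$-hardness --- presumably via a reduction from $3$-\problem{SAT}, encoding clauses as constraints on the pseudoforest structure of $B(G)$ --- appears to be open. One also needs to realize the balanced backbone and the gadget inside $B(G)$ for a suitable graph $G$, which is constrained by the special shape of bicircular matroids. I expect the bicircular half to be the genuinely difficult one, which is presumably why the statement is posed as a conjecture rather than a theorem; the binary half should be tractable with the equality description of Theorem~\ref{t:main-positive} together with more careful gadget engineering, in the same spirit as Corollary~\ref{cor:EMason} for the generalized Mason inequality.
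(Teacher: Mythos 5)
You are attempting to prove what the paper itself poses only as a conjecture: there is no proof of Conjecture~\ref{conj:SY-coNP} in the paper to compare against. The paper records only a conditional route for the binary half, namely Proposition~\ref{p:2SC}, which reduces {\sc 2-SPANNING-CIRCUIT} to $\neg\EBULC_0$ for binary matroids (via a modification of the examples in Section~\ref{s:ex}), so that the binary half would follow from Conjecture~\ref{conj:2SC}, the still-open $\NP$-completeness of {\sc 2-SPANNING-CIRCUIT} for binary matroids. Your membership argument agrees with Corollary~\ref{cor:k=0} and is fine for both classes; the entire content of the statement is the hardness direction, and there your proposal has a genuine gap, part of which you acknowledge and part of which you do not.

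Concretely, for binary matroids you propose to reduce from {\sc GIRTH} (Vardy), but girth is the wrong invariant for \eqref{eq:SY}. By Theorem~\ref{t:main-positive}, strictness of \eqref{eq:SY} for $k=0$ is governed by the parallel classes of the rank-$2$ contractions $\Mf/A$, and two non-loops $x,y$ are parallel in $\Mf/A$ exactly when some circuit of $\Mf$ passes through both $x$ and $y$ and is otherwise contained in~$A$; so the relevant structures are circuits through two prescribed elements (one in $R$, one in $X-R$), not short circuits. Girth characterizes equality in Mason's ultra-log-concave inequality, which is what gives Corollary~\ref{cor:EMason}, but it does not characterize equality in \eqref{eq:SY} --- this is precisely why the paper routes the hardness through {\sc 2-SPANNING-CIRCUIT} rather than {\sc GIRTH}, and why the missing ingredient is Conjecture~\ref{conj:2SC}, an analogue of Vardy's theorem that is not known. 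Consequently your assertion that ``the binary half should be tractable'' from Vardy's result plus gadget engineering is unsubstantiated: the gadget you describe --- attaching the instance $\Nf$ inside $R'$ so that circuits of size at most $g$ become monochromatic parallel classes in some rank-$2$ contraction, while in the large-girth case none of the exponentially many contractions $\Mf'/A$ produces an unbalanced class --- is exactly the reduction that is missing, and as written there is no argument that a short circuit of $\Nf$ collapses to a single parallel class of some $\Mf'/A$, nor any control of the negative case. The bicircular half you correctly identify as open. In short, the proposal isolates the easy membership direction but does not close the hardness direction, and in the binary case it starts from a source problem that does not match the equality description of Theorem~\ref{t:main-positive}; the statement remains a conjecture.
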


Note that for graphical  matroids, the number of
bases \ts $\iB(\Mf)$ \ts is in~$\FP$ by the \emph{matrix-tree theorem}.
For regular matroids, the same linear algebraic argument applies.
More generally, the number \ts $\iB_{\bSc}(\Mf,R,a)$ \ts can also be computed in
polynomial time via the weighted (multivariate) version of the matrix-tree
theorem (see e.g.\ \cite{GJ83}).  This gives the following observation to
contrast with Conjecture~\ref{conj:SY-coNP}.

\begin{prop} \label{p:SY-P}
$\EBULC_k$ \. is in \. $\poly$ \. for regular matroids and all fixed \ts $k\ge 0$.	
\end{prop}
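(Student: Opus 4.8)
The plan is to exploit that, for a regular matroid, every slice count $\iB_{\bSc}(\Mf,R,b)$ can be computed exactly in polynomial time by a weighted matrix--tree theorem, after which the equality test in $\EBULC_k$ is just integer arithmetic.

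First I would reduce the decision to exact integer computation. Since $\aP_{\bSc}(\Mf,R,b)=\iB_{\bSc}(\Mf,R,b)\binom{r}{b,c_1,\ldots,c_k,\ups_b}^{-1}$ with $\ups_b:=r-b-c_1-\cdots-c_k$, clearing denominators turns the defining equality of $\EBULC_k$ into the integer identity
\[
\iB_{\bSc}(\Mf,R,a)^2\,\tbinom{r}{a+1,c_1,\ldots,c_k,\ups_{a+1}}\tbinom{r}{a-1,c_1,\ldots,c_k,\ups_{a-1}}
\ = \ \iB_{\bSc}(\Mf,R,a+1)\,\iB_{\bSc}(\Mf,R,a-1)\,\tbinom{r}{a,c_1,\ldots,c_k,\ups_a}^2.
\]
All quantities appearing are nonnegative integers of polynomial bit-length (the $\iB$'s are at most $\binom{n}{r}<2^n$, the multinomials are at most $n!$ and are written down directly), so it suffices to compute $\iB_{\bSc}(\Mf,R,b)$ for $b\in\{a-1,a,a+1\}$ in polynomial time and then compare the two sides.

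Next I would carry out that computation. Fix a unimodular representation $\phi\colon X\to\zz^{r}$ of the regular matroid $\Mf$ (part of the input, or computable in polynomial time from a binary representation), and let $M$ be the corresponding $r\times n$ integer matrix with columns $M_x:=\phi(x)$, so that $\det(M_A)^2=1$ if $A\in\cB(\Mf)$ and $\det(M_A)=0$ for every other $r$-subset $A\subseteq X$. Put $T:=X\sm(R\cup S_1\cup\cdots\cup S_k)$, so $X$ is partitioned into the parts $R,S_1,\ldots,S_k,T$; introduce one variable per part, write $y_{g(x)}$ for the variable of the part containing $x$, set $D(\yb):=\mathrm{diag}\big(y_{g(x)}\big)_{x\in X}$, and consider
\[
F(\yb)\ :=\ \det\!\big(M\,D(\yb)\,M^{\mathsf T}\big)\ \in\ \zz[\yb].
\]
By the Cauchy--Binet formula,
\[
F(\yb)\ =\ \sum_{A\in\binom{X}{r}}\det(M_A)^2\prod_{x\in A}y_{g(x)}\ =\ \sum_{A\in\cB(\Mf)}\ \prod_{x\in A}y_{g(x)},
\]
a homogeneous polynomial of degree $r$ in $k+2$ variables whose coefficient of $y_R^{\,b}y_{S_1}^{\,c_1}\cdots y_{S_k}^{\,c_k}y_T^{\,\ups_b}$ is exactly $\iB_{\bSc}(\Mf,R,b)$ (this is the weighted matrix--tree theorem, cf.\ \cite{GJ83}). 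Since $k$ is fixed, $F$ has only $\binom{r+k+1}{k+1}=O(r^{k+1})$ monomials, so I would recover all of them by multivariate interpolation: evaluate $F$ over the grid $\{0,1,\ldots,r\}^{k+2}$ of $(r+1)^{k+2}$ points, each evaluation being the determinant of an explicit integer positive semidefinite $r\times r$ matrix, computable in polynomial time and of polynomial bit-length; then interpolate. This delivers $\iB_{\bSc}(\Mf,R,b)$ for $b\in\{a-1,a,a+1\}$, and plugging these into the displayed identity decides $\EBULC_k$.

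There is no essential obstacle: the argument is the classical matrix--tree theorem (via Cauchy--Binet) combined with routine interpolation. The only point that needs attention is size bookkeeping --- verifying that $F$, the evaluation grid, the integer determinants, and the multinomials are all of size polynomial in $n$ when $k$ is a constant --- and this works precisely because $\binom{r+k+1}{k+1}$ is polynomial in $r$ for fixed $k$ and integer $r\times r$ determinants are polynomial-time. (Interpolation can be replaced by a symbolic expansion of $\det(M\,D(\yb)\,M^{\mathsf T})$ via fraction-free Gaussian elimination, every intermediate minor being a polynomial of degree at most $r$ and hence of polynomially many terms.)
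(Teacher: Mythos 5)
Your proof is correct and takes essentially the same route as the paper: the paper's argument is exactly that for regular matroids the numbers $\iB_{\bSc}(\Mf,R,b)$ are computable in polynomial time via the weighted (multivariate) matrix--tree theorem, i.e.\ the Cauchy--Binet expansion of $\det\big(M\ts D(\yb)\ts M^{\mathsf T}\big)$ for a unimodular representation, after which the equality test is elementary integer arithmetic. Your extra details (clearing the multinomial denominators, grid interpolation using that $k$ is fixed, and bit-size bookkeeping) simply make that brief argument explicit.
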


We conclude with a possible approach to the proof of Conjecture~\ref{conj:SY-coNP}.
The \ts {\sc 2-SPANNING-CIRCUIT} \ts is a problem whether a matroid has a
circuit containing two given elements.  For graphical matroids, this problem is
in~$\poly$ by \emph{Menger's theorem}.  For regular matroids, this problem is in~$\poly$
by a result in \cite{FGLS16} based on
\emph{Seymour's decomposition theorem}.  One can modify examples in
Section~\ref{s:ex} to show the following:

\begin{prop} \label{p:2SC}
{\sc 2-SPANNING-CIRCUIT} \. reduces to \, $\neg\EBULC_0$ \. for binary matroids.	
\end{prop}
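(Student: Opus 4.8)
The plan is to give a polynomial-time reduction which, from an instance of \textsc{2-SPANNING-CIRCUIT} consisting of a binary matroid $\Nf$ (presented over $\ff_2$) with ground set $E$, rank $\rho=\rk(\Nf)$, and two distinguished elements $x,y\in E$, produces a binary matroid $\Mf$, a subset $R\subseteq X$ of its ground set, and an index $a$, so that the inequality \eqref{eq:comb-1} for the triple $(\Mf,R,a)$ is \emph{strict} if and only if $\Nf$ has a circuit through both $x$ and $y$. The construction will be arranged so that $\aP(\Mf,R,a)>0$ (by Proposition~\ref{p:vanish} this is just a rank inequality, which is easy to guarantee), so that Theorem~\ref{t:main-positive} applies. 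Thus the first step is to restate Theorem~\ref{t:main-positive} in the form I will use: \eqref{eq:comb-1} is strict precisely when there is an independent $A\subseteq X$ with $|A|=r-2$ and $|A\cap R|=a-1$ for which $\Mf/A$ has a parallel class $P$ with $|P\cap R|\neq \as\,|P\cap(X-R)|$ for the (unique, if any) constant $\as>0$; in particular strictness occurs as soon as some such contraction has a parallel class contained entirely in $R$ or entirely in $X-R$, or two such classes whose ``$R$-to-$(X-R)$ ratios'' disagree. So the entire task becomes: design the gadget so that exactly one of these things happens iff the circuit exists.

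Next I would record the matroid-theoretic reformulation of the source problem. A short greedy argument --- extend a near-basis one element at a time while keeping $x$ and $y$ out of the span --- shows that $\Nf$ has a circuit through $x$ and $y$ if and only if there is an independent set $I\subseteq E\setminus\{x,y\}$ with $|I|=\rho-1$ such that both $x$ and $y$ are non-loops of $\Nf/I$, equivalently iff $\Nf$ has a rank-$(\rho-1)$ flat avoiding both $x$ and $y$. If $x$ or $y$ is a loop or a coloop of $\Nf$, or if $x$ and $y$ are parallel, the answer is immediate, so I would dispose of these degenerate cases first.

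The gadget itself would be a relative version of the matroids in Section~\ref{s:ex}. Starting from the $\ff_2$-representation of $\Nf$, one takes a copy $E$ of its ground set together with a ``mirror'' copy $E'$ (each $e'\in E'$ carrying the same vector as $e$), places $E$ inside $R$ and $E'$ inside $X-R$, adjoins a single extra element tied to $x$ and $y$, and adjoins one parallel pair whose only role is to raise the rank by one so that the relevant contractions $\Mf/A$ become indexed by the rank-$(\rho-1)$ independent subsets of $\Nf$. As in the proof that the double matroid has total equality (Section~\ref{s:ex}), the mirror copy forces every parallel class of every relevant $\Mf/A$ to meet $R$ and $X-R$ in equal numbers --- with the single exception that, exactly when the contracting set witnesses a rank-$(\rho-1)$ flat of $\Nf$ avoiding both $x$ and $y$, the extra element survives as one additional non-loop inside an otherwise balanced class, producing the imbalance demanded by the first paragraph. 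This is the analogue of the combination matroid of Section~\ref{s:ex}, with its fixed hyperplane replaced by the flat structure of $\Nf$. Conversely, when no such witness exists, every relevant contraction is perfectly balanced and \eqref{eq:comb-1} is an equality; hence strictness holds iff the circuit exists. The reduction is polynomial since $\Mf$ is obtained from the representation of $\Nf$ by doubling its columns and appending $O(1)$ more.

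The hard part will be the bookkeeping in the previous paragraph: one must check, case by case over all admissible contracting sets $A$ (sorted by how much of the mirror copy and the rank-adjusting pair they absorb), that no \emph{spurious} imbalance ever appears --- that outside the circuit-witness case every parallel class of $\Mf/A$ meets both $R$ and $X-R$ in \emph{equal} numbers, with no coloops, no singleton class lying wholly in $R$ or wholly in $X-R$, and no clashing ratios --- and, conversely, that in the witness case the surviving extra element genuinely cannot be rebalanced by the mirror. It is precisely here that the rigid design choices (which half of the parallel pair goes into $R$, the exact value of $a$, and doubling $E$ but not the extra element) get pinned down, and this is the technical core. Once it is in place, Proposition~\ref{p:2SC} follows by combining it with the reformulation of the second paragraph and Theorem~\ref{t:main-positive}.
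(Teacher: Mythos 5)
Your plan follows exactly the route the paper gestures at (the paper offers no detailed proof of Proposition~\ref{p:2SC}, only the remark that the examples of Section~\ref{s:ex} can be modified): secure nonvanishing via Proposition~\ref{p:vanish}, invoke the equality criterion of Theorem~\ref{t:main-positive}, restate \textsc{2-Spanning-Circuit} as the existence of a rank-$(\rho-1)$ independent set (equivalently a hyperplane) of $\Nf$ avoiding both terminals, and then build a mirrored ``double matroid'' gadget in which only a circuit witness produces an unbalanced parallel class. That reformulation of the source problem, and your handling of the degenerate cases, are correct.

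The genuine gap is that the reduction is never actually constructed, and the part you defer is not bookkeeping but the entire content of the proposition. The gadget is specified only by desiderata (``a single extra element tied to $x$ and $y$'', ``one parallel pair whose only role is to raise the rank by one''): the representing vectors over $\ff_2$, their placement in $R$ versus $X\sm R$, and the choice of $a$ are all left open, and both directions of the equivalence are postponed to ``the technical core''. That these choices are delicate can be seen from the most naive instantiation of your sketch: double every element of $\Nf$ except $x$ (kept only in $R$) and $y$ (kept only in $X\sm R$). Take $\Nf$ of rank $2$ with ground set $\{x,a,y,b\}$, where $a$ is parallel to $x$ and $b$ is parallel to $y$; then $x,y$ are neither loops, coloops, nor parallel, and there is no circuit through $x$ and $y$, yet for $A=\emp$ and $a=1$ the class of $x$ in the gadget meets $R$ and $X\sm R$ in ratio $2$ while the class of $y$ does so in ratio $1/2$, so Theorem~\ref{t:main-positive} gives strict inequality and the reduction answers YES on a NO-instance. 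The underlying problem is that an independent $A$ of size $r-2$ with $x,y\notin\mathrm{cl}(A)$ and $x,y$ non-parallel in $\Mf/A$ need not extend, over $\ff_2$, to a hyperplane avoiding both terminals, so the relevant contractions are not automatically ``indexed by the rank-$(\rho-1)$ independent subsets of $\Nf$''; arranging that (and controlling the contractions that absorb the mirror copy, the extra element, or the rank-raising pair) is precisely what a proof must supply. Until a concrete matroid, subset $R$, and index $a$ are exhibited and the two-way verification is carried out, the proposition has not been proved.
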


By the proposition, the first part of Conjecture~\ref{conj:SY-coNP} follows from
the following natural conjecture that would be analogous to Vardy's result for
the \ts {\sc GIRTH}:

\begin{conj} \label{conj:2SC}
{\sc 2-SPANNING-CIRCUIT} \.  is \ts $\NP$-complete for binary matroids.	
\end{conj}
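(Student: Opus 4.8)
The plan is to mirror Vardy's proof that \ts {\sc GIRTH} \ts is \ts $\NP$-complete for binary matroids, which the statement flags as the guiding analogy. Membership in \ts $\NP$ \ts is the routine half. Given a binary matroid \ts $\Mf=\Mf[A]$ \ts presented by a binary matrix \ts $A$ \ts whose columns index the ground set \ts $X$, together with the two distinguished elements \ts $e,f\in X$, a nondeterministic machine guesses a subset \ts $C\ssu X$ \ts with \ts $e,f\in C$ \ts and then checks in polynomial time, by Gaussian elimination over \ts $\ff_2$, that \ts $C$ \ts is a circuit, i.e.\ \ts $\rk(C)=|C|-1$ \ts and \ts $\rk(C-x)=|C|-1$ \ts for every \ts $x\in C$. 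Hence \ts {\sc 2-SPANNING-CIRCUIT} \ts $\in\NP$, and the whole content of the conjecture is $\NP$-hardness.

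For the hardness I would reduce from a syndrome-decoding problem of the kind underlying Vardy's argument, e.g.\ the Berlekamp--McEliece--van Tilborg maximum-likelihood decoding problem (or a ``codeword of prescribed weight'' variant). Starting from a binary parity-check matrix \ts $H$, a syndrome vector \ts $s$, and a bound \ts $w$, the task is to decide whether \ts $Hx=s$ \ts has a solution \ts $x$ \ts of weight at most \ts $w$. The aim is to build from \ts $(H,s,w)$ \ts a binary matroid \ts $\Mf'$ \ts and a pair \ts $e,f\in X(\Mf')$ \ts so that the circuits of \ts $\Mf'$ \ts passing through both \ts $e$ \ts and \ts $f$ \ts are in bijection with the low-weight solutions of the system: one appends a column realizing the syndrome \ts $s$ \ts (so that minimal dependencies involving that column encode solutions of the inhomogeneous system $Hx=s$), attaches a gadget built from parallel and series extensions together with a free coextension that (i) forces every such circuit to meet both designated elements \ts $e$ \ts and \ts $f$, and (ii) destroys all unintended circuits through \ts $e,f$ \ts that do not come from a genuine solution. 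Each ingredient is a short binary vector configuration (direct sums, parallel/series extensions, free coextensions), so \ts $\Mf'$ \ts stays binary; it can be arranged to be loopless and bridgeless, and its representation has size polynomial in \ts $|H|$ \ts and \ts $w$.

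The main obstacle will be the gadget construction itself, precisely because binary matroids are rigid. Unlike in graphs or in general matroids, one cannot freely prescribe which elements may co-occur in a prescribed small circuit: binary matroids exclude \ts $U_{2,4}$, which tightly constrains the patterns of circuits realizable over \ts $\ff_2$, so the satisfiability constraint must be routed through genuinely binary-representable dependencies — exactly the delicate point already present in Vardy's construction for \ts {\sc GIRTH}. A second difficulty is ruling out spurious solutions: one must show that the \emph{only} circuits of \ts $\Mf'$ \ts meeting both \ts $e$ \ts and \ts $f$ \ts are the intended ones, which typically requires a structural analysis of \ts $\Mf'$ \ts (for instance via its cocircuit space, or via a minor/decomposition argument) rather than a direct combinatorial count. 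Finally, if the reduction can be made to respect the extra structure exploited in Proposition~\ref{p:2SC}, it would simultaneously deliver the \ts $\coNP$-hardness half of Conjecture~\ref{conj:SY-coNP} for binary matroids.
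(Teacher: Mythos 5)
First, a framing point you could not have known but that matters for the evaluation: this statement is posed in the paper as an open \emph{conjecture} --- the authors give no proof of it, and indeed the whole point of \S10.4 is that such a proof is currently missing. Read as a proof, your proposal has a gap that coincides exactly with the open content of the conjecture. The membership half ($\textsc{2-Spanning-Circuit}\in\NP$) is fine and routine. But the hardness half is never actually carried out: the reduction hinges on a gadget that you describe only by the properties you need it to have (``forces every such circuit to meet both $e$ and $f$'', ``destroys all unintended circuits''), and you then correctly identify the construction of such a gadget inside the rigid world of $\ff_2$-representable dependencies as ``the main obstacle.'' Naming the obstacle is not the same as overcoming it; as written, nothing beyond $\NP$-membership is established, and the spurious-circuit analysis --- which you also flag --- is precisely where such reductions live or die.

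There is also a concrete reason to worry that the plan, as stated, aims at the wrong target. If the problem is read literally as the paper's prose describes it --- ``does $\Mf$ have a circuit containing two given elements,'' with no size constraint --- then it is solvable in polynomial time for binary matroids: two elements lie on a common circuit if and only if they belong to the same connected component of the matroid (see \cite[\S 4.1]{Ox}), and the component decomposition of a binary matroid is computable from its representation, e.g.\ via the fundamental circuits with respect to a fixed basis. Under that reading the conjecture could only hold if $\poly=\NP$. A reduction from syndrome decoding of the kind you sketch naturally produces a \emph{weight-bounded} question (``is there a circuit through $e,f$ of size at most $w$''), which is the refined version one could plausibly prove $\NP$-hard in the spirit of Vardy and of \cite{FGLS16}; you would need to either commit to that version of the problem or explain how the weight bound is eliminated, and in either case check that the version you prove hard is the one consumed by Proposition~\ref{p:2SC}. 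Until the gadget is actually built, the unintended circuits are ruled out, and the problem statement is pinned down, this remains a reasonable research plan for an open problem rather than a proof.
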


\smallskip

\subsection{Spanning trees}\label{ss:intro-st}
Note that for simple planar graphs, Stong's Theorem~\ref{t:Stong} is nearly
optimal since the number of spanning trees is at most exponential
for planar graphs with $n$ vertices, or even all graph with bounded average
degree, see \cite{Gri76}.  This gives \ts $\al(N) = \Omega(\log N)$.
In fact, since the number of unlabeled planar graphs with $n$ vertices
is exponential in~$n$, see e.g.\ \cite[$\S$6.9.2]{Noy15},
proving the corresponding upper bound \ts $\al(N) = O(\log N)$ \ts
is likely to be very difficult.  On the other hand, it follows from
the proof of Theorem~\ref{t:st}, that this upper bound is implied by
the celebrated \emph{Zaremba's conjecture}, see a discussion and further references
in~\cite{CP-CF}.\footnote{We intend to explore this connection
in a forthcoming work.}

\smallskip

\subsection{Understanding the results}\label{ss:intro-under}
There are several ways to think of our results.  First and most
straightforward, we completely resolve a 1981 open problem by Stanley by
both showing that the equality cases of \eqref{eq:SY} cannot have a satisfactory
description (from a combinatorial point of view) for $k>0$, and by
deriving such a description for $k=0$.

Second, one can think of the results as a showcase for the tools.
This includes both the computational complexity and number theoretic
approach towards the proof of Theorem~\ref{t:main-negative},
and the (rather technical) combinatorial
atlas approach towards the proof of Theorem~\ref{t:main-positive}.

Third, one can think of Theorem~\ref{t:main-negative} as an evidence of
the strength of Lorentzian polynomials.  In combinatorics, some of the
most natural combinatorial inequalities are proved by a direct injection.
See e.g.\ \cite{CPP-effective,CPP-KS,DD85,DDP84}
for injective proofs of variations and special cases of \eqref{eq:Sta},
and to \cite{Mani10} for a rare injective proof of a matroid inequality.
Now, if an injection and its inverse (when defined)
are poly-time computable, this implies that the equality cases are
in~$\ts\coNP$.  Thus, having \ts $\EBULC_1 \notin \PH$ \ts shows
that Lorentzian polynomials are \emph{powerful}, in a sense that they
can prove results beyond elementary combinatorial means.

Finally, this paper gives a rare example of \emph{limits of what
is knowable} about {matroid inequalities}, as opposed to
realizability of matroids where various hardness and undecidability
results are known, see e.g.\ \cite{KY22,Sch13}.  This is especially
in sharp contrast with the equality cases for Mason's inequalities,
which are known to have easy descriptions.

\vskip.4cm

\subsection*{Acknowledgements}
We are grateful to Petter~Br\"and\'en, Milan Haiman, Jeff Kahn,
Noah Kravitz, Jonathan Leake, Daniel Lokshtanov, Steven Noble,
Greta Panova, Yair Shenfeld, Ilya Shkredov, Richard Stanley,
Richard Stong, Ramon van~Handel and Alan Yan for interesting
discussions and helpful remarks.

This work was initiated in July 2023, when both authors were
visiting the American Institute of Mathematics (AIM) at their new
location in Pasadena, CA.  We continued our collaboration during
a workshop at the Institute of Pure and Applied Mathematics (IPAM),
in April 2024.  We are grateful to both AIM and IPAM for the
hospitality, and to workshop organizers for the opportunity to
participate.  Both authors were partially supported by the~NSF.

\vskip1.1cm


\newpage

{\footnotesize

}


\begin{thebibliography}{abcde111}

	
\bibitem[Aar16]{Aar16}
Scott~Aaronson, $\poly\overset{?}=\NP$, in \emph{Open problems in mathematics},
Springer, Cham, 2016, 1--122; available at \ts
\href{https://www.scottaaronson.com/papers/pnp.pdf}{scottaaronson.com/papers/pnp.pdf}

\bibitem[AHK18]{AHK}
Karim~Adiprasito, June~Huh and Eric~Katz,
Hodge theory for combinatorial geometries,
\emph{Annals of Math.}~\textbf{188} (2018), 381--452.


%



\bibitem[AOV18]{ALOV-FOCS18}
Nima~Anari, Shayan Oveis~Gharan and  Cynthia~Vinzant,
Log-concave polynomials, entropy, and a deterministic approximation algorithm for
counting bases of matroids, in \emph{Proc.\ 59th FOCS},
IEEE, Los Alamitos, CA, 2018, 35--46.

\bibitem[ALOV19]{ALOV-RW}
Nima~Anari,  Kuikui~Liu,  Shayan Oveis~Gharan and  Cynthia~Vinzant,
Log-concave polynomials~II:
High-dimensional walks and an FPRAS for counting bases of a
matroid, \emph{Annals of Math.}~\textbf{199} (2024), 259--299.

\bibitem[ALOV24]{ALOV-III}
Nima~Anari,  Kuikui~Liu,  Shayan Oveis~Gharan and  Cynthia~Vinzant,
Log-concave polynomials~III: Mason's
Ultra-log-concavity conjecture for independent sets of matroids,
\emph{Proc.\ AMS}~\textbf{152} (2024), 1969--1981.


\bibitem[AB09]{AB}
Sanjeev~Arora and Boaz~Barak, \emph{Computational complexity.\ A modern approach},
Cambridge Univ.\ Press, Cambridge, UK, 2009, 579~pp.

\bibitem[A\v{S}13]{AS13}
Jernej~Azarija and Riste~\v{S}krekovski,
Euler's idoneal numbers and an inequality concerning minimal graphs with a prescribed
number of spanning trees, \emph{Math.\ Bohem.}~\textbf{138} (2013), 121--131.

\bibitem[BBL09]{BBL09}
Julius~Borcea, Petter~Br\"and\'en and Thomas~M.~Liggett,
Negative dependence and the geometry of polynomials,
\emph{Jour.~AMS}~\textbf{22} (2009), 521--567.

\bibitem[Br\"a15]{Bra15}
Petter~Br\"and\'en, Unimodality, log-concavity, real-rootedness and beyond,
in \emph{Handbook of enumerative combinatorics}, CRC Press, Boca Raton,
FL, 2015, 437--483.

\bibitem[BH20]{BH}
Petter~Br\"and\'en and  June~Huh, Lorentzian polynomials,
\emph{Annals of Math.}~\textbf{192} (2020), 821--891.

\bibitem[BL23]{BL23}
Petter~Br\"and\'en and  Jonathan~Leake,
Lorentzian polynomials on cones,
preprint (2021), 34~pp.; \ts {\tt arXiv:} {\tt 2304.13203}.

\bibitem[Bre89]{Bre89}
Francesco~Brenti, Unimodal, log-concave and P\'{o}lya frequency sequences
in combinatorics, \emph{Mem.\ AMS} \textbf{81} (1989), no.~413, 106~pp.

\bibitem[BW91]{BW}
Graham~Brightwell and Peter~Winkler,
Counting linear extensions,
\emph{Order}~\textbf{8} (1991), 225--247.

\bibitem[BZ88]{BZ-book}
Yuri~D.~Burago and  Victor~A.~Zalgaller,
\emph{Geometric inequalities},
Springer, Berlin, 1988, 331~pp.

\bibitem[CP22a]{CP-intro}
Swee~Hong~Chan and Igor~Pak, Introduction to the combinatorial atlas,
 \emph{Expo.\ Math.}~\textbf{40} (2022), 1014--1048.

\bibitem[CP22b]{CP-corr}
Swee~Hong~Chan and Igor~Pak, Correlation inequalities for linear extensions,
preprint (2022), 23~pp.; \ts {\tt arXiv:2211.16637}.

\bibitem[CP23a]{CP-coinc}
Swee~Hong~Chan and Igor~Pak,
Computational complexity of counting coincidences,  to appear in
\emph{Theor.\ Comp.\ Sci.}; preprint (2023), 24~pp.; \ts {\tt arXiv:2308.10214}.

\bibitem[CP23b]{CP-AF}
Swee~Hong~Chan and Igor~Pak, Equality cases of the Alexandrov--Fenchel inequality
are not in the polynomial hierarchy, to appear in \emph{Forum Math.~Pi}; 
preprint (2023), 35~pp.; \ts
{\tt arXiv:2309.05764}; extended abstract in
\emph{Proc.\ 56th STOC}, ACM, New York, 2024, 875--883.

\bibitem[CP23c]{CP23-survey}
Swee~Hong~Chan and Igor~Pak,
Linear extensions of finite posets, preprint (2023), 55~pp.; \ts {\tt arXiv:2311.} {\tt 02743}.

\bibitem[CP24a]{CP}
Swee~Hong~Chan and Igor~Pak, Log-concave poset inequalities,
\emph{Jour.\ Assoc.\ Math.\ Res.}~\textbf{2} (2024), 53--153.

\bibitem[CP24b]{CP-CF}
Swee~Hong~Chan and Igor~Pak,
Linear extensions and continued fractions,
\emph{European J.\ Combin.}~\textbf{122} (2024), 104018, 14~pp.

\bibitem[CPP23a]{CPP-effective}
Swee~Hong~Chan, Igor~Pak and Greta~Panova, Effective poset inequalities,
\emph{SIAM J.\ Discrete Math.}~\textbf{37} (2023), 1842--1880.

\bibitem[CPP23b]{CPP-KS}
Swee~Hong~Chan, Igor~Pak and Greta~Panova,
Extensions of the Kahn--Saks inequality for posets of width two,
\emph{Combinatorial Theory}~\textbf{3} (2023), no.~1, Paper No.~8, 34~pp.

\bibitem[CW96]{CW96}
Laura~Ch\'{a}vez Lomel\'{\i} and Dominic~Welsh,
Randomised approximation of the number of bases,
in \emph{Matroid theory},  AMS, Providence, RI, 1996, 371--376.

\bibitem[COSW04]{COSW}
Young-bin~Choe, James~G.~Oxley, Alan~D.~Sokal and David~G.~Wagner,
Homogeneous multivariate polynomials with the half-plane property,
\emph{Adv.\ Appl.\ Math.}~\textbf{32} (2004),  88--187.


\bibitem[CW06]{CW06}
Young-bin~Choe and David~G.~Wagner, Rayleigh matroids,
\emph{Combin.\ Probab.\ Comput.}~\textbf{15} (2006), 765--781.

\bibitem[DD85]{DD85}
David~E.~Daykin and Jacqueline~W.~Daykin,
Order preserving maps and linear extensions of a finite poset,
\emph{SIAM J.\ Algebraic Discrete Methods}~\textbf{6} (1985), 738--748.

\bibitem[DDP84]{DDP84}
David~E.~Daykin,  Jacqueline~W.~Daykin,  and Michael~S.~Paterson,
On log concavity for order-preserving maps of partial orders,
\emph{Discrete Math.}~\textbf{50} (1984), 221--226.

\bibitem[DGH98]{DGH}
Martin~Dyer, Peter~Gritzmann and Alexander~Hufnagel,
On the complexity of computing mixed volumes,
\emph{SIAM J.\ Comput.}~\textbf{27} (1998), 356--400.


\bibitem[FM92]{FM92}
Tom\'as~Feder and Milena~Mihail, Balanced matroids,
in \emph{Proc.\ 24th STOC} (1992), ACM, New York, 26--38.


\bibitem[FGLS16]{FGLS16}
Fedor~V.~Fomin, Petr~A.~Golovach, Daniel~Lokshtanov and Saket~Saurabh,
Spanning circuits in regular matroids,
\emph{ACM Trans.\ Algorithms}~\textbf{15} (2016), no.~4, Art.~52, 38~pp.

\bibitem[FGLS18]{FGLS18}
Fedor~V.~Fomin, Petr~A.~Golovach, Daniel~Lokshtanov and Saket~Saurabh,
Covering vectors by spaces: regular matroids, \emph{SIAM J.\
Discrete Math.}~\textbf{32} (2018), 2512--2565.

\bibitem[GN06]{GN06}
Omer~Gim\'enez and Marc~Noy,
On the complexity of computing the Tutte polynomial of bicircular matroids,
\emph{Combin.\ Probab.\ Comput.}~\textbf{15} (2006), 385--395.

\bibitem[God84]{God84}
Christopher~D.~Godsil,
Real graph polynomials, in \emph{Progress in graph theory},
Academic Press, Toronto, ON, 1984, 281--293.

\bibitem[Gol08]{Gold}
Oded~Goldreich,
\emph{Computational complexity.\ A conceptual perspective},
Cambridge Univ.\ Press, Cambridge, UK, 2008, 606~pp.

\bibitem[GJ83]{GJ83}
Ian~P.~Goulden and David~M.~Jackson,
\emph{Combinatorial enumeration},
John Wiley, New York, 1983, 569~pp.

\bibitem[GJ21]{GJ21}
Heng~Guo and Mark~Jerrum,
Approximately counting bases of bicircular matroids,
\emph{Combin.\ Probab.\ Comput.}~\textbf{30} (2021), 124--135.

\bibitem[Haj61]{Haj61}
Gy\"{o}rgy~Haj\'{o}s,
\"Uber eine konstruktion nicht $n$-f\"arbbarer graphen (in German),
\emph{Wiss.\ Zeitschrift der Martin-Luther-Univ.}~\textbf{10} (1961), 116--117.


\bibitem[HH02]{HH02}
J\"urgen~Herzog and Takayuki~Hibi, Discrete polymatroids,
\emph{J.\ Algebraic Combin.}~\textbf{16} (2002), 239--268.


\bibitem[Huh18]{Huh}
June~Huh,
Combinatorial applications of the Hodge--Riemann relations, in
\emph{Proc.\ ICM Rio de Janeiro}, vol.~IV, World Sci.,
Hackensack, NJ, 2018, 3093--3111.

\bibitem[HSW22]{HSW22}
June~Huh, Benjamin~Schr\"oter and Botong~Wang,
Correlation bounds for fields and matroids,
\emph{Jour.\  Eur.\ Math.\ Soc.}~\textbf{24} (2022), 1335--1351.

\bibitem[IP22]{IP22}
Christian~Ikenmeyer and Igor~Pak,
What is in~$\SP$ and what is not?,  preprint (2022),
82~pp.; \ts {\tt arXiv:} {\tt 2204.13149}; extended abstract
in \emph{Proc.\ 63rd FOCS} (2022), 860--871.

\bibitem[IPP24]{IPP24}
Christian~Ikenmeyer, Igor~Pak and Greta Panova,
Positivity of the symmetric group characters is as hard as the polynomial time hierarchy,
\emph{Int.\ Math.\ Res.\ Not.} (2024), no.~10, 8442--8458.

\bibitem[Jer94]{Jer94}
Mark~Jerrum,
Counting trees in a graph is $\SP$-complete,
\emph{Inform.\ Process.\ Lett.}~\textbf{51} (1994), no.~3, 111--116.

\bibitem[Jer06]{Jer}
Mark~Jerrum, Two remarks concerning balanced matroids,
\emph{Combinatorica}~\textbf{26} (2006), 733--742.

\bibitem[Gre81]{Gre}
Ji\v{r}\'{\i}~Gregor, On quadratic Hurwitz forms,
\emph{Apl.\ Mat.}~\textbf{26} (1981),  142--153.

\bibitem[Gri76]{Gri76}
Geoffrey~R.~Grimmett, An upper bound for the number of spanning trees of a graph,
\emph{Discrete Math.}~\textbf{16} (1976), 323--324.

\bibitem[HW08]{HardyWright}
Godfrey~H.~Hardy and Edward~M.~Wright,
\emph{An introduction to the theory of numbers} (sixth~ed., revised),
Oxford Univ.\ Press, Oxford, 2008, 621~pp.

\bibitem[KS84]{KS84}
Jeff~Kahn and Michael~Saks,
Balancing poset extensions,
\emph{Order}~\textbf{1} (1984), 113--126.

\bibitem[Kal23]{Kalai}
Gil~Kalai, The work of June Huh,
in \emph{Proc.\ ICM 2022}, Vol.~1, Prize lectures,
EMS Press, Berlin, 2023, 50--65.

\bibitem[KN23]{KN23}
Christopher~Knapp and Steven~Noble,
The complexity of the greedoid Tutte polynomial,
preprint (2022), 44~pp.; {\tt arXiv:2309.04537}.

\bibitem[KM22]{KM22}
Tomer~Kotek and Johann~A.~Makowsky,
The exact complexity of the Tutte polynomial, in
\emph{Handbook of the Tutte Polynomial and Related Topics},
2022, 175--193.

\bibitem[Knu98]{Knuth98}
Donald~E.~Knuth,
\emph{The art of computer programming.\ Vol.~2.\ Seminumerical algorithms}
(third ed.), Addison-Wesley, Reading, MA, 1998, 762~pp.

\bibitem[KS21]{KS21}
Noah~Kravitz and Ashwin~Sah, Linear extension numbers of $n$-element posets,
\emph{Order}~\textbf{38} (2021), 49--66.

\bibitem[KY22]{KY22}
Lukas~K\"uhne and Geva~Yashfe, Representability of matroids by $c$-arrangements
is undecidable, \emph{Israel J.\ Math.}~\textbf{252} (2022), 95--147.

\bibitem[Lar86]{Lar86}
Gerhard~Larcher,
On the distribution of sequences connected with good lattice points,
\emph{Monatsh.\ Math.} \textbf{101} (1986), 135--150.

\bibitem[MS24]{MS24}
Zhao~Yu~Ma and Yair~Shenfeld,
The extremals of Stanley's inequalities for partially ordered sets,
\emph{Adv.\ Math.}~\textbf{436} (2024), Paper~109404, 72~pp.

\bibitem[Mani10]{Mani10}
Arun~P.~Mani,
Some inequalities for Whitney--Tutte polynomials,
\emph{Combin.\ Probab.\ Comput.}~\textbf{19} (2010), 425--439.

\bibitem[Mat77]{Mat77}
Laurence~R.~Matthews, Bicircular matroids,
\emph{Quart.\ J.\ Math.}~\textbf{28} (1977), 213--227.

\bibitem[MNY21]{MNY21}
Satoshi~Murai, Takahiro~Nagaoka and  Akiko~Yazawa,
Strictness of the log-concavity of generating polynomials of matroids,
\emph{J.\ Combin.\ Theory, Ser.~A}~\textbf{181} (2021),  Paper~105351, 22~pp.

\bibitem[Mur03]{Mur03}
Kazuo~Murota,
\emph{Discrete convex analysis}, SIAM, Philadelphia, PA, 2003, 389~pp.


%



\bibitem[Noy15]{Noy15}
Marc~Noy, Graphs, in \emph{Handbook of enumerative combinatorics},
CRC Press, Boca Raton, FL, 2015, 397--436.

\bibitem[Ox11]{Ox}
James~Oxley, \emph{Matroid theory} (second ed.),
Oxford Univ.\ Press, Oxford, UK, 2011, 684~pp.

\bibitem[Pak19]{Pak}
Igor~Pak, Combinatorial inequalities, \emph{Notices AMS}~\textbf{66} (2019), 1109--1112;
an expanded version of the paper is available at \ts
\href{https://www.math.ucla.edu/~pak/papers/full-story1.pdf}{tinyurl.com/py8sv5v6}

\bibitem[Pak22]{Pak-OPAC}
Igor~Pak, What is a combinatorial interpretation?, in
\emph{Open Problems in Algebraic Combinatorics}, AMS, Providence, RI, to appear, 58~pp.;
\ts {\tt arXiv:2209.06142}.


\bibitem[Pap94]{Pap}
Christos~H.~Papadimitriou, \emph{Computational Complexity},
Addison-Wesley, Reading, MA, 1994, 523~pp.


\bibitem[Rota86]{Rota}
Gian-Carlo~Rota, Foreword, in
Joseph~P.~S.~Kung, \emph{A source book in matroid theory},
Boston, MA, 1986, 413~pp.

%
%

\bibitem[Sch13]{Sch13}
Marcus~Schaefer, Realizability of graphs and linkages,
in \emph{Thirty essays on geometric graph theory},
Springer, New York, 2013, 461--482.

\bibitem[Sch19]{Sch19}
Ralf~Schiffler,
Snake graphs, perfect matchings and continued fractions,
\emph{Snapshots of modern mathematics from Oberwolfach}, 2019, No.~1,
10~pp.; available at \ts \href{https://publications.mfo.de/handle/mfo/1405}{publications.mfo.de/handle/mfo/1405}

\bibitem[Sch85]{Schn1}
Rolf~Schneider,
On the Aleksandrov--Fenchel inequality, in \emph{Discrete geometry and convexity},
New York Acad.\ Sci., New York, 1985, 132--141.

\bibitem[Sch88]{Schn-zono}
Rolf~Schneider,
On the Aleksandrov-Fenchel inequality involving zonoids, 
\emph{Geom.\ Dedicata}~\textbf{27} (1988), 113--126.

\bibitem[Sch03]{Schr03}
Alexander~Schrijver, \emph{Combinatorial optimization.\ Polyhedra and efficiency},
vols.~A--C, Springer, Berlin, 2003, 1881~pp.

\bibitem[Sed70]{Sed70}
Ji\v{r}\'{\i} Sedl\'{a}\v{c}ek,
On the minimal graph with a given number of spanning trees,
\emph{Canad.\ Math.\ Bull.}~\textbf{13} (1970), 515--517.

%

\bibitem[SvH19]{SvH19}
Yair~Shenfeld and Ramon~van Handel,
Mixed volumes and the Bochner method,
\emph{Proc.\ AMS}~\textbf{147} (2019), 5385--5402.

\bibitem[SvH22]{SvH-duke}
Yair~Shenfeld and Ramon~van~Handel,
The extremals of Minkowski's quadratic inequality,
\emph{Duke Math.~J.} \textbf{171} (2022), 957--1027.

\bibitem[SvH23]{SvH-acta}
Yair~Shenfeld and Ramon~van~Handel,
The extremals of the Alexandrov--Fenchel inequality for convex polytopes,
\emph{Acta Math.}~\textbf{231} (2023), 89--204.

\bibitem[Sno12]{Snook}
Michael~Snook, Counting bases of representable matroids,
\emph{Elec.\ J.\ Combin.}~\textbf{19} (2012), no.~4, Paper~41, 11~pp.

\bibitem[Sta81]{Sta-AF}
Richard~P.~Stanley,
Two combinatorial applications of the Aleksandrov--Fenchel inequalities,
\emph{J.\ Combin.\ Theory, Ser.~A} \textbf{31}  (1981), 56--65.

\bibitem[Sta86]{Sta-two}
Richard~P.~Stanley, Two poset polytopes,
\emph{Discrete Comput.\ Geom.}~\textbf{1} (1986), no.~1, 9--23.

\bibitem[Sta89]{Sta-Log}
Richard~P.~Stanley,
Log-concave and unimodal sequences in algebra, combinatorics, and geometry,
in  \emph{Graph theory and its applications}, New York Acad.\ Sci.,
New York, 1989, 500--535.


\bibitem[Sto22]{Stong}
Richard~Stong, Minimal graphs with a prescribed number of spanning trees,
\emph{Australas.\ J.\ Combin.} \textbf{82} (2022), 182--196.

\bibitem[Toda91]{Toda}
Seinosuke~Toda, $\PP$ is as hard as the polynomial-time hierarchy,
{\em SIAM J.\ Comput.}~\textbf{20} (1991), 865--877.

\bibitem[Urq97]{Urq97}
Alasdair~Urquhart,
The graph constructions of Haj\'os and Ore,
\emph{J.\ Graph Theory}~\textbf{26} (1997), 211--215.

\bibitem[Var97]{Var97}
Alexander~Vardy,
The intractability of computing the minimum distance of a code,
\emph{IEEE Trans.\ Inform.\ Theory}~\textbf{43} (1997), 1757--1766.

\bibitem[vHYZ23]{vHYZ}
Ramon~van~Handel, Alan~Yan and Xinmeng~Zeng,  The extremals of the Kahn--Saks inequality,
preprint (2023), 29~pp.; \ts {\tt arXiv:2309.13434}.

\bibitem[Wel76]{Welsh-matroids}
Dominic~J.~A.~Welsh, \emph{Matroid theory},  Academic Press,
London, UK, 1976, 433~pp.

\bibitem[Wel93]{Welsh-knots}
Dominic~J.~A.~Welsh, \emph{Complexity: knots, colourings and counting},
Cambridge Univ.\ Press, Cambridge, UK, 1993, 163~pp.

\bibitem[Wig19]{Wig19}
Avi~Wigderson,
\emph{Mathematics and computation}, Princeton Univ.\ Press, Princeton,
NJ, 2019, 418~pp.; available at \ts \href{https://www.math.ias.edu/avi/book}{math.ias.edu/avi/book}

\bibitem[Wig23]{Wig23}
Avi~Wigderson,
Interactions of computational complexity theory and mathematics,
in \emph{Proc.\ ICM 2022}, Vol.~2, Plenary lectures,
EMS Press, Berlin, 2023, 1392--1432.

\bibitem[Yan23]{Yan23}
Alan~Yan, \emph{Log-concavity in combinatorics}, senior thesis,
Princeton University, 2023, 140~pp.; \ts {\tt arXiv:} {\tt 2404.10284}.

\bibitem[YK75]{YK75}
Andrew~C.~Yao and Donald~E.~Knuth, Analysis of the subtractive algorithm for
greatest common divisors, \emph{Proc.\ Nat.\ Acad.\ Sci.\ USA}~\textbf{72} (1975), 4720--4722.

\bibitem[Zas87]{Zas87}
Thomas~Zaslavsky,
The biased graphs whose matroids are binary,
\emph{J.\ Combin.\ Theory,Ser.~B}~\textbf{42} (1987), 337--347.

\end{thebibliography}
\end{document}